\let\oldmarginpar\marginpar
\renewcommand\marginpar[1]{\-\oldmarginpar[\raggedleft\footnotesize #1]%
{\raggedright\footnotesize #1}}
\theoremstyle{plain}
\newtheorem{thm}{Theorem}[section]
\newtheorem{lem}[thm]{Lemma}
\newtheorem{lemma}[thm]{Lemma}
\newtheorem{cor}[thm]{Corollary}
\newtheorem{claim}[thm]{Claim}
\newtheorem{prop}[thm]{Proposition}
\theoremstyle{definition}
\newtheorem{rem}[thm]{Remark}
\newtheorem{Ex}[thm]{Example}
\newtheorem*{Nt}{Notation}
\theoremstyle{remark}
\newcommand{\be}{\begin{enumerate}}
\newcommand{\ee}{\end{enumerate}}
\def\IZ{\mathbf{Z}}
\def\C{\mathbb{C}}
\def\R{\mathbb{R}}
\def\Z{\mathbb{Z}}
\def\dfn#1{{\em #1}}
\def\bdry{\partial}
\def\gcd{{\tt gcd}}
\newcommand{\cP}{\mathcal{P}}
\newcommand{\lutz}{\mbox{\tiny{Lutz}}}
\newcommand{\Id}{\mathit{id}}
\newcommand{\Map}{\mathit{Map}}
\newcommand{\Dehn}{\mathit{Dehn}}
\newcommand{\Stein}{\mathit{Stein}}
\newcommand{\Strong}{\mathit{Strong}}
\newcommand{\Weak}{\mathit{Weak}}
\newcommand{\Veer}{\mathit{Veer}}
\newcommand{\HFH}{\mathit{HFH}}
\title[Cabling and contact structures]{Cabling, contact structures\\ and mapping class monoids}
\author{Kenneth L.\ Baker}
\address{
    Department of Mathematics,
    University of Miami,
   PO Box 249085,
Coral Gables, FL 33124-4250}
\email{k.baker@math.miami.edu}
\urladdr{http://math.miami.edu/\char126 kenken}
\author{John B.\ Etnyre}
\address{
    School of Mathematics,
    Georgia Institute of Technology,
    686 Cherry St.,
    Atlanta, GA  30332-0160}
\email{etnyre@math.gatech.edu}
\urladdr{http://math.gatech.edu/\char126 etnyre}
\author{Jeremy Van Horn-Morris}
\address{American Institute of Mathematics, 360 Portage Ave
Palo Alto, CA  94306-2244 } 
\email{jvanhorn@aimath.org}
\urladdr{http://aimath.org/~jvanhorn}
\begin{document}
\begin{abstract}
In this paper we discuss the change in contact structures as their supporting open book decompositions have their binding components cabled. To facilitate this and applications we define the notion of a rational open book decomposition that generalizes the standard notion of open book decomposition and allows one to more easily study surgeries on transverse knots. As a corollary to our investigation we are able to show there are Stein fillable contact structures supported by open books whose monodromies cannot be written as a product of positive Dehn twists. We also exhibit several monoids in the mapping class group of a surface that have contact geometric significance. 
\end{abstract}

\maketitle
\section{Introduction}

In \cite{Giroux02} Giroux introduced a powerful new tool into contact geometry. Specifically he demonstrated there is a one to one correspondence between contact structures up to isotopy and open book decompositions up to positive stabilization. This Giroux correspondence is the basis for many, if not most, of the advances in contact geometry recently. Moreover, the correspondence opens two central lines of enquiry. The first is to see how properties of a contact structure are reflected in an open book decomposition associated to it, and vice versa. The second is to see how natural constructions on one side of this correspondence affect the other. This paper addresses both these themes. It is primarily focused on studying how changes in an open book decomposition, namely cabling of binding components, affect the supported contact structure. In this study one is confronted with fibered links that are not the bindings of open books. In order to deal with such objects we introduce the notion of a rational open book decomposition and contact structures compatible with them. 

While the heart of the paper involves studying the cabling procedure and general fibered links, there are several  interesting and unexpected corollaries dealing with the first theme mentioned above. The first corollary involves showing that there are Stein fillable open books that are supported by open books whose monodromies cannot be written as products of positive Dehn twists. We construct such an open book using our analysis of the behavior of monodromies under cabling. As a second corollary we again use our study of monodromies under cables to construct Stein cobordisms that can be used to construct geometrically interesting monoids in the mapping class group of a surface. We begin by discussing these corollaries. 


\subsection{Stein fillings and monodromy}
One immediate and obvious effect of Giroux's correspondence is a relationship between contact structures and mapping classes of surface automorphisms. More specifically recall that if $(B,\pi)$ is an open book decomposition of a 3-manifold $M$ that supports a contact structure $\xi$ then one can describe the fibration $\pi:(M\setminus B)\to S^1$ as the mapping torus of a diffeomorphism $\phi:\Sigma\to \Sigma,$ where $\Sigma$ is a fiber of $\pi.$ The map $\phi$ is called the monodromy of $(B,\pi).$ We will frequently denote by $M_{(B,\pi)}$ or $M_{(\Sigma,\phi)}$ the manifold defined by the open book decomposition $(B,\pi)=(\Sigma,\phi)$ and by $\xi_{(B,\pi)}$ or $\xi_{(\Sigma,\phi)}$  the associated contact structure. 

It has long been known, \cite{AkbulutOzbagci01, Giroux02, LoiPiergallini01}, that a contact 3-manifold $(M,\xi)$ is Stein fillable if and only if there is an open book decomposition $(\Sigma,\phi)$ supporting it such that $\phi$ can be written as a composition of right handed Dehn twists along curves in $\Sigma.$ This gives a nice characterization of Stein fillability in terms of monodromies, but it can sometimes be hard to use in practice as one only knows there is some open book decomposition for $\xi$ that has monodromy with the given presentation. So this result does not allow one to check Stein fillability using any open book decomposition for $\xi.$ While it seemed likely for a time that all open books supporting a Stein fillable $\xi$ might have monodromies given as a product of right handed Dehn twists this turns out not to be the case.  

\begin{thm} \label{thm:counterexampletoStein} \label{thm:counterexample}  
The tight contact manifolds $(L(p,p-1),\xi_{std})$ for $p\geq 1$ are Stein fillable but admit compatible open book decompositions whose monodromy cannot be factored as a  product of positive Dehn twists.
\end{thm}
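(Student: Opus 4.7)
The plan is to use the cabling machinery developed in the body of the paper to produce, starting from a simple open book for $(L(p,p-1),\xi_{std})$, a more complicated open book supporting the same contact structure whose monodromy admits no factorization as a product of right-handed Dehn twists. Concretely, I would begin with a standard open book for $(L(p,p-1),\xi_{std})$ with small page (e.g.\ an annulus) and monodromy a power of the core Dehn twist; this monodromy is manifestly a product of positive Dehn twists, and in particular re-establishes Stein fillability. I would then apply a judiciously chosen cable to one binding component. The cabling theorem established earlier guarantees that the resulting open book still supports $(L(p,p-1),\xi_{std})$, and gives an explicit description of the new monodromy on the cabled page as the old monodromy pulled back to the cabled surface and post-composed with a definite collection of Dehn twists arising from the cabling annulus.

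The key observation is that for appropriately chosen cabling slopes the cabled monodromy acquires a genuine negative contribution around the new binding component. The obstruction to a positive factorization is then provided by the fractional Dehn twist coefficient of Honda--Kazez--Mati\'c, an $\mathbb{R}$-valued additive invariant associated to each boundary component of the page, which must be non-negative on every boundary component whenever the monodromy is written as a product of right-handed Dehn twists along (not necessarily simple) curves in the interior. By arranging the cabling parameters so that the fractional Dehn twist coefficient about one of the new binding components is strictly negative, no positive factorization of the new monodromy can exist, which is exactly the conclusion sought.

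The main obstacle is juggling several constraints with a single cabling choice. The cabling must (a) preserve the supported contact structure (so that the resulting open book really does support $\xi_{std}$ rather than some other, possibly overtwisted, structure on $L(p,p-1)$), which pins down the slope subject to the combinatorics of the cabling formula; (b) begin from an open book whose original monodromy has an explicit positive factorization so that the behaviour of the cabled monodromy can be written down; and (c) produce a monodromy whose fractional Dehn twist coefficient on some boundary component is provably negative. Meeting all three of these simultaneously for every $p\geq 1$ requires the cabling formula in its full strength, together with a careful bookkeeping of the contribution each new Dehn twist makes to the fractional coefficient, and the $p=1$ case (where $L(1,0)=S^{3}$ and the construction is least constrained) may warrant separate verification. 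A subsidiary technical point is ruling out the possibility that the cabled open book is a positive stabilization of a simpler open book whose monodromy does factor positively, since stabilization can mask the obstruction from the fractional Dehn twist coefficient alone.
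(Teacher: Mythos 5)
Your overall strategy—start with a small positive open book for $(L(p,p-1),\xi_{std})$, cable it so the contact structure is preserved, and then obstruct a positive factorization of the cabled monodromy—matches the shape of the paper's argument, and your first two constraints (a) and (b) are exactly the ones the paper satisfies with the $(2,1)$-cable of the genus one open book $(\Sigma,D_1^pD_2)$. The gap is in constraint (c): the fractional Dehn twist coefficient cannot do the job you are asking of it. The cabled open book supports $\xi_{std}$, which is tight; by Honda--Kazez--Mati\'c tightness forces the monodromy to be right-veering, and a right-veering monodromy has non-negative fractional Dehn twist coefficient on every boundary component. So any cabling choice satisfying (a) automatically makes the FDTC $\geq 0$, and your desired condition (c) is incompatible with (a). Equivalently, FDTC negativity detects $\phi\notin\Veer^+(\Sigma)$, whereas the whole content of this theorem is to exhibit a $\phi\in\Veer^+(\Sigma)\setminus\Dehn^+(\Sigma)$; you need an obstruction that sees the difference between these two monoids, and FDTC does not.

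The paper uses a genuinely different, ``hard'' obstruction: it writes down an explicit factorization of the cabled monodromy $\phi_{(2,1)}$ (via Theorem~\ref{thm:connected binding}) into twists about non-separating curves and computes its algebraic length $\equiv p-8 \pmod{10}$ in the abelianization $\Z/10\Z$ of $\Map^+(\Sigma_{2,1})$. Any hypothetical positive factorization would build a Stein filling whose Euler characteristic is determined by the number of twists, and Lisca's classification of minimal symplectic fillings of $(L(p,p-1),\xi_{std})$ forces that number to be $p+3$. Since $p+3\not\equiv p-8\pmod{10}$, no positive factorization exists. To repair your argument you would need to replace the FDTC with an invariant of this kind—one sensitive to the abelianized word length and pinned down by the topology of the filling—rather than one that only detects failure of right-veering. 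Your subsidiary worry about stabilizations is also not an issue for the statement as posed: the theorem asks only for the existence of one compatible open book without a positive factorization, and the paper's obstruction applies directly to that specific open book; nothing about other open books for $\xi_{std}$ needs to be ruled out.
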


The examples above are all $(2,1)$--cables of genus one open book decompositions, each of which is a Hopf stabilization of an annular open book.  The example $L(1,0)$ refers to $S^3$.  In this case, the open book has as a binding the $(2,1)$--cable of the right handed trefoil.  This theorem will be proven in Section~\ref{applications}. One has the following amusing corollary, previously observed by Melvin-Morton, \cite{melvinmorton}.
\begin{cor} The $(2,1)$--cable of the right handed trefoil is not a Hopf stabilization of the unknot.\qed
\end{cor}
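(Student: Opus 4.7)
The plan is to deduce this corollary immediately from Theorem~\ref{thm:counterexample} taken at $p=1$, where $L(1,0)=S^3$.

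First I would record what Theorem~\ref{thm:counterexample} gives us in this case: the $(2,1)$-cable of the right handed trefoil is the binding of an open book decomposition of $S^3$ whose monodromy cannot be written as a product of positive Dehn twists. Next I would note that a fibered knot $K$ in $S^3$ has a unique incompressible Seifert surface (its fiber) up to isotopy, and the monodromy of the fibration $S^3\setminus K\to S^1$ is determined up to conjugation by its restriction to the fiber. So the open book decomposition of $S^3$ whose binding is a given fibered knot is unique up to isotopy, and consequently \emph{any} description of this open book must yield a monodromy in the same conjugacy class -- one with no positive Dehn twist factorization.

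I would then compare this with what Hopf stabilization produces. The unknot is the binding of the standard open book of $S^3$ with disk fiber and identity monodromy, which we view as the empty product of positive Dehn twists. A single (positive) Hopf stabilization replaces the fiber $\Sigma$ by $\Sigma$ plumbed with a positive Hopf band, i.e.\ attaches a $2$-dimensional $1$-handle and post-composes the monodromy with a positive Dehn twist along a curve running once over the new handle. Inductively, any open book obtained from the unknot's open book by a sequence of Hopf stabilizations has monodromy given explicitly as a product of positive Dehn twists.

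Combining the two observations, if the $(2,1)$-cable of the right handed trefoil were a Hopf stabilization of the unknot, its associated monodromy would factor as a product of positive Dehn twists, contradicting Theorem~\ref{thm:counterexample}. No substantial new ideas are required beyond the theorem; the only point to make carefully is the uniqueness of the open book associated to a given fibered knot in $S^3$, which is what ensures the monodromy condition of the theorem depends only on the binding and hence rules out \emph{every} possible presentation as a stabilization of the unknot.
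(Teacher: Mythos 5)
Your argument correctly shows that the $(2,1)$--cable of the right handed trefoil is not a \emph{positive} Hopf stabilization of the unknot, and this is exactly the route the paper implicitly intends: the unknot open book has identity monodromy, positive stabilization inserts one positive Dehn twist per step, the fiber and hence the open book is canonical for a fibered knot in $S^3$, and Theorem~\ref{thm:counterexample} forbids a positive factorization.

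However, the Melvin--Morton result the paper is echoing (and the phrase ``not a Hopf stabilization'' in light of the follow-up remark about ``even positive ones'') rules out \emph{any} sequence of Hopf-band plumbings, negative bands included, not merely positive ones. Your proof does not reach that case, because a sequence containing negative stabilizations produces a monodromy which is a word in both positive and negative Dehn twists, and Theorem~\ref{thm:counterexample} says nothing about those. To close the gap you would add one more observation: a negative Hopf stabilization is Murasugi sum with the open book supporting the overtwisted contact structure on $S^3$, hence the resulting open book supports an overtwisted contact structure, and subsequent stabilizations of either sign cannot make it tight again. Since, by Theorem~\ref{thm:counterexample}, the $(2,1)$--cable of the right handed trefoil supports the \emph{tight} (indeed Stein fillable) contact structure on $S^3$, no sequence involving a negative Hopf band can yield this open book either. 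Combined with your argument for the all-positive case, this finishes the corollary. The uniqueness-of-fiber point you make is correct and worth stating, but it is not where the substance lies; the substance is the two-case argument split on the sign of the stabilizations.
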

Of course there is necessarily some sequence of Hopf stabilizations and destabilizations (even positive ones) that go from the unknot to the $(2,1)$--cable of the right handed trefoil knot by Giroux correspondence.  Indeed the $(2,2)$--cable of the right handed trefoil knot is a single stabilization of the $(2,1)$--cable as well as a sequence of stabilizations of the unknot. 

By different techniques, Wand has shown that there is an open book supporting a tight contact structure on $S^1 \times S^2 \# S^1 \times S^2$ whose monodromy cannot be factored as a product of positive Dehn twists, \cite{wand}.

\subsection{Stein cobordisms and monoids in the mapping class group}
Shortly after Giroux established his correspondence, observations were made linking geometric properties of the contact structure to the monodromy and certain monoids in the mapping class group of a compact oriented surface with boundary.  (We note that one must study monoids in the mapping class group instead of subgroups, as there are usually serious differences between the geometric properties of a contact structures associated to an open book with a given monodromy diffeomorphism and the open book whose monodromy is the inverse of this diffeomorphism.) There are two striking examples, $\Dehn^+(\Sigma)$ and $\Veer^+(\Sigma)$, which are used to detect Stein fillability and tightness of the contact structure, respectively.  The monoid $\Dehn^+(\Sigma)$ is the sub-monoid of the oriented mapping class group $\Map^+(\Sigma)$ generated by positive Dehn twists about curves in $\Sigma,$ and $\Veer^+(\Sigma)$ is the sub-monoid of {\em right veering} diffeomorphisms defined in \cite{HondaKazezMatic07}.

While it is shown in \cite{HondaKazezMatic07} that every open book $(\Sigma,\phi)$ compatible with a tight contact structure has $\phi \in \Veer^+(\Sigma)$, \cite{HondaKazezMatic07} also shows there may be (and for certain surfaces there are) monodromies in $\Veer^+(\Sigma)$ which correspond to overtwisted contact structures.   
Thus tight contact structures are not characterized by having compatible open book decompositions with monodromy in $\Veer^+(\Sigma)$.
Similarly Theorem~\ref{thm:counterexampletoStein} shows that Stein fillable contact structures are not characterized by having compatible open book decompositions with monodromy in $\Dehn^+(\Sigma)$.  
However, we show that, at least in the Stein fillable case (and much more generally) the set of monodromies compatible with Stein fillable contact structures (for example) forms a closed monoid in the mapping class group $\Map^+(\Sigma)$. We begin by observing the following result whose proof can be found in Section~\ref{applications}.

\begin{thm} \label{constructbordism}
Let $\phi_1$ and $\phi_2$ be two elements of $\Map^+(\Sigma).$ There is a Stein cobordism $W$ from 
$(M_{(\Sigma,\phi_1)},\xi_{(\Sigma,\phi_1)}) \sqcup (M_{(\Sigma,\phi_2)},\xi_{(\Sigma,\phi_2)}) $ to $(M_{(\Sigma,\phi_1\circ \phi_2)},\xi_{(\Sigma,\phi_1\circ\phi_2)}) $.
\end{thm}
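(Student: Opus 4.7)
The plan is to realize $W$ as a $\Sigma$-bundle over a pair of pants, with the binding filled in. Let $P$ be a pair of pants with boundary components $c_1, c_2$ (incoming) and $c_0$ (outgoing). Since $\pi_1(P)$ is free on two generators (corresponding to $c_1, c_2$, with $c_0$ representing the inverse of their product), one can form a $\Sigma$-bundle $\pi : E \to P$ whose monodromy is $\phi_i$ around $c_i$ for $i = 1, 2$ and $\phi_1 \circ \phi_2$ around $c_0$. The boundary of $E$ decomposes as a vertical part $\partial_v E$ (the disjoint union of mapping tori $T_{\phi_1}, T_{\phi_2}, T_{\phi_1 \circ \phi_2}$ over the three $c_i$) and a horizontal part $\partial_h E = \partial \Sigma \times P$ (each $\phi_i$ being the identity on $\partial \Sigma$), joined along the corner $\partial \Sigma \times \partial P$.

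For each boundary circle $\gamma_j$ of $\Sigma$, I would attach a piece $N_j \cong D^2 \times P$ to $E$ via the identification $\partial D^2 \times P \cong \gamma_j \times P \subset \partial_h E$, and set $W := E \cup \bigsqcup_j N_j$. The leftover portion $D^2 \times c_i$ of each $\partial N_j$ caps the torus boundary $\gamma_j \times c_i$ of the mapping torus $T_{\phi_i}$ with a solid torus, promoting $T_{\phi_i}$ to the open book $3$-manifold $M_{(\Sigma,\phi_i)}$. Consequently $\partial W = (-M_{(\Sigma,\phi_1)}) \sqcup (-M_{(\Sigma,\phi_2)}) \sqcup M_{(\Sigma,\phi_1 \circ \phi_2)}$, giving the desired cobordism topologically.

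To endow $W$ with a Stein structure, give $P$ the Weinstein structure of a Liouville cobordism from $c_1 \sqcup c_2$ to $c_0$, and $\Sigma$ its standard Weinstein structure. Since mapping classes of a surface with boundary are realized by exact symplectomorphisms, the bundle $\pi : E \to P$ admits a symplectic connection, and combining horizontal and vertical Liouville forms produces a Weinstein structure on $E$ with convex vertical boundary $T_{\phi_1 \circ \phi_2}$ and concave vertical boundary $T_{\phi_1} \sqcup T_{\phi_2}$. Each $N_j = D^2 \times P$ carries a product Weinstein structure, and its hypersurface $\partial D^2 \times P$ is of contact type, matching that of $\partial_h E$. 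Gluing and smoothing corners then yields a Weinstein, and hence Stein, structure on $W$ whose induced contact structure on each component of $\partial W$ is $\xi_{(\Sigma, \phi_i)}$, as can be checked in a local model near each $c_i$.

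The main obstacle is Stein-structure compatibility along the gluing hypersurfaces. The symplectic form on $E$ must be arranged so that its restriction to a collar of $\partial_h E$ matches the product Liouville form on $N_j$ along $\gamma_j \times P$; I would do this by choosing the fiberwise Liouville form on $\Sigma$ to be standard (e.g.\ $r\, d\theta$ in cylindrical coordinates) near $\partial \Sigma$, and arranging the symplectic connection so that $\partial \Sigma$ is horizontal. With this setup, the Liouville fields glue across $\gamma_j \times P$, the corner along $\partial \Sigma \times \partial P$ can be smoothed in the standard way, and the resulting convex and concave boundary components carry the claimed contact structures by direct identification with the open book model.
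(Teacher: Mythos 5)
Your bundle-over-a-pair-of-pants idea is genuinely different from the paper's proof (which instead glues $\Sigma_1, \Sigma_2$ by $1$-handles into the cable surface $\Sigma_{(2,\mathbf{1})}$, attaches Weinstein $2$-handles realizing the positive twists in the cabling rotation $\rho$, and then observes the resulting open book is a positive stabilization of $(\Sigma,\phi_2\circ\phi_1)$), but there is a topological error in the way you cap off the horizontal boundary. When you glue $N_j = D^2\times P$ to $E$ via $\partial D^2 \times P \cong \gamma_j \times P$, the leftover piece $D^2 \times c_i$ is a solid torus glued to $\gamma_j \times c_i \subset \partial T_{\phi_i}$ so that the \emph{meridian} $\partial D^2$ is identified with $\gamma_j \subset \partial\Sigma$. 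That is precisely the opposite of the open book filling, where the meridian of the solid torus must be identified with the mapping-torus circle $\{pt\}\times c_i$ and $\gamma_j$ must be a longitude (in the notation of the proof of Theorem~\ref{thm:support}, the gluing has $\theta=\varphi$, $t=\vartheta$, so $\partial\Sigma$ goes to the longitude and the monodromy circle to the meridian). Your filling instead makes $\gamma_j$ bound a disk, i.e.\ it caps off $\Sigma$ to the closed surface $\hat\Sigma$ and produces the $\hat\Sigma$-bundle over $c_i$. For example, taking $\Sigma$ an annulus and $\phi$ a right-handed Dehn twist about the core, your construction yields $S^2\times S^1$ on the relevant boundary component, whereas $M_{(\Sigma,\phi)}=S^3$. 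So $\partial W$ is not $(-M_{(\Sigma,\phi_1)})\sqcup(-M_{(\Sigma,\phi_2)})\sqcup M_{(\Sigma,\phi_1\circ\phi_2)}$, and the subsequent Weinstein/Stein discussion does not apply.

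Moreover there is no straightforward fix inside this framework: to get the open books you would need $\partial D^2$ to be identified with the mapping-torus circle direction, but that direction has no global meaning over the pair of pants $P$ --- it is only defined over $\partial P$ --- so a product cap $D^2\times P$ cannot be reglued with the correct slopes on all three ends simultaneously. This is exactly the difficulty the paper avoids by working entirely inside the world of open books: one first forms $\Sigma'=\Sigma_{(2,\mathbf{1})}$ by gluing Weinstein $1$-handles (so the top boundary open book is $(\Sigma',\widetilde{\phi_2}\circ\widetilde{\phi_1})$), then attaches Weinstein $2$-handles realizing the positive Dehn twists of $\rho = \rho_{(2,\mathbf{1})}(\Sigma)$, and finally uses Theorem~\ref{thm:stabilization} to recognize $(\Sigma',\rho\circ\widetilde{\phi_2\circ\phi_1})$ as a positive stabilization of $(\Sigma,\phi_2\circ\phi_1)$. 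If you want to preserve the spirit of a fibration-over-a-base construction, you would at minimum need to either round corners along $\partial\Sigma\times\partial P$ rather than cap by $D^2\times P$, and then explicitly argue how the leftover piece $\partial\Sigma\times P$ of $\partial_h E$ combines with the mapping tori to give a cobordism of open books, or else replace the trivial $D^2$-bundle by something whose boundary restriction near each $c_i$ reproduces the binding neighborhood.
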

Recall $W$ will be a Stein cobordism from $(M,\xi)$ to $(M',\xi')$ if it is a compact complex manifold and there is a strictly pluri-subharmonic function $\psi:W\to [0,1]$  such that $M=\psi^{-1}(0)$ and $M'=\psi^{-1}(1).$ In particular $W$ can be endowed with a symplectic form such that $(M,\xi)$ is a concave boundary component of $W$ and $(M',\xi')$ is a convex boundary component. 
Upon announcing this theorem John Baldwin noticed that his joint paper with Plamenevskaya \cite{BaldwinPlamenevskayaPre} contains an implicit proof of this result. He made this explicit in \cite{BaldwinPre??} and in addition observes the following corollaries of this result. 

Eliashberg proved that any Stein manifold/cobordism can be built by attaching to a piece of the symplectization of a contact manifold a collection of 4-dimensional 1-handles and 2-handles along Legendrian knots with framings one less than the contact framing, see \cite{Eliashberg90a}. As the attachment of 1-handles corresponds to (possibly self) connected sums (even in the contact category) and attaching 2-handles as above corresponds to Legendrian surgery we have the following immediate corollary of Theorem~\ref{constructbordism}.
\begin{thm} \label{thm:mainmonoid}  
Let $\mathcal{P}$ be any property of contact structures which is preserved under Legendrian surgery and (possibly self) connected sum. 
Let $\Map^\cP(\Sigma) \subset \Map^+(\Sigma)$ be the set of monodromies $\phi$ which give open book decompositions compatible with contact structures satisfying $\mathcal{P}$.  Then $\Map^\cP(\Sigma)$ is closed under composition. Thus if the identity map on $\Sigma$ is in $\Map^\cP(\Sigma)$  then $\Map^\cP(\Sigma)$  is a monoid. \qed
\end{thm}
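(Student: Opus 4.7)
The plan is to derive this directly from Theorem~\ref{constructbordism} together with Eliashberg's handle decomposition of Stein cobordisms. Fix $\phi_1,\phi_2\in\Map^\cP(\Sigma)$ and set $(M_i,\xi_i):=(M_{(\Sigma,\phi_i)},\xi_{(\Sigma,\phi_i)})$ for $i=1,2$; by definition each $(M_i,\xi_i)$ satisfies $\mathcal P$. Theorem~\ref{constructbordism} gives a Stein cobordism $W$ from $(M_1,\xi_1)\sqcup(M_2,\xi_2)$ to $(M_{(\Sigma,\phi_1\circ\phi_2)},\xi_{(\Sigma,\phi_1\circ\phi_2)})$. The claim reduces to verifying that this top boundary inherits property $\mathcal P$.

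The key step is to invoke Eliashberg's structure theorem, stated just above the theorem in the excerpt: $W$ is obtained from a piece of the symplectization of its concave boundary $(M_1,\xi_1)\sqcup(M_2,\xi_2)$ by a finite sequence of 4-dimensional 1-handle attachments and 2-handle attachments along Legendrian knots with framing one less than the contact framing. In the contact category a 1-handle attachment realizes either a connected sum of two components or a self-connected sum of a single component, while such a 2-handle attachment realizes Legendrian surgery. Hence, walking up the handle decomposition from bottom to top and applying the preservation hypothesis on $\mathcal P$ after each attachment shows that the contact structure on every intermediate level set of the strictly pluri-subharmonic function continues to satisfy $\mathcal P$.

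The only point that requires attention is the first time a 1-handle joins the two components of the incoming boundary: at that stage one passes from the disjoint union $(M_1,\xi_1)\sqcup(M_2,\xi_2)$ to the contact connected sum $(M_1\#M_2,\xi_1\#\xi_2)$, which is why the hypothesis on $\mathcal P$ must include genuine (not merely self) connected sums. After this initial merger, every subsequent handle attachment is either a self-connected sum or a Legendrian surgery on a connected contact manifold, each of which preserves $\mathcal P$ by assumption. It follows that $(M_{(\Sigma,\phi_1\circ\phi_2)},\xi_{(\Sigma,\phi_1\circ\phi_2)})$ has property $\mathcal P$, so $\phi_1\circ\phi_2\in\Map^\cP(\Sigma)$, proving closure. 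If in addition $\Id_\Sigma\in\Map^\cP(\Sigma)$, then $\Map^\cP(\Sigma)$ is a submonoid of $\Map^+(\Sigma)$. The only conceptual obstacle is the invocation of Eliashberg's theorem for cobordisms with disconnected concave end; since the Stein structure is ambient and the handle decomposition respects the disjoint-union structure of the level set, no additional work is needed.
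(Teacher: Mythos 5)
Your proposal is correct and follows essentially the same route as the paper: invoke Theorem~\ref{constructbordism} to produce the Stein cobordism, then decompose it à la Eliashberg into Weinstein 1-handles (interpreted as possibly-self contact connected sums) and 2-handles along Legendrian knots (interpreted as Legendrian surgeries), and propagate the property $\mathcal P$ upward through the handle attachments. The paper treats this as immediate from Theorem~\ref{constructbordism} and the paragraph preceding the statement; your additional remark about the first 1-handle merging the two disjoint components is a sound clarification, not a deviation.
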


Using results from \cite{Eliashberg90a,EtnyreHonda02a, OzsvathSzabo05a} we have the following corollary. 
\begin{cor}  For each of the properties $\cP$ listed below, the set of monodromies $\phi$ of open books $(\Sigma, \phi)$ compatible with contact structures satisfying $\cP$ forms a monoid in the mapping class group $\Map^+(\Sigma)$:
\be 
\item non-vanishing Heegaard-Floer invariant,
\item Weakly fillable,
\item Strongly fillable,
\item Stein fillable.\qed
\ee
\end{cor}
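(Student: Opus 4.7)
The plan is a direct appeal to Theorem~\ref{thm:mainmonoid}. For each property $\cP$ in the list I need to verify three things: (a) $\cP$ is preserved under Legendrian surgery; (b) $\cP$ is preserved under (possibly self) connected sum; and (c) the identity monodromy on $\Sigma$ gives a contact manifold satisfying $\cP$. Once (a) and (b) are in place, Theorem~\ref{thm:mainmonoid} gives closure under composition, and (c) then promotes this closed-under-composition set to a monoid containing $\Id_\Sigma$.

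For (a), Stein fillability under Legendrian surgery is Eliashberg's handle-attachment theorem in \cite{Eliashberg90a}. The same Weinstein $2$-handle argument, phrased symplectically rather than Steinwise, yields preservation for strong fillability; a symplectic perturbation near the contact boundary (together with the constructions of \cite{EtnyreHonda02a}) reduces the weak case to the strong one, so weak fillability is likewise preserved. Preservation of the non-vanishing of the Heegaard--Floer contact invariant under Legendrian surgery is established in \cite{OzsvathSzabo05a}.

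For (b), both ordinary and self connected sum in the contact category are realized by attaching a contact $1$-handle. Stein $1$-handles preserve Stein fillings and Weinstein $1$-handles preserve strong and weak fillings (using the same perturbation trick in the weak self-sum case); the connect-sum formula for the Heegaard--Floer contact invariant, combined with $c(S^3,\xi_{std})\neq 0$, handles the HF case. For (c), the open book $(\Sigma,\Id)$ supports the standard tight contact structure on $\#^k(S^1\times S^2)$, where $k=2g(\Sigma)+|\bdry\Sigma|-1$; this has Stein filling $\natural^k(S^1\times D^3)$ and so satisfies each of the four properties.

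The only step I expect to require any genuine care is the self connected sum in the weak fillability setting, where the $1$-handle is attached between two points of a single weakly filled $4$-manifold. Here one must first deform the symplectic form in neighborhoods of the two attaching balls so that the filling becomes strong on these neighborhoods, then attach the Weinstein $1$-handle, and finally verify that the resulting filling is still weak. This is the main, though mild, obstacle; apart from this verification, the corollary is a purely formal consequence of Theorem~\ref{thm:mainmonoid} together with the classical preservation results cited above.
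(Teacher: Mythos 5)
Your argument is the one the paper intends: the corollary is stated immediately after Theorem~\ref{thm:mainmonoid} with a pointer to \cite{Eliashberg90a,EtnyreHonda02a,OzsvathSzabo05a}, and those three references are exactly what you invoke for (a) and (b), with Theorem~\ref{thm:mainmonoid} then delivering closure under composition. The extra care you take with the weak self connected sum and the explicit identification of $(\Sigma,\Id)$ with the standard tight contact structure on $\#^k(S^1\times S^2)$ are welcome details, but they do not change the route; this is the same proof, spelled out.
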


Denote by $\HFH(\Sigma)$, $\Weak(\Sigma)$, $\Strong(\Sigma)$ and $\Stein(\Sigma)$, the corresponding monoids in $\Map^+(\Sigma)$. 

That the first category forms a monoid was first observed in \cite{Baldwin08} using a comultiplication map in Heegaard Floer homology. (We note that the comultiplication map can be defined using our Theorem~\ref{constructbordism}).  The other three monoids were previously unknown.  It has long been known, {\em cf.\ }\cite{AkbulutOzbagci01,LoiPiergallini01}, that monodromies in $\Dehn^+(\Sigma)$, the monoid generated by all right-handed Dehn twists on $\Sigma$, give rise to Stein fillable contact structures and so $\Dehn^+(\Sigma) \subset Stein(\Sigma).$   Work of Honda-Kazez-Mati\'{c} \cite{HondaKazezMatic08} gives strong results on the fillability of $\xi$ when the monodromy is pseudo-Anosov.  Wendl in \cite{WendlPre??}  has very interesting results showing that $\Strong(\Sigma) = \Stein(\Sigma) = \Dehn^+(\Sigma)$ when $\Sigma$ is a planar surface.   We have the following sequences of inclusions:
$$
\begin{array}{cccccccccc}
& & & &     && \Weak(\Sigma) &&\\
& & & &     &\text{\begin{turn}{45} $\subsetneq$\end{turn}}& & \text{\begin{turn}{-45} $\subsetneq$\end{turn}}&\\
\Dehn^+(\Sigma) &\subsetneq& \Stein(\Sigma) & \subsetneq& \Strong(\Sigma) && \text{\begin{turn}{90} $\not\subset$\end{turn}} \text{\begin{turn}{90} $\not\supset$\end{turn}}
&& \Veer^+(\Sigma).\\
& & & &     & \text{\begin{turn}{-45} $\subsetneq$\end{turn}}& & \text{\begin{turn}{45} $\subsetneq$\end{turn}}&\\
& & & &     && \HFH(\Sigma) &&\\
\end{array}
$$
The first inclusion is discussed above and the fact that it is strict follows from Theorem~\ref{thm:counterexampletoStein}.  The second inclusion is well known and the fact that the inclusion is strict follows from \cite{Ghiggini05}. The inclusion $\Strong(\Sigma)\subset \Weak(\Sigma)$ is obvious and the strictness of the inclusion was first observed in \cite{Eliashberg96}.
It is  known that neither $\HFH(\Sigma)$ nor $\Weak(\Sigma)$ is included in the other, see \cite{GhigginiHondaVanHornMorrisPre, GhigginiLiscaStipsicz07}.  It was shown in \cite{OzsvathSzabo04a} ({\em cf.\ }\cite{Ghiggini06}) that $\Strong(\Sigma) \subset  \HFH(\Sigma)$ and the other two inclusions follow from \cite{HondaKazezMatic07} as it is well known that a weakly fillable contact structure or one with non-vanishing Heegaard Floer invariant is tight. The strictness follows as there are right veering monodromies that support overtwisted contact structures as noted above. 

\begin{rem}
It is unknown whether the set of tight contact structures is closed under Legendrian surgery (although it is closed under connected sum) and hence whether there is a tight monoid, though the above theorem says that 

\smallskip
\begin{center}
\begin{minipage}[h]{3.5in}
\begin{center}
\em tightness is preserved under Legendrian surgery \\
if and only if there is a tight monoid. 
\end{center}
\end{minipage}
\end{center}
\end{rem}
\medskip

\subsection{Rational open books and cabling}

Given a fibered knot $L$ whose fiber is a Seifert surface in a manifold $M$ it is well known (and will be proven below) that for $pq \neq 0$ the link obtained from the knot $L$ by a $(p,q)$--cable, denoted $L_{(p,q)}$, is also fibered.  Thus if $L$ is the connected binding of an open book decomposition of $M$, its cable is too, and then one might ask how their compatible contact structures are related to each other.

 However if $L$ is a fibered link with more than one component, then the $(p,q)$--cable of one component produces a link with fibration whose fibers run along the other components $p$ times rather than once.  This cabled open book is then not an honest open book.   

In Section~\ref{sec:rob} we define the notion of a {\em rational open book decomposition} that generalizes the notion of an open book decomposition. Roughly speaking a rational open book decomposition of a manifold $M$ is a fibered link for which the fiber provides a rational null-homology of the link.   (The similar concept of a ``nicely fibered'' link has been previously defined by Gay \cite{Gay02a} when studying symplectic 2-handles.)  When we want to restrict to ordinary open books we use the adjective ``integral''.  Generically, cabling one binding component of a rational open book (or just an integral open book as above) produces another rational open book. These objects also naturally show up when studying surgery problems as we will see below.

Throughout this paper when $L$ is the binding of a (rational) open book decomposition and we discuss cabling a component of the binding $L$ we really mean cabling the open book decomposition. This is an important distinction as a given link can be the boundary of many open book decompositions. Despite this distinction, our abuse of terminology should not cause confusion as when we discuss a link $L$ as a binding of an open book decomposition we will always have a fixed open book decomposition in mind.

  One can define what it means for such an open book decomposition to support a contact structure in direct analogy to what happens in the usual case. The main observation now is the following.
\begin{thm}\label{thm:support}
Let $(L,\pi)$ be any rational open book decomposition of $M$. Then there exists a unique contact structure $\xi_{(L,\pi)}$ that is supported by $(L,\pi).$
\end{thm}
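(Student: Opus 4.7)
The plan is to generalize the Thurston--Winkelnkemper construction to the rational setting for existence, and then use Giroux's convex-combination trick plus Gray's stability theorem for uniqueness.

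For existence, I split $M$ into the complement $M \setminus N(L)$ of an open tubular neighborhood of the binding and the neighborhoods $N(L_i) \cong S^1 \times D^2$ of the individual components. On $M \setminus N(L)$ the standard Thurston--Winkelnkemper argument produces a 1-form $\alpha$ that is positive on the $S^1$-direction of the fibration, whose exterior derivative restricts to a positive area form on each page, and which has a controlled form near $\partial N(L_i)$. The key new step is the local model on each $N(L_i)$: in coordinates $(\varphi, r, \theta)$ with $L_i = \{r = 0\}$ and the pages meeting $\partial N(L_i)$ in parallel curves of some rational slope determined by $\pi$, I write $\alpha = h_1(r)\,d\varphi + h_2(r)\,d\theta$ and choose $h_1, h_2$ so that (i) the contact condition $h_1 h_2' - h_2 h_1' > 0$ holds for $r > 0$, (ii) $\alpha$ extends smoothly across the core (so $h_1(0) \neq 0$ and $h_2(r) = O(r^2)$), and (iii) the values of $(h_1, h_2)$ at $r = r_0$ match the page slope coming from $M \setminus N(L)$. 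The slope-matching is a simple linear condition, so all three demands can be satisfied simultaneously, and gluing then produces the desired global supporting contact form.

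For uniqueness, if $\alpha_0$ and $\alpha_1$ both support $(L, \pi)$, the defining positivity conditions --- positivity of $\alpha$ on the binding and positivity of $d\alpha$ on each page --- are convex, so $\alpha_t = (1-t)\alpha_0 + t\alpha_1$ remains a supporting contact form for every $t \in [0,1]$; Gray's stability then yields an isotopy from $\xi_0$ to $\xi_1$. The main obstacle is the construction of the local model near a rational binding component: in the integral case one can simply use the textbook model $\ker(d\varphi + r^2\,d\theta)$ with pages $\{\theta = \mathrm{const}\}$, but in the rational case smooth extension across the core, the contact condition, and slope matching on $\partial N(L_i)$ must all be arranged at once. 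Once this is done the rest of the proof is a routine adaptation of Giroux's argument in the integral case.
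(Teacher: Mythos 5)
Your existence argument follows the paper's proof essentially verbatim: both extend the Thurston--Winkelnkemper contact form from the mapping torus over the solid torus neighborhoods $N(L_i)$ using a rotationally symmetric model $f(r)\,d\varphi + g(r)\,d\vartheta$, with the (only new) constraint that the boundary slope of the page is now an arbitrary rational slope rather than a longitude. That part is fine.

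The uniqueness argument has a genuine gap. You assert that because the two defining conditions (positivity of $\alpha$ on $TL$ and positivity of $d\alpha$ on pages) are convex, the straight-line interpolation $\alpha_t = (1-t)\alpha_0 + t\alpha_1$ ``remains a supporting contact form.'' Convexity of those two conditions does show that $\alpha_t$ continues to \emph{satisfy the compatibility conditions}, but it does \emph{not} show that $\alpha_t$ is a \emph{contact form}. Compatibility is not the same as contactness: a $1$-form can be positive on the binding and have $d\alpha$ positive on pages without satisfying $\alpha\wedge d\alpha>0$. Writing $\alpha_i = g_i\,d\theta + \beta_i$ in a collar of a page, one finds
\[
\alpha_t \wedge d\alpha_t \;=\; d\theta\wedge\bigl[\,g_t\,d^\Sigma\beta_t \;-\; \beta_t\wedge(dg_t + \partial_\theta\beta_t)\,\bigr],
\]
and the cross-terms $\beta_t\wedge(dg_t+\partial_\theta\beta_t)$ are quadratic in $t$ and not sign-controlled by the compatibility hypotheses. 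So Gray stability cannot be applied to the raw linear family. This is exactly the step the paper handles with the Giroux trick: one first defines a global $1$-form $\eta$ that equals $\pi^*d\theta$ away from $L$ and vanishes (to order two) on $L$, notes that $K\eta + \alpha_i$ is still a contact form in the same isotopy class as $\alpha_i$, and then observes that for $K\gg 0$ the term $K\,d\theta\wedge d^\Sigma\beta_t$ dominates the cross-terms, so $K\eta + \alpha_t$ \emph{is} a path of contact forms. Without adding this large multiple of $\eta$, the linear homotopy you propose need not stay inside the contact forms, so the uniqueness proof as written does not go through.
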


With the notion of rational open book in hand we return to cabling binding components of open book decompositions. Before we can state our main theorem we briefly make a couple of definitions. 

Given an oriented knot $K$ let $N$ be a tubular neighborhood of $N$. Let $\mu$ be a meridional curve on $T=\partial N$ oriented so that it positively links $K$ and $\lambda$ some longitudinal curve on $T$, oriented so that it is isotopic, in $N$, to $K$. An $(p,q)$-curve on $T$ is an embedded curve, or collection of curves, that represent the homology class $p[\lambda]+q[\mu]$ in $H_1(T;\Z)$. (We notice that this convention for naming curves is different from what is commonly used in contact geometry, but the same as is commonly used in 3-manifold topology. See Subsection~\ref{sec:slopeconventions} for a more complete discussion.) We will also use the terminology that $q/p$ is the \dfn{slope} of this curve. The \dfn{$(p,q)$-cable of $K$} is the $(p,q)$-curve on $T$ and it is denoted $K_{(p,q)}$. If $K$ is a component of a link $L$ then the $(p,q)$-cable of $L$ along $K$ is the link obtained by replacing $K$ in $L$ by $K_{(p,q)}$. If $\Sigma$ is a (rational) Seifert surface for $L$ then $\Sigma\cap T$ can be assumed to be a (collection of) embedded curve(s) on $T$ (oriented as $\bdry (\Sigma-N)$) and hence there is some integers $r,s$ with $r>0$ such that $\Sigma\cap T$ is isotopic to the $(r,s)$-curve on $T$. We call $(r,s)$, or $s/r$, the \dfn{Seifert slope} of $\Sigma$ along $K$. We call a $(p,q)$-cable, \dfn{positive}, respectively \dfn{negative}, if $q/p>s/r,$ respectively $q/p<s/r$.   Observe that when the cable $K_{(p,q)} \subset T$ runs in the direction of $K$ (i.e.\ when $p>0$), the cable is positive, respectively negative, if and only if $K_{(p,q)}$ intersects $\Sigma$ positively, respectively negatively.

Suppose  $T$ is a transverse curve in a contact manifold $(M,\xi)$. If $\xi'$ is obtained from $\xi$ by performing a (half) Lutz twist on $T$ then there will be a  knot in the core of the Lutz solid torus that is topologically isotopic to $T$.   Around this core there will be a concentric torus whose characteristic foliation is by meridional curves. On this torus, let $T' = T_{(p,q)}$ be the $(p,q)$--cable of $T$ relative to the framing used when defining the Lutz twist.  We call $T'$ a \dfn{$(p,q)$--Lutz cable} of $T$. See Section~\ref{sec:lutz} for a more complete discussion of Lutz twists and cables. 

To state our theorem we need the notion of exceptional cablings. We briefly describe them here. Given a component $K$ of a fibered link $L$ in $M$ we say there are no exceptional cablings if the the fiber $\Sigma$ of the fibration of $M-L$ defines a longitude for $K$. Otherwise, choose a longitude $\lambda$ for $K$ so that the Seifert slope $s/r$ is between 0 and $-1$. The end points of the shortest path in the Farey tessellation from $-1$ to $s/r$ give the slopes of the exceptional cables of $K$.  (Alternatively one may reinterpret the exceptional cabling slopes as follows:  In the plane $H_1(T,\R) = \langle[\mu],[\lambda]\rangle$ let $C$ be the cone in the second quadrant between the two lines through the origin and each of the points $(-1,1)$ and $(s,r)$ where $r[\lambda]+s[\mu]$ is the Seifert slope.  Then $q/p \neq s/r$ is the slope of an exceptional cable if $(q,p)$ is a lattice point on the boundary of the convex hull of the integral lattice in $C$ minus the origin.  We leave the equivalence of these definitions for the reader.)
For more details and a simple method to compute the exceptional cables see Subsection~\ref{exception-section}. We note a few facts about exceptional cables. 
Any component of a fibered link has a finite number of exceptional cablings, and these are all easily computable from the link.  The only exceptional cabling slope of an integral open book is $-1$.   Also, we define a rational unknot to be a knot whose exterior is a solid torus (fibered by disks); as such, it is a knot in a lens space.

\begin{thm}\label{thm:main-cable}
Let $(L,\pi)$ be a rational open book decomposition supporting the contact structure $\xi$ on $M$. 
Order the components $L_1,\ldots, L_n,$ of $L$ and for each component $L_i$ choose pairs of integers $(p_i,q_i)$ such that the slope $\frac{q_i}{p_i}$ is neither the meridional slope nor the Seifert slope of $L_i$. Assume all the $p_i$ have the same sign and set $(\mathbf{p},\mathbf{q})=((p_1,q_1),\ldots, (p_n,q_n)).$
Then the contact structure $\xi_{(\mathbf{p},\mathbf{q})}$ associated to the $(\mathbf{p},\mathbf{q})$--cable of $(L,\pi)$ is
\begin{enumerate}
\item contactomorphic to $\xi$ if the $p_i$ are positive and all the $(p_i,q_i)$ with $p_i \neq 1$ are positive,
\item contactomorphic to $-\xi$ if the $p_i$ are negative and all the $(p_i,q_i)$ with $p_i \neq -1$ are positive, 
\item virtually overtwisted or overtwisted if any of the $(p_i,q_i)$ with $p_i \neq \pm1$ are negative and $L$ is not a rational unknot having Seifert slope $\frac sr$ with $rq-ps=-1$, and
\item overtwisted if any of the $(p_i,q_i)$ with $p_i \neq \pm1$ are negative and not an exceptional cabling and $L$ is not a rational unknot having Seifert slope $\frac sr$ with $rq-ps=-1$.
\end{enumerate}
Furthermore, in the last case, if $L_{i_1},\ldots,L_{i_k}$ are the components of $L$ for which $(p_i,q_i)$ is negative  then $\xi_{(\mathbf{p},\mathbf{q})}$ is contactomorphic to the contact structure obtained from $\xi$ (respectively $-\xi$) by performing Lutz twists on $L_{i_j}$ (respectively $-L_{i_j}$) followed by a Lutz twist on the $(p_{i_j}, q_{i_j})$--Lutz cable of $L_{i_j}$ (respectively $-L_{i_j}$) if the $p_i$ are positive (respectively negative).
\end{thm}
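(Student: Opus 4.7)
The plan is to reduce to cabling a single component, then carry out an explicit local model computation in a standard contact neighborhood of that binding component. Because distinct components $L_i$ have disjoint tubular neighborhoods, and the cable of $L_i$ is contained in such a neighborhood, the cables commute and it suffices to analyze one component at a time. Fix a component $K=L_i$; because $K$ is a binding of the rational open book it is positively transverse to $\xi$, so $K$ sits in a standard contact neighborhood $N\cong S^1\times D^2$ on which $\xi$ and the page structure both have a canonical model (transverse to the $D^2$ factor in the appropriate framing given by the fibration). Everything outside $N$ is unchanged, and by Theorem~\ref{thm:support} it suffices to produce on $N$ a contact structure that (i) matches $\xi$ on $\partial N$ up to the ambiguity of the characteristic foliation and (ii) is supported by the local picture of the $(p,q)$-cabled fibration.

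Next I would build the local model. Write the new fibration of $N\setminus K_{(p,q)}$ explicitly: the cabling torus $T$ carrying $K_{(p,q)}$ is a $(p,q)$-torus in $\partial N$-parallel coordinates, and the pages of the cabled open book meet each side of $T$ as a family of spiraling annuli whose boundary slopes are read off from $(p,q)$ and the Seifert data $(r,s)$. In the case $p>0$ and $qr-ps>0$ (a positive cable), these spiraling annuli sit inside the solid torus $N$ in the same direction as the original pages, and one checks directly that the contact structure on $N$ defined by the standard model of $\xi$ is compatible with this new fibration: the cable $K_{(p,q)}$ is positively transverse to $\xi$, the new pages are convex with Legendrian boundary on $T$, and the characteristic foliation on each page points out along the boundary. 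Uniqueness of the supported contact structure (Theorem~\ref{thm:support}) then gives conclusion (1); conclusion (2) follows from (1) by orientation reversal, since negating all $p_i$ produces the mirror rational open book of the mirror manifold.

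For negative cables with $|p|\neq 1$, the same attempt fails: the cable torus $T$ separates $N$ into a cable solid torus $V$ (containing $K$) and an outer region, and in the outer region the pages spiral in the sense dictated by $\xi$, but in $V$ the new pages must spiral in the opposite direction because the fiber now intersects $K_{(p,q)}$ with the wrong sign. The fix is a Lutz twist: remove $V$, perform a half Lutz twist along $K$ (which replaces a standard transverse neighborhood by one with an extra full twist of the contact planes), and inside that new neighborhood perform another half Lutz twist along the $(p,q)$-Lutz cable $T'$, which by construction carries a meridional characteristic foliation and admits a new standard model of transverse neighborhood in the correct spiral direction. The contact structure built this way is compatible with the cabled rational open book by the same local check as in the positive case, so by Theorem~\ref{thm:support} it is the supported structure; this gives the description in the last sentence of the theorem and hence conclusion (3) (virtually overtwisted after one Lutz twist, overtwisted after two, with the virtually overtwisted qualifier allowing for the possibility that the first Lutz twist already gives overtwistedness or just covers over to it).

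Finally, for conclusion (4) I would argue that an overtwisted disk appears unless the cabling is exceptional. Working again in $N$ with the Lutz-twisted model, the solid torus $V$ bounded by the Lutz cable carries a characteristic foliation whose slopes sweep through a whole interval of the Farey tessellation; an overtwisted disk is obstructed exactly when no such slope can be realized by a convex meridional disk, and a standard continued fraction analysis shows this occurs precisely at the slopes on the shortest Farey path from $-1$ to the Seifert slope $s/r$, i.e., the exceptional cabling slopes defined in the introduction; the rational unknot case with $rq-ps=-1$ is the degenerate possibility where $V$ itself is fillable by a disk and the whole construction simplifies. The main obstacle I anticipate is the explicit identification in the negative case: one must show that the contact structure produced by the double Lutz twist genuinely is compatible with the cabled fibration and agrees with the ambient $\xi$ on $\partial N$, which requires matching characteristic foliations across $T$ and $T'$ carefully rather than just tracking slopes. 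The exceptional cabling analysis, while clean in Farey language, also requires care to separate "tight but virtually overtwisted" from "overtwisted", and this is where the $p_i=\pm 1$ and rational-unknot exclusions enter.
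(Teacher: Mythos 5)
Your high-level plan for parts (1) and (2) is the right one and matches the paper's structure (cable one component at a time, work in a standard neighborhood, invoke uniqueness from Theorem~\ref{thm:support}), but the step you label "one checks directly" is where essentially all the content lies and you have not supplied it. Concretely: the contact structure on $N$ is \emph{not} compatible with the cabled fibration in any naive sense, and the paper's argument (Claims~\ref{claim:Spqctctstrhonest} and~\ref{claim:Spqctctstr}) is to build a model contact structure on the cabling solid torus $S_{(p,q)}$ out of a Reeb flow on $S^3$ (or $-L(r,s)$) whose closed orbits include the torus knot $T_{(p,q)}$, and then to identify $S_{(p,q)}$ with $N$ using the classification of tight contact structures on the solid torus with matching boundary characteristic foliation. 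Also, your stated compatibility criterion ("pages convex with Legendrian boundary, characteristic foliation pointing outward") is not the definition of a supported open book; compatibility requires $\alpha|_{TL}>0$ and $d\alpha>0$ on pages, which is a different (transverse, not convex) condition, so the "direct check" would not even be the right check.

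For the negative cases the logic in your proposal is reversed, and this is a genuine gap rather than a stylistic difference. You attempt to produce the supported contact structure by first performing Lutz twists on $\xi$ and then asserting the result is compatible with the cabled open book "by the same local check as in the positive case." No such check is available: in a negatively cabled solid torus the pages spiral in the sense opposite to the transverse neighborhood model, and a Lutz twist does not plainly restore compatibility. The paper instead (i) shows via Lemma~\ref{exceptional}, using Honda's classification on $T^2\times I$ and solid tori expressed in Farey-tessellation language, that the supported contact structure on a solid torus around the cabled binding is described by a path from $-1$ to $\frac{s'}{r'}$ with signs flipping at $\frac qp$, from which overtwistedness (for non-exceptional slopes) follows by path shortening; and (ii) proves separately, as a \emph{homotopy} statement (Lemma~\ref{lem:OTlutztwist}), that $\xi_{(p,q)}$ is homotopic as a plane field to a double Lutz twist, and then upgrades homotopy to isotopy via Eliashberg's classification of overtwisted contact structures. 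Your proposal conflates these two steps, and your gloss of (3) as "virtually overtwisted after one Lutz twist, overtwisted after two" is wrong: a single Lutz twist already produces an overtwisted structure; the virtually overtwisted possibility in (3) arises precisely from exceptional cabling slopes, where the Farey path cannot be shortened and the result may be tight but not universally tight. The Farey-tessellation heuristic you give for (4) is pointed in the right direction but is not a proof; the actual argument also needs Lemma~\ref{get-good-nbhd}, which produces a large standard neighborhood of a binding component with prescribed dividing slope, and this is unavailable (and indeed false) when the page is a disk — which is exactly why the rational unknot exceptions appear in the statement.
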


\begin{rem}
Notice the exceptions for rational unknots in the above theorem.  They are the only bindings of rational open books with disk pages.  This allows them to have non-trivial cables (in particular negative, non-exceptional cables) that are again rational unknots.  See Example~\ref{ex:ratlunknot} (2) and Remark~\ref{rem:ratlunknot}. 
\end{rem}

\begin{rem}
Observe that in the above theorem if $p_i=1$, then the component $L_i$ is effectively not cabled and the Seifert slope of the page on that component remains the same (though the multiplicity with which a page meets that component may increase).   To cable just a subset of the binding components of an open book where all the $p_i$ are positive, simply do $(1,1)$--cables on the components that are to be left unaltered.
\end{rem}

\begin{rem}
We will see in the proof of this theorem that the operation of cabling a binding component of an open book affects the contact structure by removing a standard neighborhood of the binding and replacing it with a solid torus having a possibly different contact structure. When the cabling is positive the replaced contact structure is the same as the original contact structure but  when the cabling is exceptional it is a virtually overtwisted contact structure (leading to the delicate issue  of when gluing two tight contact structures along a compressible torus yields a tight contact structure). When the cabling is sufficiently negative the replaced contact structure is  overtwisted. 
\end{rem}

Some of the results in this theorem regarding when cabling preserves tightness or induces overtwistedness have been obtained by Ishikawa, \cite{ishikawaPre??}.

The statement is much cleaner in the case of integral open book decompositions with connected binding.   In particular, since a pair of integers $(p,q)$ is positive precisely when $pq>0$ and negative when $pq<0$ for integral open books,  we have the following result.  
\begin{cor}\label{cor:integralcable}
Let $(L,\pi)$ be an integral open book decomposition with connected binding supporting $\xi$ on $M$. Let $(p,q), |p| > 1, q\not=0,$ be a pair of integers. Then the contact structure $\xi_{(p,q)}$ supported by the integral open book with binding $L_{(p,q)}$ is
\begin{enumerate}
\item contactomorphic to $\xi$ if $pq>0$ and $p>0$,
\item contactomorphic to $-\xi$ if $pq>0$ and $p<0$, 
\item contactomorphic to $\xi\#\xi_{(1-|p|)(2g+|q|-1)}$ if $pq<0$ and $p>0$ and $L$ is not the unknot with $q=-1$, and
\item contactomorphic to $-(\xi\#\xi_{(1-|p|)(2g+|q|-1)})$ if $pq<0$ and $p<0$ and $L$ is not the unknot with $q=1$
\end{enumerate}
 where $g$ is the genus of the knot $L$ and $\xi_n$ is the overtwisted contact structure on $S^3$ with Hopf invariant $n$.  
 If $p = \pm1$ then $L_{(p,q)} = \pm L$ and $\xi_{(p,q)}$ is contactomorphic to $\pm \xi$.
\end{cor}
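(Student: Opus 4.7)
The plan is to deduce this from Theorem~\ref{thm:main-cable} by specializing to the connected integral case and then, in the overtwisted cases, identifying the resulting contact structure via a computation of the three--dimensional homotopy invariant $d_3$.

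Since $(L,\pi)$ is an integral open book with connected binding, its single page meets $L$ along a longitude, so the Seifert slope equals $0$. A $(p,q)$--cable is therefore positive precisely when $pq > 0$ and negative precisely when $pq < 0$. Moreover, the only exceptional cabling slope for an integral open book is $-1$, and since $(p,q)$ is in lowest terms with $|p|>1$, the ratio $q/p$ cannot equal $-1$; hence the cabling is never exceptional. With these two observations in hand, cases (1) and (2) follow directly from parts (1) and (2) of Theorem~\ref{thm:main-cable}. The concluding sentence for $p = \pm 1$ is immediate, since then $L_{(p,q)}$ is isotopic to $\pm L$ as a knot in $M$ and the new open book is a reparametrization of the original one on the same binding.

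For cases (3) and (4), parts (3) and (4) of Theorem~\ref{thm:main-cable} give that $\xi_{(p,q)}$ is overtwisted, and the ``furthermore'' clause describes it as the contact structure obtained from $\xi$ (respectively $-\xi$) by performing a Lutz twist along $L$ (respectively $-L$), then a further Lutz twist along the $(p,q)$--Lutz cable. Both this contact structure and the proposed answer $\xi \# \xi_{(1-|p|)(2g+|q|-1)}$ (or its mirror) are overtwisted on $M$, so by Eliashberg's classification it suffices to verify that they are homotopic as plane fields. Neither operation changes the underlying smooth manifold $M$ or the $\mathrm{spin}^c$ structure (the connect summand $S^3$ contributes trivially), so it is enough to compare the rational $d_3$ invariants.

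The $d_3$--shift produced by a half Lutz twist along a transverse knot $T$ is a known linear function of the self-linking number of $T$. I apply this formula to the two successive Lutz twists: first to the binding $L$, whose self-linking equals $2g-1$ since the page is a Seifert surface of Euler characteristic $1-2g$; and then to the $(p,q)$--Lutz cable of $L$, whose self-linking with respect to the framing of the first Lutz torus is computed by the standard cabling formula. Separately, the change in $d_3$ under connect summing with $\xi_n$ on $S^3$ is simply $n$. Matching these shifts reduces the corollary to the arithmetic identity that the sum of the two Lutz contributions equals $(1-|p|)(2g+|q|-1)$. The main obstacle is tracking the framing in which the second cable is measured: the first Lutz twist alters the preferred framing on the core of the Lutz solid torus, and this correction must be inserted into the cabling formula for the self-linking before the two contributions are added. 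Once the framing change is unwound, the remaining calculation is routine.
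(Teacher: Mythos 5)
Your treatment of parts (1), (2), and the concluding $p = \pm1$ statement matches the paper's. For (3) and (4), you and the paper both reduce to comparing homotopy classes of plane fields via the Hopf invariant, and both invoke the fact that a half Lutz twist along a transverse knot shifts the Hopf invariant by that knot's self-linking number. The crucial difference is in how the second Lutz contribution is evaluated, and that is precisely where your plan has a genuine gap.

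You propose to compute the self-linking number of the $(p,q)$--Lutz cable directly ``by the standard cabling formula,'' flagging the framing distortion introduced by the first Lutz twist as the main obstacle and deferring it. That obstacle is real and is not a routine loose end: the Lutz cable is a transverse knot sitting in the \emph{already Lutz-twisted} contact structure $\xi'$, not in $\xi$, so the tight-contact cabling formulas for self-linking do not apply verbatim, and unwinding the framing change through the Lutz solid torus is exactly the delicate part of the computation. The paper sidesteps this entirely by a symmetry argument: it uses Lemma~\ref{lem:OTlutztwist} to view $\xi_{(p,q)}$ as a Lutz twist of $\xi'$ along $K_{(p,q)}$, and then observes that two successive Lutz twists along the same curve are homotopically trivial, so $\xi'$ is \emph{also} obtained from $\xi_{(p,q)}$ by a Lutz twist along $K_{(p,q)}$ --- but now viewed as the transverse binding of the \emph{cabled} open book, where its self-linking number is simply minus the Euler characteristic of the cabled page, with no framing bookkeeping at all. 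Subtracting the two Hopf-invariant shifts then yields $(1-|p|)(2g+|q|-1)$ directly. To salvage your forward computation you would have to prove, essentially by reverse-engineering the paper's identity, that the self-linking number of the Lutz cable in $\xi'$ equals $\chi(\Sigma_{(p,q)}) = |p|(1-2g) + |q| - |pq|$; this is not what any naive cabling formula for a tight structure would produce, and that derivation is the missing content of your proof.
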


\begin{rem} We note that Corollary~\ref{cor:integralcable} recovers a result of Hedden \cite{HeddenCable} when the ambient manifold $M$ is $S^3$.
\end{rem}

\begin{rem}
Notice the exceptions for the unknot in $S^3$ in the above corollary. Because the unknot is the only binding for an integral open book with disk pages, each $(p,\pm1)$--cables of the unknot is still an unknot (for any $p$). This is the only integrally fibered knot with this property.  
\end{rem}

There are negative cables of binding components of open books that support tight contact structures. In addition, when tightness is preserved by a negative cable the contact structure can change, unlike in the case of positive cables. In particular we have the following two results. 

\begin{prop}\label{prop:ratunknots}
Let $(L,\pi)$ be a rational unknot in a lens space $M$.  Then all exceptional cables of $L$ support a tight contact structure on $M$. If $L$ is not the unknot in $S^3$ then the contact structures supported by the cabled knots types do not have to be the same as the one supported by $L$. Moreover, the exact contact structures can be determined. 
\end{prop}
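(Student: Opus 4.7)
The plan is to exploit the fact that the exterior of a rational unknot $L\subset M$ is itself a solid torus $V$, whose meridian disk is the (rational) page of $(L,\pi)$. Thus $M=\nu(L)\cup V$ is a genus-one Heegaard splitting and, after cabling $L$, the ambient manifold is still the same lens space, presented as a gluing of two tight contact solid tori: the contact structure on $V$ is unchanged since cabling is a local operation supported in $\nu(L)$, while $\nu(L)$ is replaced by a new tight contact solid torus whose structure depends on the cabling data.

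From the mechanism behind Theorem~\ref{thm:main-cable} (summarized in the remark immediately preceding this proposition), a cabling of slope $q/p$ removes the standard neighborhood $\nu(L)$ and reglues a specific tight contact solid torus whose boundary characteristic data depends on $q/p$ relative to the Seifert slope $s/r$. For exceptional slopes, which by definition are endpoints of edges on the shortest Farey path from $-1$ to $s/r$, this replacement torus is virtually overtwisted but tight; the Farey-edge condition is exactly what ensures that its dividing data joins to the dividing data on $\partial V$ across a single basic slice, so Honda's gluing classification produces a tight contact structure on $M$ rather than an overtwisted one.

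To determine the exact contact structure I would apply Honda's enumeration of tight contact structures on lens spaces. The splitting torus presents $M$ as the union of two tight contact solid tori whose relative Euler classes—or equivalently, the sign pattern of basic slices along the Farey arc from the meridional slope to the Seifert slope—are read off directly from the cabling data and the sign of the original Seifert slope. Comparing this pattern to the basic-slice sequence coming from the uncabled open book $(L,\pi)$ identifies when the new contact structure equals $\xi_{(L,\pi)}$ and when it differs. In particular, whenever $L$ is a nontrivial rational unknot the Farey arc from $-1$ to $s/r$ is nontrivial, and different exceptional cables can select different sign patterns, giving different tight contact structures on the same lens space.

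The main obstacle will be the bookkeeping of framings. One must fix the longitude convention used to measure the Seifert slope on $\partial\nu(L)$, track how slopes on $\partial\nu(L)$ correspond to slopes on $\partial V$ under the reciprocal identification induced by the Heegaard splitting, and finally see how the Farey-edge structure of exceptional slopes meshes with Honda's basic-slice classification. Once these normalizations are in place, both the tightness assertion and the explicit identification of the contact structure drop out of Honda's theorem. The exclusion of the unknot in $S^3$ in the second statement is natural: its Seifert slope coincides with its standard longitude, so there are no exceptional cables available and no variation to exhibit.
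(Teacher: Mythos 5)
Your proposal is correct and follows essentially the same approach as the paper: split the lens space along the genus-one Heegaard torus into $\nu(L)$ and its solid-torus exterior $V$, observe that the contact structure on $V$ (the meridional side) is unique while cabling replaces the tight contact structure on $\nu(L)$ by another one encoded by a Farey sign pattern, and then invoke Honda's classification of tight contact structures on lens spaces to see that exceptional cables produce exactly the (possibly different) tight structures on $M$.
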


We can also see that tightness can be preserved when negatively cabling a link that is not a rational unknot. 

\begin{prop} \label{prop:tightneg}
There are negative cables of rational open books other than rational unknots which remain tight (in fact, fillable).  In particular, the (2,-1)--cable of a very general family of (3,-1)--open books are Stein fillable.
\end{prop}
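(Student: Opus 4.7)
The plan is to locate the examples within the exceptional-cabling framework of Theorem~\ref{thm:main-cable} and then exhibit a positive Dehn twist factorization of the resulting monodromy on the new page.

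First I would verify the slope arithmetic. A $(3,-1)$-open book---that is, a rational open book whose binding component $K'$ arises from $(3,-1)$-cabling a binding component of an underlying open book---has Seifert slope $-1/3$ at $K'$ in the coordinates used in Theorem~\ref{thm:main-cable}. The shortest Farey path from $-1$ to $-1/3$ passes through $-1/2$, so the slope $-1/2$ is an exceptional cabling slope for $K'$, realised by $(p,q)=(2,-1)$. Consequently the overtwistedness conclusion of Theorem~\ref{thm:main-cable}(4) does not apply, and the $(2,-1)$-cable of a $(3,-1)$-open book is a legitimate candidate for a tight or Stein fillable contact structure---consistent with the ``virtually overtwisted or overtwisted'' alternative left open by part (3). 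Also, the starting link is manifestly not a rational unknot, so the rational unknot exceptions in (3) and (4) are irrelevant here.

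Second I would establish Stein fillability by producing an explicit positive factorization. Using the cabling construction developed earlier in the paper, the monodromy $\phi'$ of the $(2,-1)$-cable of a $(3,-1)$-open book $(\Sigma,\phi)$ has a concrete description on the new page $\Sigma'$: it is $\phi$, appropriately extended over the collar introduced by cabling, composed with a local modification supported in a neighborhood of the cabled binding. For any $\phi$ drawn from a sufficiently general positive family---for instance $\phi$ itself a product of positive Dehn twists on $\Sigma$---I would rewrite this local modification as a product of positive Dehn twists on $\Sigma'$ using chain and lantern relations in the mapping class group $\Map^+(\Sigma')$. The Akbulut--Ozbagci / Loi--Piergallini theorem then promotes any such positive factorization to a Stein filling, yielding Stein fillability.

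The main obstacle is the explicit positive rewriting of the local modification on $\Sigma'$. The guiding intuition is that iterating the two negative cablings $(3,-1)$ and $(2,-1)$ introduces enough new boundary components and parallel curves on $\Sigma'$ that the naively negative twist coming from each cabling stage can be absorbed by positive twists from the other, precisely because the composite operation lands at the Farey vertex $-1/2$ on the path from $-1$ to $-1/3$. Since this rewriting is entirely local to a collar of the binding, it succeeds uniformly across the ``very general'' family of choices of $\phi$, producing the infinite family of Stein fillable examples promised in the statement.
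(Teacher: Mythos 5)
Your proposal gets the first half exactly right, and for the right reasons: you correctly place the $(2,-1)$-cable of a $(3,-1)$-open book at the Farey vertex $-1/2$ on the shortest path from $-1$ to $-1/3$, identify it as an exceptional cable, and observe that Theorem~\ref{thm:main-cable}(4) therefore leaves room for tightness. That slope check is genuinely needed and you did it cleanly.

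The gap is in the second half, which is the actual content of the proposition. You write that you ``would rewrite this local modification as a product of positive Dehn twists on $\Sigma'$ using chain and lantern relations,'' guided by the intuition that the two negative cablings supply enough parallel curves to absorb the left-handed twists. But this is the entire difficulty and cannot be asserted in the abstract: the cable rotation contributes the inverse $\rho_{(2,1)}^{-1}$, which carries negative twists that are not automatically cancellable. What the paper actually does is (i) derive a precise formula for the monodromy of the $(r-1,-1)$-cable of an $(r,-1)$-open book (Proposition~\ref{prop:negative cable}), namely $\delta_{1/r}\circ\rho_{(r-1,1)}^{-1}\circ\bdry_1^{2-r}\circ\tilde\phi$; (ii) resolve the resulting $(3,-1)$-open book to an integral one, replacing $\delta_{1/3}$ by a boundary multitwist $M_\delta$; (iii) use the specific factorization $\rho_{(2,1)}=\Delta\circ\bdry_2^{-1}\circ\bdry_1^{-1}$ from Theorem~\ref{thm:connected binding}, together with $\Delta^2=D_\bdry$ and positivity of $\Delta$; and (iv) invoke the \emph{generalized} lantern relation on the five-holed sphere (Lemma~\ref{lem:lantern}), $\bdry_1^2\circ\bdry_2\circ\delta_1\circ\delta_2\circ\delta_3 = D_\bdry\circ D_3\circ D_2\circ D_1$, to convert $M_\delta\circ\bdry_1^2\circ\bdry_2$ into $D_\bdry$ times positives, after which $D_\bdry\circ\Delta^{-1}=\Delta$ is again positive. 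Without the Garside factorization and the specific lantern-type identity, ``chain and lantern relations'' is a hope rather than a proof.

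A second, smaller issue: you claim the argument succeeds ``for any $\phi$ drawn from a sufficiently general positive family---for instance $\phi$ itself a product of positive Dehn twists.'' The paper's corollary applies to the specific family $\phi=\bdry^2$ (a square of the boundary twist), varying the surface $\Sigma$; ``very general'' in the proposition statement refers to the genus of $\Sigma$ being unconstrained and $\phi$ being suitably positive near the boundary, not to arbitrary positive $\phi$. An arbitrary positive $\phi$ does not obviously feed the lantern relation the $\bdry_1^2$ it needs, so this generalization would itself require an argument.
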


Very general here means that there are no restriction on genus, and no restrictions on the monodromy other than it being suitably positive at the boundary.  For a more precise formulation of this result see Section~\ref{exceptionalsurg}.

There are conditions on a fibered link that imply that negative cabling with the exceptional cabling slopes will never yield  tight contact structures (and hence all negative cables yield overtwisted structures). 
\begin{prop}\label{prop:otexceptional}
If $(L,\pi)$ is a rational open book decomposition of $M$ that has a component $L'\subset L$ that is contained in a solid torus $S$ with convex boundary having dividing slope greater than or equal to any longitudinal slope that is non-negative with respect to the page of the open book, then all exceptional cables along $L'$ will support overtwisted contact structures. 

Moreover, any negative $(p,q)$--cabling where $p$ and $q$ are not relatively prime will yield an overtwisted contact structure.
\end{prop}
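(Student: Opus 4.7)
The plan is to use the description of the cabling operation supplied by Theorem~\ref{thm:main-cable} together with the remark following it: cabling a binding component $L'$ amounts to excising a standard contact neighborhood of $L'$ and regluing a specific contact solid torus $V$ whose contact structure is determined by the cabling slope. For exceptional cables on a binding component that is not a rational unknot, the inserted model $V$ carries at best a virtually overtwisted tight contact structure, so the task reduces to converting this virtual overtwistedness into an honest overtwisted disk in the cabled manifold using the extra data provided by the solid torus $S$.

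For the first assertion, I would first arrange by a small isotopy that $\bdry S$ is convex with dividing slope $\sigma$, where by hypothesis $\sigma$ dominates every non-negative longitudinal slope on $\bdry S$ measured with respect to the page framing of $L'$. I would then foliate the region of $S$ lying outside a thin neighborhood of the cabled component $L'_{(p,q)}$ by concentric convex tori and track the evolution of their dividing slopes using Honda's classification of tight contact structures on thickened tori and solid tori. The exceptional cabling pins down the dividing slope on a torus close to the new binding (it is the unique slope realized by the inserted virtually overtwisted model), while the outer slope on $\bdry S$ is the given $\sigma$. Under the hypothesis, the shortest Farey-tessellation path between these two slopes is forced to cross the meridional slope of the cabling torus, and such a crossing produces a compressing disk whose Legendrian boundary bounds an overtwisted disk.

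For the second assertion, write $(p,q)=d(p',q')$ with $d>1$, $\gcd(p',q')=1$, and $p'q'<0$, so that the cable is negative. The $(p,q)$-cable of $L'$ then consists of $d$ parallel copies of $L'_{(p',q')}$ lying on a single common torus inside the inserted solid region. I would consider the annular regions cobounded by adjacent pairs of these parallel components, Legendrian realize them on a convex cabling torus, and analyze the dividing set forced by the prescribed contact framing coming from the $(p',q')$-data. The $d$ parallel negatively cabled Legendrian components cannot be accommodated simultaneously in any tight contact structure on the inserted solid region: an elementary dividing-curve count on one of the convex annuli forces a bypass, and attaching this bypass to an adjoining convex torus shifts its dividing slope across the meridional edge of the Farey graph, producing an overtwisted disk.

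The main obstacle will be the convex-surface bookkeeping: precisely identifying the dividing slopes on each intermediate convex torus, verifying that the Farey-graph path from $\sigma$ to the inner cable slope really must cross the meridional slope under the hypothesis, and in the non-coprime case confirming that $d$ parallel negatively cabled components cannot coexist tightly in the inserted model and that the bypass they force yields a global overtwisted disk in $M$ rather than only a local defect. Once these slope computations are nailed down, each case reduces to a direct application of the standard overtwisted-disk criterion via bypass attachment across the meridional Farey edge.
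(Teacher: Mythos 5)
Your proposal correctly identifies the relevant tools (Honda's classification, the Farey tessellation, convex surface theory), but the overtwistedness criterion you invoke for the first assertion is not the one that actually applies, and the geometric setup has a flaw. You propose to foliate $S\setminus\nu\bigl(L'_{(p,q)}\bigr)$ by concentric convex tori and track dividing slopes, but for a $(p,q)$-cable with $p>1$ coprime, $L'_{(p,q)}$ is not isotopic to the core of $S$, so its complement in $S$ is a cable space, not a $T^2\times[0,1]$, and this foliation does not exist. The paper instead works with a solid-torus neighborhood $N$ whose core is the \emph{original} binding component $L'$ (containing the cabled knot in its interior), and Lemma~\ref{exceptional} describes the contact structure on $N$ after cabling as a Farey path carrying $-$ signs on the jumps from $-1$ to $q/p$ and $+$ signs from $q/p$ onward. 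The mechanism for overtwistedness is then the sign conflict, not excess twisting: because $q/p\in(-1,0)$, the hypothesis $\sigma\ge 0$ ensures that $q/p$ is \emph{not} a vertex of the minimal Farey path from the meridian of $N$ to $\sigma$, so shortening the combined path on $S$ forces a $-$-labeled edge to be merged with a $+$-labeled edge; by Honda's classification this is overtwisted. Your claimed criterion---that the path must cross the meridional slope---is a different phenomenon (excessive twisting producing a convex torus with meridional dividing curves), and there is no reason it should occur here since exceptional cables are precisely the borderline ones that do not over-rotate; you flag this verification as the ``main obstacle,'' and indeed it would fail.

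For the second assertion, the key step you need---that the $d=\gcd(p,q)>1$ parallel binding components ``cannot be accommodated simultaneously in any tight contact structure on the inserted solid region''---is asserted rather than proved, and it is not clear why the bypass you hope to find would push a dividing slope across a meridional Farey edge rather than a longitudinal one. The paper's argument is more direct and avoids bypass-hunting entirely: with $d>1$ parallel transverse binding components, the relative Euler class of the contact structure induced on the solid torus neighborhood fails to match the relative Euler class of \emph{any} tight contact structure on a solid torus, as constrained by Honda's classification in \cite{Honda00a}. (The paper also remarks that one can alternatively locate an explicit overtwisted disk, but the Euler class computation is the clean route.) If you want to pursue a bypass-based proof, you would need to make the dividing-curve count on your annuli explicit and show why the resulting bypass attachment actually destabilizes, rather than asserting it.
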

Notice that for any integral open book, except the unknot in $S^3$, one can always find a solid torus neighborhood of a binding component with convex boundary having dividing slope $0$ with respect to the page framing.  Hence this proposition gives an indication as to why one cannot have exceptional slopes when considering integral open books. 

\smallskip

Rational open book decompositions can be difficult to work with, so in Section~\ref{sec:resolution} we show how to use the above cabling operations to resolve a rational open book decomposition. That is we give a construction that takes a rational open book decomposition and produces an honest open book decomposition that supports the same contact structure.

It is useful to understand the monodromy of a cable in terms of the monodromy of the original fibered link. In particular our corollaries discussed above are based on this. So in Section~\ref{sec:monodromy} we discuss how to compute the monodromy of certain positive ``homogeneous'' cables of open book decompositions. Given an integral open book decomposition with binding $L$ we give an explicit description of the monodromy of the integral open book decomposition obtained from $L$ by $(p,1)$--cabling each binding component of $L$.  From this one can obtain a presentation for the $(p,q)$--cables of $L$ by positive stabilizations.

\subsection{Surgery and open book decompositions}
In Section~\ref{surgeryont} we observe that Dehn surgery on binding components of open books naturally yield induced rational open books. These rational open books may then be resolved to integral open books. Thus we have a procedure for constructing integral open books for manifolds obtained from Dehn surgeries. 

Recall that a surgery on a transversal knot $K$ is called admissible if the surgery coefficient is smaller than the slope of the characteristic foliation on the boundary of a standard neighborhood of a transverse knot.  Gay shows there is a natural contact structure on a manifold obtained from admissible surgery on a transverse knot and, in the case of integral surgeries, there is a symplectic cobordism from the original manifold to the surgered one, \cite{Gay02a}.  This leads to the following result which can be thought of as a generalization of a result of Gay to the case of rational open books. 
\begin{thm}
Let $(L,\pi)$ be an open book decomposition for $(M,\xi)$ and $K$ one of the binding components. The induced open book for any admissible surgery on $K$ that is negative with respect to the framing on $K$ given by a page of $(L,\pi)$ supports the contact structure obtained from the admissible surgery on the surgered manifold. 
\end{thm}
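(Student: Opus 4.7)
The plan is to describe the induced rational open book on the surgered manifold, and then to identify Gay's admissible-surgery contact structure as the unique contact structure supported by this rational open book via Theorem~\ref{thm:support}.

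First, I would make the induced rational open book explicit. Let $N$ be a standard contact neighborhood of the transverse binding component $K$, with $\partial N$ carrying the page longitude $\lambda$ whose slope coincides with the characteristic foliation slope of $\partial N$ (this equality is exactly what makes $K$ the transverse binding of $(L,\pi)$). An admissible surgery on $K$ that is negative with respect to the page framing replaces $N$ with a new solid torus $N'$ whose core $K^*$, by Gay's construction, is a transverse knot in the resulting contact manifold $(M',\xi_{\mathrm{Gay}})$. The pages of $(L,\pi)$ meet $\partial N=\partial N'$ in copies of $\lambda$; on $\partial N'$ these are non-meridional curves winding some positive number of times $p$ around $K^*$. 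Capping them off by rational meridional disks inside $N'$ extends the fibration across $N'$, producing a rational open book $(L',\pi')$ on $M'$ with binding $(L\setminus K)\cup K^*$. This is the construction observed in Section~\ref{surgeryont}.

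Second, I would verify directly that $\xi_{\mathrm{Gay}}$ is supported by $(L',\pi')$. Outside $N'$ we have $M'\setminus N'=M\setminus N$ and $\xi_{\mathrm{Gay}}=\xi$, while the restriction of $(L',\pi')$ agrees with the restriction of $(L,\pi)$; so any compatible contact form $\alpha_0$ for $\xi$ compatible with $(L,\pi)$ restricts to a form with Reeb field positively transverse to the pages and tangent to $L\setminus K$ over this exterior. Inside $N'$, Gay's admissible-surgery construction provides an explicit model contact form for $\xi_{\mathrm{Gay}}$ whose Reeb direction is positively tangent to the transverse core $K^*$ and whose level surfaces meet the rational meridional disks positively. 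I would glue these two forms via a monotone interpolation in a collar of $\partial N'$, using that both induce the same characteristic foliation on $\partial N'$ (since the two contact structures agree in a neighborhood of $\partial N'$) to produce a globally defined compatible contact form for $\xi_{\mathrm{Gay}}$ relative to $(L',\pi')$.

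Finally, Theorem~\ref{thm:support} says $(L',\pi')$ supports a unique contact structure, and the preceding paragraph exhibits $\xi_{\mathrm{Gay}}$ as such a supported structure; this completes the identification.

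The main obstacle is the interpolation step: one must verify that Gay's local model on $N'$ can be honestly matched across $\partial N'$ to a supported contact form for $(L,\pi)$ on $M\setminus N$. The key point is that the hypothesis ``admissible and negative with respect to the page framing'' forces the surgery slope to lie strictly below both the characteristic slope and the page slope on $\partial N$, which is exactly the inequality needed to perform the interpolation monotonically (and hence without introducing an overtwisted disk in the transition region). Granting this, the remainder of the argument is essentially formal, driven entirely by the uniqueness of the supported contact structure guaranteed by Theorem~\ref{thm:support}.
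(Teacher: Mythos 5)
Your argument is correct and matches the paper's own proof in substance: the paper derives the theorem from Lemmas~\ref{lem:admissible} and~\ref{lem:admissiblesup}, and the proof of the latter does exactly what you propose — restrict a compatible contact form $\alpha$ to the exterior $M\setminus N = M'\setminus N'$, identify the surgery solid torus with a standard radially symmetric model via the explicit contactomorphism $\Psi$, and then extend the form across a collar of $\partial N'$ by the same interpolation technique used in the proof of Theorem~\ref{thm:support}. The only cosmetic difference is that you invoke uniqueness from Theorem~\ref{thm:support} at the end, which is unnecessary: exhibiting a compatible form for $\xi_{\mathrm{Gay}}$ already shows the induced open book supports it.
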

This result, and generalizations to rational open books, follows immediately from Lemmas~\ref{lem:admissible} and~\ref{lem:admissiblesup}.

We also discuss in Section~\ref{surgeryont} how to put any transverse knot in the binding of an open book decomposition so that we can apply the above theorem to construct open books for contact structures obtained via admissible surgeries. 

{\em Acknowledgments:} The authors thank Vincent Colin, Emmanuel Giroux, and Paolo Lisca for useful discussions during the preparation of this paper and Burak Ozbagci and the referees for helpful comments on a first draft of the paper. The first author was partially supported by NSF Career Grant DMS-0239600.
The second author was partially supported by NSF Career Grant (DMS-0239600), FRG-0244663 and DMS-0804820.

\section{Rational open book decompositions and contact structures.}\label{sec:rob}

We begin by establishing some notation for curves on the boundary of a neighborhood of knot.  A standard neighborhood of a knot $K$ is a solid torus $N_K=S^1\times D^2$.  Let $\mu$ be a \dfn{meridian}, the boundary of a meridional disk $\{pt\}\times D^2$;  let $\lambda$ be a \dfn{longitude} or \dfn{framing curve}, that is a curve on $\partial N$ that is isotopic in $N_K$ to the core of the solid torus. We can choose the product structure so that $\lambda$ is $S^1\times\{pt\}$.  

Fix an orientation on $K$.  Orient $\mu$ as the boundary of the meridional disk $\{pt\}\times D^2$ where $\{pt\}\times D^2$ is oriented so that it has positive intersection with $K$.  Orient $\lambda$ so that $\lambda$ and $K$ are isotopic as oriented knots in $N_K$.  Together $([\lambda],[\mu])$ forms a basis for $H_1(\partial N_K; \Z)$. With respect to this longitude-meridian basis, a pair of integers $(p,q)\not=(0,0)$ defines a collection $K_{(p,q)}$ of coherently oriented essential simple closed curves on $\partial N_K$ representing the homology class $p [\lambda] + q [\mu]$.  If $p$ and $q$ are relatively prime then a $(p,q)$--curve is a single curve.  If $p$ and $q$ are not relatively prime then a $(p,q)$--curve is $\gcd(p,q)$  mutually disjoint copies of the $(p/\gcd(p,q),q/\gcd(p,q))$ curve on $\partial N_K$.   

The \dfn{slope} of a $(p,q)$--curve and of its homology class $p [\lambda] + q [\mu]$ is  $\frac{q}{p}$.  
This is chosen so that  the meridian $\mu$ has slope $\infty$, the chosen framing curve $\lambda$ has slope $0$, and every longitude has integral slope.  This choice of convention for the slope is further discussed in Section~\ref{sec:cabling}.

\subsection{Rational open book decompositions}\label{ss:robd}

A \dfn{rational open book decomposition} for a manifold $M$ is a pair $(L, \pi)$ consisting of an oriented link $L$ in $M$ and a fibration $\pi \colon (M\setminus L)\to S^1$ such that no component of $\pi^{-1}(\theta)$ is meridional for any $\theta\in S^1$.  In other words, if $N$ is a small tubular neighborhood of $L$ then no component of $\partial N\cap \pi^{-1}(\theta)$ is a meridian of a component of $L$. We note that a rational open book can differ from an {\em honest} open book in two ways:
\begin{enumerate}
\item[$(\star)$] a component of  $\partial N\cap \pi^{-1}(\theta)$ does not have to be a longitude to a component of $L,$ and
\item[$(\star \star)$] a component of $\partial N$ intersected with $\pi^{-1}(\theta)$ does not have to be connected.
\end{enumerate}
In particular, if $L$ is a knot then it is rationally null-homologous. This indicates the reason for the name ``rational open book''. As usual $\overline{\pi^{-1}(\theta)}$ is called a \dfn{page} of the open book for any $\theta\in S^1$ and $L$ is called the \dfn{binding} of the open book. We will usually put the word ``rational'' in front of ``open book'' when referring to the above concept.   Sometimes to emphasize that we are referring to the original meaning of ``open book'' we will use the phrase ``honest open book'' or ``integral open book''.

We note that just as for honest open books, one may describe rational open books using their monodromy map. That is, given $(L,\pi)$ a rational open book for $M$, the fibration $\pi \colon (M\setminus L)\to S^1$ is a mapping torus of a diffeomorphism $\phi:\Sigma\to\Sigma$ where $\Sigma=\overline{(\pi^{-1} (\theta) )}$ for some $\theta\in S^1$. We call $\phi$ the monodromy of the open book. 
For an honest open book one demands that $\phi$ is the identity in a neighborhood of the boundary, but for rational open books we allow $\phi$ to be the identity in a neighborhood of the boundary, to be a rigid rotation in either direction (of order less than $2\pi$), or to identify the neighborhood of one boundary with another. In particular we require that some power of $\phi$ is the identity on each boundary component.

\subsection{Torus knots and other examples of rational open books.}
In this subsection we discuss various basic examples and constructions of rational open book decompositions. 

\subsubsection{Torus knots in lens spaces}\label{Ex:torusknots}
Torus knots in lens spaces provide a fundamental class of rational open books.  Fix an oriented longitude-meridian basis $([\gamma], [\alpha])$ for the boundary $T$ of an oriented solid torus $U_\alpha$ (viewing $U_\alpha$ as a standard neighborhood of a knot as above).  With respect to this basis, let $\beta$ be a simple closed curve on $T$ of slope $\frac sr$ for coprime integers $0\leq s < r$.  Attaching another solid torus $U_\beta$ to $U_\alpha$ along $T$ so that $\beta$ is a meridian of $U_\beta$ forms the lens space $-L(r,s)$.  For coprime integers $k$ and $l$ we define the $(k,l)$--torus knot in $-L(r,s)$ to be the simple closed curve on $T$ of slope $\frac lk$ and denote it as $T_{(k,l)}^{(r,s)}$ or simply $T_{(k,l)}$ when the ambient lens space is understood.   If $k \neq 0$ then we may orient $T_{(k,l)}$ so that it is homologous to $k \cdot \gamma$ in $U_\alpha$.  (If $k = 0$, then the torus knot is the meridian of $U_\alpha$ so it bounds an embedded disk and the two orientations yield isotopic knots.)  If $\pm (k,l) = (0,1)$ or $(r,s)$, then $T_{(k,l)}^{(r,s)}$ is a meridian of $U_\alpha$ or $U_\beta,$ respectively, and hence bounds a disk.  We say such torus knots are \dfn{trivial}.  

\begin{rem}{We use the lens space conventions most common to 4-manifold, contact and symplectic topologists and opposite that used by most 3-manifold topologists.  With this convention, $L(r,s)$ is given by $-\frac{r}{s}$ surgery on the unknot, rather than $\frac{r}{s}$.  }
\end{rem}

\begin{lem}\label{lem:torusknotopenbook}\label{lem:torusknotfiber}
The non-trivial torus knot $T_{(k,l)}^{(r,s)}$ in the lens space $-L(r,s)$ is the binding of a rational open book. The page of the open book $\Sigma_{(k,l)}^{(r,s)}$  is a surface of Euler characteristic 
\[\frac{|k|+|ks-lr| - |k (ks-lr)|}{\gcd(r,k)}
\] 
and 
\[
\frac{\gcd(r,k^2)}{\gcd(r,k)}
\]
boundary components. Moreover, as an element of $H_1(-L(r,s);\IZ)$ the knot $T_{(k,l)}^{(r,s)}$ has order   
\[
\frac{r}{\gcd(r,k)}.
\]
Notice that this implies that the total boundary of a fiber in a fibration of the complement of $T_{(k,l)}^{(r,s)}$ wraps $\frac{r}{\gcd(r,k)}$ times around $T_{(k,l)}^{(r,s)}$ and each boundary component of the fiber wraps $\frac{r}{\gcd(r, k^2)}$ times around $T_{(k,l)}^{(r,s)}.$
\end{lem}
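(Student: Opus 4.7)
My plan follows three steps. First, compute the homology class of $T_{(k,l)}^{(r,s)}$: the class $k[\lambda]+l[\mu]$ on $T$, pushed into $U_\alpha$ (where $[\mu]=0$), gives $[T_{(k,l)}^{(r,s)}]=k[\gamma]$ in $H_1(-L(r,s))\cong\Z/r\Z$ generated by $[\gamma]$. Its order is therefore $r/\gcd(r,k)$, matching the homological claim in the lemma; this also equals the total wrapping of $\partial\Sigma$ around $T_{(k,l)}^{(r,s)}$ once the open book is constructed.

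Second, construct the rational open book by realizing $-L(r,s)$ as a Seifert fibered space with $T_{(k,l)}^{(r,s)}$ as a regular fiber. Foliate $U_\alpha$ by $(k,l)$-curves on concentric tori with the core as a singular fiber of multiplicity $|k|$; on the other side, the gluing matrix identifies the $(k,l)$-curve on $T$ with a $(k',l')$-curve on $\partial U_\beta$ where $k'=ks-lr$, so the same prescription gives a Seifert fibration of $U_\beta$ with core multiplicity $|ks-lr|$. These match on $T$ and glue to a Seifert fibration of $-L(r,s)$ with base $S^2$ carrying cone points of orders $|k|$ and $|ks-lr|$. Non-triviality of $T_{(k,l)}^{(r,s)}$ means it is neither exceptional fiber, so it is genuinely regular. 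The classical fact that the complement of a regular fiber in a closed Seifert fibered space over $S^2$ fibers over $S^1$ with fiber a horizontal surface then delivers the rational open book; horizontality rules out meridional page boundary.

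Third, compute the invariants of the page $\Sigma$ by splitting it along $T$ into horizontal surfaces $F_\alpha\subset U_\alpha$ and $F_\beta\subset U_\beta$. Each $F_i$ is a smooth branched cover of a meridional disk of $U_i$, branched only over the central cone point (of order $|k|$, respectively $|ks-lr|$), with degree determined by the total wrapping of $\partial\Sigma$ around $T_{(k,l)}^{(r,s)}$. Applying orbifold Riemann--Hurwitz on each side and summing (their intersection is one-dimensional and contributes zero to $\chi$) yields $\chi(\Sigma)=(|k|+|ks-lr|-|k(ks-lr)|)/\gcd(r,k)$ after simplifying the sign structure. The number of boundary components equals $\gcd(p_K,q_K)$ for the wrapping pair $(p_K,q_K)$ on $\partial N(T_{(k,l)}^{(r,s)})$ in the page framing; working through the framing calculation produces $\gcd(r,k^2)/\gcd(r,k)$, so each component wraps $r/\gcd(r,k^2)$ times.

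The main obstacle I anticipate is the boundary-count calculation, in particular the appearance of $k^2$ inside the $\gcd$. This factor arises from combining the longitudinal winding number of the $(k,l)$-curve through the core of $U_\alpha$ with the meridional contribution coming from the Seifert framing at the exceptional fiber of multiplicity $|k|$; identifying this twist explicitly, with consistent signs and the correct entries of the gluing matrix, is where the bookkeeping becomes delicate and is the step most likely to hide an error.
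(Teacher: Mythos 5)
Your Seifert-fibration route---realizing $-L(r,s)$ as a Seifert fibered space with $T_{(k,l)}^{(r,s)}$ a regular fiber, then taking a horizontal surface in the fiber complement as the page---is a legitimate alternative to the paper's direct disk-and-band construction, and the paper even notes this option in passing. The homology computation and the general existence of a horizontal fibered structure are fine. However, the Euler characteristic step contains a genuine error. You split $\Sigma$ along $T$ into $F_\alpha$ and $F_\beta$, apply orbifold Riemann--Hurwitz to each, and assert that because the intersection is one-dimensional it contributes zero to $\chi$. This is false: $\Sigma$, positioned as a horizontal surface, meets the annulus $A = T\setminus N(T_{(k,l)})$ in a collection of \emph{arcs} (each arc runs from one boundary circle of $A$ to the other, transverse to the Seifert circles foliating $A$), and arcs have Euler characteristic $1$. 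The correct inclusion-exclusion is $\chi(\Sigma)=\chi(F_\alpha)+\chi(F_\beta)-\chi(\Sigma\cap A)$, and $\chi(\Sigma\cap A)$ equals the number of arcs, which is the degree $d$ of the branched cover $\Sigma\to D$. That $-d$ term is exactly the negative term $-|k(ks-lr)|/\gcd(r,k)$ in the lemma's formula; it cannot emerge from a sum of two nonnegative orbifold Euler characteristics, so your argument as stated cannot produce the claimed answer.

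Compounding this, the degree $d$ is \emph{not} the ``total wrapping'' $r/\gcd(r,k)$ (which you earlier identified with the order of $[T_{(k,l)}]$): for instance with $k=2$, $s=1$, $r=6$ the quantity $r/(|k|\gcd(r,k))$ is not even an integer, while the degree must be. The degree equals the geometric intersection number of $\partial\Sigma$ with a Seifert fiber on $\partial N$; since the fiber there is the $T$-induced longitude $\lambda$ and $[\partial\Sigma]=\tfrac{k(ks-lr)}{\gcd(r,k)}\mu+\tfrac{r}{\gcd(r,k)}\lambda$ (up to sign), one gets $d=|k(ks-lr)|/\gcd(r,k)$. (Sanity check: for $(r,s)=(1,0)$ this gives $|kl|$, the familiar degree $|pq|$ for the fiber of $T_{(p,q)}$ in $S^3$.) In fact the cleanest way to salvage your strategy is to drop the splitting entirely and apply orbifold Riemann--Hurwitz once, to $\Sigma\to D$ where $D$ is a disk with cone points of orders $|k|$ and $|ks-lr|$: then $\chi(\Sigma)=d\bigl(\tfrac{1}{|k|}+\tfrac{1}{|ks-lr|}-1\bigr)$, which yields the stated formula directly. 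The boundary count, which you rightly flag as the delicate point, then amounts to computing $\gcd$ of the two coefficients of $[\partial\Sigma]$ above and simplifying $\gcd(r,k(ks-lr))=\gcd(r,k^2s)=\gcd(r,k^2)$ using $\gcd(r,s)=1$.
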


\begin{proof}
Consider the torus knot $T_{(k,l)}$ in $-L(r,s)$.
The exterior of $T_{(k,l)}$, $(-L(r,s) \setminus N_{T_{(k,l)}})$, may be viewed as the union of $U_\alpha$ and $U_\beta$ glued together along the complementary annulus $T \setminus (T\cap N_{T_{(k,l)}})$.  When $T_{(k,l)}$ is non-trivial then this annulus is essential in each $U_\alpha$ and $U_\beta$ and hence the exterior of $T_{(k,l)}$ is a small Seifert fiber space over the disk with two exceptional fibers.  Thus there exists a fibration $\pi \colon (-L(r,s) \setminus N_{T_{(k,l)}}) \to S^1$. This fibration can be seen as a (multi-section) of the Seifert fibration or can be constructed directly as we do below.  No component of $\bdry N_{T_{(k,l)}} \cap  \pi^{-1}(\theta)$ is a meridian since that would imply that $[T_{(k,l)}]$ has infinite order in $H_1(-L(r,s);\Z)=\Z/r\Z.$ 

In direct analogy with torus knots in $S^3$, a fiber $\Sigma_{(k,l)}^{(r,s)}$ of $\pi \colon (-L(r,s) \setminus N_{T_{(k,l)}}) \to S^1$ may be viewed as the union of $\frac{| ks-lr|}{\gcd(r,k)}$ meridional disks of $U_\alpha$ and $\frac{| k |}{\gcd(r,k)}$ meridional disks of $U_\beta$ joined together by $\frac{| k (ks-lr)|}{\gcd(r,k)}$ bands in $T \times (-\epsilon, \epsilon) - N_{T_{(k,l)}}$.  The number of disks is due to 
\[
(x,y,z) = (\frac{ks-lr}{\gcd(r,k)},\frac{k}{\gcd(r,k)},\frac{r}{\gcd(r,k)})
\] 
giving the ``smallest'' non-trivial integral solution to $x[\alpha]+y[\beta] = z[T_{(k,l)}]$ in $H_1(T;\Z)$.  The number of bands then may be seen as resulting from $T_{(k,l)}^{(r,s)}$ intersecting $\alpha$ minimally $|k|$ times and using $\frac{|ks-lr|}{\gcd(r,k)}$ meridional disks of $U_\alpha$.  The surface $\Sigma_{(k,l)}^{(r,s)}$ is verified to be a fiber of a fibration by either direct inspection or using Gabai's sutured manifold theory \cite{Gabai83}: the complement in the Heegaard torus $T$ of $T_{(k,l)}$ and the bands of the surface $\Sigma_{(k,l)}^{(r,s)}$ give rise to a complete set of product disks for a sutured manifold decomposition of the sutured manifold $(M \setminus (\Sigma_{(k,l)}^{(r,s)}\times I), \bdry (M \setminus (\Sigma_{(k,l)}^{(r,s)} \times I)))$ where $M = -L(r,s) \setminus N_{T_{(k,l)}^{(r,s)}}$ is the torus knot exterior.  Thus the fiber has Euler characteristic $\frac{|k|+|ks-lr| - |k (ks-lr)|}{\gcd(r,k)}$.  

From this description we may also calculate that the fiber $\Sigma_{(k,l)}^{(r,s)}$ has $\frac{\gcd(r,k^2)}{\gcd(r,k)}$ boundary components as follows.   
The order of $[T_{(k,l)}^{(r,s)}]$  in $H_1(-L(r,s);\Z)$ gives the number of times $\bdry \Sigma_{(k,l)}^{(r,s)}$ intersects its meridian $\mu$.  The number of bands joining the meridional disks in the construction of $\Sigma_{(k,l)}^{(r,s)}$ gives the number of times its boundary intersects $\lambda$.  Therefore the homology class of $\bdry \Sigma_{(k,l)}^{(r,s)}$ in $H_1(\bdry N_{T_{(k,l)}};\Z)$ is $\frac{k(ks-lr)}{\gcd(r,k)}[\mu] +\frac{k}{\gcd(r,k)}[\lambda]$ with respect to the meridian $\mu$ of $T_{(k,l)}^{(r,s)}$ and longitude $\lambda$ induced from $T$.    Thus $\bdry \Sigma_{(k,l)}^{(r,s)}$ has 
\[
\gcd \left( \frac{k}{\gcd(r,k)}, \frac{k(ks-lr)}{\gcd(r,k)} \right) = \frac{\gcd(r,k^2)}{\gcd(r,k)}
\]
components.  (Obtaining this equality makes use of the facts that $\gcd(r,s)=1$ and $\gcd(k,l)=1$.)
\end{proof}

\begin{Ex} \label{ex:ratlunknot} Let $N$ be a small tubular neighborhood of the non-trivial torus knot $T_{(k,l)}^{(r,s)}$.  Let $\pi$ be the fibration of its exterior $-L(r,s) \setminus N$.  The following examples illustrate how a rational open book may differ  from an honest open book by just one of properties $(\star)$ and $(\star\star)$ or both.
\begin{enumerate}
\item  The torus knot $T_{(1,n)}^{(r,s)}$, for any integer $n$,  has disk pages with $\bdry N \cap \pi^{-1}(\theta)$ connected and running $r$ times longitudinally on $\bdry N$.  Indeed, $T_{(1,n)}^{(r,s)}$ is isotopic to the core of $U_\alpha$ and its exterior is a solid torus.  The fibration $\pi \colon (-L(r,s) \setminus T_{(1,n)}) \to S^1$ may be identified with the fibration of $U_\beta$ by meridional disks.  These knots are called \dfn{rational unknots}.

 (Note that the only rational unknot that is also a trivial knot is the standard unknot in $S^3$.  In contrast to trivial knots in other manifolds, it is the binding of an open book.)

\item Similarly, the torus knot $T_{(t,u)}^{(r,s)}$, for any $(t,u)$ such that $ru-ts = \pm 1$, is a rational unknot.  It is isotopic to the core of $U_\beta$ and so its exterior is a solid torus too.  Observe that $T_{(p,1)}^{(1,0)}$, the $(p,1)$--torus knot in $S^3$, is an unknot. 

\item The torus knot $T_{(2,1)}^{(4,1)}$ has annular pages.  Hence $\bdry N \cap \pi^{-1}(\theta)$ has two components.  Since the knot has order $2$, each of these components is a longitude.

\item The torus knot $T_{(2,1)}^{(8,1)}$ has twice-punctured torus pages.  Again $\bdry N \cap \pi^{-1}(\theta)$ has two components.  But since the knot has order $4$, each of these components run twice longitudinally on $\bdry N$.

\end{enumerate}
\end{Ex}

The entire above discussion may be extended to the torus links $T_{(k,l)}^{(r,s)}$ where $\gcd(k,l) \neq 1$.   They give examples of rational open books, as long as no component is a trivial knot.

\subsubsection{Rational open books produced by Dehn surgery.} 
Given an honest open book decomposition $(L, \pi)$ for a manifold $M$, let $\bar{\gamma} = (\gamma_1, \dots, \gamma_n)$ where $\gamma_i$ is a slope on the boundary of a small tubular neighborhood $N_{L_i}$ of $L_i$, the $i^\text{th}$ component of $L$.  Fix a $\theta \in S^1$.  Assume
\begin{itemize}
\item $\gamma_i$ is not isotopic on $\bdry N_{L_i}$ to $\bdry N_{L_i} \cap \pi^{-1}(\theta)$ for any $i = 1,\dots, n$, and 
\item there exists some $i$ for which $\gamma_i$ minimally intersects $\bdry N_{L_i} \cap \pi^{-1}(\theta)$ more than once.
\end{itemize}
Then the Dehn surgered manifold $M' = M_L(\bar{\gamma})$ has a rational open book decomposition $(L',\pi')$ where $L'$ is the link in $M'$ obtained from the cores of the Dehn surgery solid tori, and $\pi' \colon (M' \setminus L') \to S^1$ is the same fibration as $\pi$ since $M' \setminus L' = M \setminus L$.  The two properties imposed upon $\bar{\gamma}$ ensure that $(L', \pi')$ is a rational open book but not an honest open book. (If one ignored the second property then one still obtains a rational open book, but it might in fact be an honest open book.)

\begin{Ex} Let $K$ be a fibered knot in $S^3$ with fibration $\pi \colon (S^3\setminus K) \to S^1$.  Then the surgered manifold $S^3_K(q/p)$ with $q/p \neq 0$ admits an open book decomposition $(K', \pi)$ where $K'$ is the core of the surgery solid torus.  This is an honest open book decomposition if $p=\pm1$ and a rational open book decomposition otherwise.

In particular, any knot in $S^3$ with a Dehn surgery yielding a lens space is known to be fibered \cite{OzsvathSzabo05}, \cite{Ni07}.  Thus they confer a rational open book decomposition upon the resulting lens space.
\end{Ex}

\subsubsection{Grid number one knots in lens spaces}
The known non-torus knots in lens spaces admitting a Dehn surgery yielding $S^3$ all have grid number one (see \cite{BergeXX}, \cite{BakerGrigsbyHedden08}).  Indeed all grid number one knots in lens spaces that represent a generator of $H_1(L(r,s);\Z)$ have fibered exterior \cite{OzsvathSzabo05} and hence are the bindings of rational open books.  Grid number one knots that do not represent a generator of $H_1(L(r,s);\Z)$ are not always fibered.  This may be easily observed through several straightforward calculations of their knot Floer homology, \cite{BakerGrigsbyHedden08}, \cite{Ni07}.  Also, this is explicitly catalogued for grid number one knots with once-punctured torus rational Seifert surfaces in \cite{Baker06}.

\subsection{Inducing contact structures from rational open book decompositions.}\label{sec:thirstywinkle}

Generalizing a definition of Giroux, we say a rational open book $(L,\pi)$ for $M$ {\em supports} a contact structure $\xi$ if there is a contact form $\alpha$ for $\xi$ such that 
\begin{enumerate}
\item $\alpha(v)>0$ for all positively pointing tangent vectors $v\in TL,$ and
\item $d\alpha$ is a volume form when restricted to each page of the open book.
\end{enumerate}

The  existence part of the proof of Theorem~\ref{thm:support} is a small modification of Thurston and Winkelnkemper's original proof for honest open books \cite{ThurstonWinkelnkemper75}, {\em cf} \cite{Etnyre06}. Uniqueness readily follows as in \cite{Giroux02} with the  appropriate definition of ``support'' given above.
\begin{proof}[Proof of Theorem~\ref{thm:support}]
Observe
\[
M_\phi=\Sigma_\phi \cup_\psi \left(\coprod_{|\partial \Sigma_\phi|} S^1\times D^2\right),
\]
where $\Sigma_\phi$ is the mapping torus of $\phi,$ and $\psi$ is a diffeomorphism used to glue the solid tori to $\Sigma_\phi.$   Note we use $|\bdry \Sigma_\phi|$ rather than $|\bdry \Sigma|$ because the monodromy $\phi$ may permute the components of $\bdry \Sigma$. (See Subsection~\ref{ss:robd} for a discussion of the monodromy of rational open books.)

We first construct a contact structure on
$\Sigma_\phi.$  Let $\lambda$ be a 1--form on $\Sigma$ such that $d\lambda$ is a volume form on $\Sigma$ and $\lambda = s\,d\theta$ in the coordinates $(s,\theta)\in [-1,-1+\epsilon]\times S^1$ near each boundary component of $\Sigma$ for some sufficiently small $\epsilon >0$. (Here $s=-1+\epsilon$ corresponds to $\partial \Sigma$). Consider 
the 1--form
\[
\lambda_{(t,x)}= t\lambda_x +(1-t)(\phi^*\lambda)_x
\]
on $\Sigma\times[0,1]$ where $(x,t)\in\Sigma\times[0,1]$ and set
\[\alpha_K=\lambda_{(t,x)}+ Kdt.\]
For sufficiently large $K$ this form is a contact form and it 
is clear that this form descends to a contact form on the mapping torus $\Sigma_\phi.$ (For details on 
the existence of $\lambda$ or the above construction see \cite{Etnyre06, ThurstonWinkelnkemper75}.)

We now
want to extend this form over the solid tori neighborhood of the binding. To this end consider the
map $\psi$ that glues the solid tori to the mapping torus. Using coordinates $(\varphi,(r,\vartheta))$
on $S^1\times D^2$ where $D^2$ is the unit disk in the $\R^2$ with polar coordinates and coordinates $(s, \theta, t)$ as above on the component of $N(\bdry \Sigma_\phi)$ at hand, we have
\begin{align*}
\psi\colon S^1 \times N(\bdry D^2) &\to N(\bdry \Sigma_\phi) \\
(\varphi, r, \vartheta) &\mapsto  (-r, p\varphi+q\vartheta, -q\varphi+p\vartheta).
\end{align*}
This is a map defined near the boundary of $S^1\times D^2$ where $N(\bdry D^2)$ contains the annulus $r\in[1-\epsilon,1]$. Pulling back the contact form $\alpha_K$
using this map gives
\begin{align*}
\alpha_\psi&=-r(p \, d\varphi+q\, d\vartheta) +  K(-q\, d\varphi + p \, d\vartheta)\\
&=(-rp-Kq)\,d\varphi + (-rq+pK)\, d\vartheta.
\end{align*}
We now need to extend $\alpha_\psi$ over all of $S^1\times D^2$. We will extend using a form of the form
\[
f(r)\, d\varphi + g(r)\, d\vartheta.
\]
This form is a contact form if and only if $f(r)g'(r)-f'(r)g(r)>0.$ 
Near $\bdry S^1 \times D^2$,  $\alpha_\psi$ is defined with $f(r)= -rp-qK$ and $g(r)=-rq+pK.$ Near the core of
$S^1\times D^2$ we would like $f(r)=1$ and $g(r)=r^2$. One may easily extend $f(r)$ and $g(r)$ so that  $\alpha_\psi$ is a contact form on the solid torus. Moreover, it is easy to check that $f(r)$ and $g(r)$ can be chosen so that $d\alpha_\psi$ is non-zero on the extension of the pages to the core of the solid torus.

For uniqueness suppose that $\alpha_1$ and $\alpha_2$ are contact forms for two contact structures supported by $(L,\pi).$ 
Consider the form $\pi^* d\theta$ on $M-L,$  where $\theta$ is the angular coordinate on $S^1.$ Let $f:M\to \R$ be a function of the distance to $L$ that is 1 outside a neighborhood of $L,$ vanishes to order 2 on $L$ and increasing in between. Now set $\eta$ to be $f\, \pi^* d\theta$ extended to be $0$ over $L.$ This is a global 1--form on $M$ that acts like ``$dt$'' above, outside a neighborhood of $L.$
Then for any positive $K$ the form $K\eta+\alpha_i$ is a contact form for a contact structure isotopic to $\ker \alpha_i.$ For $K$ sufficiently large the family of forms $K\, dt + (t\alpha_1+ (1-t) \alpha_2)$ on $M\setminus N$ are all contact forms. From this one easily constructs the isotopy between the contact structures. For more details see \cite{Etnyre06}.
\end{proof}

\subsection{Stabilization} 
For later use in the paper we recall the notion of stabilizing an honest open book decomposition. The intrinsic definition is as follows. If $(L,\pi)$ is an open book decomposition supporting $\xi$ on $M$ then choose an arc $\alpha$ properly embedded in a page of the open book and perform a Murasugi sum of $(L,\pi)$ with the negative Hopf link (this is an open book for $S^3$ supporting the tight contact structure) along $\alpha.$ More specifically choose an arc $\beta$ in a page of the Hopf link open book and identify a neighborhood of $\alpha$ with a neighborhood of $\beta$ so that the pages are plumbed together. This results in a new open book $(L_\alpha,\pi_\alpha)$ supporting $\xi$ on $M.$ See \cite{Etnyre06}. The open book decomposition $(L_\alpha,\pi_\alpha)$  is said to be obtained from $(L,\pi)$ by \dfn{positive stabilization along $\alpha.$} 
An open book decomposition can be described via its monodromy presentation. That is the complement of an open neighborhood of $L$ is a surface bundle over $S^1$ and hence there is some diffeomorphism $\phi$ of a fiber $\Sigma$ of $\pi$ that fixes the boundary of $\Sigma$ so  that the complement of a neighborhood of $L$ in $M$ is diffeomorphic to 
\[
\Sigma\times[0,1]/\sim,
\]
where $(1,x)\sim (0, \phi(x)).$ If we add the relation $(t,x)\sim (t',x)$ for all $x\in \partial \Sigma$ then we get back $M.$ So the pair $(\Sigma, \phi)$ will be called the monodromy presentation of the open book decomposition of $M.$ 
In terms of the monodromy presentation $(\Sigma, \phi)$ of the open book  $(L,\pi)$ we can describe a positive stabilization as follows. Again fix an arc $\alpha$ properly embedded in $\Sigma.$ Let $\Sigma_\alpha$ be the surface obtained from $\Sigma$ by attaching a 1-handle along $\partial \alpha\subset \partial \Sigma.$ Let $\phi_\alpha=\phi\circ D_{\alpha'}$ where $\alpha'$ is the simple closed curve  $\alpha$ union the core of the 1-handle and $D_{\alpha'}$ is a right handed Dehn twist along $\alpha'.$ The open book decomposition $(\Sigma_\alpha,\phi_\alpha)$ also supports $\xi$ on $M$ and is called the \dfn{positive stabilization of $(\Sigma,\phi)$ along $\alpha.$}

\section{Cables of open books}\label{sec:cabling}

Refer to the beginning of Section~\ref{sec:rob} for our conventions about curves on the boundary of a standard neighborhood of a knot.  Given a (framed, oriented) knot $K$ in a manifold $M$, replacing $K$ with the curve $K_{(p,q)}$ on $\bdry N_K$ forms the \dfn{$(p,q)$--cable} of $K$, so long as $p \neq 0$.  Note $K_{(0,q)}$ is just a collection of meridians of $K$.  Each $(1,q)$--cable of $K$ is isotopic to $K$ in $N_K$ as an oriented curve, and each $(-1,q)$--cable of $K$ is isotopic to $-K$.  Mind that the curves $K_{(p,q)}$ and $K_{(-p,-q)}$ differ by a reversal of orientation.

If $L$ is an oriented link with $n$ components we can order the components, choose $n$ pairs of integers $(p_i,q_i)$, and then $(p_i,q_i)$--cable the $i^\text{th}$ component of $L$.  This will be denoted $L_{({\mathbf{p}},{\mathbf{q}})}$ where $(\mathbf{p},\mathbf{q})$ is the $n$-tuple of pairs $((p_1,q_1),\ldots, (p_n,q_n))$.  If $L$ is the binding of an honest open book then each component of $L$ has a natural framing coming from a page of the open book. Throughout this paper we assume this framing is used when discussing the bindings of an integral open book.  If $L$ is the binding of a rational open book then the pages might not induce a framing on $L$.  

In this case one must simply choose a framing for each component. Given a choice of framing, a page of the rational open book approaches a component $K$ of $L$ as a cone on an $(r,s)$--curve for some $r>0$ and $s$. (In other words, the page intersects a neighborhood $N$ of $K$ in the obvious collection of annuli each with one boundary component  a component of a $(r,s)$--curve on $\partial N$ and the other boundary component wrapping $r/\gcd(r,s)$ times around $K$.)   In either case, the slope $\frac{s}{r}$ at which a page encounters the standard neighborhood of a binding component is called the \dfn{Seifert slope} for that component.  For binding components of an integral open book using its natural framing, the Seifert slope corresponds to $0$.  Moreover, regardless of what framing is used, the Seifert slope of an integral open book is integral.

The Seifert slope and the meridian partition the remaining slopes on $\bdry N$ into positive and negative.
Relative to the Seifert slope $\frac{s}{r}$, a pair of integers $(p,q)$ with $p \neq 0$ defines a slope $\frac{q}{p}$ that is \dfn{positive}  \label{posnegdef} if $\frac{q}{p}>\frac{s}{r}$ and is \dfn{negative} if $\frac{q}{p}<\frac{s}{r}$.   We also say the pair of integers and the curve on $\bdry N$ it represents are positive or negative accordingly.  In particular, this permits us to speak of a $(p,q)$--cable as being positive or negative.

Given an open book, a cabling of its binding components naturally produces another open book --- except when a component is cabled along its Seifert slope (or its meridional slope).

\begin{lem}\label{lem:cable}
Let $(L,\pi)$ be a (rational) open book decomposition for a 3--manifold $M.$ For each component $L_i, i=1,\ldots, n,$ of $L,$ let $(p_i,q_i)$ be a pair of integers for which $q_i/p_i$  is neither the Seifert slope nor meridional slope of $L_i$, and all the $p_i$ have the same sign, and then set $(\mathbf{p},\mathbf{q})=((p_1,q_1),\ldots, (p_n,q_n)).$  The link $L_{(\mathbf{p},\mathbf{q})}$ is the binding of a rational open book for $M$, and this open book is naturally induced from the open book $(L, \pi)$. If the $p_i$'s all have the same magnitude and $(L,\pi)$ is an honest open book then $L_{(\mathbf{p},\mathbf{q})}$ is the binding of an honest open book.
\end{lem}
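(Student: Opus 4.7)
The construction is local at each binding component of $L$. Fix a component $L_i$, drop the subscript, and work in a standard tubular neighborhood $N = N(L) \cong S^1 \times D^2$ with coordinates $(\varphi, re^{i\vartheta})$ realizing the chosen framing. In these coordinates the fibration $\pi$ meets $\partial N$ as a linear foliation of Seifert slope $s/r$, and the cable $L_{(p,q)}$ is realized as an embedded $(p,q)$--curve on the torus $T_{1/2} = \{r = 1/2\}$. The lemma reduces to extending $\pi$ from a collar of $\partial N$ across $N \setminus L_{(p,q)}$ to a fibration whose level-set closures meet $L_{(p,q)}$ in non-meridional curves, and then gluing such local extensions over the various components of $L$.

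I would carry out the local extension in two pieces separated by $T_{1/2}$. On the outer shell $V^+ = \{1/2 \leq r \leq 1\}$, minus a thin tubular neighborhood of the cable, I linearly interpolate the boundary foliation slope from $s/r$ at $r=1$ to a chosen non-meridional slope on $T_{1/2}$, while transitioning to an angular model $\arg$ around $L_{(p,q)}$ near the cable itself. This is possible precisely because $q/p \neq s/r$ forces the intersection number $|rq - ps|$ to be positive, so the two linear directions are not collinear on $T^2$, and because $q/p \neq \infty$ keeps the cable away from the meridian direction. On the inner solid torus $V^- = \{r \leq 1/2\}$ the prescribed foliation on $T_{1/2}$ fills across $V^-$ by the standard filling of a solid torus by a non-meridional foliation, with the old core $L$ becoming a transverse arc in the new pages. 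The two extensions glue along $T_{1/2}$ by construction, and the common-sign hypothesis on the $p_i$'s ensures that the locally constructed extensions at the various binding components assemble into a globally well-defined fibration $\pi' \colon M \setminus L_{(\mathbf{p},\mathbf{q})} \to S^1$; if the signs were mixed, the induced $S^1$-direction would reverse at some components and the local $\pi'$'s would fail to close up into a single $S^1$-valued map.

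The principal technical obstacle is the construction of the outer extension in $V^+$, which I would realize cleanly by viewing $V^+$ minus a tubular neighborhood of $L_{(p,q)}$ as a Seifert fibered space and producing $\pi'$ as a smooth $S^1$--valued submersion transverse to the Seifert fibers with the prescribed boundary behavior; the nonvanishing intersection number $|rq - ps|$ is exactly the obstruction whose vanishing would prevent such a submersion from existing. The final claim, that $L_{(\mathbf{p},\mathbf{q})}$ is the binding of an honest open book when $(L,\pi)$ is honest and all $|p_i|$ agree in magnitude, then follows from the classical observation that cables of honestly fibered links are honestly fibered: the new fiber is a Murasugi (plumbing) sum of the original fiber with the fiber of the appropriate $(p_i,q_i)$--torus link at each cabled component, and the uniform-magnitude hypothesis is precisely what lets these local plumbings combine coherently into a single connected honest fiber surface for $L_{(\mathbf{p},\mathbf{q})}$ rather than a multi-sheeted rational one.
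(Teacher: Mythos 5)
Your overall strategy --- build the cabled fibration locally in a tubular neighborhood $N$ of each binding component, exploit the Seifert fibered structure of the cable complement inside a solid torus, let the original core become transverse to the new pages, and control orientations by the common sign of the $p_i$ --- is the right spirit and parallels the paper's proof, which replaces $N$ by the solid torus $S_{(p,q)}=S^3\setminus\nu(C)$ carrying a $(p,q)$-torus-knot fibration. The essential gap is that you propose to extend $\pi$ itself across $N\setminus L_{(p,q)}$, and this is impossible when $|p|>1$. What extends is $f_p\circ\pi$, where $f_p\colon S^1\to S^1$ is the degree-$p$ covering map (and, when the $|p_i|$ differ, $p$ is replaced by their least common multiple). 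To see the obstruction, note that for an integral open book with $|p|>1$ and $\gcd(p,q)=1$ the page of the cabled open book meets $M\setminus N$ in $p$ parallel copies of the original page, and so meets $\partial N$ in $p$ parallel longitudes rather than one; thus any fibration of $N\setminus L_{(p,q)}$ that glues to a fibration outside has boundary behavior on $\partial N$ equal to $f_p\circ\pi|_{\partial N}$, not $\pi|_{\partial N}$. Your interpolation construction does not introduce this extra multiplicity, and without it the local model cannot be glued to $\pi|_{M\setminus N}$. This composition with $f_p$ is exactly the device the paper uses, and it is also what produces the binding multiplicities appearing in the definition of a rational open book.

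Two secondary points. The ``standard filling of a solid torus by a non-meridional foliation'' with the core transverse does not exist as a fibration over $S^1$: any fiber is a disk whose boundary is a meridian. In fact near the core each page of the cabled open book consists of several parallel meridian disks. Finally, the claim that the cable fiber is a Murasugi sum of the original fiber with the $(p_i,q_i)$-torus link fiber is false: the $(2,1)$-cable of the right-handed trefoil has fiber with $\chi=-3$, while a Murasugi sum of the trefoil fiber ($\chi=-1$) with the disk fiber of $T_{(2,1)}$ (an unknot) has $\chi=-1$. The correct relation is Lemma~\ref{lem:gencable}: $L_{(p,q)}$ is a Murasugi sum of $L_{(p,\sgn(q))}$ with $T_{(p,q)}$, and already the fiber of $L_{(p,\pm1)}$ is built from $p$ parallel copies of the original fiber.
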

\begin{proof}
There are several ways to prove this statement. A simple way that will be useful in what follows is to notice that the $(p,q)$ torus knot $T_{(p,q)}$ sits on a standardly embedded torus $T\subset S^3$ that bounds solid tori $V_0$ and $V_1$ such that $S^3=V_0\cup V_1.$ It is well known that $T_{(p,q)}$ is the binding of an open book for $S^3.$ Moreover it is easily checked that if $C$ is the core of $V_1,$ say, then $C$ intersects the pages of this open book transversely and it intersects each page in $p$ points. (We view $T_{(p,q)}$ as a $(p,q)$--curve on the boundary of $V_0$.)  Thus the complement of a small tubular neighborhood of $C$ is a solid torus $S_{(p,q)}=D^2\times S^1$ containing $T_{(p,q)}$ such that the open book structure on $S^3$ gives a fibration $S_{(p,q)}\setminus T_{(p,q)}\to S^1.$ This fibration induces a fibration of $\partial S_{(p,q)}$ by curves $\{pt\}\times S^1$ and the preimage of any point in $S^1$ intersected with $\partial S_{(p,q)}$ is $p$ curves. 

Suppose the knot $K$ is the binding of an honest open book, $N$ is a small tubular neighborhood of $K$ and $\pi\colon M\setminus N\to S^1$ is the fibration of the complement of $N.$ Letting $f_p\colon S^1\to S^1$ be the $p$-fold covering map one sees that $S_{(p,q)}$ may be glued to $M\setminus N$ to recover $M$ and so that the fibration $f_p\circ \pi\colon M\setminus N\to S^1$ and  $S_{(p,q)}\setminus T_{(p,q)}\to S^1$ glue together to give an open book decomposition of $M$ with binding $K_{(p,q)}.$ In the case of an open book $(L,\pi)$ with multiple binding components it is clear that if all the $p_i$ have the same magnitude then the same construction yields an open book structure with binding $L_{(\mathbf{p},\mathbf{q})}$. If the $p_i$ have different magnitudes then let $p$ be the least common multiple of the $p_i$. Now using the fibration $f_p\circ \pi$ on the complement of a neighborhood of the binding, one easily sees a rational open book structure with binding $L_{(\mathbf{p},\mathbf{q})}.$ Notice that in this construction we have not been paying attention to the orientation on the page.  Taking page orientations into account forces all the signs on the $p_i$ to be the same. (The construction really deals with the case when all the $p_i$ were positive. For the negative case, reverse the orientation on $L.$)

The rational open book case can be similarly considered. Specifically for binding components with multiplicity one (that is one boundary component of a page contains the binding component) we begin with the torus knots in lens spaces from Examples~\ref{Ex:torusknots} above. Using the same argument for $T_{(k,l)}^{(r,s)}$, the $(k,l)$--torus link in the lens space $L(r,s)$, one can construct a solid torus $S_{(k,l)}^{(r,s)}$ containing $T_{(k,l)}^{(r,s)}$ that can be used to $(k,l)$--cable a binding component for which a page approaches as a cone on an $(r,s)$--curve. If a binding component has higher multiplicity then we compose the fibration of the complement of $T_{(k,l)}^{(r,s)}$ in the solid torus with a covering map of the circle. 
\end{proof}

\begin{rem}
Observe that the open book constructed for the cable $L_{(\mathbf{p},\mathbf{q})}$ above may be obtained by a sequence of cablings, cabling one component of $L$ at a time in any order.
\end{rem}

\begin{rem}
Notice that the $((2,1),(3,1))$--cable of the positive Hopf link is a (non-integral) rational open book for $S^3$.  Its page wraps along the first component with multiplicity $3$ and the second with multiplicity $2$.  
\end{rem}

The following relationship between the $(p,q)$--cable of a fibered knot and its $(p,\pm1)$--cable ({\em cf.\ }\cite[Figure 4.2]{NeumannRudolph87}) will be particularly useful in the proof of Theorem~\ref{thm:main-cable} for integral open books.
\begin{lem}\label{lem:gencable}
Let $L$ be a fibered knot in a manifold.
The cable $L_{(p,q)}$ is obtained from $L_{(p,\text{sgn} (q))}$ by $(|p|-1)(|q|-1)$ negative, respectively positive, stabilizations when $pq<0,$ respectively $pq>0,$ where $\text{sgn}(q)$ is $+1$ if $q>0$ and $-1$ if $q<0.$ Equivalently, $L_{(p,q)}$ is obtained from $L_{(p,\text{sgn}(q))}$ by Murasugi summing with the $(p,q)$--torus link $T_{(p,q)}$ in $S^3.$
\end{lem}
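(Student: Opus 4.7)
The plan is to reduce the claim to a local statement in a solid torus neighborhood and then invoke the standard Stallings/Neumann--Rudolph description of torus knot fibers as iterated Hopf plumbings. The first step is to use the construction in the proof of Lemma~\ref{lem:cable}: both $L_{(p,q)}$ and $L_{(p,\text{sgn}(q))}$ are produced by removing a standard neighborhood of $L$ and gluing back, respectively, the solid tori $S_{(p,q)}$ and $S_{(p,\text{sgn}(q))}$ that contain the relevant torus knot inside a copy of $S^3$. Outside these solid tori the two open books agree, so the statement reduces to comparing the two local pages cut out by the torus knots $T_{(p,q)}$ and $T_{(p,\text{sgn}(q))}$ in $S^3$.

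Next I would invoke the classical fact that the standard fiber surface of the torus knot $T_{(p,q)}$ in $S^3$ is obtained from a disk (the fiber of the unknot $T_{(p,\text{sgn}(q))}$) by plumbing on $(|p|-1)(|q|-1)$ Hopf bands, positive when $pq>0$ and negative when $pq<0$ (Stallings, with the explicit sign count matching Neumann--Rudolph, as referenced just before the Lemma statement). A quick Euler characteristic check validates the count: the disk has $\chi=1$, the fiber of $T_{(p,q)}$ has $\chi=|p|+|q|-|pq|$, and each plumbing lowers $\chi$ by $1$, so the number of plumbings must be $1-(|p|+|q|-|pq|)=(|p|-1)(|q|-1)$. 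Thus on the level of open books in $S^3$, the binding $T_{(p,q)}$ is obtained from $T_{(p,\text{sgn}(q))}$ by that many Hopf plumbings of the appropriate sign.

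To transfer this to the cable of $L$, I would use the standard equivalence between Hopf plumbing and open book stabilization together with the fact that Murasugi sum of open book decompositions is an operation local to a properly embedded arc in the page. Since the plumbing arcs for the torus knots in $S^3$ can all be arranged inside an arbitrarily small ball disjoint from $L$, plumbing these bands onto $L_{(p,\text{sgn}(q))}$ alters neither the ambient manifold nor the page away from that ball, and it produces precisely the local model $S_{(p,q)}$ that defines $L_{(p,q)}$. Hence $L_{(p,q)}$ is obtained from $L_{(p,\text{sgn}(q))}$ by $(|p|-1)(|q|-1)$ positive (respectively negative) stabilizations when $pq>0$ (respectively $pq<0$), and equivalently by Murasugi summing with $T_{(p,q)}\subset S^3$.

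The main obstacle I expect is bookkeeping with signs and orientations: one must verify that the induced orientations on the local fiber from the cabling construction match those in the standard Seifert surface of $T_{(p,q)}$, so that the Hopf bands really appear with sign $\text{sgn}(pq)$ and not the opposite, and one must check that reversing $q\to -q$ corresponds to mirroring $T_{(p,q)}$ consistently with how $L_{(p,q)}$ and $L_{(p,-q)}$ are oriented as curves on $\partial N_L$. Once the orientation conventions from Section~\ref{sec:cabling} are pinned down on the local model $S_{(p,q)}$, the rest is an application of Stallings' plumbing description and the identification of Hopf plumbing with open book stabilization.
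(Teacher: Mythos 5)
Your plan is close to the paper's in spirit — the paper also points to \cite[Figure 4.2]{NeumannRudolph87} as its model and uses the same stacked-plates picture of the cable's fiber — but you take a different reduction path. The paper draws the $(|p|-1)(|q|-1)$ twisted bands and the Murasugi summing disks directly on the page of $L_{(p,q)}$ (whose ``horizontal sheets'' are copies of the fiber of $L$), so there is no transfer step: the stabilizations are exhibited on the cabled fiber itself. You instead reduce to the local $S^3$ model $S_{(p,q)}$ of Lemma~\ref{lem:cable}, invoke the classical Hopf-plumbing description of the fiber of $T_{(p,q)}$, confirm the count via Euler characteristic, and then transfer. The Euler characteristic count and sign analysis agree in both versions, and your version has the pedagogical virtue of making the relation to Stallings' theorem explicit.

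The place to be careful is the transfer. You say the plumbing arcs ``can all be arranged inside an arbitrarily small ball disjoint from $L$,'' but $L$ does not live in the local $S^3$ model — what you actually need is that the plumbing arcs are disjoint from the complementary core $C$ of Lemma~\ref{lem:cable} (equivalently, from the $p$ small subdisks of the page of $T_{(p,\mathrm{sgn}(q))}$ that are excised to form $S_{(p,\mathrm{sgn}(q))}$ and replaced by copies of the fiber of $L$ when glued into $M$). You also need that after the plumbing, $C$ is still a core of the complementary Heegaard solid torus for $T_{(p,q)}$, still transverse to the new pages, and still meets each page in exactly $p$ points — this is what guarantees that removing a neighborhood of $C$ produces precisely $S_{(p,q)}$ and not some other fibered solid torus. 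Both facts hold in the Stallings/Neumann–Rudolph stacked-plates picture (the new bands sit in a collar of the Heegaard torus, away from $C$, and $C$ meets each plate once as before), but they are the substantive content of the transfer and should be stated, not left to locality of Murasugi sums alone. Once those two points are pinned down, the identification of the composite plumbing with a single Murasugi sum against $T_{(p,q)}$, and of each Hopf plumbing with a stabilization of the cable's open book, goes through as you indicate.
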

\begin{proof}
In the left hand side of Figure~\ref{fig:p1topq} we show how to go from a $(p,1)$--cable of a knot to the $(p,q)$--cable. The horizontal sheets are copies of the Seifert surface for $L$. To go from the top to the bottom pictures in the figure one simply attaches $(|p|-1)(|q|-1)$ bands with a right handed twist. It is easy to see that this corresponds to a positive stabilization of the open book.
\begin{figure}[ht]
 {\epsfysize=2.5truein\centerline {\epsfbox{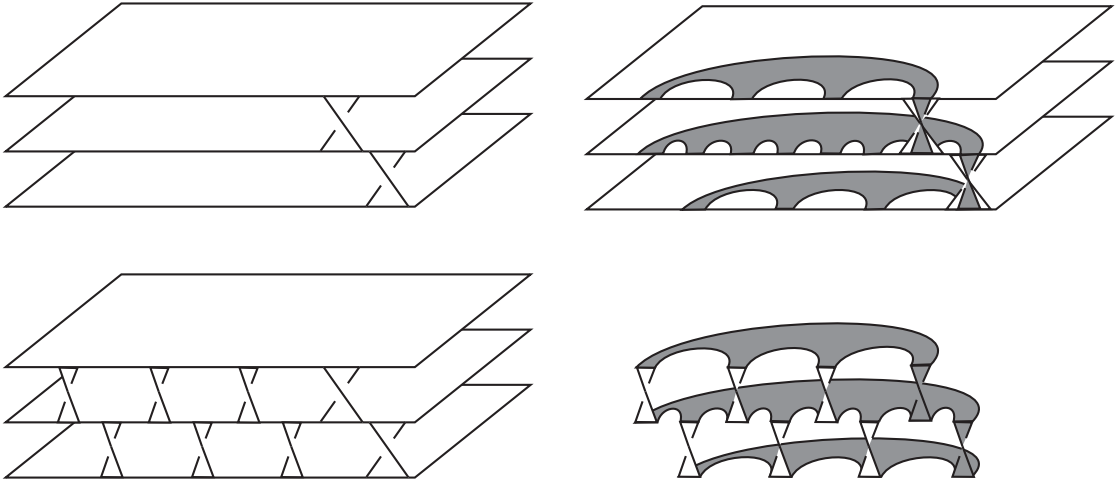}}} 
        \caption{Left: the top picture is the $(3,1)$--cable of $L$ and the bottom picture is the $(3,4)$--cable of $L.$ Right: the top picture is the $(3,1)$--cable of $L$ with a Murasugi summing disk shaded and the bottom picture is the $(3,4)$--torus knot with a Murasugi summing disk shaded.}
        \label{fig:p1topq}
\end{figure}
For the second statement notice that the two shaded disks on the right hand side of Figure~\ref{fig:p1topq} are Murasugi summing disks that show the $(p,q)$--cable of $L$ is the Murasugi sum of the $(p,1)$--cable and the $(p,q)$--torus link.  Similar arguments work for $(p,q)$--cables when $q<0.$
\end{proof}

\section{The proof of Theorem~\ref{thm:main-cable}}\label{proofofmain}

In this section we give the proof of Theorem~\ref{thm:main-cable}.   We break this proof into a few parts for clarity.  In Subsection~\ref{sec:parts1and2} we show that positive cablings of open books preserve the supported contact structures, parts (1) and (2) of the theorem.  As these are the main results needed for all our applications, the reader primarily interested in the applications only needs to read the first subsection; though, the surprising rich and subtle structure of negative cables is  interesting in its own right. In Subsection~\ref{sec:part3OT} we show that all non-exceptional negative cablings of open books support overtwisted contact structures.  In Subsection~\ref{sec:lutz} we show how the homotopy class of the plane field of the contact structure supported by a negative cabling may be induced by Lutz twists.   Finally in Subsection~\ref{sec:thm}  we pull these together to prove the theorem and its corollary, as well as several propositions about exceptional cables.

Since a cabling of a link may be done one component at a time, we focus our attention on cabling just one binding component of an open book.  To this end, throughout this section let $(L,\pi)$ be a rational open book for $M$ with binding components $L_1=K, L_2, \dots, L_n$ and supporting the contact structure $\xi$.   Consider the $(p,q)$--cable of $(L,\pi)$ --- that is, taking the $(p,q)$--cable of $K$ and not cabling the remaining components of $L$.   We will use $K_{(p,q)}$ to denote only the $(p,q)$--cable of $K$ while $L_{(p,q)}$ is the entire cabled link $K_{(p,q)} \cup L_2 \cup \dots \cup L_n$.

Assume a page of $(L,\pi)$ approaches $K$ as an $(r,s)$ curve, having chosen a framing of $K$ such that $0\leq s< r$.
Assuming the $(p,q)$--cabling slope  is neither the trivial slope nor the Seifert slope, then the cabled open book supports a contact structure $\xi_{(p,q)}$.

\subsection{Compatibility of positive cablings}\label{sec:parts1and2} 

For simplicity, we first argue in the case that $(L,\pi)$ is an integral open book.  Thereafter the case of a rational open book is a straightforward generalization.

\begin{lem}\label{main-cable-lemma1}
Let $(L, \pi)$ be an integral open book compatible with $\xi$.  If $(p,q)$ is a positive slope and $p>0$ then the cabled open book $L_{(p,q)}$ is also compatible with $\xi$. 
\end{lem}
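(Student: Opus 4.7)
By the uniqueness clause of Theorem~\ref{thm:support}, it suffices to exhibit a contact form for $\xi$ that is also compatible with $L_{(p,q)}$. The plan is to begin with a compatible contact form $\alpha$ for $(L,\pi)$ in standard Thurston--Winkelnkemper form near $K=L_1$, observe that on a torus parallel to $\bdry N_K$ the positivity of $(p,q)$ forces $K_{(p,q)}$ to be positively transverse to $\xi$ and the form to remain $d\alpha$-positive on the new pages away from $K_{(p,q)}$, and then modify $\alpha$ in a tubular neighborhood of $K_{(p,q)}$ to match the standard model for a compatible open book there.

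Choose $\alpha$ compatible with $(L,\pi)$ so that in coordinates $(\varphi,r,\vartheta)$ on $N_K$ one has $\alpha=f(r)\,d\varphi+g(r)\,d\vartheta$ with $f(0)>0$, $g(0)=0$, $g'>0$, $f'<0$, and $fg'-f'g>0$; the pages $\{\vartheta=\text{const}\}$ then carry $d\alpha=-f'(r)\,d\varphi\wedge dr>0$. Pick $r_0$ small enough that $f(r_0)>0$, and let $T_{r_0}=\{r=r_0\}$. Since $(L,\pi)$ is integral the Seifert slope is $0$, and positivity of $(p,q)$ together with $p>0$ gives $q>0$, so on $T_{r_0}$ the cable $K_{(p,q)}$ with tangent $p\partial_\varphi+q\partial_\vartheta$ satisfies $\alpha(K_{(p,q)})=pf(r_0)+qg(r_0)>0$; hence $K_{(p,q)}$ is positively transverse to $\xi$. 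Moreover, by the construction in Lemma~\ref{lem:cable} the pages of $L_{(p,q)}$ outside a tubular neighborhood $V$ of $K_{(p,q)}$ meet $N_K$ in $p$-tuples of constant-$\vartheta$ annuli of the old pages; on each such annulus $d\alpha=-f'(r)\,d\varphi\wedge dr$ is still a positive area form, while outside $N_K$ the pages are unchanged. Thus $\alpha$ is already compatible with $L_{(p,q)}$ on $M\setminus V$.

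It remains to replace $\alpha|_V$ by a contact form compatible with $K_{(p,q)}$ as a binding. In coordinates $(\Phi,\rho,\Theta)$ adapted to $K_{(p,q)}$ (with the framing inherited from $T_{r_0}$), this is the solid-torus extension step from the proof of Theorem~\ref{thm:support}: one writes the new form as $h_1(\rho)\,d\Phi+h_2(\rho)\,d\Theta$ with $h_1,h_2$ chosen to match $\alpha$ on $\bdry V$, to be contact on all of $V$, and to give $d\alpha$ positive on the meridional disk pages of $V$. The main obstacle is this interpolation: one must verify that profiles $(h_1,h_2)$ can simultaneously realize the boundary matching, the contact inequality $h_1 h_2'-h_1' h_2>0$, and the positivity of $d\alpha$ on the cobordism pieces of the new pages throughout $V$. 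This is precisely the flexibility exploited in the second half of the proof of Theorem~\ref{thm:support}; once the form is built, a Moser-type argument identifies the resulting contact structure with $\xi$ up to isotopy, and Theorem~\ref{thm:support} then yields $\xi_{(p,q)}\cong\xi$.
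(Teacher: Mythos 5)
Your overall strategy—keep the compatible form $\alpha$ on most of $M$, observe $K_{(p,q)}$ is transverse, and patch near the new binding—is plausible in outline, but the key compatibility check is based on a wrong picture of the cabled pages, and that is exactly where the real content of the lemma lies.

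You assert that outside a tubular neighborhood $V$ of $K_{(p,q)}$, the new pages meet $N_K$ in ``$p$-tuples of constant-$\vartheta$ annuli.'' This is false. After cabling, the original core $K$ is no longer a binding component, so the new pages cannot have boundary along $K$; instead they cross $K$ transversely. Concretely, inside $N_K$ a page of the cabled fibration is (the image of) a Seifert surface of the $(p,q)$-torus knot in the solid torus: roughly $q$ meridional (constant-$\varphi$) disks near the core, $p$ constant-$\vartheta$ annuli near $\partial N_K$, and $pq$ spiraling bands near the torus $T_{r_0}$ carrying $K_{(p,q)}$. The bands necessarily sweep all the way around $T_{r_0}$ and cannot be absorbed into any tubular neighborhood of $K_{(p,q)}$. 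On the meridional-disk pieces $d\alpha$ restricts as $g'(r)\,dr\wedge d\vartheta$ rather than $-f'(r)\,d\varphi\wedge dr$, and on the bands $d\alpha$ has mixed terms; positivity of $d\alpha$ on these pieces is exactly the condition that the pages be positively transverse to the Reeb field of $\alpha$, and this is \emph{not} automatic. In fact the Reeb slope of your $\alpha$ on $T_r$ is $-f'(r)/g'(r)$, which tends to $0$ near $r=0$ and hence cannot agree with $q/p$ on the core; so the Reeb orbits of the given $\alpha$ do not form a Seifert fibration by $(p,q)$-circles, and transversality of the torus-knot fiber surfaces to this flow needs an argument.

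This is why the paper proceeds differently. Rather than checking that a fixed $\alpha$ remains compatible, it cuts out $N$ and glues in an explicit model $S_{(p,q)}=S^3\setminus N'$: on the ellipsoid $\{p|z_1|^2+q|z_2|^2=1\}$ the form $r_1\,d\theta_1+r_2\,d\theta_2$ has Reeb flow a genuine Seifert fibration by $(p,q)$-orbits with the torus-knot pages Reeb-transverse (Claim~\ref{claim:Spqctctstrhonest}); then it invokes the Giroux--Honda classification of tight contact structures on solid tori to conclude that this model is contactomorphic to $\xi|_N$ (both are universally tight with matching characteristic foliations); finally Claim~\ref{claim:glueOBhonest} handles the $T^2\times I$ interpolation. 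Those two claims supply precisely what your proposal hand-waves: a compatible form on the solid torus that does not come by simple radial extension near the new binding, and an identification of the resulting contact structure with $\xi$. If you want to salvage the ``keep $\alpha$ fixed'' route, you would need to (i) verify page transversality to $R_\alpha$ throughout $N_K$, including the spiraling bands, or choose a new $\alpha$ whose Reeb flow is Seifert-fibered near $N_K$ and prove horizontality of the pages; and (ii) show the resulting contact structure is isotopic to $\xi$ --- and (ii) is where the tight-solid-torus classification enters in any case. As written, the proposal has a genuine gap at the ``$d\alpha$ remains positive on the new pages'' step.
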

\begin{proof}

Recall in the proof of Lemma~\ref{lem:cable} we replaced a small neighborhood $N$ of a binding component $K$ of $L$ with $S_{(p,q)}=S^3\setminus N'$ where $N'$ was a neighborhood of the unknotted curve $C$ intersecting a fiber of the fibration of the $(p,q)$--torus link in $p$ points  (see that proof for notation). The contact structure on $N=D_{\epsilon}^2\times S^1,$ with $D_{\epsilon}^2$ being the disk of radius $\epsilon$ in $\R^2,$ can be assumed to be given in coordinates $((r,\theta),\phi)$ by $f(r)\, d\theta+ d\phi,$ where $f(r)=r^2$ near 0 and $f(r) \gg 0$ near $\epsilon$. Similarly the contact structure on $N'=D^2_{\epsilon'}\times S^1$ in the coordinates $((r,\theta),\phi)$ is given by $g(r)\, d\theta + d\phi$ where $g(r)=r^2$ near 0 and $0<g(r) \ll \epsilon'$ near $\epsilon'$.  

\begin{claim}\label{claim:Spqctctstrhonest}
The contact structure on $S_{(p,q)}=S^3\setminus N'$ is contactomorphic to  the one on $N$ for the appropriate choice of $\epsilon.$ 
\end{claim}

\begin{proof} 
To see this we examine the contact structure on $S_{(p,q)}$.   Let 
\[
S^3=\{(z_1,z_2)\in \C^2\, |\; \,  p|z_1|^2+q|z_2|^2=1\}.
\]
As $S^3$ is transverse to the radial vector field on $\C^2$ we see that $r_1\, d\theta_1+r_2\, d\theta_2$ restricts to a contact form on $S^3$ giving the standard tight contact structure, where $z_j=r_je^{i\theta_j}.$ One may easily check that there are two closed unknotted trajectories $C_0, C_1$ to the Reeb field corresponding to $\{z_j=0\}$ for $j=0,1$. In addition, $C_0 \cup C_1$ is a Hopf link with complement fibered by $(p,q)$--torus knots which are orbits of the flow of the Reeb vector field. One may also check that if we fix one of these fibers $T_{(p,q)}$ then $S^3\setminus T_{(p,q)}$ is fibered by surfaces transverse to the Reeb trajectories. This shows that the standard tight contact structure is supported by the open book with binding $T_{(p,q)}.$ Moreover, we see that $C_1$ is an unknot that intersects the pages of this open book $p$-times; in particular, $C_1$ is $C$ from the proof of Lemma~\ref{lem:cable} (and Lemma~\ref{main-cable-lemma1} above) and thus $C$ is a transverse unknot with  self-intersection $-1.$ The complement of a neighborhood of such an unknot in $S^3$ is easily seen to be contactomorphic to the one on $N$, since they are both universally tight and the neighborhoods can be chosen so that  they have the same characteristic foliation on their boundaries, see \cite{Giroux00, Honda00a}.
\end{proof}

\begin{claim}\label{claim:glueOBhonest}
The contact structure on $M$ resulting from gluing $S_{(p,q)}$ in place of $N$ is supported by the image of $T_{(p,q)}$ in $M$, that is by the $(p,q)$--cable of the original binding. 
\end{claim}

\begin{proof}
Break $M$ into three regions $M\setminus N'',$ where $N''$ is a neighborhood of $K$ slightly larger than $N$ but with contact structure still given by a form as described above, $S_{(p,q)}$ and a region $T^2\times[0,1]=N''\setminus N$.   If $\alpha$ is a contact form for the original contact structure and $\alpha'$ is a contact form supported by $T_{(p,q)}$ on $S^3$ then it is clear by construction that $\alpha|_{M\setminus N''}$ is ``supported'' by the new open book on $M\setminus N''$ and $\alpha'|_{S_{(p,q)}}$ is ``supported'' by the new open book on $S_{(p,q)}$.  We need to extend $\alpha|_{M\setminus N''}$ and $\alpha'|_{S_{(p,q)}}$ to $N''\setminus N$.  This is easily accomplished as at the end of the proof Theorem~\ref{thm:support} above. More specifically, in coordinates $(\theta,\phi, t)$ on $T^2\times[0,1]=N''\setminus N$ we notice the above forms $\alpha|_{M\setminus N''}$ and $\alpha'|_{S_{(p,q)}}$ near $\partial (N''\setminus N)$ can be assumed to be of the form $f(t)\, d\theta + g(t)\, d\phi.$ One now extends the functions $f(t)$ and $g(t)$ across $[0,1]$ so they are compatible with the fibration of $N''\setminus N$ given by constant $\phi$'s.
\end{proof}

By Claim~\ref{claim:Spqctctstrhonest}, gluing $S_{(p,q)}$ in place of $N$ does not change the contact structure on $M$.   Then by Claim~\ref{claim:glueOBhonest}, the open book $L_{(p,q)}$ supports the original contact structure.
\end{proof}

\begin{lem}\label{lem:main-cable-lemma2}
Let $(L,\pi)$ be a rational open book compatible with $\xi$.  If $(p,q)$ is positive and $p>0$ then the cabled open book $L_{(p,q)}$ is also compatible with $\xi$. 
\end{lem}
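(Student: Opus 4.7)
The plan is to adapt the proof of Lemma~\ref{main-cable-lemma1} directly, substituting the lens space models of Example~\ref{Ex:torusknots} for the $S^3$ model used there. Let $N$ be a standard neighborhood of the binding component $K$, chosen so that the pages of $(L,\pi)$ meet $\partial N$ in $(r,s)$--curves, and arrange $\xi|_N$ to have contact form $f(\rho)\,d\theta+d\varphi$ in meridian--longitude coordinates on $N=D^2\times S^1$, with $f$ monotone and $f(\rho)=\rho^2$ near $\rho=0$. By Lemma~\ref{lem:cable}, the solid torus $S_{(p,q)}^{(r,s)}$ is obtained from $-L(r,s)$ by removing a small neighborhood $N'$ of an unknotted curve $C$ transverse to the fibration of $-L(r,s)\setminus T_{(p,q)}^{(r,s)}$, and cabling $K$ in $(L,\pi)$ amounts to gluing $S_{(p,q)}^{(r,s)}$ in place of $N$.

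The first task is to produce a contact form on $-L(r,s)$ supported by the rational open book with binding $T_{(p,q)}^{(r,s)}$, whose restriction to a neighborhood of $C$ takes the same shape $g(\rho)\,d\theta+d\varphi$ as the neighborhood $N$. I would realize $-L(r,s)$ as $S^3/(\Z/r\Z)$, where the action rotates the two $\C$--coordinates by $e^{2\pi i/r}$ and $e^{2\pi i s/r}$, and descend a standard contact form $\alpha_0=a\,r_1^2\,d\theta_1+b\,r_2^2\,d\theta_2$ for $a,b>0$ chosen so that the Reeb flow has $T_{(p,q)}^{(r,s)}$ as a closed orbit; then $C$ is the image of one of the core circles $\{z_j=0\}$. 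A direct inspection (analogous to the one in the proof of Claim~\ref{claim:Spqctctstrhonest}, and using the positivity hypothesis $q/p>s/r$ to guarantee $d\alpha_0$ is a volume form on the pages) verifies that the standard tight contact structure on $-L(r,s)$ is supported by this rational open book.

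The two remaining steps are the analogs of Claims~\ref{claim:Spqctctstrhonest} and~\ref{claim:glueOBhonest}. For the first, both $(N,\xi|_N)$ and $(S_{(p,q)}^{(r,s)},\alpha_0|_{S_{(p,q)}^{(r,s)}})$ are universally tight solid--torus neighborhoods of transverse knots, and by a choice of the radii and of $f,g$ their boundary characteristic foliations can be arranged to coincide; the Giroux--Honda classification of tight contact structures on the solid torus with prescribed convex boundary then yields a contactomorphism $(N,\xi|_N)\to (S_{(p,q)}^{(r,s)},\alpha_0|_{S_{(p,q)}^{(r,s)}})$, which can be taken to carry the annular fibers of $(L,\pi)$ near $K$ to the annular fibers of the cabled open book near $T_{(p,q)}^{(r,s)}$. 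For the second, I would proceed exactly as in Claim~\ref{claim:glueOBhonest}: on a collar $T^2\times[0,1]$ interpolating between the old piece $M\setminus N''$ and the new piece $S_{(p,q)}^{(r,s)}$, both supporting forms can be written as $f(t)\,d\theta+g(t)\,d\varphi$, and one extends $f,g$ monotonically so that the resulting global form is contact and is supported by the fibration of the cable.

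The main obstacle I expect is the bookkeeping involved in matching the characteristic foliations and page structures across $\partial N=\partial S_{(p,q)}^{(r,s)}$: the rational setting introduces the integers $r,s$ and the multiplicity with which a page wraps around $K$, and these must be tracked carefully through the quotient description of $-L(r,s)$ and the explicit contact forms so that the resulting contactomorphism actually sends pages to pages. Once this matching is in hand, the extension and the uniqueness of the supported structure are immediate from Theorem~\ref{thm:support}.
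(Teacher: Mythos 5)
Your proposal follows essentially the same route as the paper: replace the standard neighborhood $N$ of the binding component by the complementary solid torus $S_{(p,q)}^{(r,s)} = -L(r,s)\setminus N'$, build a model contact form on $-L(r,s)$ by descending an $S^1$-invariant form from $S^3$ through the cyclic quotient so that the Reeb orbits realize the Seifert fibration by $T_{(p,q)}^{(r,s)}$ curves, show $N$ and $S_{(p,q)}^{(r,s)}$ are contactomorphic universally tight solid tori by matching boundary characteristic foliations, and patch the supporting forms by the interpolation argument of Claim~\ref{claim:glueOBhonest}. One small caution: in the paper the $\mathbb{Z}_r$-action uses the exponent $r-s$ rather than $s$ to land on $-L(r,s)=L(r,r-s)$, and the paper explicitly lifts $T_{(p,q)}^{(r,s)}$ to a torus knot $T_{(p',q')}$ and verifies transversality of the pages to the Reeb flow via an incompressibility/horizontality argument — a step your sketch compresses but which needs to be carried out to justify that $d\alpha_0$ is a volume form on the pages.
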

\begin{proof}
Recall we think of the lens space $-L(r,s)$ as being obtained by gluing $V_1=S^1\times D^2$ to $V_0=S^1\times D^2$ so that $\{pt\}\times \bdry D^2$ maps to the $(r,s)$--curve in the boundary of $V_0$. We denote the core of $V_i$ by $C_i$. Moreover $T_{(p,q)}^{(r,s)}$ is the $(p,q)$--curve on the boundary of a neighborhood of $C_0$ in $V_0.$ 

In the proof of Lemma~\ref{lem:cable} we replaced a small neighborhood $N$ of a binding component $K$ of $L$ with $S^{(r,s)}_{(p,q)}=-L(r,s)\setminus N'$ where $N'$ was a neighborhood of the unknotted curve $C_1$ intersecting a fiber of the fibration of $T_{(p,q)}^{(r,s)}$ in $r/\gcd(p,r)$ points. The contact structure on $N=D_{\epsilon}^2\times S^1,$ with $D_{\epsilon}^2$ being the disk of radius $\epsilon$ in $\R^2,$ can be assumed to be given in coordinates $((r,\theta),\phi)$ by $f(r)\, d\theta+ d\phi,$ where $f(r)=r^2$ near 0 and $f(r) \gg 0$ near $\epsilon.$ Similarly the contact structure on $N'=D^2_{\epsilon'}\times S^1$ in the coordinates $((r,\theta),\phi)$ is given by $g(r)\, d\theta + d\phi$ where $g(r)=r^2$ near 0 and $0<g(r) \ll \epsilon'$ near $\epsilon'.$

\begin{claim}\label{claim:Spqctctstr}
The contact structure on $S^{(r,s)}_{(p,q)}=-L(r,s)\setminus N'$ is contactomorphic to  the one on $N$ for the appropriate choice of $\epsilon.$ 
\end{claim}

\begin{proof} 
We begin by recalling that $-L(r,s)=L(r, r-s)$ can be constructed from the unit sphere $S^3$ in $\C^2,$ with coordinates $(z_1,z_2),$  as follows. Let $g=e^{\frac{2\pi i}{r}}$ and define the $\Z_r$ action on $S^3$ by $g\cdot (z_1,z_2)=(gz_1, g^{(r-s)} z_2).$ Then $-L(r,s)$ is the quotient of $S^3$ under this action.  Notice that $T_{(p,q)}^{(r,s)}$ can be thought of as a (multiple) of a fiber in a Seifert fibration of $-L(r,s).$ More precisely, $-L(r,s)\setminus (C_0\cup C_1)$ is diffeomorphic to $T^2\times \R$ and can be fibered by $T_{(p,q)}^{(r,s)}$ curves in such a way that this extends to a Seifert fibration of $-L(r,s)$ with singular fibers $C_0$ and $C_1.$ There is a positive integer $n$ such that $nT_{(p,q)}^{(r,s)}$ lifts to a closed curve in $S^3$ that sits on a standardly embedded torus. Thus it lifts to a (multiple) of a torus knot which we denote by $T_{(p',q')}.$ Moreover, one may now easily check that the Seifert fibration of $S^3$ by $T_{(p',q')}$ curves covers the given Seifert fibration of $-L(r,s).$ Moreover, there is a Hopf link $C_0'\cup C_1'$ in $S^3$ that covers $C_0\cup C_1$ in $-L(r,s).$   We also note that the condition that $T_{(p,q)}^{(r,s)}$  is a positive torus knot in $-L(r,s)$ implies that $T_{(p',q')}$ is a positive torus knot in $S^3.$

We now represent $S^3$ as
\[
S^3=\{(z_1,z_2)\in \C^2\, |\; \,  p'|z_1|^2+q'|z_2|^2=1\}.
\]
As $S^3$ is transverse to the radial vector field on $\C^2$ we see that $r_1\, d\theta_1+r_2\, d\theta_2$ restricts to a contact form on $S^3$ giving the standard tight contact structure, where $z_j=r_je^{i\theta_j}.$ The action above clearly preserves this $S^3$ and hence gives a model for the universal cover of $-L(r,s).$ One may easily check that there are two closed unknotted trajectories $C'_0, C'_1$ to the Reeb field corresponding to $\{z_j=0\}$ for $j=1,2.$ In addition, the complement of the Hopf link $C_0'\cup C_1'$ is fibered by $(p',q')$--torus knots which are orbits of the flow of the Reeb vector field. As the form $r_1\, d\theta_1+r_2\, d\theta_2$ is equivarient  with respect to the $\Z_r$ action above it is clear that the $1$--form and Reeb vector field descend to $-L(r,s).$ This gives a Reeb vector field for a contact structure on $-L(r,s)$ whose orbits consist of $C_0, C_1$ and $T_{(p,q)}^{(r,s)}$ curves. We also note that the fibers of the fibration $\pi\colon (-L(r,s)\setminus T_{(p,q)}^{(r,s)})\to S^1$ can be made positively transverse to the Seifert fibration of $-L(r,s)\setminus T_{(p,q)}^{(r,s)}$ by Reeb orbits. (One way to see this is to notice that a fiber of $\pi$ is incompressible in $-L(r,s)\setminus T_{(p,q)}^{(r,s)}$ and thus can be made ``horizontal'' or ``vertical''. As it cannot be vertical, it must be horizontal, that is transverse to the fibers. Moreover, homologically we can see that it is positive transverse. It is now easy to make the other fibers of $\pi$ transverse.) This shows that the universally tight contact structure on $-L(r,s)=L(r,r-s)$ constructed above is supported by the open book with binding $T_{(p,q)}^{(r,s)}.$ Moreover, we see that $C_1$  intersects the pages of this open book $r/\gcd(p,r)$ times. The complement of $C_1$ is an open solid torus that is covered by the open solid torus in $S^3$ that is the complement of an unknot with self-linking $-1,$ and thus is universally tight. The complement of such an unknot in $S^3$ is easily seen to be contactomorphic to the one on $N$, since $N$ can be chosen so that the characteristic foliation is arbitrarily close to the pages slope, see \cite{Giroux00, Honda00a}.
\end{proof}

\begin{claim}\label{claim:glueOB}
The contact structure on $M$ resulting from gluing $S^{(r,s)}_{(p,q)}$ in place of $N$ is supported by the image of $T^{(r,s)}_{(p,q)}$ in $M$, that is by the $(p,q)$--cable of the original binding. 
\end{claim}

\begin{proof}
With the work done above, this proof is nearly identical to the proof of Claim~\ref{claim:glueOBhonest} and is left to the reader. 
\end{proof}

By Claim~\ref{claim:Spqctctstr}, gluing $S_{(p,q)}$ in place of $N$ does not change the contact structure on $M$.   Then by Claim~\ref{claim:glueOB}, the open book $L_{(p,q)}$ supports the original contact structure.
\end{proof}

\subsection{Most negative cables are overtwisted}\label{sec:part3OT}
In this section we consider when a negatively cabled open book necessarily supports an overtwisted contact structure.  One would expect that any non-trivial, negatively cabled open book supports an overtwisted contact structure except when both the original open book and the cabled open book have disk pages.  (Note that the rational unknots in lens spaces provide rational open books that necessarily support a universally tight contact structure.)  We show this holds for integral open books in  Lemma~\ref{lem:xipqOT}.  One may use a similar, but more complicated analysis to show that  sufficiently negative cables of rational open books are overtwisted, but to understand when they are not overtwisted a much more delicate argument is needed. The argument is presented in Lemma~\ref{exceptional}. We observe that the case for integral open books follows from our more detailed analysis, but we present our argument in this case separately as it is particularly easy and likely to be the most interesting to many readers. 

\begin{lem}\label{lem:xipqOT}
Statement (4) of Theorem~\ref{thm:main-cable} is true for integral open books.
\end{lem}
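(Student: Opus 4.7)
The plan is to apply Lemma~\ref{lem:gencable} in two stages, in tandem with the Giroux--Torisu correspondence between Murasugi summing of supporting open books and contact connected sum of their supported contact manifolds. As cabling can be carried out one binding component at a time, I focus on cabling a single component $K$ by some $(p,q)$ satisfying the hypotheses of statement (4); after possibly reversing the orientation of $M$, I assume $p\geq 2$ and $q\leq-2$, so $q/p$ is negative, is not the exceptional slope $-1$, and $(p-1)(|q|-1)\geq 1$.

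By Lemma~\ref{lem:gencable}, the cabled open book $L_{(p,q)}$ is the Murasugi sum of $L_{(p,-1)}$ (an integral open book on $M$) with the open book on $S^3$ whose binding is the torus link $T_{(p,q)}$. The Giroux--Torisu identification then gives
\[
(M,\xi_{L_{(p,q)}})\;\cong\;(M,\xi_{L_{(p,-1)}})\;\#\;(S^3,\xi_{T_{(p,q)}}),
\]
and since overtwistedness is preserved under contact connected sum, it remains to show that $\xi_{T_{(p,q)}}$ is overtwisted.

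Apply Lemma~\ref{lem:gencable} once more, now to the unknot in $S^3$ with its disk open book supporting $\xi_{std}$. Lemma~\ref{lem:torusknotfiber} shows $T_{(p,-1)}$ is again the unknot with disk pages, so the open book for $T_{(p,q)}$ is obtained from the disk open book by $(p-1)(|q|-1)\geq 1$ negative stabilizations, each of which is a Murasugi sum with the annular Hopf link open book whose monodromy is a single left-handed Dehn twist. That monodromy is not right-veering (a cross-cut arc is sent to one lying strictly to its left at both endpoints), so by \cite{HondaKazezMatic07} the Hopf link open book supports an overtwisted contact structure $\xi_{OT}$ on $S^3$. Applying Giroux--Torisu again, after a single negative stabilization the supported contact structure is $\xi_{std}\#\xi_{OT}$, which is overtwisted; further stabilizations preserve overtwistedness. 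Hence $\xi_{T_{(p,q)}}$, and therefore $\xi_{L_{(p,q)}}$, is overtwisted.

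The symmetric case ($p\leq-2$, $q\geq 1$) is handled identically after reversing the orientation of $M$, and the only case in which the binding is a rational unknot reduces to $L$ the unknot in $S^3$, where the excluded condition $rq-ps=-1$ becomes $q=-1$ and so is already ruled out by the exceptional-slope hypothesis. The main point to verify in detail is the Giroux--Torisu correspondence for the Murasugi sums produced by Lemma~\ref{lem:gencable}: one needs the plumbing sphere to bound a tight Darboux ball inside one of the summands, which is routine given the explicit local contact models near the binding used already in the proof of Theorem~\ref{thm:support} and Claim~\ref{claim:Spqctctstrhonest}.
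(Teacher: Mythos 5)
There is a genuine gap: your reduction ``I assume $p\geq 2$ and $q\leq -2$'' is not available, and the missing case $q=-1$ (with $p\geq 2$ and $L$ not the unknot) is precisely the base case that your approach cannot reach. For an integral open book the exceptional cabling \emph{slope} is $q/p=-1$, i.e.\ $q=-p$, not $q=-1$; so $q=-1$ gives slope $-1/p$, which is negative and non-exceptional and therefore squarely in the hypothesis of statement (4). Your strategy is to write $\xi_{(p,q)}\cong\xi_{(p,-1)}\#\xi'_{(p,q)}$ and then show the \emph{torus-link} summand $\xi'_{(p,q)}$ is overtwisted. When $q=-1$ that summand is the one supported by $T_{(p,-1)}$, which is the unknot with disk pages: it supports $\xi_{std}$, which is tight, and the Murasugi-sum decomposition degenerates to $\xi_{(p,-1)}\cong\xi_{(p,-1)}\#\xi_{std}$, telling you nothing. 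Your concluding remark that ``$q=-1\ldots$ is already ruled out by the exceptional-slope hypothesis'' misidentifies the exceptional slope and does not save this case; the hypothesis only excludes the unknot when $q=-1$, not general $L$.

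The paper's proof avoids this by going after the \emph{other} summand: after the same first Murasugi-sum reduction it shows directly that $\xi_{(p,-1)}$ is overtwisted, by exhibiting a non-right-veering arc on the page of the $(p,-1)$--cabled open book (using that the original page is not a disk). That argument covers $q=-1$ as well as all $q\leq -2$, and your argument for the torus-link factor when $|q|\geq 2$ then becomes unnecessary. To repair your proof you would need to supply, for the case $|q|=1$, an independent argument that $\xi_{(p,\mathrm{sgn}(q))}$ is overtwisted whenever the page of $L$ is not a disk; once that is in place the rest of what you wrote (the second application of Lemma~\ref{lem:gencable}, the identification of negative Hopf stabilizations with connected sums with the overtwisted structure supported by the negative Hopf band, and the persistence of overtwistedness under connected sum) is a correct, slightly different packaging of the $|q|\geq 2$ cases.
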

\begin{proof}
We first assume $L$ is the connected binding of an integral open book (where $(r,s)=(1,0)$), and take $p>1$ and $q<0$.   The case with $p<0$ is similar.   The case of more components clearly follows from the argument below. 
By Lemma~\ref{lem:gencable}, the open book for the cable $L_{(p,q)}$ (with $p>1$ and $q<0$) may be obtained as a Murasugi sum of the cable $L_{(p,-1)}$ and the $(p,q)$--torus link.  Thus the contact structure $\xi_{(p,q)}$ is contactomorphic to $\xi_{(p,-1)}\# \xi'_{(p,q)},$ where $\xi'_{(p,q)}$ is the contact structure on $S^3$ supported by the $(p,q)$--torus link.  Hence we are left to show $\xi_{(p,-1)}$ is overtwisted.

Recall the notion of \dfn{right veering} from \cite{HondaKazezMatic07}.  Given an open book decomposition of a manifold $(\Sigma, \phi)$ we say a properly embedded arc $\gamma\colon [0,1]\to \Sigma$ on $\Sigma$ is right veering if either $\phi(\gamma)$ and $\gamma$ are isotopic rel end points or when $\phi(\gamma)$ has been isotoped, rel endpoints, to intersect $\gamma$ transversely and minimally, the (inward pointing) tangent vector of $\phi(\gamma)$ at $\gamma(i)$ followed by the (inward pointing) tangent vector of $\gamma$ at $\gamma(i)$ form an oriented basis for $T_{\gamma(i)}\Sigma, i=0,1$.   We say the open book is right veering if all properly embedded arcs on a page are right veering. The main result of \cite{HondaKazezMatic07} is that if a contact structure is tight then any open book supporting it will be right veering. 

We shall see $\xi_{(p,-1)}$ is overtwisted by finding an arc on the open book for $L_{(p,-1)}$ that is not right veering.  To this end notice that if $\Sigma$ is a page of the open book associated to $T_{(p,-1)}$ in $S^3$ (notice that $\Sigma$ is a disk) the monodromy for $T_{(p,-1)}$ preserves $\Sigma\cap C,$ where $C$ is the unknot used in the construction above (and Lemma~\ref{lem:cable}). More precisely, we can think of $\Sigma\cap C$ as $p$ points sitting equally spaced on a circle about the center of the disk $\Sigma$ and the monodromy rotates this circle clockwise by $\frac{2\pi}{p}.$ Let $\gamma$ be an arc in $\Sigma$ that separates $\Sigma\cap C$ into two non-empty sets. Denote the page of the open book for $L_{(p,-1)}$ by  $\Sigma'$ and notice that $\Sigma'$ is obtained by removing disjoint disks about each point in $\Sigma\cap C$ and gluing copies of the page of $L$ in their places. One easily sees that if the page of $L$ is not a disk then $\gamma$ on $\Sigma'$ is not right veering and thus $\xi_{(p,-1)}$ is overtwisted. 
\end{proof}

\subsubsection{Relative open books}
In order to analyze negative cables of rational open books we need to recall the notion of a relative open book from  \cite{VanHornMorrisThesis}. A \dfn{relative open book decomposition} for a manifold $M$ with torus boundary components is a pair $(L,\pi)$ where $L$ is an oriented link in $M$ and $\pi:(M-L)\to S^1$ is a locally trivial fibration such that the closure of each fiber approaches each component of $L$ as a longitude. Notice that the fibration induces a fibration of each torus boundary component of $M$ by circles. We say a contact structure $\xi$ is compatible with $(L,\pi)$ if there is a contact form $\alpha$ for $\xi$ that is positive on vectors tangent to $L$ in the direction of the orientation of $L,$ $d\alpha$ is a positive volume form on each page of the open book and the characteristic foliation of $\xi$ on each boundary component of $M$ agrees with the foliation of by circles induced by $\pi.$ A slight generalization of this allows for the characteristic foliation on the boundary and the foliation induced by $\pi$ to differ but then the characteristic foliation should be linear and the Reeb vector field for $\alpha$ should leave the boundary invariant and be positively transverse to both the characteristic foliation and the foliation given by $\pi.$

One may easily check, see Proposition 3.0.7 in \cite{VanHornMorrisThesis}, that two boundary components of a relative open book can be glued together by an orientation reversing diffeomorphisms that preserves the fibration on the boundary components being glued. There is a natural (possibly still relative) open book  on the manifold obtained by the gluing, and it supports the natural contact structure obtained via the gluing. A slight generalization of this will allow us to glue relative open books and adapted contact structures along torus boundary components using diffeomorphisms that preserve both the characteristic foliations and foliations by the pages of the open book. 

\subsubsection{A few words about slope conventions.} \label{sec:slopeconventions}
We will need to use the classification of tight contact structures on solid tori and $T^2\times[0,1]$ from \cite{Honda00a}.   Unfortunately, as mentioned earlier, the convention used there (and in much of contact geometry) for denoting curves on tori and their slopes does not agree with the one used in this paper (see the introduction) and in the Dehn surgery literature.    Given a pair of oriented curves $\mu$ and $\lambda$ forming a positive basis for $T^2$, the convention used by Honda has a $(p,q)$--curve representing $p[\mu]+q[\lambda]$ with slope $q/p$ whereas our convention (used by Rolfsen for cables \cite{Rolfsen}) has a $(p,q)$--curve representing $p[\lambda]+q[\mu]$ with slope denoted $q/p$ as well.  We will use Rolfsen's convention.  To translate to Honda's convention, take the reciprocal of the slopes and then swap the letters.  Thus in translation, a $q/p$ slope is again a $q/p$ slope, but $0$ and $\pm\infty$ are swapped and inequalities flip.  

Following Rolfsen's convention, assuming our curves always have homology class with positive $[\lambda]$ coefficient,   we have the benefit of working with slopes from $-\infty$ to $\infty$ where a slope $q/p$ is positive with respect to a slope $s/r$ if $q/p>s/r$.  It does cause  $\mu$ to have slope $\infty$ and $\lambda$ to have slope $0$ (and any other longitude to have integral slope), but this is common when viewing $\mu$ as a meridian of a solid torus.  The main drawback is that when viewing $T^2$ as $\R^2/\Z^2$ associating $\mu$ and $\lambda$ with the $x$-axis and $y$-axis of $\R^2$ our slopes are ``run over rise'' and hence a ``horizontal'' curve has slope $\infty$ while a ``vertical'' curve has slope $0$.  

With our choice of convention it is convenient to orient the labeling of the Farey tessellation so that $-\infty, -1, 0$ appear in clockwise order as in Figure~\ref{fig:farey}.  This orientation is opposite what is common, but with our convention for slopes it permits us to preserve the notion of the slopes of three curves being in a clockwise order.

\begin{figure}[ht]
\relabelbox \small
{\epsfysize=2truein\centerline {\epsfbox{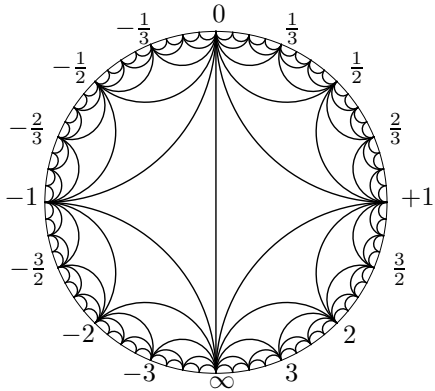}}}
\relabel{inft}{$\infty$}
\relabel{+1}{$+1$}
\relabel{2}{$2$}
\relabel{3}{$3$}
\relabel{3/2}{$\frac 32$}
\relabel{1/2}{$\frac 12$}
\relabel{1/3}{$\frac 13$}
\relabel{2/3}{$\frac 23$}
\relabel{0}{$0$}
\relabel{-1}{$-1$}
\relabel{-2}{$-2$}
\relabel{-3}{$-3$}
\relabel{-3/2}{$-\frac 32$}
\relabel{-1/2}{$-\frac 12$}
\relabel{-1/3}{$-\frac 13$}
\relabel{-2/3}{$-\frac 23$}
\endrelabelbox
	\caption{The Farey tesselation oriented for use with our convention of slopes.}
	\label{fig:farey}
\end{figure}

\subsubsection{Contact structures on solid tori and $T^2 \times I$.}\label{exception-section}
We now briefly recall the classiÞcation of tight contact structures on solid tori and $T^2 \times [0,1]$ from \cite{Honda00a}.

To this end we begin by discussing continued fractions. 
Given a rational number $-1< \frac sr < 0$ let  $[r_0,\ldots, r_k]$ be the continued fraction representation of $\frac sr$ such that  
\[
\frac sr =\cfrac{1}{r_0-\cfrac{1}{r_1-\cfrac{1}{\ddots -\cfrac{1}{r_k}}}}
\]
with $r_i\leq -2$ for each $i$.
 
Using the convention that $[r_0,\ldots, r_{j},-1]=[r_0,\ldots, r_{j}+1]$,
we define the set of {\em exceptional cabling slopes associated to $\frac sr$} as follows: Start with $e_1=[r_0,\ldots, r_k+1]$. Once $e_l$ is defined then $e_{l+1}$ is defined by adding 1 to the last term in the continued fraction expansion of $e_l$. Stop at $e_n = [-1] = -1$.  This produces a finite set of rational numbers $e_1,\ldots, e_n$.    Notice that the $e_i$ are precisely the vertices in a minimal path from $-1$ to $\frac sr$ in the Farey tessellation (not including the vertex $\frac sr$).  The set of exceptional cabling slopes for an integral open book (with Seifert slope $0$) may be taken to consist of  just $-1$.  Finally, an {\em exceptional cable} of a binding component of a rational open book with Seifert slope $\frac sr$ with $-1< \frac sr <0$ is a $(p,q)$--cable with $p,q$ coprime where $\frac qp$ is an exceptional cabling slope associated to $\frac sr$.

\begin{rem}\label{rem:ratlunknot}
If the Seifert slope $\frac sr$ of a rational unknot shares an edge in the Farey tessellation with $\frac qp$, then its $(p,q)$--cable with $p,q$ coprime (so that $rq-ps = \pm1$) gives another rational unknot.  See Example~\ref{ex:ratlunknot} (2).  
\end{rem}

Suppose $\xi$ is a tight contact structure on a solid torus such that the boundary is convex with two dividing curves of slope $ -1< \frac qp \leq 0$.  Take a shortest clockwise path in the Farey tessellation from $-1$ to $\frac qp$ and notice that the vertices in this shortest path are precisely the exceptional slopes $e_1,\ldots, e_n$ associated to $\frac qp.$  A tight contact structure on the solid torus is determined by a choice of sign on each edge. (Notice that some of these contact structures may be the same due to ``shuffling'' in a continued fraction block, but this will not be important for us in this paper).  Similarly if $\xi'$ is a tight (minimally twisting) contact structure on $T^2\times[0,1]$ with convex boundary, each torus $T^2\times\{i\}$ having two dividing curves of slope $s_i,$ with $s_0\not= s_1,$ then signs assigned to the shortest clockwise path in the Farey tessellation from $s_0$ to $s_1$ determine a tight contact structure on $T^2\times [0,1]$.  Now suppose $s_0=\frac qp$ and we glue the contact structures $\xi$ and $\xi'$ together. We now have a path from $-1$ to $s_1$ in the Farey tessellation obtained by concatenating the paths corresponding to $\xi$ and $\xi'.$ The resulting contact structure will be tight if and only if while shortening this path to a minimal path from $-1$ to $s_1$ we never have to merge two paths with different signs. 

\begin{lem}\label{exceptional}
Let $(B,\pi)$ be the relative open book on $S=S^1\times D^2$ with binding $B=S^1\times\{(0,0)\}$ and pages annuli of slope $-1 < \frac sr \leq 0$.
  Let $(p,q)$ be a pair of integers such that $p>0$ and $-1 < \frac qp < \frac sr$. 
 
  Let $\xi$ be the contact structure supported by $(B,\pi)$ and assume that $\partial S$ has linear characteristic foliation with slope $\frac {s'}{r'}$ with $\frac qp <\frac{s'}{r'}\leq\frac sr$. (We can perturb $\partial S$ in a $C^\infty$-small manner to arrange it to be convex with two dividing curves of slope $\frac {s'}{r'}.$)
Let $(B', \pi')$ be the open book obtained by cabling $(B,\pi)$ and $\xi'$ the contact structure supported by it.  After perturbing the boundary of the solid torus so that it is convex with respect to $\xi$ and $\xi'$ the contact structure $\xi'$ can be described as follows: take a shortest path in the Farey tessellation from $-1$, clockwise, to $\frac {s'}{r'}$ that passes through $\frac qp$. The contact structure $\xi$ is described by putting a $+$ on each jump. The contact structure $\xi'$ is described by putting a $-$ on all jumps from $-1$ to $\frac qp$ and a $+$ on the rest. 
\end{lem}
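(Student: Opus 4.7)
The plan is to identify the contact structure $\xi'$ on $S$ via convex surface theory and Honda's classification of tight contact structures on solid tori, by decomposing $S$ along convex tori parallel to $\partial S$ whose slopes are the vertices $v_0 = -1, v_1, \ldots, v_k = \frac{q}{p}, \ldots, v_n = \frac{s'}{r'}$ of the chosen clockwise Farey path through $\frac{q}{p}$. This splits $S$ into an innermost solid torus $S_0$ (with boundary slope $-1$) together with basic $T^2 \times I$ slices $L_1, \ldots, L_n$ between consecutive tori.

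First, I would determine the signs for the original $\xi$. Because $\xi$ is supported by $(B,\pi)$, pulling back the compatible contact form from the proof of Theorem~\ref{thm:support} along covers shows that $\xi$ is universally tight on $S$. Honda's classification then forces all basic slices in the decomposition to carry a common sign, and the positive transversality of $B$ (as the binding, positively tangent to the Reeb field) pins this common sign down to $+$.

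For $\xi'$, I would argue the outer and inner portions of the Farey path separately. By Lemma~\ref{lem:cable}, cabling replaces a small tubular neighborhood $N$ of $B$ with the piece $S^{(r,s)}_{(p,q)}$ of Example~\ref{Ex:torusknots}; by shrinking $N$ we may arrange that $N \subset \mathrm{int}(S_0)$. Then $\xi$ and $\xi'$ agree on $S \setminus N$, so the outer slices $L_{k+1}, \ldots, L_n$ are unchanged and still carry sign $+$. To determine $\xi'$ on the inner region $S_0 \cup L_1 \cup \cdots \cup L_k$, which is a solid torus of boundary slope $\frac{q}{p}$, I would use the explicit model for $S^{(r,s)}_{(p,q)}$ from the proof of Lemma~\ref{lem:main-cable-lemma2}: it arises as a $\Z_r$-quotient of the complement of a transverse Reeb orbit $C_1'$ in the standard contact $S^3$, with the new binding $B'$ descending from the torus knot $T_{(p',q')} \subset S^3$. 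Since the cabling slope $\frac{q}{p}$ is negative with respect to the Seifert slope $\frac{s}{r}$, in this model $B'$ traverses the ambient Seifert fibration of $S$ in the direction opposite to the one along which $B$ was positively transverse to $\xi$. Under Honda's classification, reversing this orientation at the core of the solid torus flips the sign of every basic slice between $-1$ and $\frac{q}{p}$ from $+$ to $-$, giving the asserted sign assignment.

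The hard part is making the sign comparison precise. Honda's signs depend on fixed co-orientations of Farey edges and on an orientation of $T^2$, and translating these conventions into the explicit Reeb dynamics of the model contact forms $\alpha_K$ and $\alpha_\psi$ constructed in the proofs of Theorem~\ref{thm:support} and Lemma~\ref{lem:main-cable-lemma2} requires careful bookkeeping. Once the signs are calibrated in a single base case (for instance, when $\frac{q}{p}$ is a Farey neighbor of $-1$), the general statement follows by induction on the length of the inner portion of the Farey path from $-1$ to $\frac{q}{p}$, since a basic slice is determined up to contact isotopy rel boundary by its two boundary slopes together with its sign.
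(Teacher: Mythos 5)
Your overall strategy is the right one, and matches the paper's: decompose $S$ into an inner solid torus (where the cabling replacement happens) and an outer $T^2 \times I$ region that is untouched, pin down $\xi$ as the all-$+$ contact structure, keep the outer region's signs as $+$, and then identify the contact structure that cabling installs on the inner solid torus. The identification of $\xi$ and the outer region is fine.

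However, the step where you determine $\xi'$ on the inner solid torus has a genuine gap, and it is exactly the part where the real work happens. You propose to use "the explicit model for $S^{(r,s)}_{(p,q)}$ from the proof of Lemma~\ref{lem:main-cable-lemma2}," i.e.\ the $\Z_r$-quotient of the complement of a Reeb orbit in the Boothby--Wang model of $S^3$. That model only produces a contact structure compatible with the cabled open book when $(p,q)$ is a \emph{positive} cable: the proof there hinges on making the fibration of $-L(r,s)\setminus T^{(r,s)}_{(p,q)}$ positively transverse to the Seifert/Reeb orbits, and this fails for $q/p < s/r$ precisely because a negative cable intersects the rational Seifert surface negatively (as observed after the definition of positive/negative cables in the introduction). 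So there is no such model for the lemma at hand, and there is nothing whose "orientation" you can reverse. Even granting some ad hoc sign flip, the jump from "$B'$ runs opposite to the Reeb direction" to "every basic slice between $-1$ and $q/p$ flips from $+$ to $-$" is not an argument; Honda's signs measure the holonomy of the dividing set across bypass layers, and there is no general principle equating an orientation reversal at the core with a uniform flip of all interior basic slices.

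What is actually needed — and what the paper supplies — is an explicit, from-scratch construction of a contact structure compatible with the cabled open book on the inner region. The paper builds the cabled page in an annular collar of $\partial S$ (Step~I), constructs a compatible contact structure on a sub-$T^2\times I$ there by hand from solid tori with carefully controlled characteristic foliations (Step~II), shows by embedding it into an $I$-invariant model that this $T^2\times I$ piece is tight and \emph{non-rotative} (Step~III), and only then reads off in the Farey picture that the resulting inner solid torus carries the standard contact structure with its co-orientation reversed, i.e.\ the all-$-$ path from $\infty$ to $q/p$ (Step~IV). That non-rotativity computation is the content you are missing: it is not a bookkeeping issue that can be "calibrated in a base case and run by induction," because the base case itself requires building the model. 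Without such a construction your proposal cannot close.
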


\begin{rem}
As an example, the shortest path from $-1$ to $-\frac 13$ does not go through $-\frac23$, so the $(3,-2)$--cable of a $(3,-1)$--open book yields an overtwisted contact structure, since the path in the Farey tessellation describing this cable can be shortened along edges with inconsistent signs. 
\end{rem}

\begin{proof}
We begin by considering a neighborhood $N$ of the boundary of $S$ and the topology of the open book in this region. Recall that the cable of $B$ can be taken to be a (multi)curve on a torus contained in $N$. We will construct an open book for $S$ with binding given by the cable of $B$ and so that the pages agree with those of $(B,\pi)$ on $\partial S$. We do this by first constructing the open book on $N$ with binding the cable of $B$ that extends over $S$. We then construct the contact structure supported by this open book in several steps and observe that is the contact structure described in the lemma.

\smallskip
\noindent 
{\bf Step I} --- {\em Construct an open book on $N$ with binding the $(p,q)$--cable of $B$.}
Think of $N$ as an annulus $A$ times $S^1$ where $A$ is an annulus on the disk $D^2$.  We can break $A$ into three successively larger annuli $A_1$, $A_2$ and $A_3$. The open book $(B,\pi)$ in each of the regions $A_i\times S^1$ is a foliation by annuli of slope $\frac sr$.   To construct $(B',\pi')$ we foliate $A_3\times S^1$ by annuli of slope $\frac sr$ (oriented so that they intersect the $S^1$-fibers positively) and $A_1\times S^1$ by annuli of meridional slope $\infty$ (oriented so they intersect the $S^1$-fibers negatively). Finally, $A_2\times S^1$ can be though of as the union of two solid tori $S_1\cup S_2$ each of which has slope $\frac qp$.  We foliate $S_1$  by meridional disks so that the disks in $S_1\cap (A_i\times S^1),$ for $i=1,3,$ agree with the foliation already defined on $A_i\times S^1.$  Thus we have fibered the complement of the neighborhood $S_2$ of a $(p,q)$--curve in $A\times S^1$.  This fibration on $\partial (S_2=S^1\times D^2)$ (the product structure is induced by the framing induced by the torus on which the $(p,q)$--curves sits) is by curves of slope $\frac lk>0.$ We can cone these curves to the core of $S_2$ to obtain an open book decomposition for $A\times S^1$ with binding a $(p,q)$--curve  and pages intersecting the boundary in curves of slope $-\infty$ (co-oriented downward) and slope $\frac sr$ (co-oriented upwards). (We notice that if the $(p,q)$--cable has multiple components then we actually have to break $A_2\times S^1$ into $2\gcd(p,q)$--solid tori. To avoid confusing notation we ignore this issue in the rest of the proof but note that none of the arguments are affected by this omission.)

Turning this construction around we can start with an open book for the solid torus $S_2$ with the core curve being the binding and the pages having slope $\frac lk>0.$ We can then glue $S_1,$ foliated by meridional disks, to $S_2$ (along a pair of annuli that are longitudinal on both $S_1$ and $S_2$) so that the pages of the open book on $S_2$ are extended over $S_1$ by the meridional disks.  We then can trivially extend this open book over $(A_1\cup A_3)\times S^1$ by annuli. This results in the same open book decomposition for $A\times S^1.$  

\smallskip
\noindent
{\bf Step II} --- {\em Construct a contact structure on $A_2\times S^1$ supported by the cabled open book.}
For convenience, first apply an orientation preserving diffeomorphism to $T^2\times [0,1]=A\times S^1$ so that the $(p,q)$--curves are vertical, recall this means their slope is 0.  (In particular, this makes the solid tori $S_1$ and $S_2$ vertical too.)  We may choose this diffeomorphism  so that the slope $\frac sr$ becomes a positive number $t$ and the slope $\infty$ becomes a negative number $t'$.  The slope $\frac lk$ on $\bdry S_2$ is still $\frac lk$ since the framing on the $(p,q)$--curve is unchanged after the diffeomorphisms (since the framing is determined by the torus on which the curve sits).

To begin the construction of the contact structure, let $S_2=S^1\times D^2$ be a solid torus and let $(B',\pi')$ be an open book for $S_2$ with binding $B'=S^1\times\{(0,0)\}$ and annular page that foliates $\partial S_2$ by $\frac lk$--curves (recall $\frac lk>0$). This supports the contact structure that starts positively transverse to the binding and uniformly rotates clockwise to any slope $0<\frac{l'}{k'}<\frac lk$.  In particular, let $S'_2$ be a concentric solid torus within $S_2$ so that the characteristic foliation on $\partial S_2'$ is linear of longitudinal slope $0$. Perturb $S_2'$ slightly, still denoting it by $S_2',$ so that $\partial S_2'$ is broken into 8 vertical annuli $\hat A_1,\ldots,\hat A_8.$  See Figure~\ref{fig:absres0}. On $\hat A_{i},$ with $i$ even, the characteristic foliation has a single vertical singular set and horizontal ruling curves. On $\hat A_1$ and $\hat A_5$ the foliation is non-singular and each leaf has slope varying between two barely negative values. On each of $\hat A_3$ and $\hat A_7$ we have an inner sub-annulus $\hat A_i'$ on which the foliation has slope exactly $\frac {l''}{k''}$ where $\frac{l'}{k'}<\frac{l''}{k''}<\frac lk$ and on $\hat A_i-\hat A_i'$ the slope is between $0$ and $\frac{l''}{k''}$. 
\begin{figure}[ht]
  \relabelbox \small 
 {\epsfysize=1.5truein\centerline {\epsfbox{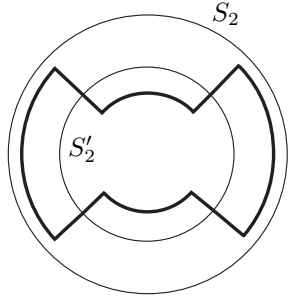}}} 
  \relabel{1}{$S_2'$} 
  \relabel {2}{$S_2$} 
  \endrelabelbox
        \caption{ The inside circle (times $S^1$) is the torus in $S'_2$ with linear vertical slope. The outside circle has some positive slope. The heavier line with corners rounded (times $S^1$) is the convex $\bdry S_2'$.} 
	\label{fig:absres0}
\end{figure}

Now let $S_1=S^1\times I\times I$ where $I=[0,1]$ is an interval. The boundary $\partial S_1$ is made of four longitudinal annuli $B_1,\ldots, B_4.$  Gluing $S_1$ to $S'_2$ so that $B_1$ maps to a sub-annulus of $\hat A_3$ and similarly for $B_3$ to $\hat A_7$ yields the manifold  $C=A_2\times S^1= T^2\times [0,1].$ We can choose the gluing maps as discussed above so that the open book decomposition on $S_2'$ extends (by meridional disks in $S_1$) to an open book decomposition for $C.$ Thinking of $S_1$ as $B_1\times [0,1],$ we can assume the slope of the pages of $S_2'$ on $B_1\times\{0\}$ is  $\frac lk$ and on $B_1\times \{1\}$ it is  $-\frac lk.$ Moreover the meridional disks that make up the pages in $S_1$ intersect $B_1\times \{t\}$ in lines of slope uniformly rotating (clockwise) from $\frac lk$ to $- \frac lk.$ The characteristic foliation on $B_1\times \{0\},$ respectively $B_1\times \{1\},$ induced by the contact structure on $S_2'$ and the gluing maps has slope  $\frac{l''}{k''},$ respectively  $-\frac{l''}{k''}.$ Thus we can extend the contact structure from $S_2'$ to $C$ so that it induces linear characteristic foliations on $B_1\times \{s\}$ of slope uniformly rotating (clockwise) from  $\frac{l''}{k''}$ to  $-\frac{l''}{k''}.$ One can easily see that this extended contact structure is supported by the open book on $C.$ (Notice that one can easily arrange the characteristic foliation on the meridional disks of $S_1$ to contain a single hyperbolic singularity.)

Note that (1) $C$ is a $T^2\times [0,1],$ (2) the binding of the open book is a vertical $S^1$, (3) the dividing slope on both boundary components of $C$ is vertical (since the vertical dividing curves on $S_2$ give dividing curves on both boundary components of $C$),  and (4) a page of the open book intersects $\partial (T^2\times[0,1])$ in curves of slope $t'$ on $T^2\times\{0\}$ and $t$ on $T^2\times\{1\}.$

\smallskip
\noindent
{\bf Step III} --- {\em Identify the contact structure on $A_2\times S^1$.}
We claim the contact structure on $C=A_2\times S^1$ supported by this open book is tight. Moreover the contact structure is non-rotative. To see this consider the $I$ invariant contact structure on $T^2\times I$ with two vertical Legendrian divides and ruling slope some small positive number on, say, $T_0 = T^2 \times \{0\}$. If $\overline A$ is a vertical annulus on $T_0$ that contains both the Legendrian dividing curves and $I'$ is a sub-interval of $I$ that is contained in the interior of $I$ then one can round the corners of $\overline A\times I'$  to obtain a solid torus contactomorphic to $S_2'.$ One may also easily check that $\overline A$ and $I'$ can be chosen so that $(T_0\setminus \overline A)\times I''$ (where $I''$ is a sub-interval of $I'$) is contactomorphic to $S_1.$ Thus we can embed $C$ into an $I$ invariant contact structure on $T^2\times I$ with vertical dividing curves on each $T^2 \times \{0\}$ and $T^2 \times \{1\}$. This establishes our claim.   

\smallskip
\noindent
{\bf Step IV} --- {\em Glue the open book decomposition and contact structure on $A_2\times S^1$ to $A_3\times S^1$ and a solid torus and observe that this is the contact structure described in the lemma.} 
From the discussion above we would like the characteristic foliation on the boundary of our manifolds to be linear.
To this end we see how to slightly extend the contact structures near the boundary of $C=A_2\times S^1$ so that they become ``rotative'' and in particular have linear characteristic foliations. 

Notice that $S_2'$ sits inside a solid torus $S_2''$ that has a linear foliation on its boundary of some very large positive slope. We can use $S_2''$ to enlarge $C$ to $C'=T^2\times [0,1]$ so that on $T^2\times \{0\}$ we have a linear characteristic foliation of slope between $t'$ and $0$ and on $T^2\times \{1\}$ the characteristic foliation is also linear and of slope between $0$ and $t$. The open book constructed above can easily be seen to support this contact structure too. 

Using the inverse of the diffeomorphisms of $T^2$ discussed above we can convert the above construction to give a contact structure $\xi'$ on $T^2\times [0,1]$ and an open book decomposition $(B',\pi')$ with the following properties: (1) the binding $B'$ has slope $\frac qp,$ (2) the pages intersecting $T^2\times \{0\}$ have slope $-\infty$ with downward co-orientation, (3) the pages intersect $T^2\times\{1\}$ have slope $\frac sr$ with upward co-orientation,  and (4) on tori $T_x=T^2\times \{x\}$ near the boundary of  $T^2\times [0,1]$ the characteristic foliations are linear with slopes between $-\infty$ and $\frac qp$ near $T_0$ and between $\frac qp$ and $\frac sr$ near $T_1$.  Moreover, on a subset $T^2\times[\epsilon, 1-\epsilon]$ the contact structure is non-rotative and both boundary components have dividing curves of slope $\frac qp$.  

We can now take the contact structure on $S^1\times D^2$ that is radially symmetric and rotates to a slope near $\frac qp$ on the boundary and notice that it is ``supported'' by the fibration by meridional disks. If we reverse the orientation on the contact planes and the meridional disks we can glue this to the contact structure on $C'$ above. 
From this we see that the contact structure on the solid torus now bounded by $T_\epsilon$ is described by a shortest path in the Farey tessellation from $\infty$ to $\frac qp$ with $-$ signs on all the jumps (except the first). (In other words it is the standard contact structure on the solid torus with its orientation reversed.)

Gluing $A_3\times S^1$ to the boundary of the above contact structure on $S^1\times D^2$ we get a new contact structure on the solid torus which is easily seen to be the one described in the lemma and is supported by the open book decomposition obtained by $(p,q)$--cabling the core of the solid torus.
\end{proof}

To apply Lemma~\ref{exceptional} to the proof of Theorem~\ref{thm:main-cable} we need to understand neighborhoods of binding components. 
\begin{lem}\label{get-good-nbhd}
Let $(L,\pi)$ be an open book decomposition and $K$ a component of $L.$ Choose a framing of $K$ so that so that the Seifert slope is $-1<\frac sr\leq 0.$  If the page of $(L,\pi)$ is not a disk then there is a neighborhood $N$ of $K$ so that $N$ has convex boundary with two dividing curves of slope $\frac sr$ and the contact structure induced on $N$ is determined by only positive jumps in the Farey tessellation. Moreover given any $\frac{s'}{r'}$ such that $-1<\frac {s'}{r'}<\frac sr,$ there is another neighborhood $N'\subset N$ of $K$ whose boundary has linear characteristic foliation of slope $\frac{s'}{r'}$ and the intersection of the open book with $N'$ gives a relative open book on $N'$ that supports the contact structure on $N'$ and the intersection of the open book with the complement of $N'$ gives a relative open book that supports the contact structure there.
\end{lem}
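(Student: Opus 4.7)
The plan is to build both $N$ and $N'$ from a Thurston--Winkelnkemper local model of $\xi$ near $K$, and to identify the resulting contact structures via Honda's classification of tight solid tori.

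Choose coordinates $(\varphi,(\rho,\vartheta))\in S^1\times D^2$ on a tubular neighborhood of $K=\{\rho=0\}$ so that the fixed framing is the product framing; longitudes are $S^1\times\{\mathrm{pt}\}$ and meridians are $\{\mathrm{pt}\}\times\partial D^2$. Following the local model at the end of the proof of Theorem~\ref{thm:support}, $\xi$ near $K$ is the kernel of
\[
\alpha=f(\rho)\,d\varphi+g(\rho)\,d\vartheta,\qquad f(0)=1,\quad g(0)=0,\quad fg'-f'g>0.
\]
The torus $T_\rho=\{\rho\}\times T^2$ has linear characteristic foliation of slope $-f(\rho)/g(\rho)$, and the contact condition forces this slope to be strictly increasing in $\rho$. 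After adjusting $f$ and $g$, one arranges the slope to sweep clockwise through the Farey disk from the meridional slope $\infty$ at $\rho=0$ to the Seifert slope $\frac{s}{r}$ at some radius $\rho=R$. The non-disk hypothesis on the page guarantees that this local picture fits strictly inside $M$, so the rest of the open book lives in the complementary region.

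Setting $N=\{\rho\le R\}$ addresses the first claim: the boundary torus $T_R$ has linear foliation of slope $\frac{s}{r}$ and a $C^\infty$-small perturbation makes it convex with two dividing curves of the same slope. By Honda's classification (Subsection~\ref{exception-section}), the tight structure $\xi|_N$ is determined by a choice of signs on the edges of the shortest clockwise Farey path from $\infty$ to $\frac{s}{r}$. Because $-f/g$ sweeps monotonically along exactly this arc, $N$ decomposes into a nested sequence of basic slices, one per Farey edge, and the contact condition $fg'-f'g>0$ forces every one of these basic slices to twist in the same positive direction, so every Honda sign is $+$.

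For the second claim, given $\frac{s'}{r'}$ with $-1<\frac{s'}{r'}<\frac{s}{r}$, the monotonic sweep produces a unique $R'\in(0,R)$ at which $-f(R')/g(R')=\frac{s'}{r'}$; take $N'=\{\rho\le R'\}$. Then $\partial N'$ carries a linear characteristic foliation of slope $\frac{s'}{r'}$ by construction. The Reeb vector field of $\alpha$ is
\[
R_\alpha=\frac{g'(\rho)\,\partial_\varphi-f'(\rho)\,\partial_\vartheta}{f(\rho)g'(\rho)-f'(\rho)g(\rho)},
\]
so it is tangent to every $T_\rho$ and $\partial N'$ is Reeb-invariant; a direct computation shows it is positively transverse both to the characteristic foliation and to the page foliation on $\partial N'$ (which meets $T_{R'}$ in $\frac{s}{r}$-curves). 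This is precisely the condition needed for $(L\cap N',\pi|_{N'})$ and $(L\cap(M\setminus N'),\pi|_{M\setminus N'})$ to be relative open books supporting $\xi|_{N'}$ and $\xi|_{M\setminus N'}$ respectively, in the sense of \cite{VanHornMorrisThesis}.

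The main obstacle is verifying that every Honda sign on $N$ is $+$. This amounts to matching the monotonic twisting of the explicit $\alpha$ with the bypass decomposition of each continued-fraction block in the Farey path and evaluating the relative Euler class against a convex meridional disk of $N$; the positivity is forced by $fg'-f'g>0$, but pinning it down edge-by-edge is the computation that genuinely uses the TW model. The non-disk hypothesis enters precisely here: without it (the rational unknot case) the local TW model fills all of $M$ and there is no complementary region in which to assemble the second relative open book called for in the conclusion.
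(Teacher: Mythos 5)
Your proposal takes a genuinely different route from the paper: you work directly from a rotationally symmetric Thurston--Winkelnkemper model $\alpha = f(\rho)\,d\varphi + g(\rho)\,d\vartheta$ near $K$, whereas the paper never appeals to the explicit TW normal form at all.  Instead, the paper's proof works on the \emph{page}: it makes a truncated page $P'$ convex, thickens it by the Reeb flow, Legendrian-realizes a curve $L'$ parallel to the $(r,s)$-cable of $K$ on the thickened surface (after a folding step when the binding is connected), and then builds $N$ as a neighborhood of an annulus cobounded by $K$ and $L'$; the conclusion that $N$ carries the universally tight solid-torus structure comes from exhibiting a contactomorphism with a neighborhood of $S^1\times\{0\}$ in $(S^1\times\R^2, d\phi + r^2\,d\theta)$, and then the ``all positive jumps'' statement is immediate from Honda's classification.

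The real difficulty in the lemma is not identifying the contact structure on $N$ once $N$ is found, but \emph{showing such an $N$ exists at all} --- i.e., that there is a solid torus neighborhood of $K$ whose boundary slope actually reaches $\frac sr$.  This is exactly the step your proposal elides.  In the explicit TW model from the end of the proof of Theorem~\ref{thm:support}, the boundary slope of the tube is $\frac{p + Kq}{-q + Kp}$, which \emph{approaches} the Seifert slope as the constant $K\to\infty$ but never equals it; and past the boundary of the tube the form is $u\,d\theta + K\,dt$ on the collar of the mapping torus, where the characteristic foliation on constant-$u$ tori is again only asymptotically the page slope. Writing ``after adjusting $f$ and $g$, one arranges the slope to sweep \dots to $\frac sr$ at some radius $\rho = R$'' quietly assumes you have freedom that you do not obviously have: $f$ and $g$ at the outer edge of the tube must match the mapping-torus form, and extending the tube to a torus of slope exactly $\frac sr$ requires an argument about the geometry of $\xi$ near $K$, not just a reparametrization.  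This is precisely what the paper's convex-surface-theory construction delivers (and why the paper's remark after the lemma emphasizes it as constructing a ``large standard neighborhood'').  As written, your proposal assumes what needs to be proved.

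The second gap you flag yourself --- that all Honda signs are positive --- is real, but it is the easier of the two and has a cleaner resolution than the bypass/Euler-class route you sketch: a contact structure of the form $\ker(f(\rho)\,d\varphi + g(\rho)\,d\vartheta)$ on $S^1\times D^2$ is manifestly universally tight (lift to $\R\times D^2$ and embed in the standard $\R^3$), and Honda's classification identifies the universally tight solid torus with the all-same-sign decoration of the Farey path.  This is effectively what the paper invokes: it shows $N$ is contactomorphic to such a model and then cites the classification.  So replace the ``monotonic twisting forces positive signs'' heuristic with ``rotationally symmetric $\Rightarrow$ universally tight $\Rightarrow$ all same sign.''  Finally, your use of the non-disk hypothesis (``otherwise the TW model fills all of $M$'') is not the reason the paper gives: the obstruction in the disk-page case is that a boundary torus of dividing slope $\frac sr$ would bound an overtwisted meridional disk in the complementary solid torus, contradicting tightness.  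It is the same fact viewed from opposite sides, but worth stating explicitly, since it is exactly what would break in your sweep argument if the page were a disk.
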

\begin{rem}
We notice that this theorem constructs a large ``standard neighborhood'' of a binding component. Understanding the size of a standard neighborhood of a transverse (or Legendrian) knot has been an important, recurring theme in contact geometry, see \cite{EtnyreHonda05, EtnyreLafountainTosun, Gay02a}. Such considerations have also become important in higher dimensional contact geometry \cite{NiederkrugerPresas10}. 

We also notice that the hypothesis that the page is not a disk is clearly necessary since if such a neighborhood of the binding could be constructed in this case then one would have an overtwisted disk on one of the pages of the open book. 
\end{rem}

\begin{proof}
Let $P$ be a page of the open book and let $A$ be a small neighborhood of the boundary of $P.$ Set $P'=\overline{P-A}.$ Since there is a Reeb vector field $v$ transverse to the pages of the open book we see that $P'$ is convex. Now let $P'\times[-\epsilon, \epsilon]$ be an invariant neighborhood of $P'$ constructed using the flow of $v.$ One can round the corners of this neighborhood so to obtain a convex surface $\Sigma$  where $\Sigma=P'\times\{\epsilon\}\cup P'\times{-\epsilon}\cup B$ where $B$ is a union of annuli. Moreover the dividing set can be taken to be $B\cap P$ and $P'\times\{\pm\epsilon\}$ is contained in the $\pm$-region of the convex surface. 

On $P'\times\{\epsilon\}\subset \Sigma$ we can Legendrian realize a curve $L'$ parallel to the $(r,s)$--cable of $K.$ If $L\not=K$ then we can use the standard Legendrian realization principle \cite{Honda00a}. If $L=K$ then we must first Legendrian realize another non-separating curve on  $P'\times\{\epsilon\}\subset \Sigma.$ Now using a local model for this realized curve we can ``fold'' the surface to create a new convex surface with two new dividing curves. We can now use the Legendrian realization principle on this new convex surface to realize $L'$ as a Legendrian curve. 

Notice that we can use the annulus on $\Sigma$ that $L'$ and a dividing curve cobound and an annulus on $P$ to create an annulus that $L'$ and $K$ cobound and that contains no singular points and no closed leaves other than $L'.$ Let $N$ be a neighborhood of this annulus and having $L'$ on its boundary. We can build a model of this annulus in $S^1\times \R^2$ with contact structure $d\phi + r^2\, d\theta$ (where $\phi$ is the coordinate on $S^1$ and $(r,\theta)$ are polar coordinates on $\R^2$). Moreover we can identify the annulus with an annulus in $S^1\times \R$ so that $K$ maps to $K'=S^1\times \{(0,0)\}.$ Thus we can assume that $N$ is contactomorphic to a neighborhood of $K'.$ This implies that the contact structure on $N$ is universally tight. Now making the boundary of $N$ convex and possible taking a sub-torus of $N$ we can assume that $\partial N$ has just two dividing curves and they have slope $\frac s r$ (that is, they  are parallel to $L'$). The classification of contact structures on solid tori implies that $N$ is the neighborhood claimed in the lemma. 

Since $N$ has the unique universally tight contact structure on the torus one may easily use a standard model for $N$ to construct $N'.$
\end{proof}

We can now prove item (3) and (4) in Theorem~\ref{thm:main-cable} in complete generality.
\begin{proof}[Proof of Statements (3) and (4)  in Theorem~\ref{thm:main-cable}]
We begin by assuming that the page of our open book is not a disk. (The case where the page is a disk is dealt with below.) Notice that Lemma~\ref{get-good-nbhd} allows us to assume $\frac{s'}{r'}=\frac{s}{r}$ in Lemma~\ref{exceptional} for the purposes of determining the effect of cabling on the contact structure. 

Given a binding component $K$ of an open book $(L,\pi)$ and a framing on $K$ chosen so that the pages of the open book approach $L'$ as $(r,s)$--curves  where $-1<\frac sr \leq 0$ then choose a pair of integers $(p,q)$ such that $\frac qp <\frac sr$.   We have neighborhoods $N$ and $N'$ given in Lemma~\ref{get-good-nbhd}. Applying Lemma~\ref{exceptional} to $N'$ with its relative open book induced from $(L,\pi)$ we see the effect of cabling on the contact structure restricted to $N'.$ From this we also see the effect for the contact structure on $N.$ 
Thus it is clear from Lemma~\ref{exceptional} that the result of $(p,q)$--cabling $K$ will be to replace the contact structure on $N$ with the one described in the lemma. If $\frac qp$ is not an exceptional cabling slope then we may shorten the path that describes the contact structure on $N$ by removing the $\frac qp$-vertex, but since the signs describing the contact structure changed at $\frac qp$ the contact structure must be overtwisted. 

To see that all negative cables are virtually overtwisted, except for the $(p, q)$--cables of a rational unknot of Seifert slope $\frac sr$ with $rq-ps=-1$, we notice that the solid torus neighborhood of the binding component $L'$ can be unwrapped in a cover of the manifold until the slope of the page becomes longitudinal while the cabling remains negative.   
One may easily see that this makes the contact structure on the solid torus overtwisted. By taking a further cover if needed so that the lift of the open book is integral, Lemma~\ref{lem:xipqOT} applies too.

Now consider the case when the page of the open book is a disk. In this case the manifold is a lens space with universally tight contact structure.  We cannot apply Lemma~\ref{get-good-nbhd} above to get a nice neighborhood of the binding; however, if we perform a $(p,q)$--cable of the binding with $rq-ps=-1$ then one may easily check in the Farey tessellation that the resulting path given in Lemma~\ref{exceptional} cannot be shortened. Moreover, as the contact structure on a lens space is determined by its restriction to the solid torus used in the statement of Lemma~\ref{exceptional} we see that the effect on the contact structure is to reverse the co-orientation. 
\end{proof}

\subsection{Lutz twists and Homotopy classes of negative cablings}\label{sec:part3planefield}\label{sec:lutz} 
We will identify the overtwisted contact structure supported by the open book of a negative cable with a modification of the original contact structure by Lutz twists along the component being cabled and along its cable.  This will facilitate an understanding of how the Hopf invariant changes under cabling too. (We note that this analysis also determines the change in the homotopy type of supported contact structures when an exceptional cabling is done to a binding component.)

Let us recall and discuss Lutz twists.  A transverse simple closed curve $K$ in a contact manifold $(M,\xi)$ has a neighborhood $N=S^1\times D^2$ with coordinates $(\phi, (r, \theta))$, where $(r,\theta)$ are polar coordinates on the disk $D^2$ of radius $c>0$ in the plane, on which the contact form can be written $d\phi+r^2 \, d\theta$.  Fix $0<\delta$ such that $4\delta \ll c$.  

Define functions $f(r)$ and $g(r)$ on the interval $[0,c]$ satisfying $g'f-f'g>0$  and such that
\[
f(r) = 
\begin{cases} -1 & r \in [0,\delta]\\ \cos (\pi(\frac{r-\delta}{c-2\delta})+\pi) & r \in [2\delta , c-2\delta]\\ 1 & r \in [c-\delta, c] \end{cases},
\]
and
\[
g(r) =  \begin{cases} -r^2 & r \in [0,\delta]\\ r^2 \sin (\pi(\frac{r-\delta}{c-2\delta})+\pi) & r \in [2\delta , c-2\delta]\\ r^2 & r \in [c-\delta, c] \end{cases}.
\]
(Note we really think of the functions as defined by the trigonometric functions on all of $[\delta, c-\delta]$ but altered on $[\delta,2\delta]\cup[c-2\delta, c-\delta]$ to make it smooth.)  The \dfn{Lutz twist} of $\xi$ along $K$ is the contact structure $\xi_{K}^{\lutz}$ obtained from $\xi$ by changing the contact form from $d\phi+ r^2\, d\theta$ to $f(r)d\,\phi+ g(r)\, d\theta$ on $N$.  In effect, a Lutz twist introduces a half rotation to the contact planes in $N$ as one travels radially inward to $K$.   This is also known as a half Lutz twist or a $\pi$--Lutz twist.  

When dealing with plane fields that are either foliations or contact near a curve transverse to the plane field one can add a positive or negative twist along the curve. We will call these positive and negative Lutz twists (even though they are not technically Lutz twists).

Notice that there is a radius $r_0 \in [0,c]$ such that the leaves of the 
characteristic foliation of the torus  $\{r=r_0\}$ induced from $\xi_{K}^{\lutz}$ are meridional. The $(p,q)$--curve on this torus is transverse to the characteristic foliation and, with orientation induced by the contact planes, wraps around $N$ in the opposite direction as $K$.  We define a \dfn{$(p,q)$--Lutz cable} of $K$ to be this transverse link  in $\xi_{K}^{\lutz}$.

\begin{lemma} \label{lem:OTlutztwist} 
Let $\xi$ be the contact structure supported by the open book $(L,\pi).$
If $\frac{q}{p} <\frac{s}{r}$, then the contact structure, $\xi_{(p,q)},$ compatible with $L_{(p,q)}$ is homotopic to the contact structure obtained from $\xi$ by a Lutz twist along the component $K$ to be cabled followed by a Lutz twist along each component of its Lutz cable $K_{(p,q)}$. 
\end{lemma}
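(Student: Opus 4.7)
The plan is to reduce to a local comparison inside a standard neighborhood $N = S^1\times D^2$ of $K$: both the cabled contact structure $\xi_{(p,q)}$ and the plane field $\eta$ obtained by the two Lutz twists agree with $\xi$ outside $N$ (choosing $N$ via Lemma~\ref{get-good-nbhd} with boundary slope $s'/r'$ just below $s/r$ and still above $q/p$), so it suffices to exhibit a homotopy of oriented $2$--plane fields on $N$ rel $\partial N$ between $\xi_{(p,q)}|_N$ and $\eta|_N$.

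In coordinates $(\phi,(r,\theta))$ on $N$, all the relevant plane fields can be represented by forms of type $f(r)\,d\phi + g(r)\,d\theta$, together with a localized modification near the Lutz cable for the second twist; as $r$ varies, the angular argument $(f,g)$ traces a path in the Farey tessellation. For $\xi|_N$ this is the all-positive minimal path from $\infty$ clockwise to $s'/r'$. A half Lutz twist along $K$ reverses orientation on the initial segment of this path, which is precisely the operation of flipping the Farey sign on the edge incident to $\infty$; the subsequent half Lutz twist, performed on the concentric torus of meridional characteristic foliation where the Lutz cable lives, reverses a further segment extending out to slope $q/p$, flipping the signs on the remaining Farey edges out to $q/p$. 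This sign pattern agrees exactly with the description of $\xi_{(p,q)}|_N$ supplied by Steps II--IV in the proof of Lemma~\ref{exceptional}.

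Matching Farey data yields a contactomorphism of $\eta|_N$ and $\xi_{(p,q)}|_N$ on each piece of the resulting tight decomposition of $N$, but to obtain a homotopy of plane fields rel $\partial N$ one must also verify that the relative $d_3$--invariant (equivalently, the relative Hopf invariant) coincides. I expect this to be the delicate step: using the radial model it reduces to counting the net winding of the contact planes against the product framing of $N$, which decomposes as a sum of the half-rotations introduced by each Lutz twist. The bookkeeping works out precisely because the second twist is done on the \emph{Lutz} cable of $K_{(p,q)}$ (with the framing determined by the first Lutz twist) rather than an arbitrarily framed $(p,q)$--cable; this is exactly the choice that makes the two half-rotations combine to produce the total winding built into the cabling construction of Lemma~\ref{exceptional}. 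Once the $d_3$--invariants are seen to match, a straight-line interpolation through the radial forms $f_t(r)\,d\phi + g_t(r)\,d\theta$ (which remain formal oriented plane fields throughout, with boundary values held fixed) furnishes the required homotopy between $\eta|_N$ and $\xi_{(p,q)}|_N$.
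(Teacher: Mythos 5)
Your approach diverges substantially from the paper's, and unfortunately there are several genuine gaps.

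First, the Farey-tessellation calculus you invoke classifies \emph{tight} contact structures on solid tori and $T^2\times[0,1]$. After performing a half Lutz twist along $K$ the contact structure on $N$ is overtwisted, so it has no Farey description at all; it is not valid to say a Lutz twist ``flips the sign on the Farey edge incident to $\infty$.'' The description of $\xi_{(p,q)}|_N$ in Lemma~\ref{exceptional} in terms of Farey signs applies to the cabled open book, but there is no parallel description of the Lutz-twisted plane field, so the proposed ``matching of Farey data'' has no content on the Lutz side.

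Second, even granting some decomposition into tight pieces, a piecewise contactomorphism does not yield a homotopy of plane fields rel $\partial N$: you would also need the global diffeomorphism realizing those contactomorphisms to be isotopic to the identity rel $\partial N$, which you neither state nor verify. And the claim that straight-line interpolation of radial forms $f_t(r)\,d\phi+g_t(r)\,d\theta$ ``remains a formal oriented plane field throughout'' is false in general: if the curves $r\mapsto(f_i(r),g_i(r))$, $i=0,1$, have different winding around the origin, the interpolation must pass through zero, so the interpolation is exactly as hard as the $d_3$ matching you defer. Finally, the relative $d_3$ computation -- the crux of a homotopy-theoretic argument like this -- is asserted to ``work out'' but never carried out. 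In short, both the reduction (Farey signs) and the conclusion (interpolation of forms) are unjustified, and the actual content is missing.

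For reference, the paper avoids these difficulties entirely by working at the level of confoliations rather than tight contact classifications. It observes that any contact structure supported by an open book is homotopic as a plane field to the standard positive confoliation built from the page foliation plus rotative contact models near the binding; that positive cables of such confoliations are (con)foliation-isotopic to the originals, and by mirroring the same holds for negative cables with negative confoliations; and that a Lutz twist on a binding component homotopes the positive rotative model near that component to the negative one (using that two half Lutz twists on the same curve, or a positive followed by a negative, return a homotopic plane field). The proof is then a short chain: Lutz twist on $K$ produces a mixed confoliation $\eta'$; since the cable is negative, $\eta'$ is homotopic (keeping $K_{(p,q)}$ transverse) to the analogous mixed confoliation $\eta''$ for the cabled open book; a Lutz twist on the Lutz cable then converts the negative rotative piece back to positive, giving the standard positive confoliation for $L_{(p,q)}$, hence $\xi_{(p,q)}$. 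If you want to salvage a local computation, you would have to replace the Farey argument with a direct comparison of the radial models (winding numbers of $(f,g)$), and actually carry out the relative homotopy-class comparison instead of appealing to interpolation.
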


\begin{proof} 
From the Thurston-Winkelnkemper construction (even the rational one), any contact structure compatible with an open book is homotopic to a standard positive confoliation (see \cite{EliashbergThurston} for information about confoliations) given by the foliation of the fibers matched to the rotative positive contact structure in tubular neighborhoods of the binding.  As shown in Lemma~\ref{lem:main-cable-lemma2}, a positive cable of a binding component of an open book induces an open book supporting a contact structure identical to the original outside a neighborhood of the binding and isotopic to the original in this neighborhood.  Similarly,  the standard positive confoliation of a positive cable is homotopic (actually confoliation-isotopic) to the standard positive confoliation of the original open book.  By mirroring, we have the same statements for negative cables and negative confoliations.

Given the standard positive confoliation associated to an open book, applying negative Lutz twists along each binding component produces a plane field homotopic to the standard negative confoliation.   Similarly, applying positive Lutz twists along each binding component of the standard negative confoliation of an open book produces a plane field homotopic to the standard positive confoliation.  Also, recall that the plane field obtained from performing two positive Lutz twists along the same curve is homotopic to the original as is performing a positive and a negative Lutz twist along the same curve.  Thus the plane field obtained from a positive Lutz twist is homotopic to the plane field obtained from a negative Lutz twist.

Now consider the standard positive confoliation $\eta$ associated to the open book $(L,\pi)$ that is the foliation of the pages outside tubular neighborhoods of the binding components.  Perform a (positive) Lutz twist on $K.$ The plane field is homotopic to a plane field $\eta'$ associated to $(L,\pi)$ by taking the foliation given by the fibers of the open book in the complement of a neighborhood of $L,$ matched to a positive rotative contact structure in a neighborhood of $L-K$ and a negative rotative contact structure in a neighborhood of $K.$ Moreover the cable $K_{(p,q)}$ of $K$ can be kept transverse to the plane fields throughout the homotopy from $\eta$ after the Lutz twist to $\eta'.$  From above this $\eta'$ is homotopic to a plane field $\eta''$ that is a negative rotative contact structure near $K_{(p,q)},$ a positive rotative contact structure near $L-K$ and a foliation by fibers of the cabled open book elsewhere. Now perform a positive Lutz twist along $K_{(p,q)}.$ This yield a plane field $\eta'''$ that is homotopic to the standard positive confoliation associated to $L_{(p,q)}$ and hence to the contact structure $\xi_{(p,q)}$ supported by $L_{(p,q)}.$ 
\end{proof}

\subsection{Contact structures of cabled open books}
\label{sec:thm}

\begin{proof}[Proof of Theorem~\ref{thm:main-cable}]
Assume $(L,\pi)$ is an open book supporting $(M,\xi)$ with binding components $L_1, \dots, L_n$. 
Let $({\bf p}, {\bf q}) = ((p_1,q_1), \dots, (p_n,q_n))$ be $n$ pairs of integers such that all the $p_i$ have the same sign and the slope $\frac{q_i}{p_i}$ is neither the meridional slope nor Seifert slope of $L_i$.
 Form the cable $L_{({\bf p}, {\bf q})}$ successively, taking the $(p_i,q_i)$--cable of the $i$th component of $L$ one at a time.  Let $\xi_{({\bf p}, {\bf q})}$ be the contact structure it supports.

Assume all the $(p_i,q_i)$ are positive.   If the $p_i$ are all positive, then apply Lemma~\ref{lem:main-cable-lemma2} upon cabling each component of $L$ to show that the open book at each step continues to induce the contact structure $\xi$.  This proves (1).  If the $p_i$ are all negative, apply the same construction to the open book $(-L,-\pi)$ which supports $-\xi$.  With respect to this new open book the corresponding cabling uses the integers $-p_i$ and $-q_i$ so that $L_{({\bf p},{\bf q})}$  supports $-\xi$.  This proves (2).

The case when $(p_i,q_i)$ is negative for some $i$ is dealt with at the end of Subsection~\ref{sec:part3OT}.

Since overtwisted contact structures are determined by their homotopy class (Eliashberg's theorem \cite{Eliashberg89}), applications of Lemma~\ref{lem:OTlutztwist} to the each of the negative cablings completes the proof.
\end{proof}

\begin{proof}[Proof of Corollary~\ref{cor:integralcable}]
Parts (1) and  (2) of the corollary are clear form Theorem~\ref{thm:main-cable} when $pq>0.$  If $p=1$ then $K_{(p,q)}$ is isotopic to $K$ so the corollary follows in this case too.  For parts (3) and (4) notice that Theorem~\ref{thm:main-cable} implies the contact structure is overtwisted and obtained from $\xi$ or $-\xi$ from performing Lutz twists. Since the binding is null-homologous, all the Lutz twists performed are on null-homologous curves and thus the $\text{spin}^c$ structure of the contact structure is unaffected, so we only need to see how the Hopf invariant of the resulting contact structure compares to $\xi$ or $-\xi$.  Let us focus on part (3) and $\xi$ as part (4) is similar.

Let $\xi'$ be obtained from $\xi$ by a Lutz twist along $K$, viewed as the transversal binding of the open book.  By Lemma~\ref{lem:OTlutztwist}, $\xi_{(p,q)}$ is homotopically obtained from $\xi'$ by a Lutz twist along $K_{(p,q)}$.   Since a Lutz twist followed by another Lutz twist on the (now orientation reversed) core is homotopic to the identity, $\xi'$ is homotopically obtained from $\xi_{(p,q)}$ by a Lutz twist along $K_{(p,q)}$, viewed as the transversal binding of the cabled open book.  Because $\xi'$ and $\xi_{(p,q)}$ are both overtwisted, this is actually an isotopy.

Section~4.3 of \cite{Geiges08} shows that a Lutz twist on a transversal knot adds the self-linking number of the knot to the Hopf invariant of the contact structure.
In the contact structure supported by an integral open book with connected binding, the binding is naturally transversal and has self-linking number equal to the negative of the Euler characteristic of its page.  
Hence in obtaining $\xi'$ from $\xi$, we add $-\chi(K)$ to the Hopf invariant of $\xi$. Similarly, in obtaining $\xi'$ from $\xi_{(p,q)}$ we add $-\chi(K_{(p,q)})$ to the Hopf invariant of $\xi_{(p,q)}$.  Therefore, passing from $\xi$ to $\xi_{(p,q)}$, we add $(-\chi(K)) - (-\chi(K_{(p,q)})) = -\chi(K) + |p|\chi(K)+|q|-|pq| = (1-|p|)(-\chi(K)+|q|)$ to the Hopf invariant of $\xi$.  Letting $g$ be the genus of $K$, the Hopf invariant changes by $(1-|p|)(2g+|q|-1)$ as we were to show.
\end{proof}

We now prove two of our propositions that determine what happens at exceptional surgeries. 

\begin{proof}[Proof of Proposition~\ref{prop:ratunknots}]
The rational unknot in $L(p,q)$ supports the contact structure obtained by gluing together two solid tori. More precisely, $L(p,q)$ is obtained from $T^2\times [0,2]$ by collapsing the $\infty$ curves on $T^2\times\{0\}$ and the $-q/p$ curves on $T^2\times \{2\}$. The contact structure on $L(p,q)$ is tangent to the $[0,2]$ factor and rotates from $-\infty$ to $-q/p$. We can split $L(p,q)$ along $T\times\{1\}$ and assume the characteristic foliation on this torus is by curves of slope $-1$. Now perturb the torus to be convex. The contact structure on the solid torus with $\infty$ meridians is unique. The contact structure on the other torus is described by only positive jumps in the Farey tessellation description. An exceptional cable will give two solid tori glued together with a combination of positive and negative  jumps in the Farey tessellation description of the second solid torus. It is well known that all of these are precisely the tight contact structures on $L(p,q)$ as described in \cite{Honda00a}.
\end{proof}

\begin{proof}[Proof of Proposition~\ref{prop:otexceptional}]
Given the hypothesis of the Proposition it is clear that the path in the Farey tessellation describing the contact structure on a solid torus neighborhood containing the cabled binding component can be shortened so that positive and negative edges must be merged.   (Compare with the proof of Statement (3) in Theorem~\ref{thm:main-cable}.)

For the second statement on the proposition one may observe that when one negatively $(p,q)$--cables a binding component with $(p,q)$ not relatively prime then the change in the relative Euler class on a neighborhood of the binding component results in a relative Euler class that is not the relative Euler class of a tight contact structure on a solid torus \cite{Honda00a}. (One may also explicitly locate an overtwisted disk in the cabling solid torus.)
\end{proof}

Our last result, Proposition~\ref{prop:tightneg} will be proven in Section~\ref{exceptionalsurg} below once we consider monodromies of cables.

\section{Integral resolution of a rational open book}\label{sec:resolution}

Suppose $K$ is a rational (and non-integral) binding component of an open book $(L,\pi)$ for a manifold $M$ whose page approaches $K$ in a $(r,s)$--curve, $r>1$.   
  Set $n = \gcd(r,s)$.  We say $K$ has \dfn{order} $r$ and \dfn{multiplicity} $n$.  

For any $l \neq s$, replacing $K$ in $L$ by $K_{(r,l)}$, the $(r,l)$--cable of $K$, gives a new link $L_{K_{(r,l)}}$ where the components of $K_{(r,l)}$ all have multiplicity $1$.   This is called the \dfn{$(r,l)$--resolution of $L$ along $K$}.  If $K$ were the only component of $L$, then the $(r,l)$--resolution of the rational open book would yield an integral open book.

Using the same analysis as in Lemma~\ref{lem:torusknotfiber} notice that in the resolution, the new fiber is created using just one copy of the old fiber.  Thus the data along the other binding components of the open book remain unchanged and we may continue to resolve the other boundary components in a similar way without affecting the boundary components that have already been resolved.

\begin{thm}[Resolution of rational open books]\label{thm:resolution}
Let $(L, \pi)$ be a rational open book decomposition of $M$ compatible with a contact structure $\xi$ and let $\{K_i \}$ be the collection of binding components, each of which has order $r_i$ greater than one.  
Choose framings on the $K_i$ so that the pages approach $K_i$ as a $(r_i,s_i)$--curve and choose integers $l_i>s_i.$
Then the open book decomposition $(L', \pi')$ obtained by $(r_i,l_i)$--resolving $L$ along $K_i$ is an integral open book, is also compatible with $\xi$, and agrees with $(L, \pi)$ outside of a neighborhood of $\{ K_i \}$.
\end{thm}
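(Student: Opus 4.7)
The plan is to perform the $(r_i,l_i)$-cabling of each rational binding $K_i$ one at a time and to invoke the positive-cable compatibility result to conclude the contact structure is unchanged. Since cabling a single binding component is a local operation---carried out inside a tubular neighborhood of that component via the gluing of $S^{(r_i,s_i)}_{(r_i,l_i)}$ from the proof of Lemma~\ref{lem:cable}---the different cabling operations commute with each other, agree with $(L,\pi)$ away from $\bigcup_i K_i$, and can be treated one at a time. It therefore suffices to analyze a single $(r,l)$-resolution along a component $K$ of order $r>1$ with Seifert slope $s/r$ and $l>s$, and then iterate.

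For such a single resolution the slope $l/r$ is strictly greater than the Seifert slope $s/r$ and is neither meridional nor Seifert. Hence Lemma~\ref{lem:cable} produces a (rational) open book $(L',\pi')$ of $M$ in which $K$ has been replaced by its $(r,l)$-cable. Since $r>0$ and the cabling is positive, Lemma~\ref{lem:main-cable-lemma2} (equivalently part~(1) of Theorem~\ref{thm:main-cable}) shows that $(L',\pi')$ is compatible with the same contact structure $\xi$. What remains is integrality near the new bindings, which has to be verified locally: working in the model $S^{(r,s)}_{(r,l)}\subset -L(r,s)$ used in the proof of Lemma~\ref{lem:cable} and repeating the homology computation from the proof of Lemma~\ref{lem:torusknotfiber} on each component of the cable, the matching of the cabling parameter $r$ with the order of $K$ is precisely what forces the boundary of a page on each new binding component to be a single longitudinal curve, thereby giving both conditions $(\star)$ and $(\star\star)$ of Section~\ref{ss:robd}.

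The main obstacle I anticipate is exactly this integrality bookkeeping in the link case $\gcd(r,l)>1$: one has to adapt the single-torus-knot calculation of Lemma~\ref{lem:torusknotfiber} to torus links and confirm that the fiber meets each of the $\gcd(r,l)$ parallel cable components as a single longitude rather than as a multi-component or non-longitudinal curve. Once this local verification is in hand, the theorem assembles from iterating Lemma~\ref{lem:cable} and Lemma~\ref{lem:main-cable-lemma2} over the components $K_i$, with the agreement with $(L,\pi)$ outside $\bigcup_i K_i$ built in by the locality of the cabling construction.
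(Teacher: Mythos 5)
Your proof follows essentially the same route as the paper's: reduce to resolving one binding component at a time, invoke Lemma~\ref{lem:cable} to produce the cabled open book, invoke Lemma~\ref{lem:main-cable-lemma2} (Theorem~\ref{thm:main-cable}(1)) for compatibility with $\xi$ since $l_i/r_i > s_i/r_i$, and then argue integrality from the matching of the cabling parameter $r_i$ with the order of $K_i$. The paper's own proof is terser, deferring the topology to the discussion preceding the theorem and Lemma~\ref{lem:torusknotfiber}, but the substance is the same.

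One point deserves sharper treatment than you give it. You assert at the outset that ``cabling a single binding component is a local operation,'' so that resolutions of the various $K_i$ commute and leave the other components untouched. That locality is \emph{not} automatic from the construction in Lemma~\ref{lem:cable}: for a generic $(p,q)$--cable the gluing requires composing the exterior fibration $\pi$ with a covering $f_m$ of $S^1$, which raises the multiplicity at every other binding component and is decidedly non-local. What makes the resolution special is precisely that $p_i$ equals the order $r_i$ of $K_i$, so the fiber of the torus-link open book inside $S^{(r_i,s_i)}_{(r_i,l_i)}$ already meets the boundary torus in the same number and type of curves as the original page --- the new fiber is ``created using just one copy of the old fiber,'' in the paper's phrase --- and hence no covering on the exterior is needed. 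This is the same observation you make later (``the matching of the cabling parameter $r$ with the order of $K$'') to establish the longitudinal behavior at the new bindings, but it is also the justification you owe for locality and commutativity; as written, that earlier sentence begs the question. Your flagged concern about $\gcd(r_i,l_i)>1$ is legitimate but resolves as expected once Lemma~\ref{lem:torusknotfiber} is extended to torus links (as the paper remarks at the end of Example~\ref{Ex:torusknots}): each of the $\gcd(r_i,l_i)$ cable components is met once, longitudinally, by the new page.
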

\begin{proof}
The fact that $(L',\pi')$ is an integral open book decomposition of $M$ that agrees with $(L, \pi)$ outside of a neighborhood of $\{ K_i \}$ follows from the discussion above. That the resolved open book $(L',\pi')$ supports the same contact structure follows from statement (1) of Theorem~\ref{thm:main-cable} since the $l_i>s_i.$
\end{proof}

\begin{Ex}\label{exs-1}
The easiest example of a resolution is when $s = -1$ (for some choice of longitude) and we do the $(r,0)$--resolution. 
Specifically let $K$ be a binding component of an open book decomposition $(L,\pi)$ of a manifold $M$ and choose a framing on $K$ so that the pages approach $K$ as $(r,-1)$--curve. There is a natural way to describe the abstract $(r,0)$--resolution of this open book given the abstract open book for $(L,\pi).$ Specifically if $\Sigma$ is a page of $L$ and the monodromy of $L$ is $\phi,$ then the page of the resolved open book $\Sigma'$ is obtained by gluing an $r$-punctured disk to $\Sigma$ along $\widetilde{K}\subset \bdry \Sigma$ (where $\widetilde{K}$  is the component of $\partial \Sigma$ that $r$--fold covers $K$) and composing the extension of $\phi$ to $\Sigma'$ with a positive Dehn twist about a curve parallel to each boundary component in the punctured disk (except for the component $K$). See Figure~\ref{fig:absres}.

To see that this is indeed the correct description of the resolution we notice that we can remove a neighborhood of $K$ from $M$ and reglue it to obtain a manifold $M'$ and an open book $(L',\pi')$ where the core of the reglued solid torus is a knot $K'$ contained in the link $L'$, $K$ has multiplicity 1, and $-r$--surgery on $K'$ yields $M$ and the open book $(L,\pi)$. 
(Then on the page $\Sigma$, $K' = \widetilde{K}$.) Notice that the surgery takes a meridian of $K'$ to the framing of $K$.
Stabilizing the open book $(L',\pi')$ by $r$ connect sums of positive Hopf bands to $K'$ produces a new open book with binding $L'$ union $r$ unknots linking $K'$ as meridional curves.  The new page gives $K'$ a framing $r$ less than the old page.  Thus $0$--surgery on $K'$ (using the framing from the new page) returns $M$.  Since the $r$ unknots from the stabilization are isotopic to meridian of $K'$, after the surgery they are $r$ parallel copies of the framing of $K$ and hence form the $(r,0)$--cable of $K$.  Abstractly, the stabilization of $(L',\pi')$, effectively adds $r$ punctures to $\Sigma$ near $K'$ with a positive Dehn twist around each. The $0$--surgery then caps off the boundary component $K'$.  This produces the open book as claimed above.
\end{Ex}

\begin{figure}[ht]
  \relabelbox \small 
 {\epsfysize=1.5truein\centerline {\epsfbox{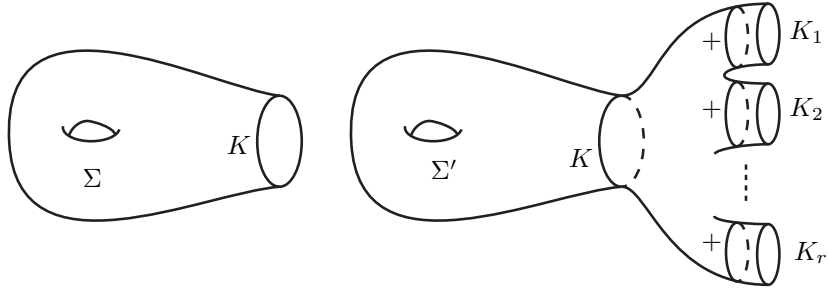}}} 
  \relabel{1}{$K_1$} 
  \relabel {2}{$K_2$} 
  \relabel{l}{$K$} 
  \relabel {s}{$\Sigma$} 
  \relabel{r}{$K_r$} 
  \relabel {k}{$K$} 
  \relabel{p1}{$+$} 
  \relabel {p2}{$+$} 
  \relabel{p3}{$+$} 
  \relabel {sp}{$\Sigma'$}  
  \endrelabelbox
        \caption{The $(r,0)$--resolution of a rational open book with binding component $K$ being approached by a page as a $(r,-1)$--curve. }
	\label{fig:absres}
\end{figure}

\begin{Nt} Every rational open book $(L,\pi)$ has description as an abstract open book.  When each boundary component has trivial multiplicity (i.e., $\mathrm{gcd}(r,s) = 1$) the monodromy is particularly straightforward to describe.  Let our link components be $L = K_1, \dots, K_n$ and denote by $\Sigma$ the rational fiber surface of $L$.  To give an abstract presentation for this rational open book, we prescribe the boundary behavior of the monodromy near an $(r,s)$ binding component to be a right-handed $\frac{|s|}{r}$ partial or fractional Dehn twist, where we have chosen our framing so that $-r < s \leq 0$.  By right-handed we mean a right-handed fractional rotation of the surface as you move toward the boundary along a cylindrical neighborhood of the boundary component.  (In particular, this rotation is ``right-veering" when measured at the boundary.)  We will denote this fractional Dehn twist by $\delta_{\frac s r }$, adding the superscript $i$ to indicate it acting on the $i$th binding component $K_i$.  Given these fractional Dehn twists we can describe the open book abstractly (uniquely) as $(\Sigma, \prod_{i=0}^n \delta_{\frac{|s|}{r}}^i \circ \phi)$ where $\phi$ is a(n) (isotopy class of) diffeomorphism(s) supported away from $\partial \Sigma$.
\end{Nt}

One particular advantage of this description is that it allows us to determine the monodromy of certain resolutions nicely. In particular the discussion of Example~\ref{exs-1} (specifically the construction of $(L,\pi)$ from $(L',\pi')$ yields the following result.

\begin{prop} Let $(L, \pi)$ be a rational open book obtained from $(L', \pi') = (\Sigma, \phi)$ by $\bf{r}$ surgery on $L'$ with each $r_i$ a negative integer.  Then we can describe $(L, \pi)$ abstractly as $(\Sigma, \prod_{i=0}^n \delta_{\frac{1}{r_i}}^i \circ \phi)$.  \qed
\end{prop}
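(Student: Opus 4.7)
My plan is to prove this proposition by directly reversing the construction in Example~\ref{exs-1}, handling one surgery at a time.

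First I would observe that the surgery operations on different components of $L'$ are supported in disjoint tubular neighborhoods, so they commute and their effects on the monodromy multiply. Thus it suffices to prove the proposition when $L'$ has a single component, or to iterate component-by-component. So fix a binding component $L'_i$ and a negative integer surgery coefficient $r_i = -n$, measured with respect to the page framing of $(L', \pi')$ (which is the natural framing inherited from $\Sigma$).

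Next I would invoke Example~\ref{exs-1} verbatim in reverse. The Example shows that if $(L, \pi)$ is a rational open book with a binding component $K$ approached by its page as an $(n, -1)$-curve, then the $(n, 0)$-resolution $(L', \pi')$ has page $\Sigma'$ obtained from the rational page $\Sigma$ by gluing an $n$-punctured disk along the boundary over $K$, with monodromy obtained from the monodromy of $(L, \pi)$ by composition with a right-handed Dehn twist parallel to each new boundary component in the punctured disk. Moreover, $-n$ surgery on the new boundary component $K' = \widetilde{K} \subset \partial \Sigma'$ recovers $(L, \pi)$. Reading this backward starting from the data $(L', \pi') = (\Sigma, \phi)$: performing $r_i = -n$ surgery on $L'_i$ yields a rational open book whose fiber is $\Sigma$ with the $\widetilde{K}$-boundary component now approaching the new binding $L_i$ as an $(n, -1)$-curve (hence multiplicity one, Seifert slope $-1/n = 1/r_i$), and whose monodromy differs from $\phi$ only in a cylindrical neighborhood of that boundary.

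Finally I would identify the boundary behavior. In the stabilization direction of Example~\ref{exs-1}, one adds $n$ positive boundary-parallel Dehn twists near $\widetilde{K}$; reversing this, the monodromy of the rational open book near $L_i$ is obtained from $\phi$ by undoing one $n$-th of those Dehn twists. Concretely, in the collar of the new binding the monodromy acts as a right-handed rigid rotation by $2\pi/n$, which is precisely the right-handed fractional Dehn twist $\delta^i_{-1/n}$ in the notation fixed just before the proposition. Since $r_i = -n$ we have $-1/n = 1/r_i$, giving the factor $\delta^i_{1/r_i}$. Applying this to each component and composing away from the boundary (where the monodromy agrees with $\phi$) yields $(\Sigma, \prod_i \delta^i_{1/r_i} \circ \phi)$.

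The main obstacle is simply keeping the sign and framing conventions consistent: verifying that \emph{negative} integer surgery corresponds to a \emph{right-handed} (hence positive under our convention) partial twist, and that the subscript $1/r_i$ with $r_i < 0$ correctly records the Seifert slope $s/r = -1/n$ appearing in the notation $\delta_{s/r}$ with $-r < s \leq 0$. Once the conventions are pinned down, the proposition is immediate from Example~\ref{exs-1}.
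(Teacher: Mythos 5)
Your proof is correct and takes essentially the same approach as the paper, whose entire argument is the pointer to Example~\ref{exs-1} together with the notation fixed for $\delta_{s/r}$ just before the Proposition; you spell out that pointer component-by-component and make the sign and framing conventions explicit. The only wobble is the phrase ``undoing one $n$-th of those Dehn twists,'' which has the wrong flavor (the fractional twist $\delta_{-1/n}$ is \emph{added} to $\phi$, not subtracted), but your next sentence states the correct conclusion that the boundary behavior is a right-handed $2\pi/n$ rotation, i.e.\ $\delta_{-1/n}=\delta_{1/r_i}$.
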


We can now restate Example~\ref{exs-1} as follows. 
\begin{prop} \label{prop:r-1 resolution} Let $(L,\pi)$ be a $({\bf r},-1)$ open book with abstract description \mbox{$(\Sigma, \prod_{i=0}^n \delta_{\frac{1}{r_i}}^i \circ \phi)$}.  The $({\bf r},0)$--resolution of $(L,\pi)$ can be described abstractly as $(\Sigma_{\bf{r}}, \phi \circ M_{\partial})$ where $\Sigma_{\bf{r}}$ is built from $\Sigma$ by 
gluing a disk with $r_i$ holes to the $i$th boundary component so that $\phi$ acts on $\Sigma$ as a subsurface of $\Sigma_{\bf{r}}$.  The map $M_\partial$ is a Dehn multitwist about all the boundary components of $\Sigma_{\bf{r}}$. \qed
\end{prop}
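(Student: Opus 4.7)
The plan is to combine Example~\ref{exs-1} with the preceding proposition and translate the output into the fractional-Dehn-twist notation. By the preceding proposition, the rational open book $(L,\pi)$ with abstract description $(\Sigma,\prod_i \delta^i_{1/r_i}\circ\phi)$ is realized as the result of $r_i$--surgery (each $r_i$ a negative integer) on the $i$th boundary component $K'_i$ of the integral open book $(L',\pi')=(\Sigma,\phi)$. Under this identification, the $i$th new binding component $K_i$ of $L$ is the core of the surgery solid torus, and the page of $(L,\pi)$ approaches $K_i$ as an $(r_i,-1)$--curve with respect to the framing inherited from $K'_i$, so the hypothesis of Example~\ref{exs-1} applies at every binding component simultaneously.

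The first step is to realize the $(\mathbf r, 0)$--resolution geometrically via the stabilize-then-surger procedure of Example~\ref{exs-1}, applied independently at each $K'_i$: stabilize $(L',\pi')$ by $|r_i|$ positive Hopf bands attached along meridional arcs near $\partial_i \Sigma$, and then perform $0$--surgery on $K'_i$ with respect to the resulting page framing. Since the stabilizations at distinct boundary components are supported in disjoint collars, they commute and can be performed simultaneously. The net effect on the page is to attach a disk with $r_i$ holes to $\partial_i \Sigma$ for each $i$, producing precisely $\Sigma_{\mathbf r}$ with $\Sigma$ as an embedded subsurface on which $\phi$ acts as before.

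The second step is to read off the new monodromy. Each of the $|r_i|$ Hopf stabilizations contributes a right-handed Dehn twist about a curve that, after the $0$--surgery caps off $\partial_i \Sigma$, becomes isotopic to a boundary-parallel curve of one of the $r_i$ new boundary components of $\Sigma_{\mathbf r}$ introduced by the pair-of-pants decomposition of the attached $r_i$-holed disk. A further twist arises along a curve parallel to $\partial_i \Sigma$ itself, but this curve bounds a disk in $\Sigma_{\mathbf r}$ after the cap-off, so its twist is isotopic to the identity in the mapping class group and can be discarded. Summing over all $i$ assembles precisely the boundary multitwist $M_\partial$ of $\Sigma_{\mathbf r}$, giving the monodromy $\phi \circ M_\partial$ as claimed. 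Finally, Theorem~\ref{thm:resolution} guarantees that the resulting integral open book supports the same contact structure as $(L,\pi)$.

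The principal point requiring care is the bookkeeping that identifies the collection of Dehn-twist curves coming from the stabilizations with exactly the set of boundary-parallel curves of $\Sigma_{\mathbf r}$, and confirms that no residual twist about the capped boundary $\partial_i \Sigma$ survives. This is the technical core of the argument and is essentially the content of Example~\ref{exs-1}, reformulated boundary component by boundary component; given this, the rest of the proof is a direct translation into the $\delta_{1/r_i}$ notation introduced in the preceding paragraph.
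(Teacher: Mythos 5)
Your proposal follows the same route as the paper's own (implicit) proof, which is exactly Example~\ref{exs-1} applied at each boundary component: realize $(L,\pi)$ as $\mathbf{r}$-surgery on $(\Sigma,\phi)$, stabilize $|r_i|$ times at $\partial_i\Sigma$ with positive Hopf bands along boundary-parallel arcs, do $0$-surgery in the new page framing, and read off the monodromy. That is essentially the correct argument and the final answer is right.

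However, the sentence ``A further twist arises along a curve parallel to $\partial_i\Sigma$ itself, but this curve bounds a disk in $\Sigma_{\mathbf r}$ after the cap-off, so its twist is isotopic to the identity in the mapping class group and can be discarded'' is both unnecessary and incorrect as stated. First, no such twist arises: each positive Hopf stabilization contributes exactly one right-handed Dehn twist, about the curve $\alpha'$ which (for a boundary-parallel stabilizing arc) is parallel to the \emph{new} boundary component created by that stabilization, and the $0$-surgery with respect to the new page framing merely caps off the boundary component $K'_i$ of the stabilized page without introducing any additional Dehn twist. So there is nothing to discard. Second, the curve $\partial_i\Sigma$ --- the gluing circle between $\Sigma$ and the attached $r_i$-holed disk --- does \emph{not} bound a disk in $\Sigma_{\mathbf r}$: it bounds the $r_i$-holed disk on one side and $\Sigma$ on the other, and neither side is a disk in general. (The curve that does bound a disk is the boundary component of the stabilized page that gets capped off, which is separated from $\partial_i\Sigma$ in $\Sigma_{\mathbf r}$ by the $r_i$ new holes and so is not parallel to it.) Since the passage's conclusion happens to coincide with the truth --- no residual twist survives --- the overall proof is salvaged, but the justification offered is wrong and should be replaced by the observation that the stabilize-then-cap procedure simply never produces such a twist.

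Two smaller points: the invocation of Theorem~\ref{thm:resolution} at the end is irrelevant to the proposition, which is a purely topological statement about the abstract open book and makes no claim about contact structures; and the phrase ``framing inherited from $K'_i$'' glosses over the fact that one must take the longitude of the surgered core to be (minus) the meridian of $K'_i$ in order for the page to appear as an $(r_i,-1)$-curve.
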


\section{Surgery on transversal knots}\label{surgeryont}

We recall the notion of an admissible surgery along a transverse knot. Given a transverse knot $K$ in a contact manifold $(M,\xi)$ with a fixed framing $F$ then  $\frac qp$ surgery on $K$  is \dfn{admissible} if there is a neighborhood $N$ of $K$ in $M$ that is contactomorphic to a neighborhood $N'_{r_0}=\{(r,\theta, z)| r\leq \sqrt{r_0}\}$ of the $z$-axis in $\R^3/(z\equiv z+1)$ with the contact structure $\xi'=\ker(dz+r^2\, d\theta)$ such that $F$ goes to the product framing on $N_{r_0}'$ and $-\infty<\frac qp<-\frac 1{r_0}.$ (We remind the reader that we are using a different convention for representing slopes of curves on tori that is usual in contact geometry.  This is to agree with conventions used when describing Dehn surgery coefficients.  See also Section~\ref{sec:slopeconventions}. )

Note that if $M_K(\frac qp)$ is obtained from $M$ by an admissible surgery then there is a natural contact structure $\xi_K(\frac qp)$ on it. The contact structure comes  from a contact reduction process. Specifically let $T_{a}=\{(r,\theta,z)| r={\frac 1{\sqrt a}}\}$ and $S_{a,b}=\{(r,\theta,z)| {\frac 1{\sqrt a}}<r<\frac1{\sqrt b}\}$ in $\R^3/(z\equiv z+1)$ with the contact structure $\xi'=\ker(dz+r^2\, d\theta).$ Then in $N'_{r_0}$ above we have the torus $T_{-\frac qp}$ that divides $N'_{r_0}$ into two pieces $S_{-\frac qp, r_0}$ and $N'_{-\frac qp}.$ If we remove $N'_{-\frac qp}$ from $N'_{r_0}\cong N\subset M$ then we have a manifold $M'$ with a torus boundary component $T_{-\frac qp}$ and the characteristic foliation on this boundary component has slope (with respect to the framing $F$) $\frac qp.$ Notice that if we form the quotient  space of $M'$ with each leaf of the characteristic foliation identified to a point then the resulting manifold is $M_K(\frac qp).$ Let $K'$ be the knot formed from points in the quotient space where nontrivial identifications have been made (that is $K'$ is the core of the surgery torus). Notice that $M_K(\frac qp)-K'$ has a contact structure on it since it is a subset of $M.$ We claim this contact structure extends over $K'.$ To see this we consider $S_{-\frac qp, r_0}$ and let $S$ be the solid torus obtained by identifying the leaves of the characteristic foliation on $T_{-\frac qp}$ to a point. We claim that the contact structure on $S_{-\frac qp,r_0}-T_{-\frac qp}$ extends over the core of $S.$ This is easily seen by applying the contactomorphism 
\[
\Psi:(S_{-\frac qp,r_0}-T_{-\frac qp})\to (N'_{r_1}-Z):(r,\theta, z)\mapsto (f(r),p'\theta-q'z,-p\theta+ qz),
\]
for some $r_1$ where $p',q'$ satisfy $qp'-q'p=1,$ $Z$ is the core of $N'_{r_1},$ and $f(r)$ is a smooth increasing function such that the torus $T_{f(r)}$ has characteristic foliation with slope $\frac {-p'-rq'}{p+rq}.$ One may easily check that since $\Psi$ preserves the radial direction and preserves the characteristic foliations on the tori $T_r$ it is a contactomorphism. It is also clear that this map extends to a homeomorphism from $S$ to $N'_{r_1}.$ Thus we may consider $M_K(\frac qp)$ as being build from $M\setminus N'_{r_0}$ and $N'_{r_1}$ by using $\Psi$ to glue their boundaries together and hence the contact structure on $M_K(\frac qp)$ clearly extends over $K'.$

We recall Gay showed that if $(M,\xi)$ is symplectically fillable then so is $(M_K(\frac qp),\xi_K(\frac qp)),$ \cite{Gay02a}. So admissible surgery on transverse knots is analogous to Legendrian surgery on Legendrian knots (in fact, if the transverse knot is the push off of a Legendrian knot with Thurston-Bennequin invariant greater than the surgery slope, then the admissible surgery is precisely a sequence of Legendrian surgeries). 

The following result is a simple consequence of the construction of compatible contact structures for open books decompositions.
\begin{lem}\label{lem:admissible}
Let  $(L,\pi)$ be an open book decomposition of a manifold $M$ that supports the contact structure $\xi.$ Then if $K$ is a component of the binding of $L$ with framing given by the page of the open book then any negative Dehn surgery is admissible. 

Let $(L,\pi)$ be a rational open book decomposition for $(M,\xi)$ and $K$ be a binding component with order larger than one. Fix a framing on $K$ so that the pages approach $K$ as a $\frac sr$--curve. Then a Dehn surgery coefficient  is admissible if it is less than $\frac sr.$ \qed
\end{lem}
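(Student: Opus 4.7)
The plan is to demonstrate that the page framing on a binding component $K$ automatically coincides with the product framing of a standard model neighborhood of the transverse binding, and that this standard model can be enlarged so that any prescribed negative surgery slope lies in the admissible range.

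First, recall from the proof of Theorem~\ref{thm:support} that near a binding component $K$ of an integral open book the compatible contact form can be arranged in coordinates $(\varphi,(r,\vartheta))$ on $S^1 \times D^2$ as $\alpha = f(r)\,d\varphi + g(r)\,d\vartheta$, with $f(r) = 1$ and $g(r) = r^2$ near $r = 0$. Consequently the subneighborhood of $K$ on which this near-core form holds is contactomorphic to the standard model $N'_{r_0} = \{r \leq \sqrt{r_0}\}$ equipped with $dz + r^2\,d\theta$; in this identification $\varphi$ plays the role of $z$, so the product framing appearing in the admissibility definition coincides with the page framing on $K$.

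Next, given any negative surgery coefficient $q/p$, we claim the radius $\sqrt{r_0}$ of this standard subneighborhood can be chosen so that $r_0 > -p/q$. Indeed, the Thurston--Winkelnkemper--style construction in Theorem~\ref{thm:support} enjoys considerable flexibility: the profile functions $f$ and $g$ need only satisfy the contact condition $fg' - f'g > 0$ and transition from $(1, r^2)$ near the core to the values $(-rp-Kq,\, -rq+pK)$ dictated by the monodromy gluing $\psi$ near the outer boundary. Nothing prevents us from keeping $(f,g) = (1, r^2)$ on an arbitrarily large initial interval provided the subsequent transition region is sufficiently wide. With such a choice, the characteristic foliation on $\partial N'_{r_0}$ has slope $-1/r_0 > q/p$, so the admissibility inequality $q/p \in (-\infty, -1/r_0)$ holds.

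The rational open book case is handled similarly via Lemma~\ref{get-good-nbhd}: that result (together with natural generalizations outside its stated range $-1 < s/r \leq 0$) provides a universally tight tubular neighborhood of $K$ whose boundary has any prescribed linear characteristic slope $s'/r' < s/r$. Inside this neighborhood the contact structure becomes, after a change of framing sending the Seifert slope to $0$, equivalent to the integral model of the first paragraph, and extending the standard model as above yields admissibility for any $q/p < s/r$.

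The main obstacle is confirming that the standard model can be enlarged through the entire range of slopes below the Seifert slope, rather than only slopes close to it. In the integral setting this is secured by the freedom to reshape $f$ and $g$ without altering the induced contact structure up to isotopy; in the rational setting the corresponding flexibility is precisely the content of Lemma~\ref{get-good-nbhd}, with the rational unknot (disk-page) case handled separately by direct appeal to the classification of tight contact structures on solid tori.
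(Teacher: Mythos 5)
Your proposal is correct and is essentially the argument the paper has in mind: the lemma is marked $\qed$ with the remark that it is a ``simple consequence of the construction of compatible contact structures,'' and your proof spells out exactly that Thurston--Winkelnkemper flexibility.

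One streamlining worth noting: you do not actually need to reshape the profile functions $f,g$ (which forces you to invoke, at least implicitly, the uniqueness-up-to-isotopy part of Theorem~\ref{thm:support} to get back to the \emph{original} contact structure). In \emph{any} Thurston--Winkelnkemper model $f(r)\,d\varphi + g(r)\,d\vartheta$ near the binding, the contact condition $fg'-f'g>0$ shows $g(r)/f(r)$ is strictly increasing on the interval where $f>0$, tending to $\infty$ as $r$ approaches the radius $r_*$ where $f$ vanishes. The map $(\varphi,r,\vartheta)\mapsto(\varphi,\sqrt{g(r)/f(r)},\vartheta)$ is then directly a contactomorphism from the concentric tube $\{r\le c\}$ (any $c<r_*$) onto $N'_{r_0}$ with $r_0=g(c)/f(c)$, matching the $\varphi$-framing with the product framing, and $r_0\to\infty$ as $c\to r_*$. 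Equivalently, the slope $-f(r)/g(r)$ of the characteristic foliation on the concentric tori sweeps monotonically through $(-\infty,s/r)$, which handles the rational case as well without the detour through Lemma~\ref{get-good-nbhd} (that lemma normalizes the framing and excludes disk pages, so it does not cover all instances of the present statement, whereas the direct argument does). Your framing identification and the rational-unknot caveat are correctly flagged; if you keep the ``reshape $(f,g)$'' route, state explicitly that the resulting contact structure is isotopic rel a neighborhood of the binding to the original, so that admissibility is preserved.
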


Notice that if $K$ is the binding of an (rational) open book decomposition $(L,\pi)$ and we do any surgery to $K,$ except the one corresponding to the slope of a page approaching $K,$ then letting $L'$ be $L-K$ union the core $K'$ of the surgery torus clearly is the binding of a rational open book decomposition for the new manifold. We will call this an \dfn{induced open book decomposition}.

\begin{lem}\label{lem:admissiblesup}
Let $(L,\pi)$ be a rational open book decomposition for $(M,\xi)$ and $K$ be a binding component. For the admissible surgeries described in Lemma~\ref{lem:admissible} the induced open book decomposition supports the contact structure obtained by the admissible surgery. 
\end{lem}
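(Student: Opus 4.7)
The approach is to verify directly that the contact structure $\xi_K(q/p)$ produced by the admissible surgery satisfies the two compatibility conditions of Subsection~\ref{sec:thirstywinkle} with the induced open book $(L',\pi')$, and then to invoke the uniqueness statement of Theorem~\ref{thm:support}. Fix a contact form $\alpha$ supporting $(L,\pi)$. By the construction of $\xi_K(q/p)$ described just before the statement, the admissible surgery leaves the contact structure on $M\setminus N'_{r_0}$ unchanged, and the pages and binding components of $(L',\pi')$ on this complement coincide with those of $(L,\pi)$. Hence both compatibility conditions are inherited there for free, and the entire problem localises to checking compatibility of $\xi_K(q/p)$ with $(L',\pi')$ on the glued-in solid torus $N'_{r_1}$.

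Next I would verify the two conditions on $N'_{r_1}$ using the model form $\alpha_1=dz+r^2\,d\theta$, which represents $\xi_K(q/p)$ there by definition of the contact reduction. Its Reeb vector field is $\partial_z$, which is positively tangent to the core $K'$ with the orientation inherited as a binding component of $(L',\pi')$, yielding condition~(1). For condition~(2), observe that on $S_{-q/p,r_0}$ the pages of $(L,\pi)$ are positively transverse to the Reeb field of $\alpha$ (this is equivalent to $d\alpha$ being a positive area form on pages). Because the contactomorphism $\Psi$ constructed before the lemma preserves the radial coordinate and the characteristic foliations (with coorientations) on the concentric tori $T_a$, it carries pages to surfaces that remain positively transverse to $\partial_z$ throughout $N'_{r_1}\setminus K'$. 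Consequently $d\alpha_1=2r\,dr\wedge d\theta$ restricts to a positive area form on the extended pages, and these pages close up on $K'$ exactly as in the Thurston--Winkelnkemper template used at the end of the proof of Theorem~\ref{thm:support}.

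The step I expect to require the most care is interpolating between $\Psi_*\alpha$ and $\alpha_1$ in a collar of $\partial N'_{r_1}$, so as to produce a single globally defined contact form on $M_K(q/p)$ representing $\xi_K(q/p)$ and compatible with $(L',\pi')$ everywhere. The two forms define the same contact structure after pull-back through $\Psi$ but need not be equal as forms. Rescaling $\alpha$ by a large positive constant outside the collar and taking a convex combination in the collar, exactly as in the existence argument of Theorem~\ref{thm:support}, produces a contact form; what must be checked is that throughout the collar $d\alpha$ remains a volume form on the fibre surfaces. The admissibility hypothesis, namely that the surgery slope is strictly less than the Seifert slope on $K$, guarantees that the slopes of the characteristic foliations on the intermediate tori interpolate monotonically through a window strictly separated from the page slope, which is precisely what makes the convex combination remain contact and the interpolated form positive on pages.

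Once a globally compatible contact form for $\xi_K(q/p)$ has been produced, the uniqueness part of Theorem~\ref{thm:support} identifies $\xi_K(q/p)$ with the contact structure supported by the induced rational open book $(L',\pi')$, completing the proof. The same argument applies verbatim to the case of a rational binding component of order larger than one, since the only local data used is the slope inequality between the page and the surgery coefficient together with the explicit model of $\Psi$.
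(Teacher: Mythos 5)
Your proposal follows essentially the same route as the paper's proof: restrict $\alpha$ to the complement of a neighborhood of $K$, observe this gives a compatible form on the surgered manifold away from $K'$, and then use the contactomorphism $\Psi$ together with the Thurston--Winkelnkemper extension mechanism from Theorem~\ref{thm:support} to finish.

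One point in the middle paragraph deserves caution. You argue that $\Psi$ carries the original pages to surfaces positively transverse to $\partial_z$ because $\Psi$ preserves the radial direction and the characteristic foliations on the tori $T_a$. But $\Psi$ is merely a contactomorphism: it conjugates the Reeb field of $\alpha$ to the Reeb field of $\Psi_*\alpha$, which after the explicit formula is a vector of the form $-q'\partial_\theta + q\partial_z$, not $\partial_z$. Pages being transverse to the Reeb field of $\Psi_*\alpha$ does not directly give transversality to $\partial_z$, so $d\alpha_1$ being positive on pages does not follow from what is stated. This does not sink the argument, because (as you correctly flag) the real work is in the collar interpolation: one never actually needs $\alpha_1$ itself to be compatible, only that the single global form built by interpolating $\Psi_*\alpha$ toward the model near the core remains contact and positive on the pages, and the admissibility inequality on slopes is precisely what makes the monotone interpolation possible. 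This is exactly the move the paper makes (``use the construction at the end of the proof of Theorem~\ref{thm:support} to extend $\alpha$ over this torus''), phrased as choosing suitable $f(r), g(r)$ rather than as a two-form interpolation. The appeal to uniqueness at the end is harmless but not really needed: once a globally defined compatible contact form representing $\xi_K(q/p)$ is produced, the conclusion is immediate from the definition of ``supports.''
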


\begin{proof}
Notice that the compatibility of $(L,\pi)$ with $\xi$ gives a contact form $\alpha$ that is positive on oriented tangents to $L$ and such that $d\alpha$ is a positive area form on the pages. After the admissible surgery $\alpha$ restricted to the complement of a small neighborhood of $K$ gives a contact form on the surged manifold minus a small neighborhood of $K'.$ This contact form shows compatibility with the induced open book everywhere except the neighborhood of $K'.$ Using the contactomorphism $\Psi$ above one may easily use the construction at the end of the proof of Theorem~\ref{thm:support} to extend $\alpha$ over this torus so as to demonstrate compatibility. 
\end{proof}

\begin{Ex}
Let $(L_0,\pi_0)$ be an open book for a contact manifold $(M,\xi)$ and let $K_0$ be a component of the binding $L_0.$ For a positive integer $r$ the $-r$ surgery is an admissible surgery on $K_0$ and satisfies the hypothesis of Lemma~\ref{lem:admissible}. Thus the rational open book $(L,\pi)$ induced on the admissibly surgered manifold supports the resulting contact structure. The binding component of this open book has the page approaching it as a $(-1,r)$--curve. Moreover Figure~\ref{fig:absres} shows how to resolve this rational open book decomposition into an honest open book decomposition. 

Notice that this gives the same open book we would obtain by Legendrian surgery on a Legendrian copy of $K$ given by $r$ right-handed stabilizations of a realization of $K$ on the page of the open book (assuming this is possible, which it is if there are other boundary components).
\end{Ex}

\begin{Ex}  The open book on $S^3$ with binding the left-handed trefoil $K$ supports an overtwisted contact structure.  In particular it supports the contact structure $\xi_{-2}$ with Hopf invariant $-2$. The $-5$ surgery is an admissible surgery and satisfies the hypothesis of Lemma~\ref{lem:admissible} so the induced rational open book decomposition on $S^3_K(-5)$ supports the contact structure obtained by admissible surgery on $K.$ 
It is known that $-5$ surgery on $K$ gives the lens space $L(5,4)$.  Looking at the $(5,0)$--resolution of the rational open book decomposition, we obtain the open book given by Figure~\ref{fig:lensspace}.  Using relations in the mapping class group given in  \cite{KorkmazOzbagci08} one can see that this gives the (unique) Stein fillable contact structures on $L(5,4)$.  Thus, the rational open book supported by the $-5$ surgery on the left-handed trefoil is tight.
\begin{figure}[ht]
  \relabelbox \small 
 {\epsfysize=2truein\centerline {\epsfbox{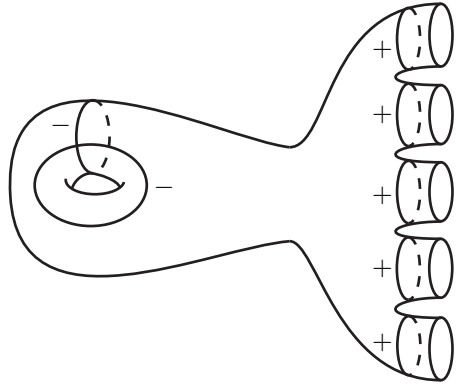}}}
  \relabel{1}{$+$} 
  \relabel {2}{$+$} 
  \relabel{3}{$+$} 
  \relabel {4}{$+$} 
  \relabel{5}{$+$} 
  \relabel {a}{$-$} 
  \relabel{b}{$-$} 
  \endrelabelbox
\caption{The resolution of a rational open book on $L(5,4)$ given by surgery on the left-handed trefoil.}
\label{fig:lensspace}
\end{figure}
\end{Ex} 

It is clear from the results above that to understand the open book decomposition associated to admissible surgeries on a transverse knot $K$ it is helpful to have $K$ in the binding of an open book decomposition. One can always do this as the following lemma, whose potential existence was first observed during conversations between the authors and Vincent Colin, shows. 

\begin{lem}
Let $(L,\pi)$ be an integral open book decomposition for the contact manifold $(M,\xi).$ Assume $K$ is an oriented  Legendrian knot on a page of the open book decomposition (so that the framing given by $\xi$ and by the page agree). Let $\gamma$ be an arc on the page running from one binding component of the open book decomposition to the knot $K$ and approaches $K$ from the right. (The page of the open book and $L$ are both oriented. We say $\gamma$ approaches $K$ from the right if the orientation on $K$ followed by the orientation on $\gamma$ induces the orientation on the page where $K$ and $\gamma$ intersect.) Set $\alpha$ to be a curve that runs from $L$ along $\gamma$ around $K$ and then back to $L$ along a parallel copy of $\gamma.$ The open book $(L_\alpha,\pi_\alpha)$ obtained from $(L,\pi)$ by positively stabilizing along $\alpha$ has a binding component $B$ that is the transverse push off of $K.$ See Figure~\ref{transrealize}.
\end{lem}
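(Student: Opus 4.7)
The plan is to show $B$ is the transverse push-off of $K$ by exhibiting $B$, up to transverse isotopy in $M$, as a push-off of $K$ inside a standard contact neighborhood of $K$, and then invoking the hypothesis that the page framing of $K$ agrees with its contact framing.

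The key geometric observation is that the arc $\alpha$ wraps once around $K$ within a thin annular neighborhood of $K$ on the page $\Sigma$, and hence lies within a standard tubular neighborhood $N$ of $K$ in $M$ except for a small ``feeler'' running out toward $L$ along $\gamma$ and $\gamma'$. After stabilization, the positive Hopf band $H$ plumbed in along $\alpha$---equivalently, a regular neighborhood in $\Sigma_\alpha\subset M$ of the closed curve $\alpha' = \alpha \cup (\text{core of the 1-handle})$---has its ``wraps around $K$'' portion sitting inside $N$, with only the feeler (plus the 1-handle) outside. Under the Murasugi sum description of stabilization, one of the two boundary circles of $H$, namely the one on the same side of $\alpha'$ as $K$, gets spliced with the short arc of $L$ between the endpoints of $\alpha$ to form a new binding component $B$ of $\Sigma_\alpha$. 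Most of $B$ is therefore a page push-off of $K$ living inside $N$, and the remaining portion outside $N$ (running along $\gamma$, the short arc of $L$, and $\gamma'$) is a short transverse arc.

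Once we transversely isotope this short outside portion back into $N$, the curve $B$ lies entirely inside $N$ as a page push-off of $K$. Because $K$'s page framing agrees with its contact framing by hypothesis, this page push-off is precisely the transverse push-off of $K$: in a standard contact model for $N$, the contact-framing push-off of $K$ is the transverse curve of the correct slope on a concentric torus, and the page push-off, inheriting its framing from a subsurface of $\Sigma_\alpha$, realizes exactly this slope.

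The main obstacle is tracking orientations through the construction. The hypothesis that $\gamma$ approaches $K$ from the right determines the direction in which $\alpha$ wraps around $K$, which in turn pins down the side of $\alpha'$ on which $K$ lies and hence which of the two new boundary components of $\Sigma_\alpha$ is $B$. This same orientation condition ensures that the orientation induced on $B$ as a boundary component of the oriented page $\Sigma_\alpha$ matches the orientation of $K$ along the push-off, so that $B$ is the \emph{positive} transverse push-off of $K$ rather than its negative. Setting up and verifying the local orientation conventions near the plumbing region is the delicate part of the argument; once done, the identification of $B$ with the transverse push-off of $K$ follows.
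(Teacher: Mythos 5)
Your overall geometric picture is reasonable, but the crucial framing step is not established, and this is exactly where all the content of the lemma lives. The problem is that $B$ is \emph{not} parallel to $K$ on the stabilized page $\Sigma_\alpha$---there is no annulus in $\Sigma_\alpha$ from $K$ to $B$. The curve that cobounds an annulus with $B$ on $\Sigma_\alpha$ is a different curve $K'$, one that runs over the new 1-handle exactly once; $K'$ is smoothly isotopic to $K$ in $M$ but not in $\Sigma_\alpha$, since $K$ lies on the subsurface $\Sigma\subset\Sigma_\alpha$ and never touches the handle. So the assertion that ``the page push-off, inheriting its framing from a subsurface of $\Sigma_\alpha$, realizes exactly this slope'' does not hold up as written: there is no such subsurface through which to inherit a framing, and the Hopf plumbing introduces a nontrivial twist that you are not tracking.

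The paper bridges this gap by Legendrian realizing $K'$ on $\Sigma_\alpha$ and showing that $K$ and $K'$ cobound an annulus in $M$ on which the contact twisting of $K$ is $0$ while that of $K'$ is $-1$; this makes $K'$ a Legendrian stabilization of $K$, and the ``$\gamma$ approaches $K$ from the right'' hypothesis is precisely what forces this stabilization to be \emph{negative}. Since negative Legendrian stabilization does not change the positive transverse push-off, $T_{K'}=T_K$. Finally, because $K'$ and $B$ do cobound an annulus on the page $\Sigma_\alpha$, one sees $B=T_{K'}$, and hence $B=T_K$. Any correct proof must either reproduce this reduction to $K'$ or else give an explicit local computation of the framing of $B$ relative to $K$ through the plumbing region; your sketch does neither, and simply asserts the conclusion of the hard step.

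A smaller but real issue: your account of what the ``from the right'' hypothesis buys is off. The binding of any open book supporting $\xi$ is automatically positively transverse, so this condition is not needed to make $B$ positive rather than negative. What it actually controls is the \emph{sign} of the Legendrian stabilization $K'$. Had $\gamma$ approached $K$ from the left, $K'$ would be the positive stabilization of $K$, and $B$ would be the transverse push-off of that positive stabilization---which is a genuinely different transverse knot from $T_K$.
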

\begin{figure}[ht]
  \relabelbox \small 
 {\centerline {\epsfbox{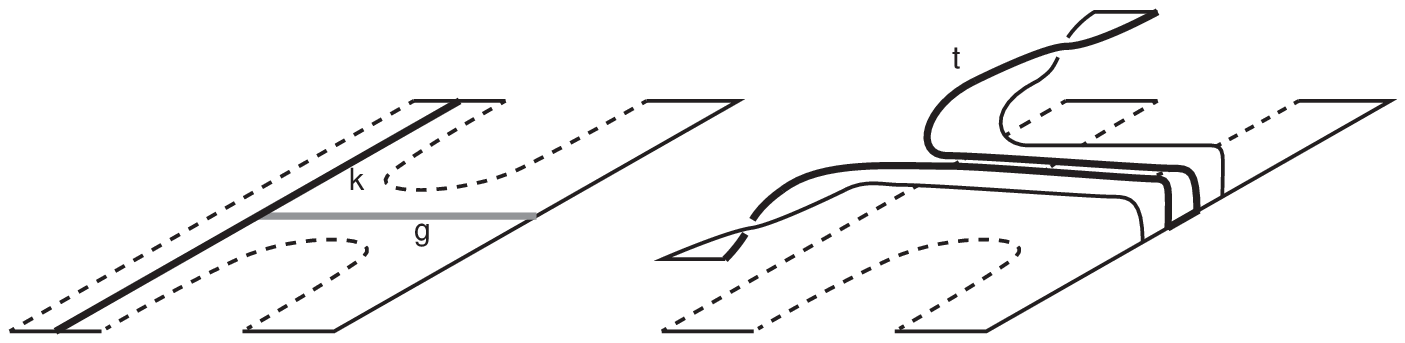}}}
  \relabel{k}{$K$} 
  \relabel {g}{$\gamma$} 
  \relabel{t}{$T$} 
  \endrelabelbox
\caption{On the left is a neighborhood of $K, \gamma$ and a binding component of $L$. (The top and bottom horizontal lines in both figures are identified.) On the right is the stabilized open book with the binding component $T$ drawn in a thicker line, where $T$ is the transverse push off of $K$.}
\label{transrealize}
\end{figure}

\begin{proof}
On the page of the open book $(L_\alpha, \pi_\alpha)$ there is a knot $K'$ that runs over the new 1-handle once, is isotopic to $K$ (in the whole manifold),  and is parallel to one of the new binding components of the open book. One may Legendrian realize both $K$ and $K'$.  Then using a local model for stabilization one can see that together $K$ and $K'$ cobound an embedded annulus where the contact framing of $K$ with respect to this annulus is $0$ and the contact framing of $K'$ with respect to this annulus is $-1.$ One may use this annulus to see that $K'$ is a Legendrian stabilization of $K.$ Moreover, by our choice of $\gamma$ in the lemma, $K'$ will be a negative stabilization of $K.$ Thus $K$ and $K'$ have the same transverse push-offs, see \cite{EtnyreHonda01b}. 

Recall that $K'$ cobounds an annulus with one of the binding components of the open book. Using this annulus one sees that the binding component is the transverse push-off of $K'.$ Thus the binding is also the transverse push-off of $K.$
\end{proof}

\begin{rem}
There are many techniques for putting Legendrian knots on pages of open book decompositions \cite{Etnyre04b, EtnyreOzbagci06} (though this is still more of an art than a science).  Since any transverse knot can be realized as the transverse push off of a Legendrian knot this previous lemma, coupled with resolutions of rational open books,  allows us to find open books for admissible surgeries on transversal knots. The lemma also gives us a convenient way to find open books for all ``rational Legendrian surgeries'' on Legendrian knots sitting on a page of an open book. 
\end{rem}

\section{Monodromies of cables}\label{sec:monodromy}

In Lemma~\ref{lem:cable} we discussed how to construct the page of a cabled fibered link from the page of the original link.  In this section we show how to compute the monodromy, in terms of Dehn twists, of certain cablings of an integral open book decomposition from the monodromy of the original open book decomposition. (Throughout this section we will focus solely upon integral open books.)  It turns out this is most difficult when the binding is connected and one wishes to perform a $(p,1)$--cable. This will be addressed in Subsection~\ref{sec:connected binding}. The somewhat simpler cases will be addressed first in Subsection~\ref{sec:easy case}. We will analyze both cases using branched coverings, so we review a few facts about branched covers in Subsection~\ref{bc}. Subsection~\ref{sec:monsetup} contains a basic proposition that we use throughout our analysis. 

\begin{rem} A quick note regarding notation:  throughout this section and the next, we will use group notation for braids, writing them left to right, and functional notation for mapping class elements, composing them right to left.  Whenever product notation is used, we will assume that the lower indexed elements act first.  In the functional notation, the product of mapping class elements would be written $\prod_{i=1}^n f_i = f_n \circ f_{n-1} \circ \cdots \circ f_1$, whereas for a braid we would write resulting product in the reverse order. \end{rem}
\subsection{Branched covers and open book decompositions}\label{bc}
As we will be using branched covering technology and terminology heavily in this section we refer the reader to \cite{BersteinEdmonds79, Montesinos-AmilibiaMorton91} for a thorough discussion of the relevant results; however, we review a few basic facts used below for the convenience of the reader. 

Let $(U,\pi)$ be the open book for $S^3$ with binding the unknot. 
Recall a link $B$ is braided in $S^3$ it it is transverse to the pages of $(U,\pi)$, or equivalently is isotopic to a closed braid through links transverse to the pages of $(U,\pi)$. The braid index of $B$, seen as a closed braid,  is the number of times $B$ intersects the pages of $(U,\pi)$. If $B$ has braid index $n$ then notice that we can fix $n$ points $x_1,\ldots, x_n$ on a disk $D^2$ then there will be a diffeomorphism $\phi$ of $D^2$ that fixes these points set-wise such that the image of $\{x_1\times[0,1],\ldots, x_n\times[0,1]\}$ in the mapping cylinder of $\phi$ will trace out a link $L_\phi$ that is braid isotopic to $B$ when the mapping cylinder is completed to give the open book $(U,\pi)$ of $S^3$.  

Let $p:\Sigma\to D^2$ be a $k$-fold covering map branched over $x_1,\ldots, x_n$. 
If there is a diffeomorphism $\phi'$ of $\Sigma$ that covers $\phi$, then one may easily check that the manifold associated to the open book $(\Sigma, \phi')$ is a $k$-fold cover of $S^3$ branched along $B$. 

We call a $k$-fold branched cover $p:\Sigma\to \Sigma'$ of surfaces \dfn{simple} if the the pre-image of any point has either $k$ or $(k-1)$ points. Notice if the pre-image has $k$ points then all the pre-image points are regular point (that is, it is not a branched point).  If it has $(k-1)$ points then $(k-2)$ of the points are regular points and the other point has order two ramification, that is there are local (complex) coordinates where $p$ looks like the map $z\mapsto z^2$. 

Suppose that $p:\Sigma\to D^2$ is a simple $k$-fold cover branched over the points $x_1,\ldots, x_n$. Let $\gamma$ be an arc with end points $x_i$ and $x_j$ and not intersecting any $x_k$ on its interior. Let $h_\gamma$ be the diffeomorphism of $D$ that exchanges $x_i$ and $x_j$ via a right handed twist in a small neighborhood of $\gamma$. The pre-image of $\gamma$ in $\Sigma$ will be a collection of arcs and possible a circle. The diffeomorphism $h$ is covered by the composition of right handed half twists between the end points of the arcs and a right handed Dehn twist about the circle (if it exists). Thus $h$ is covered by a diffeomorphism that is either isotopic to the identity or isotopic to a Dehn twist about the circle covering $\gamma$ (if it exists). 

Here are some simple examples. Let $g$ be the genus of  $\Sigma$.  Let $n = |\bdry \Sigma|$ if $\Sigma$ has disconnected boundary, but set $n=2$ if $\Sigma$ has connected boundary.   If $\Sigma$ has connected boundary then quotienting $\Sigma$ by the hyper-elliptic involution shows that it can be realized as a 2-fold cover over the disk branched along $2g+1$ points. If $\Sigma$ has disconnected boundary then 
the surface $\Sigma$ can be built by an $n$-fold simple branched cover of the disk, branched over $d = (2g+2) + 2(n-2)$ points.  To see this, think of the disk as the unit disk in $\R^2$ and the points on the $x$-axis, labeled $x_1$ to $x_d$, left to right.  Each adjacent pair of points $(x_{2k-1}, x_{2k})$ can be connected by an arc $\gamma_k$ in the $x$-axis.  Crossing this arc moves you between sheet 1 and sheet 2 in the cover for $k =1, \dots, g+1$ and between sheet 1 and sheet $k - g+1$  for $k = g+2, \dots, g+n$.  See Figure~\ref{fig:basicbcover}. 
\begin{figure}[ht]\small 
 {\epsfysize=2.2truein\centerline {\epsfbox{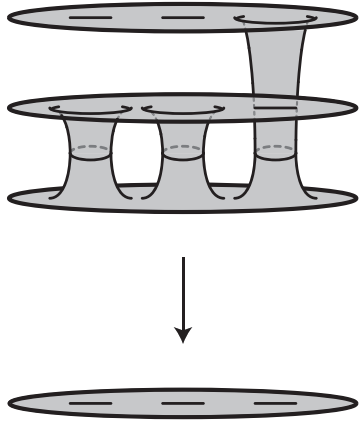}}} 
        \caption{The simple 3-fold branched cover of a genus 1 surface with 3 boundary components $\Sigma$ over the disk. The gray arcs on the disk are the arcs $\gamma_i$ and their preimages are shown in $\Sigma.$ }
	\label{fig:basicbcover}
\end{figure}
Now one easily sees that $M_{\left(\Sigma, \Id\right)}$ can be built as a simple $n$-fold branched cover of $M_{(D^2, \Id)} \cong S^3$, branched over the closure $L_B$ of the trivial $d$ component braid $B$ where we think of $L_B$ as braided about the unknot $U=\partial D^2.$

Denote the standard generators in the braid group by $\sigma_i$, this generator switches the $i$th and $(i+1)$st points by a right handed half twist. Now, for $i < j$, set 
\[
\sigma_{i,j}= \sigma_i^{-1}\ldots \sigma_{j-2}^{-1}\sigma_{j-1}\sigma_{j-2}\ldots\sigma_{i}.
\]
That is to say, $\sigma_i$ is the generator that exchanges the $i$ and $i+1$ strands in the braid by a right handed twist while $\sigma_{i,j}$ is the element in the braid group that switches the $i$ and $j$ strands with a right handed twist along an arc that lies in the front of the braid diagram.  Thinking of the braid as the trace of a set of marked points on the $y$-axis of the disk, the arc lies to the left of the marked points $x_1,\ldots, x_n$, where we index the marked points starting from the bottom strand. It is an interesting and simple exercise to determine what these braid generators lift to in the branched covers mentioned above. Any braid can be realized as a composition of these generators.  

We recall that a \dfn{Markov stabilization} of a braid $B$ of index $n$ is obtained by adding an $(n+1)$st point and composing with $\sigma_n$. 

\begin{lem}[Montesinos-Amilibia and Morton 1991, \cite{Montesinos-AmilibiaMorton91}] \label{lem:stabilize} Let $(D^2,\Id)$ be the standard open book decomposition for $S^3$ and let $L$ be a link braided about $U=\partial D^2.$ Suppose $(\Sigma,\phi)$ is an open book decomposition constructed by a finite sheeted simple branched cover of $(D^2,\Id)$ branched along the link $L.$ Let $L'$ be obtained from $L$ by a positive Markov stabilization. As $L'$ is topologically isotopic to $L$ the branching data from $L$ provides branching data for $L'.$ The open book decomposition $(\Sigma',\phi')$ obtained from  $(D^2,\Id)$ by the simple branched cover of $L'$ is obtained from $(\Sigma, \phi)$ by a positive Hopf stabilization. 
\end{lem}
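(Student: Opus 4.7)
The plan is to compare the branched covers $\Sigma$ and $\Sigma'$ via cut-and-paste and to exploit the lifting principle recalled in Subsection~\ref{bc}. Since $L'$ is topologically isotopic to $L$, the branching data transports and the new strand inherits the transposition $\tau_{n+1} = \tau_n = (a,b)$. Place $x_{n+1}$ in a small disk $D^*$ adjacent to $x_n$: over $D^*$, the cover $\Sigma$ has $k$ disjoint disks while $\Sigma'$ has $k-2$ disjoint disks (for the inert sheets $c \neq a,b$) together with one disk that is the double cover of $D^*$ branched at $x_{n+1}$.

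My first step is to identify $\Sigma' \cong \Sigma \cup H$, where $H$ is an orientable $1$-handle attached to $\partial\Sigma$. The Euler characteristic count $\chi(\Sigma') = k-n-1 = \chi(\Sigma) - 1$ forces a single $1$-handle modification; orientability of both covers rules out a non-orientable band attachment; and a cut-and-paste argument---pushing the rewiring of the two sheets $a, b$ over $D^*$ out to $\partial\Sigma$ along a companion arc $\gamma_{n+1}$ running parallel to $\gamma_n$---shows that the handle is attached to the boundary rather than inserted in the interior. My second step is to compute the lift of $\sigma_n$ to $\Sigma'$: letting $\delta$ be a short arc in $D^2$ from $x_n$ to $x_{n+1}$ realizing $\sigma_n$, its preimage in $\Sigma'$ consists of $k-2$ arcs (one in each inert sheet) together with a single closed curve $\alpha$ formed by concatenating the two arc-lifts in sheets $a$ and $b$ at the shared branch endpoints $\tilde x_n, \tilde x_{n+1}$ (the circle appears precisely because $\tau_n = \tau_{n+1}$). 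By the lifting principle, $\sigma_n$ is covered by a right-handed Dehn twist $D_\alpha$; the half-twists on the inert-sheet arcs are isotopically trivial rel $\partial\Sigma'$ since we do not mark interior points. Tracing $\alpha$ through $\Sigma' = \Sigma \cup H$ shows that $\alpha$ crosses $H$ exactly once and serves as the core curve of a positive Hopf stabilization along its subarc in $\Sigma$.

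To finish: because $B$ does not involve $x_{n+1}$, it fixes a neighborhood of $x_{n+1}$, so its lift to $\Sigma'$ agrees with the extension $\widehat\phi$ of $\phi$ by the identity across $H$. Composing with the lift of $\sigma_n$ yields $\phi' = \widehat\phi \circ D_\alpha$, the defining formula for the positive Hopf stabilization of $(\Sigma, \phi)$. The main obstacle is the first step: one must verify carefully that the local modification near $D^*$ is genuinely a boundary-attached orientable $1$-handle and confirm that the right-handed convention for $\sigma_n$ yields a \emph{positive} rather than negative Hopf stabilization.
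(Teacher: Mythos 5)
Your proposal is correct and follows essentially the same route as the paper's proof: both key on the observation that the new marked point carries the same holonomy/transposition as the strand being stabilized, deduce that the branched cover changes by attaching a single orientable boundary $1$-handle, and note that the added right-handed half-twist lifts to a right-handed Dehn twist about the curve that crosses the co-core once — exactly a positive Hopf stabilization. The paper states the argument in the more general setting of a link transverse to an arbitrary open book $(F,\psi)$ in an arbitrary $3$-manifold (and quickly dispatches the unramified case), whereas your write-up fills in the $1$-handle identification — Euler characteristic count, orientability, cut-and-paste to the boundary — which the paper merely asserts.
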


\begin{proof}
There is neither need to restrict to $S^3$ nor to the trivial open book $(D^2, \Id)$, and so we give a proof that holds for an arbitrary link $L$ transverse to an arbitrary open book $(F, \psi)$ in an arbitrary manifold $M$.  Denote by $\pi$ the covering map $\pi : M_L \rightarrow M$.  Let $(F_L, \psi_L)$ be the open book decomposition on $M_L$ induced from the cover. 

Restricting $\pi$ to a page $F$ of the open book gives finite sheeted cover $\pi_L : F_L \rightarrow F$, branched over the points $\{x_1, \dots, x_d\}$.  We think of $L$ as a braid and hence a map $B_L \in Map^+(F, \{x_1, \dots, x_d\})$ in the mapping class group of $(F,\{x_1, \dots, x_d\})$ (the diffeomorphisms are all the identity in a neighborhood of the boundary). 

We define \dfn{positive Markov stabilization} in this context as follows. Let $\alpha$ be an arc connecting $x_d$ to the boundary and choose a point $x_{d+1}$ on $\alpha$ that is contained in the region where $B_L$ is the identity. The positive Markov stabilization is now the map $B_{L'}$ obtained from $B_L$ by composing with the diffeomorphism that exchanges $x_d$ and $x_{d+1}$ by a right handed twist contained in a small neighborhood of the portion of $\alpha$ between $x_d$ and $x_{d+1}.$ One may easily check that this corresponds to positive Markov stabilization (and conjugation) in the standard braid group. 

If the cover $\pi$ is unramified along the component of $L$ containing $x_d$ then the branched cover of $L$ and $L'$ are the same so $(F_{L'}, \psi_{L'})=(F_L, \psi_L)$.   Thus we assume that $L$ is fully ramified, so that some branching occurs along each component of $L$.   In this case, as $x_{d+1}$ and $x_d$ are on the same component of the link described by the braid, they must be ramified in the same way.  By this we mean that if we use the arc $\alpha$ to provide branch cuts for $x_d$ and $x_{d+1}$, the holonomy of the branched cover about both points will be the same. (Said in a different way, $\alpha$ can be used to provide an explicit relation between the loops about $x_d$ and $x_{d+1}$ in the fundamental group of the complement of the branched points.  With this identification these elements will both map to the same permutation of the sheets of the cover).
Now one easily sees that $F_{L'}$ is obtained from adding a 1-handle to $F_L$ and the arc between $x_d$ and $x_{d+1}$ will lift to a simple closed curve $a$ that intersects the co-core of this 1-handle one time. The extra right handed twist in $B_{L'}$ will lift to a right handed Dehn twist in $F_{L'}$, so  $\psi_{L'}$ is $\psi_L$ composed with a right handed Dehn twist about the simple closed curve $a.$
\end{proof}

\subsection{Monodromies of cables}\label{sec:monsetup}
Let $\left(\Sigma, \phi \right)$ be the page and monodromy of an open book on a manifold $M$ with binding $L$, where $L$ is an $n$ component link, $L = K_1 \cup \dots \cup K_n$.  The goal of this section is to give a Dehn twist presentation of the monodromy of the open book $\left(\Sigma_{(p,\bf{q})}, \phi_{(p,\bf{q})} \right)$, the $(p,\bf{q})$--cable of $\left(\Sigma, \phi \right)$.  As in the previous sections, $p$ is a positive integer and $\bf{q}$ a vector of length $n$ of positive integers.  By $(p,\bf{q})$--cable we mean the open book made by replacing the $i$th component $K_i$ of $L$ with its $(p, q_i)$--cable.  

We recall the construction in Lemma~\ref{lem:cable} of the pages of an open book after cabling. See the proof of that lemma for notation, but briefly recall we took a $(p,q)$--torus knot $T_{(p,q)}$ in $S^3$ and noticed that we could choose a core $C$ of a Heegaard torus for $S^3$ such that the Seifert surface $F_{(p,q)}$ for $T_{(p,q)}$ (which is a page of an open book decomposition for $S^3$ with binding $T_{(p,q)}$) intersected $C$ in $p$ points, which we label $x_1,\ldots, x_p.$ We now observe that the monodromy of the open book $T_{(p,q)},$ which we denote $\psi_{(p,q)},$ takes points $x_{i}$ to $x_{i+1}$ (where $i$ is taken modulo $p$). Let $N_C$ be a small tubular neighborhood of $C.$ This can be chosen so that $N_C\cap F_{(p,q)}$ is $p$ disjoint disks $D_1\ldots, D_{p}$ and $\psi_{(p,q)}|_{D_{i}}$ is a diffeomorphism from $D_{i}$ to $D_{i+1}$. Now $S^3\setminus N_C$ is denoted $S_{(p,q)}$ and is a solid torus $D^2\times S^1$ containing $T_{(p,q)}$ such that the open book structure on $S^3$ gives a fibration $S_{(p,q)}\setminus T_{(p,q)}\to S^1.$ This fibration induces a fibration of $\partial S_{(p,q)}$ by curves $\{pt\}\times S^1$ and the preimage of any point in $S^1$ intersected with $\partial S_{(p,q)}$ is $p$ curves. In addition the fiber of this fibration is $C_{(p,q)}=F_{(p,q)}\setminus (\cup_{i=1}^{p} D_i)$ and the monodromy is $\psi'_{(p,q)}=\psi_{(p,q)}|_{C_{(p,q)}}.$

As in Lemma~\ref{lem:cable} the open book $\left(\Sigma_{(p,\bf{q})}, \phi_{(p,\bf{q})} \right)$ is built by removing small neighborhoods of the binding components of $\left(\Sigma, \phi \right)$ and replacing them with $S_{(p,q_i)}.$ Thus the fiber surface $\Sigma_{(p,\bf{q})}$ is built by taking the surfaces $C_i,$ where $C_i$ is isomorphic to $C_{(p,q_i)}$ and the $p$ cyclicly ordered boundary components, $O_{i,j}$ are ordered so that the $\psi'_{(p,q_i)}$ takes $O_{i,j}$ to $O_{i,j+1}$.   To this collection of surfaces one glues $p$ copies $\Sigma_1, \dots \Sigma_p$ of $\Sigma$, gluing the $i$th boundary component of $\Sigma_j$ to $O_{i,j}$.  
 
If the monodromy $\phi$ is $id_\Sigma$ then the monodromy $\phi_{(q,\bf{q})}$ is simply $\psi_{(p,q_i)}$ on $C_i$ and sends $\Sigma_{j}$ to $\Sigma_{j+1}$, where again, $j=p+1$ is identified with $j=1.$ Since the $\Sigma_i$ can be thought of as sitting in the complement of the binding of the original open book which is a product we can use this product structure to identify $\Sigma_{j}$ with $\Sigma_{j+1}.$ Thus we have an explicit description of the monodromy in this case. We denote this monodromy map as $\rho_{(p,{\bf q})}(\Sigma).$ 

If the the original monodromy map $\phi$ is non-trivial, then we can describe $\phi$ as the identity map followed by a sequence of positive and negative Dehn twists performed on fiber surfaces near $\Sigma_1$, which we then interpret as Dehn surgeries on curves lying on pages near $\Sigma_1$. Thus the monodromy map $\phi_{(q,\bf{q})}$ will differ from $\rho_{(p,{\bf q})}(\Sigma)$ by performing these Dehn surgeries on the curves near $\Sigma_1\subset \Sigma_{(p,{\bf q})}.$ We denote by $\widetilde{\phi}$ the diffeomorphism of $\Sigma_{(p,{\bf q})}$ obtained from these Dehn twists on $\Sigma_1$ and call it the \dfn{lift} of $\phi$ to $ \Sigma_{(p,{\bf q})}.$

Because we will use this decomposition of $\Sigma_{(p,\bf{q})}$ rather heavily, we introduce the term \dfn{nodules} to refer to these distinguished subsurfaces $\Sigma_j$ of $\Sigma_{(p,\bf{q})}$.  The remaining connected components, $C_i$, of $\Sigma_{(p,\bf{q})}$ will be called \dfn{base components}.  
The goal of this section is to find a Dehn twist presentation of the monodromy $\phi_{(p,\bf{q})}$ of the cable, and the following proposition, which is proven above, allows us to do this without referring to a particular open book.

\begin{prop}  \label{prop:monodromy split}  Let $\left(\Sigma_{(p,\bf{q})}, \phi_{(p,\bf{q})}\right)$ be the $(p, \bf{q})$--cable of an open book decomposition $(\Sigma, \phi)$.  The monodromy $\phi_{(p,\bf{q})}$ splits as a product $\phi_{(p,\bf{q})} = \rho_{(p,\bf{q})}(\Sigma) \circ \widetilde{\phi}$, where $\rho_{(p,\bf{q})}(\Sigma)$ is a universal map depending only on $\Sigma$, $p$ and $\bf{q}$, and $\tilde{\phi}$ is a lift of $\phi$ acting on the first nodule $\Sigma_1$.  This factorization holds for any conjugation of the factors by a map of the cable surface $\Sigma_{(p,\bf{q})}$ which preserves the nodules and hence is independent of the identification of $\Sigma$ with $\Sigma_1$.\qed
\end{prop}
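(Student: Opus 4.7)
The plan is to assemble the statement directly from the geometric construction of the cabled page already laid out in the paragraphs preceding the proposition, and to isolate what is universal from what depends on $\phi$. First I would fix the decomposition
\[
\Sigma_{(p,\mathbf{q})} \;=\; \Bigl(\bigsqcup_{i=1}^n C_i\Bigr) \;\cup\; \Bigl(\bigsqcup_{j=1}^{p} \Sigma_j\Bigr),
\]
where each base component $C_i$ is a copy of the cabling piece $C_{(p,q_i)}$ and each nodule $\Sigma_j$ is a copy of $\Sigma$, glued so that the $i$-th boundary component of $\Sigma_j$ is identified with the $j$-th cyclically ordered boundary $O_{i,j}$ of $C_i$. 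The upshot of this decomposition is that the cabled monodromy is built by prescribing its behavior separately on the $C_i$ (where the cabling forces a specific shift) and on the union of the nodules (where all the $\phi$-information lives).

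Next I would handle the trivial case $\phi = \mathit{id}_\Sigma$ in detail: define
\[
\rho_{(p,\mathbf{q})}(\Sigma)\bigr|_{C_i} \;=\; \psi'_{(p,q_i)}, \qquad \rho_{(p,\mathbf{q})}(\Sigma)\bigr|_{\Sigma_j}\colon \Sigma_j \longrightarrow \Sigma_{j+1\,(\mathrm{mod}\,p)},
\]
the second map being the identity under the chosen identification of each $\Sigma_j$ with $\Sigma$. This is well defined precisely because $\psi'_{(p,q_i)}$ sends $O_{i,j}$ to $O_{i,j+1}$, so the two prescriptions agree on the common boundary circles; this is exactly the construction described just before the proposition, and it manifestly depends only on $\Sigma$, $p$, and $\mathbf{q}$.

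For a general monodromy I would write $\phi$ as a composition of Dehn twists along curves $\gamma_1,\ldots,\gamma_k \subset \Sigma$, and define $\widetilde{\phi}$ to be the composition of the corresponding Dehn twists along the images of those curves inside the first nodule $\Sigma_1$, extended by the identity to the rest of $\Sigma_{(p,\mathbf{q})}$. To verify $\phi_{(p,\mathbf{q})} = \rho_{(p,\mathbf{q})}(\Sigma) \circ \widetilde{\phi}$, I would observe that the cabling construction of Lemma~\ref{lem:cable} replaces neighborhoods of the binding components of $(\Sigma,\phi)$ with the solid tori $S_{(p,q_i)}$ while leaving the fibration on the complement of these neighborhoods untouched. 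Tracking the return map of the fibration around $S^1$, one passes through a single nodule (accounting for $\widetilde{\phi}$, since the Dehn twists from $\phi$ may be isotoped to lie in the product region that becomes $\Sigma_1$) and then through the cabling pieces and nodule shifts (accounting for $\rho_{(p,\mathbf{q})}(\Sigma)$). The main subtlety is verifying that the ambiguity in how one identifies each $\Sigma_j$ with $\Sigma$ is absorbed into a conjugation of the two factors by a diffeomorphism of $\Sigma_{(p,\mathbf{q})}$ that preserves the nodule decomposition; this follows because any two such identifications differ by a product of homeomorphisms of the individual $\Sigma_j$ fixing their boundaries, and such a product commutes past $\rho_{(p,\mathbf{q})}(\Sigma)$ up to the claimed conjugation. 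The main obstacle will be bookkeeping: making sure that the cyclic shift in $\rho_{(p,\mathbf{q})}(\Sigma)$ and the placement of the Dehn twists composing $\widetilde{\phi}$ on $\Sigma_1$ (rather than distributed across the $\Sigma_j$) really do compose to the return map, and that moving a twist from one nodule to another amounts precisely to conjugating by the nodule shift.
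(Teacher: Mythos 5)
Your proposal follows the same route the paper takes in the paragraphs immediately preceding the proposition (which is why the statement carries a terminal \(\qed\) rather than a separate proof): the same decomposition of \(\Sigma_{(p,\mathbf{q})}\) into base components \(C_i\) and nodules \(\Sigma_j\), the same definition of \(\rho_{(p,\mathbf{q})}(\Sigma)\) by working out the case \(\phi = \mathit{id}_\Sigma\), and the same lifting of a Dehn twist factorization of \(\phi\) concentrated near the fiber that becomes \(\Sigma_1\). One small wording issue: the fibration on the complement of the binding neighborhoods is not ``left untouched'' after cabling --- it is replaced by \(f_p \circ \pi\) for the \(p\)-fold cover \(f_p\colon S^1 \to S^1\), which is precisely why the new fiber contains \(p\) nodules rather than one --- but your surface decomposition and nodule-shift description already use this fact correctly, so this is a matter of phrasing rather than a gap in the argument.
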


Thus, assuming we know the original monodromy $\phi$, we need understand only $\rho_{(p,\bf{q})}(\Sigma)$ in order to understand the monodromy of the cable.  The idea will be to construct Dehn twist presentations of $\rho_{(p,\bf{q})}(\Sigma)$ while keeping track of the first nodule $\Sigma_1$ without a specific identification with $\Sigma$.  

\subsection{Simple branched covers and cablings}\label{sec:easy case}
In this subsection we understand $\rho_{(p,\bf{q})}(\Sigma)$ in the case where $\Sigma$ has more than one boundary component or all the $q_i>1.$ The reason for this restriction is that $\Sigma$ will have a nice branched cover description that can be exploited. We illustrate this basic idea in the next theorem. Theorems~\ref{thm:mon2discon}, \ref{prop:22cable} and~\ref{thm:connected binding} expand on the basic ideas used here.   However, as we will see in Theorem~\ref{thm:counterexample}, this result does not hold for $(p,1)$--cables in general.

\begin{thm} \label{thm:stabilization} 
Let $(\Sigma, \phi)$ be an open book decomposition and $\left(\Sigma_{(p,\bf{q})}, \rho_{(p,\bf{q})}(\Sigma) \circ \tilde{\phi}\right)$ its $(p,\bf{q})$--cable with $p$ and each $q_i$ a positive integer.  If either $\Sigma$ has disconnected boundary or each $q_i>1$ then $\left(\Sigma_{(p,\bf{q})}, \rho_{(p,\bf{q})}(\Sigma) \circ \tilde{\phi}\right)$ can be obtained from $\left(\Sigma, \phi\right)$ by a sequence of positive Hopf stabilizations.
\end{thm}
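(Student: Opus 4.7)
The plan is to build the cabled open book $(\Sigma_{(p,\mathbf{q})}, \rho_{(p,\mathbf{q})}(\Sigma) \circ \tilde{\phi})$ from $(\Sigma, \phi)$ by positive Hopf stabilizations using the branched-cover technology of Section~\ref{bc} and Lemma~\ref{lem:stabilize}. By Proposition~\ref{prop:monodromy split} the monodromy splits with $\tilde{\phi}$ acting only on the first nodule $\Sigma_1 \subset \Sigma_{(p,\mathbf{q})}$, so I would arrange every stabilization arc to be supported in $\Sigma_{(p,\mathbf{q})} \setminus \Sigma_1$. The new Dehn twists are then disjoint from the support of $\tilde{\phi}$, commute with it, and assemble to give exactly the universal factor $\rho_{(p,\mathbf{q})}(\Sigma)$. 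This reduces the problem to showing that $(\Sigma_{(p,\mathbf{q})}, \rho_{(p,\mathbf{q})}(\Sigma))$ is obtained from $(\Sigma, \mathit{id})$ by positive Hopf stabilizations.

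Next I would realize $\Sigma$ as a simple branched cover $\Sigma \to D^2$ exactly as in Section~\ref{bc}: the $n$-fold simple cover branched over $d = 2g + 2n - 2$ points when $\Sigma$ has $n \geq 2$ boundary components, or the hyperelliptic $2$-fold cover branched over $d = 2g + 1$ points when $\Sigma$ has connected boundary (forcing every $q_i > 1$ by the hypothesis). This presents $(\Sigma, \mathit{id})$ as a simple branched cover of the standard open book $(D^2, \mathit{id})$ on $S^3$ along the trivial $d$-braid $B$, with each binding component $K_i$ distinguished by a definite subset of the branch arcs used to construct the cover.

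The crux is to identify the cabled open book as a simple branched cover of $(D^2, \mathit{id})$ along a braid $B'$ obtained from $B$ by a sequence of positive Markov stabilizations; Lemma~\ref{lem:stabilize} then translates these Markov stabilizations upstairs into positive Hopf stabilizations and finishes the argument. I would handle the $(p, 1)$-cabling first: splitting the branch datum associated to a binding component $K_i$ into $p$ copies and inserting parallel trivial strands realizes the cabling upstairs and can be achieved by $p-1$ positive Markov stabilizations downstairs. For a general positive $(p, q_i)$ I would then appeal to Lemma~\ref{lem:gencable}, which expresses $L_{(p, q_i)}$ as a Murasugi sum of $L_{(p, 1)}$ with the positive torus link $T_{(p, q_i)}$, contributing $(p-1)(q_i-1)$ further positive Hopf stabilizations directly upstairs.

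The main obstacle will be the $(p,1)$-step in the connected-boundary case: the hyperelliptic $2$-fold cover has too little branching room to accommodate a $(p,1)$-cable as Markov stabilizations, which is precisely why the hypothesis requires $q_i > 1$ there. In fact Theorem~\ref{thm:counterexample} shows that some $(2,1)$-cables of connected-boundary integral open books yield monodromies that are not even products of positive Dehn twists, so no sequence of positive Hopf stabilizations can produce them; thus the exclusion $q_i > 1$ in the connected-boundary case is unavoidable and serves as the sharpness check on the theorem.
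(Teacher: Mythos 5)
Your overall strategy mirrors the paper's proof: reduce to $\phi=\mathrm{id}$ by Proposition~\ref{prop:monodromy split}, present $\Sigma$ as a simple branched cover of $D^2$ as in Section~\ref{bc}, realize the cable as a branched cover over a Markov-stabilized braid, invoke Lemma~\ref{lem:stabilize} to convert Markov stabilizations into positive Hopf stabilizations, and pass from $(p,1)$- to $(p,q_i)$-cables by Lemma~\ref{lem:gencable}. However, the central step is asserted without justification. The claim that ``splitting the branch datum into $p$ copies and inserting parallel trivial strands realizes the cabling upstairs'' is precisely what must be proved, and you offer no mechanism for it. What the paper actually does (Lemma~\ref{realizedisconnected}) is a braid-axis change: the unknot $U_p=(\partial D^2)_{(p,1)}$ is transverse to the branch locus $L_B$, and since the cover is \emph{trivial} over $\partial D^2$ in the disconnected-boundary case, $U_p$ lifts to the $(p,1)$-cable of every component of $\partial\Sigma$; hence the simple cover branched over the \emph{same} locus $L_B$, but taken relative to the open book of $U_p$ rather than of $U$, is exactly the $(p,\mathbf{1})$-cabled open book. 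Untwisting $U_p$ to a round axis then rewrites $L_B$ as the $dp$-strand braid $B_p$ of Lemma~\ref{realizedisconnected}, which one checks is $(p-1)d$ positive Markov stabilizations of the trivial $d$-strand braid. Your proposal never performs the braid-axis change, never verifies that the inserted strands produce a branched cover isotopic to the cable (rather than some other surface bundle with the same page), and miscounts the stabilizations as ``$p-1$'' instead of $(p-1)d$.

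Your reason for the hypothesis $q_i>1$ in the connected-boundary case is also not the actual mechanism. The issue is not ``too little branching room'' but that when $\partial\Sigma$ is connected, the branched cover is the nontrivial double cover over $\partial D^2$, so $U_p$ lifts to the $(p,2)$-cable of $\partial\Sigma$ rather than its $(p,1)$-cable; the same argument therefore yields only $(p,q)$-cables with $q\geq 2$. Citing Theorem~\ref{thm:counterexample} is a nice sharpness remark, but it does not explain the obstruction inside the branched-cover framework.
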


\begin{proof}  
By Proposition~\ref{prop:monodromy split}, it is enough to prove the theorem when $\phi$ is the identity on $\Sigma,$ as any sequence of positive Hopf stabilizations from $\left(\Sigma, \Id\right)$ to $\left(\Sigma_{(p,\bf{q})}, \rho_{(p,\bf{q})}(\Sigma)\right)$ can be used to build a sequence of positive Hopf stabilizations from $\left(\Sigma, \phi\right)$ to $\left(\Sigma_{(p,\bf{q})}, \rho_{(p,\bf{q})}(\Sigma) \circ \widetilde{\phi}\right)$.  
Following \cite[Section 4.3 and Figure 4.2]{NeumannRudolph87}, and recalled in Lemma~\ref{lem:gencable} above, the ${(p,\bf{q})}$--cable can be obtained from the $(p,\bf{1})$--cable by positive Hopf stabilizations.  
These stabilizations can be done along arcs disjoint from $\Sigma_1$ and thus we may make the further simplification that $q_i=1$ (or in the case of connected boundary we will take $q_1=2$).

Recall the description of $\left(\Sigma, \Id\right)$ as a simple branched cover from Subsection~\ref{bc}. The branched cover description breaks down into two cases. One when $\partial \Sigma$ is connected and one when it is not. 

{\bf Case 1.}  {\em Disconnected boundary.} 
We begin with a simple lemma.
\begin{lem}\label{realizedisconnected}
If $\Sigma$ has disconnected boundary then the $(p,1)$--cable of the binding of the open book $\left(\Sigma, \Id\right)$ can be realized as the $p$-fold branched cover of the braid $B_p$ shown in Figure~\ref{fig:sqpword}. The braid $B_p$ can be expressed as

\begin{align}
 \prod_{i=2}^{p}\prod_{j=1}^{d} \sigma_{(p-i)d+j, (p-i+1)d+j} = 						&(\sigma_{(p-2)d+1,(p-1)(d)+1}\sigma_{(p-2)d+ 2,(p-1)d+2}\cdots \sigma_{(p-1)d,pd}) \cdot \notag\\
	&(\sigma_{(p-3)d+1, (p-2)d+1}\sigma_{(p-3)d+2,(p-2)d+2}\cdots \sigma_{(p-2)d,(p-1)d})\cdots \label{sqp-word}\\
	&(\sigma_{1, d+1}\sigma_{2, d+2}\cdots\sigma_{d, 2d}).\notag
\end{align}
\end{lem}
\begin{figure}[ht]\small 
 {\epsfysize=1.5truein\centerline {\epsfbox{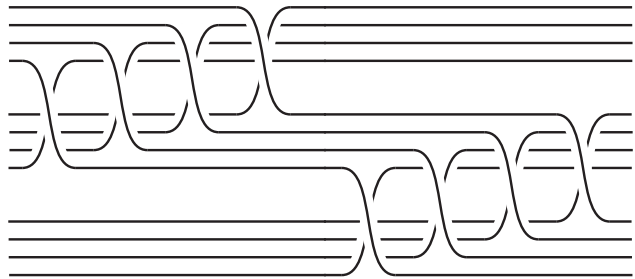}}} 
        \caption{A representation of $B_p$ in terms of the generators $\sigma_{i,j}.$ The figure illustrates $d=4$ and $p=3.$}
	\label{fig:sqpword}
\end{figure}
\begin{proof}
Since the boundary of $\Sigma$ is disconnected, the cover constructing $\Sigma$ is trivial covering along $\partial D^2$ is the trivial $n$-fold cover, and so the $(p,1)$--cable $U_p$ of $\partial D^2$ lifts to the $(p,1)$--cable of every component of $\partial \Sigma$.  The knot $U_p$ is again an unknot and the trace of the branch loci in $D^2$, $L_B$, is now braided about $U_p$ (in particular, it is transverse to the disk fibers in the complement of $U_p$).  Untwisting $U_p$ to make it the braid axis transforms $L_B$ into the closure of the $dp$ stranded braid $B_p$, the branch cover of which reconstructs the fibration on the complement of the lift of $U_p$, i.e., the desired $(p,1)$-cable.  The left hand side of Figure~\ref{fig:unwindB} shows the braid $B$ and the cable $U_p$ of braid axis $U$, while the right hand side shows the braid $B_p$ after unwinding $U_p$. One may easily verify that this braid can also be expressed as in Equation~\eqref{sqp-word}.
\begin{figure}[ht]
  \relabelbox \small 
 {\epsfysize=1.75truein\centerline {\epsfbox{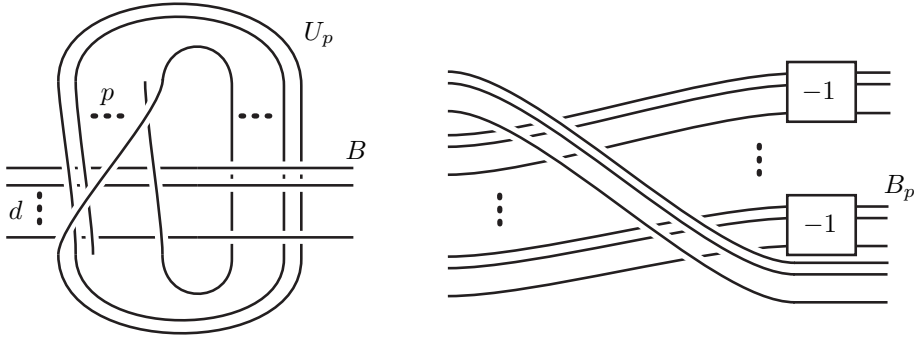}}} 
  \relabel{p}{$p$} 
  \relabel {d}{$d$} 
  \relabel{u}{$U_p$} 
  \relabel {b}{$B$} 
  \relabel{n}{$B_p$} 
  \relabel {t}{$-1$} 
  \relabel{z}{$-1$}  
  \endrelabelbox
        \caption{The cabled unknot $U_p$ and braid $B$ on the left. On the right is $B_p,$ that is $L_B$ when written as a braid about the unknot $U_p.$ The braid $B_p$ has $dp$ strands.  The strand index starts at the bottom. }
	\label{fig:unwindB}
\end{figure}
Since the disk $U_p$ bounds arises by the cabling construction from the disk $U$ bounds, the branched cover lifts it to the surface $\Sigma_{(p,\bf{q})}$.  
\end{proof}

From the presentation of $B_p$ given in the lemma it is easy to see it can be obtained from the $d$-strand trivial braid about $U_p$ by positive Markov stabilizations.
Using Lemma~\ref{lem:stabilize}, it then follows that $\left(\Sigma_{(p,\bf{q})}, \rho_{(p,\bf{q})}(\Sigma)\right)$ is a Hopf stabilization of $(\Sigma, \phi)$.  
 
 {\bf Case 2.}  {\em Connected boundary.}
When $\Sigma$ has connected boundary, the above goes through as stated, and nearly the same as in the previous case, though since the chosen branched cover over $\partial D^2$ is a non-trivial 2-fold cover, $U_p$ now lifts to the $(p,2)$--cable of $\partial \Sigma$.  
\end{proof}

We are now ready to explicitly describe the monodromy of the cabled open book. 
\begin{thm} \label{thm:monpdiscon}  \label{thm:mon2discon} 
The monodromy $\phi_{(p,{\bf 1})}$ of the $(p,{\bf 1})$--cable of an open book $\left(\Sigma, \phi\right)$ with disconnected binding can be written as 
$$\phi_{(p,{\bf 1})} = \displaystyle\prod_{j=1}^{p-1} \displaystyle\prod_{i=1}^{d} D_{c_{p-i,j}} \circ \tilde{\phi},$$
where  $\tilde{\phi}$ is the lift of $\phi$, acting on the first nodule and $D_{c_{i,j}}$ is the right handed Dehn twist along the curve $c_{i,j}$,  that is the simple closed curve component of the lift of the curve $a_{i,j}$ shown in Figure~\ref{fig:basedisk3} to the branch cover $\Sigma_{(p,{\bf 1})}.$ The $c_{i,j}$ can also be thought of as the image of $c_j$ from Theorem~\ref{thm:mon2discon} under the identification of the subsurface of $\Sigma_{(p,{\bf q})}$ lying above $D_{i,i+1}$ shown in Figure~\ref{fig:basedisk3} with $\Sigma_{(2,{\bf 1})}$.
\end{thm}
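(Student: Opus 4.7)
The plan is to combine Proposition~\ref{prop:monodromy split} with the explicit braid-theoretic description of the cable given by Lemma~\ref{realizedisconnected}, and then translate each braid generator into a Dehn twist via the standard branched-cover dictionary of Subsection~\ref{bc}.

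First I would invoke Proposition~\ref{prop:monodromy split} to reduce to the case $\phi = \Id$, so that the problem becomes to give an explicit Dehn-twist factorization of the universal piece $\rho_{(p,\mathbf{1})}(\Sigma)$. The lift $\widetilde\phi$ simply tags onto the end (or the beginning, depending on convention) of whatever factorization we produce, and this is exactly what the statement of the theorem records. Since by Proposition~\ref{prop:monodromy split} the factorization of $\rho_{(p,\mathbf{1})}(\Sigma)$ that we obtain is independent of how $\Sigma$ is identified with $\Sigma_1$, we are free to read the final product off the branched cover model.

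Next I would apply Lemma~\ref{realizedisconnected}, which realizes $\left(\Sigma_{(p,\mathbf{1})}, \rho_{(p,\mathbf{1})}(\Sigma)\right)$ as a simple $n$-fold branched cover of $(D^2, \Id)$ branched over the closure of the explicit braid
\[
B_p \;=\; \prod_{j=1}^{p-1}\prod_{i=1}^{d} \sigma_{(p-i)d+j,\,(p-i+1)d+j},
\]
with the factors ordered exactly as in equation~\eqref{sqp-word}. By the discussion in Subsection~\ref{bc}, a factor $\sigma_{k,\ell}$, which is the right-handed half-twist along an arc $\alpha$ connecting the branch points $x_k$ and $x_\ell$, lifts to the branched cover as a composition of right-handed Dehn twists along the simple closed curve components of $p^{-1}(\alpha)$ (together with half-twists between the arc components of the lift, which occur only when the two endpoints of $\alpha$ lie on distinct sheets of the cover). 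Thus, to each $\sigma_{(p-i)d+j,(p-i+1)d+j}$ in the braid word for $B_p$ I associate the arc $a_{p-i,j}$ of Figure~\ref{fig:basedisk3}, and the task becomes identifying the lift of this arc.

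The remaining step is to verify that each $a_{p-i,j}$ lifts to a (single) simple closed curve, which I then call $c_{p-i,j}$. Because the two endpoints of $a_{p-i,j}$ are branch points with identical monodromy (they lie in a single ``horizontal row'' of the branching pattern inherited from the trivial cover description of $\Sigma$), both ends of the arc are encircled by loops carrying the same permutation, and so the preimage is a collection of arcs plus a single simple closed curve; the half-twist between the arc endpoints is absorbed into the symmetry of the branched cover, leaving only the Dehn twist $D_{c_{p-i,j}}$. Multiplying these Dehn twists in the order induced by the product in $B_p$ gives precisely $\prod_{j=1}^{p-1}\prod_{i=1}^{d} D_{c_{p-i,j}}$, which is $\rho_{(p,\mathbf{1})}(\Sigma)$. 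Precomposing with $\widetilde\phi$ yields the stated formula. The main obstacle I anticipate is bookkeeping: matching the global indexing $(i,j)$ used in the statement (and the identification of $c_{i,j}$ with the curves $c_j$ coming from the $(2,\mathbf{1})$-subcase over the strip $D_{i,i+1}$) to the order of factors appearing in $B_p$, and confirming in each case that the lift of $a_{i,j}$ really is connected rather than splitting as two circles, so that a single Dehn twist appears rather than a product.
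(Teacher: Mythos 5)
Your proposal follows essentially the same route as the paper: reduce to $\phi=\Id$ via Proposition~\ref{prop:monodromy split}, realize $\left(\Sigma_{(p,\mathbf{1})},\rho_{(p,\mathbf{1})}(\Sigma)\right)$ as the simple $n$-fold branched cover over $(D^2,\Id)$ along the braid $B_p$ of Lemma~\ref{realizedisconnected}, express $B_p$ as a product of half-twists along the arcs $a_{i,j}$ of Figure~\ref{fig:basedisk3}, and lift each half-twist to a right-handed Dehn twist about the unique simple closed curve component $c_{i,j}$ of the preimage of $a_{i,j}$. Your justification that each arc $a_{i,j}$ lifts to a single circle (plus inessential arcs) because its two endpoints carry the same transposition in the simple cover is exactly the reasoning the paper relies on from Subsection~\ref{bc}, and your flagged concern about indexing is the right thing to watch but does not affect the argument's validity.
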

 \begin{figure}[ht]
  \relabelbox \small 
 {\epsfysize=2truein\centerline {\epsfbox{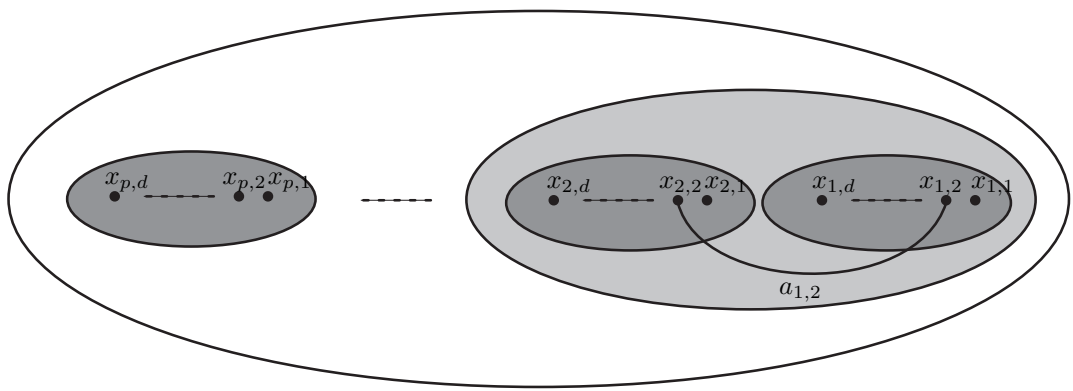}}} 
  \relabel{a}{$x_{2,1}$} 
  \relabel {b}{$x_{2,2}$} 
  \relabel{d}{$x_{1,d}$} 
  \relabel {1}{$x_{1,1}$}  
    \relabel{2}{$x_{1,2}$} 
  \relabel {e}{$x_{2,d}$} 
  \relabel{c}{$a_{1,2}$}     
    \relabel {m}{$x_{p,1}$}  
    \relabel{n}{$x_{p,2}$} 
  \relabel {o}{$x_{p,d}$}    
  \endrelabelbox
        \caption{The disk $D$ with its subdisks $D_1,\ldots D_p$ shaded. The disk $D_{1,2}$ is lighter grey.}
	\label{fig:basedisk3}
\end{figure}

\begin{proof}[Proof of Theorem~\ref{thm:monpdiscon} in the case of $p=2$] 
We give a detailed discussion of the monodromy computation with $p=2$. Later we extend this to all $p>1$. In this case the theorem states:
The monodromy $\phi_{(2,{\bf 1})}$ of the $(2,{\bf 1})$--cable of an open book $\left(\Sigma, \phi\right)$ with disconnected binding can be written as $$\phi_{(2,{\bf 1})} = \displaystyle\prod_{i=1}^d D_{c_i} \circ \tilde{\phi}$$ where $\tilde{\phi}$ is the lift of $\phi$, acting on the first nodule.  We point out that, notationally, this product is a sequence of compositions of Dehn twists with the lowest indexed twists acting first.  The Dehn twists $D_{c_i}$ are Dehn twists along the curves $c_i$, which are the simple closed curve components of the lifts to the branch cover $\Sigma_{(2,{\bf 1})}$ of the curves $a_i$ shown in Figure~\ref{fig:basedisk2}. The curves $c_i$ are also shown in  Figure~\ref{fig:bcover+gen1} and~\ref{fig:bcover+gen2}. See Figure~\ref{fig:nicecover} for a symmetric view of $\Sigma_{(2,{\bf 1})}$ and the curves $c_i.$  

\begin{figure}[ht]
  \relabelbox \small 
 {\epsfysize=3truein\centerline {\epsfbox{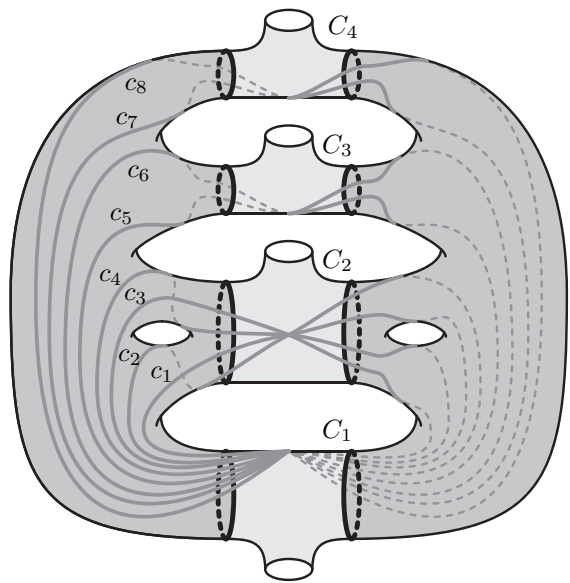}}} 
  \relabel{1}{$c_1$} 
  \relabel {2}{$c_2$} 
  \relabel{3}{$c_3$} 
  \relabel {4}{$c_4$}    
    \relabel{5}{$c_5$} 
  \relabel {6}{$c_6$} 
  \relabel{7}{$c_7$} 
  \relabel {8}{$c_8$}    
  \relabel{9}{$C_4$}
  \relabel{10}{$C_3$}
  \relabel{11}{$C_2$}
  \relabel{12}{$C_1$}
  \endrelabelbox
        \caption{The page $\Sigma_{(2,{\bf 1})}$ drawn symmetrically when $\Sigma$ is genus 1 and has 4 boundary components. The nodules $\Sigma_1$ and $\Sigma_2$ are the right most and left most surfaces and the basic components $C_1,\ldots, C_4$ are the four central pairs-of-pants. }
	\label{fig:nicecover}
\end{figure}

Since Proposition~\ref{prop:monodromy split} allows us to compute $\phi_{(2,{\bf 1})}$ as a product of $\rho_{(2,{\bf 1})}(\Sigma)$ and $\tilde{\phi}$ (provided we keep track of the nodules of $\Sigma_1$), we begin by assuming that $\phi = \Id$.  Now as detailed in the proof of Subsection~\ref{bc}, since $n = |\bdry \Sigma| \geq 2$, the open book decomposition $\left(\Sigma, \Id\right)$ can be thought of as a simple $n$-fold branched cover of $(D^2, \Id)$ branched over a $d$-component unlink that sits transverse to $(D^2, \Id)$ as the trivial braid $B$, thinking of $U = \partial D^2$ as the braid axis.  Moreover, because $n>1$, the cover along $\partial D^2$ is the trivial $n$-fold cover. (When the binding of the open book is connected (i.e., $n=1$), a separate construction is needed.  This will be given in Section~\ref{sec:connected binding}.)  

\begin{figure}[ht]
  \relabelbox \small 
 {\epsfysize=2.75truein\centerline {\epsfbox{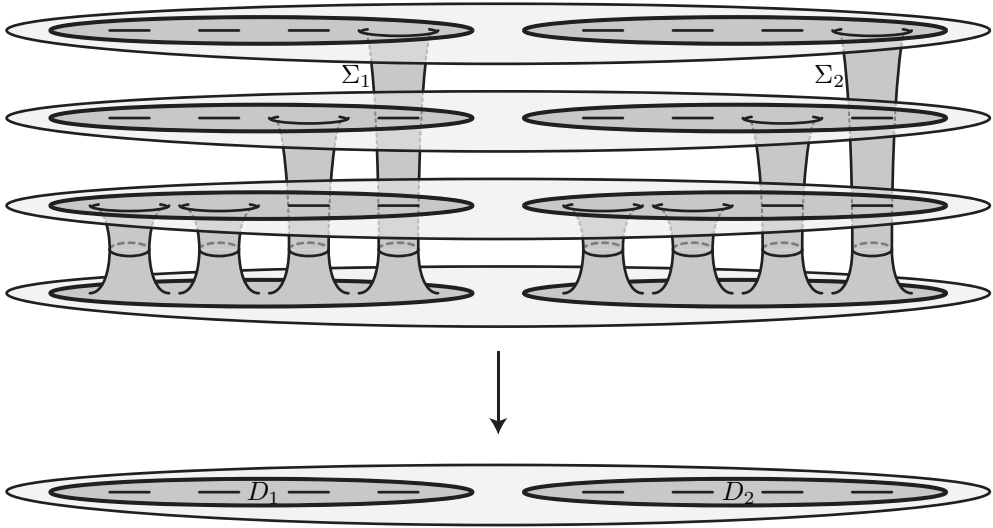}}} 
  \relabel{s}{$\Sigma_1$} 
  \relabel {t}{$\Sigma_2$} 
  \relabel{d}{$D_1$} 
  \relabel {f}{$D_2$}    
  \endrelabelbox
        \caption{A page of the $(2,1)$--cable of $(\Sigma, \Id)$ when $\Sigma$ has genus 1 and 4 boundary components. The two nodules $\Sigma_1$ and $\Sigma_2$ are shown in grey. The white regions are the 4 base components $C_1,\ldots, C_4.$}
	\label{fig:bcovertwo}
\end{figure}

Lemma~\ref{realizedisconnected} shows that 
the open book decomposition for the $(2,\bf{1})$--cable of the open book decomposition $(\Sigma, \Id)$ is obtained as the simple cover branched over the $2d$-braid  $B_2$ given in Equation~\eqref{sqp-word} (with $p=2$). The page $\Sigma_{(2,\bf{1})}$ of the $(2,\bf{1})$--cable is shown in Figure~\ref{fig:bcovertwo} with the nodules and base components labeled. To be specific we think of $D^2$ as a disk in $\R^2$ that contains a segment of the $x$-axis. We then label $2d$-points on the $x$-axis from right to left, $x_{1,1}, \ldots, x_{1,d},x_{2,1},\ldots, x_{2,d}.$ Let $D_1$ and $D_2$ be two disjoint subdisks of $D$ with $D_i$ containing the $x_{i,j}$ with  $i=1,2$ and for  $j=1, \ldots, d$.  The cable surface $\Sigma_{(2,{\bf 1})}$ is the simple cover of $D^2$ branched over the $x_{i,j}$ with ramification data as described in the proof of Theorem~\ref{thm:stabilization} for the $x_{1,j}$ and the same data repeated for the $x_{2,j}.$ Moreover, the nodules $\Sigma_i$ are lifts of the subdisks $D_i.$ Let $a_i$ be the arc embedded in $D$, with negative $y$-coordinate on its interior, that connects $x_{1,i}$ to $x_{2,i}$ as shown in Figure~\ref{fig:basedisk2}. 
\begin{figure}[ht]
 \relabelbox \small 
{\epsfysize=1.5truein\centerline {\epsfbox{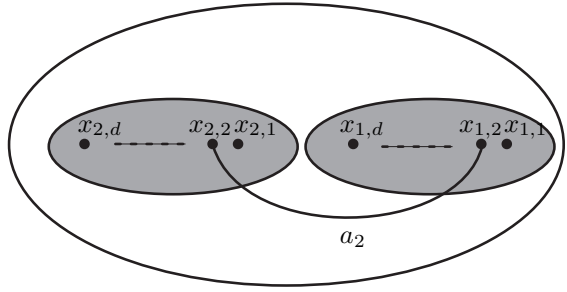}}} 
 \relabel{d}{$x_{2,1}$} 
 \relabel {2}{$x_{2,2}$} 
 \relabel{a}{$x_{1,d}$} 
 \relabel {e}{$x_{1,1}$}  
   \relabel{b}{$x_{1,2}$} 
 \relabel {1}{$x_{2,d}$} 
 \relabel{c}{$a_2$}     
 \endrelabelbox
       \caption{The disk $D$ with its two subdisks $D_1$ and $D_2$ shaded.}
	\label{fig:basedisk2}
\end{figure}
The braid $B_2,$ thought of as an element of the mapping class group, is given as $B_2=\prod_{i=1}^d \tau_i$ where $\tau_i$ is a right handed half twist exchanging $x_{1,i}$ and $x_{2,i}$ in a small neighborhood of $a_i.$ Each $a_i$ lifts to a simple closed curve $c_i$ in $\Sigma_{(p,{\bf 1})}$ (and several arcs). See Figures~\ref{fig:bcover+gen1} and~\ref{fig:bcover+gen2}. 
\begin{figure}[ht]
  \relabelbox \small 
 {\epsfysize=2.75truein\centerline {\epsfbox{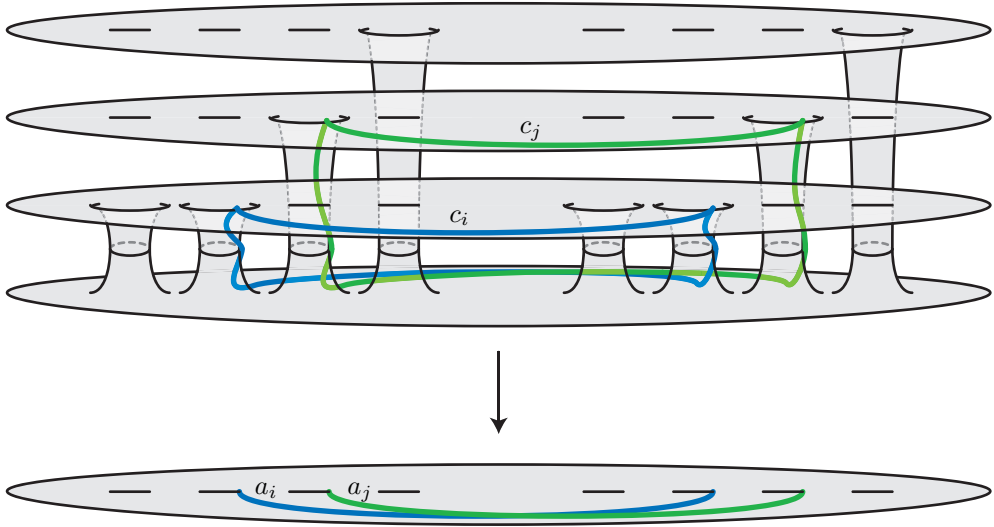}}} 
  \relabel{a}{$a_i$} 
  \relabel {b}{$a_j$} 
  \relabel{c}{$c_i$} 
  \relabel {d}{$c_j$}    
  \endrelabelbox
        \caption{The branched cover $\Sigma_{(2,{\bf 1})}$ with the curves $c_i, 1\leq i\leq 2g+2,$ and $c_j, 2g+2<j\leq d,$ with $i$ and $j$ even, where $g$ is the genus of $\Sigma.$}
	\label{fig:bcover+gen1}
\end{figure}

\begin{figure}[ht]
  \relabelbox \small 
 {\epsfysize=2.75truein\centerline {\epsfbox{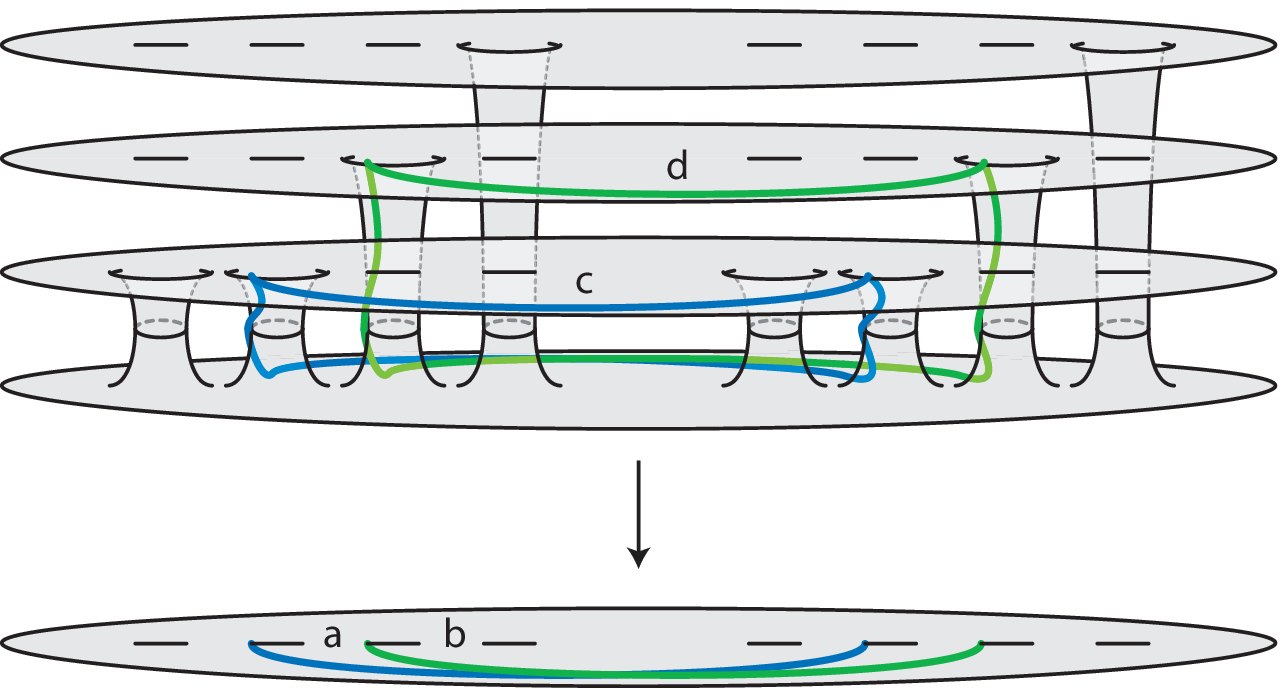}}} 
  \relabel{a}{$a_i$} 
  \relabel {b}{$a_j$} 
  \relabel{c}{$c_i$} 
  \relabel {d}{$c_j$}    
  \endrelabelbox
          \caption{The branched cover $\Sigma_{(2,{\bf 1})}$ with the curves $c_i, 1\leq i\leq 2g+2,$ and $c_j, 2g+2<j\leq d,$ with $i$ and $j$ odd, where $g$ is the genus of $\Sigma.$}
	\label{fig:bcover+gen2}
\end{figure}
Since $\tau_i$ lifts to the right handed Dehn twist $D_{c_i}$ we clearly see that $\rho_{(2,{\bf 1})}=\displaystyle\prod_{i=1}^d D_{c_i}.$
\end{proof}

\begin{proof}[Proof of Theorem~\ref{thm:monpdiscon} in the general case] 
Again, by Proposition~\ref{prop:monodromy split}, it is enough to find a factorization of the cable of the open book with $\phi = \Id$, keeping track of the nodules, and so we make that simplification again.  The factorization is again a lift of a braid factorization of $B_p$ from Lemma~\ref{realizedisconnected}. Specifically consider the disk $D$ in $\R^2$ intersecting the $x$-axis and let 
\[
x_{1,1},\ldots, x_{1,d},x_{2,1},\ldots,x_{2,d},\ldots, x_{p,1},\ldots, x_{p,d}
\] 
be points on the $x$-axis, again ordered from right to left. (See Figure~\ref{fig:basedisk3}.)  Let $D_1,\ldots, D_p$ be disjoint disks in $D$ such that $D_i$ contains the points $x_{i,1},\ldots, x_{i,d}.$  Moreover let $D_{i,i+1}, i=1,\ldots, p-1$ be larger disks in $D$ engulfing adjacent pairs of disks: $D_{i,i+1}$ contains the disks $D_i$ and $D_{i+1}$ and is disjoint from the other $D_j$. Finally let  $a_{i,j}$ be the embedded arc in $D_{i,i+1}$ with negative $y$-coordinate on its interior that connects $x_{i,j}$ to $x_{i+1,j}$ as indicated in Figure~\ref{fig:basedisk3}. The braid $B_p,$ thought of as an element of the mapping class group, is given as $B_p=\prod_{j=1}^{p-1}\prod_{i=1}^d \tau_{p-i,j}$ where $\tau_{i,j}$ is a right handed half twist exchanging $x_{i,j}$ and $x_{i+1,j}$ in a small neighborhood of the arc $a_{i,j}.$ Each $a_{i,j}$ lifts to a simple closed curve $c_{i,j}$ in $\Sigma_{(2,{\bf 1})}$ (and several arcs).  Since $\tau_{i,j}$ lifts to the right handed Dehn twist $D_{c_{i,j}}$, the factorization of $B_p$ gives the desired factorization: $\rho_{(p,{\bf 1})}= \displaystyle\prod_{j=1}^{p-1} \displaystyle\prod_{i=1}^{d} D_{c_{p-i,j}}.$
\end{proof}

\begin{rem}
From the proof of Theorem~\ref{thm:stabilization} we know that the monodromy of the $(p,{\bf q})$--cable of an open book decomposition $(\Sigma, \phi)$ can be constructed from the $(p,{\bf 1})$--cable by stabilization. While it would be nice to have an explicit description of the monodromy it is somewhat difficult to write down and we leave this to future work.
\end{rem}

\subsection{Connected Binding} \label{sec:connected binding}

In this subsection we write find the monodromy of the $(2,2)$--cable and the $(p,1)$ cable of an open book with connected binding. The $(2,2)$--cable is more or less done in the previous subsection, but explicitly derive it here as we will need it in our applications in Section~\ref{applications} (it also helps cement  the ideas from the last subsection before we move onto the more difficult monodromy computations for the $(p,1)$--cable). It is interesting to contrast the monodromies constructed in this section as we see the $(p,1)$--cable requires some explicit left-handled Dehn twists. This observation is a key to construction Stein fillable contact structures supported by open books whose monodromy is not a composition of positive Dehn twists. 

\subsubsection{The $(2,2)$--cables of open books with connected bindings}
As discussed in the proof of Theorem~\ref{thm:stabilization}, the $(2,2)$--cable of an open book with connected binding is the natural object you get by doubling the branch locus as in the proof of Theorems~\ref{thm:stabilization} and~\ref{thm:monpdiscon} and this braid has a positive braid factorization which lifts to a factorization of $\rho_{(2,2)}$, the rotation map in the monodromy of the cable.  To obtain a more convenient and symmetric expression for $\rho_{(2,2)}$ we choose a different conjugacy representative of the braid, see Figure~\ref{fig:22braid} (the conjugation is by a half twist on the lower $(2n+1)$ strands).
\begin{figure}[htp]
  	\labellist 
 	\small \hair 2pt
		\pinlabel 1 at -8, 1
		\pinlabel	2g+2 at -10, 25
		\pinlabel 2g+1 at -10, 17
		\pinlabel	4g+2 at -10, 41
	\endlabellist
 	\includegraphics[width = 2in]{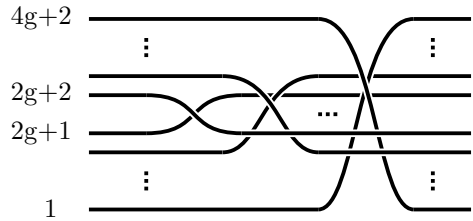}
	\caption{Branch locus of the $(2,2)$-cable of the trivial genus $g$ open book.} \label{fig:22braid}
\end{figure}
This construction is fundamentally different than the construction in Theorem~\ref{thm:connected binding}, below, where it is shown that the rotation map of the $(2,1)$--cable does not admit a positive factorization.  There is, however, a (single)  
positive Hopf stabilization taking the $(2,1)$--cable to the $(2,2)$--cable ({\em cf.\ }Lemma~\ref{lem:gencable}), which gives a factorization of the monodromy of the $(2,2)$--cable.  The equivalence of these two presentations is discussed in Section~\ref{sec:stabilizing to 2,2}.

\begin{prop} \label{prop:22cable}
Let $(\Sigma, \phi)$ be an open book with connected binding and let $g = g(\Sigma)$ be the genus of $\Sigma$.  The $(2,2)$--cable of $(\Sigma, \phi)$ can be described abstractly as $(\Sigma_{(2,2)}, \phi_{(2,2)})$ where $\Sigma_{(2,2)}$ has genus $2g$ and 2 boundary components, and $\phi_{(2,2)} = \rho_{(2,2)} \circ \tilde{\phi}$.  The map $\rho_{(2,2)}$ is a lift of the braid $R_{(2,2)}^g$ shown in Figure \ref{fig:22braid} and has a factorization 
$$\rho_{(2,2)} = D_{d_{2g+1}} \circ \cdots \circ D_{d_{1}},$$
where the $D_{d_i}$ are Dehn twists about the curves $d_i$ shown in Figure \ref{fig:22dehntwist}.
\end{prop}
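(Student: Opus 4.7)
The plan is to proceed in close parallel with the proof of Theorem~\ref{thm:monpdiscon}, adapting the branched cover construction to the connected-binding setting. As before, Proposition~\ref{prop:monodromy split} lets us assume $\phi = \Id$, so that $\phi_{(2,2)} = \rho_{(2,2)}$ and it suffices to exhibit the claimed Dehn twist factorization of the universal rotation map $\rho_{(2,2)}$ acting on $\Sigma_{(2,2)}$.

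First I would realize $(\Sigma,\Id)$ as a simple $2$-fold cover of $(D^2,\Id)$. Since $\Sigma$ has connected boundary, the proof of Lemma~\ref{realizedisconnected} / Case~2 of Theorem~\ref{thm:stabilization} identifies $(\Sigma,\Id)$ with the $2$-fold cover of $(D^2,\Id)$ branched over $d = 2g+1$ points $x_1, \dots, x_{2g+1}$ in the $x$-axis, with the boundary $\partial D^2$ lifting to the connected boundary of $\Sigma$ via the nontrivial connected double cover. Following Lemma~\ref{realizedisconnected}, the $(2,2)$--cable of $(\Sigma,\Id)$ is then the $2$-fold cover of $(D^2,\Id)$ branched over the $(2,2)$--cable of the original branch link, which sits as the closure of a braid on $2(2g+1) = 4g+2$ strands. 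The choice of a conjugating half-twist on the lower $2g+1$ strands, as indicated in Figure~\ref{fig:22braid}, produces the symmetric braid $R_{(2,2)}^g$ whose branch cover will visibly carry the symmetric curves $d_i$ of Figure~\ref{fig:22dehntwist}.

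Next I would verify the topology of the cover: a simple $2$-fold branched cover of a disk over $4g+2$ points has Euler characteristic $2\chi(D^2) - (4g+2) = -4g$, and by construction the preimage of $\partial D^2$ consists of two circles (the conjugation arranges the branched cover along the boundary to again be trivial), so $\Sigma_{(2,2)}$ has genus $2g$ and $2$ boundary components, as required. Then I would write $R_{(2,2)}^g$ explicitly as a product of $2g+1$ half-twists $\tau_i$ along disjoint properly embedded arcs $a_1, \dots, a_{2g+1}$ in the disk, each arc joining one branch point in the upper half to one in the lower half across the conjugating region. Because each arc joins two distinct branch points over which the double cover is nontrivial, its preimage is a single simple closed curve $d_i$, and each $\tau_i$ lifts to the right-handed Dehn twist $D_{d_i}$ about $d_i$ (this is the standard lifting principle recalled in Subsection~\ref{bc}). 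Composing the lifts in the order inherited from the braid factorization gives
\[
\rho_{(2,2)} = D_{d_{2g+1}} \circ \cdots \circ D_{d_1}.
\]

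The main obstacle I expect is bookkeeping rather than conceptual: one must check that the conjugation by a half-twist on the lower $2g+1$ strands, applied to the natural braid from Lemma~\ref{realizedisconnected}, yields exactly the braid $R_{(2,2)}^g$ of Figure~\ref{fig:22braid} and that the resulting arcs $a_i$ lift to the specified curves $d_i$ of Figure~\ref{fig:22dehntwist} in the correct cyclic order on the symmetric model of $\Sigma_{(2,2)}$. This is a direct but careful picture-matching exercise, using the fact that conjugation in the braid group corresponds to an ambient isotopy of the branched cover and so does not change the diffeomorphism type of the lifted monodromy, only its presentation; the chosen conjugation is precisely the one that realizes the hyperelliptic-style symmetric presentation of $\Sigma_{(2,2)}$ in which the curves $d_i$ form the advertised chain.
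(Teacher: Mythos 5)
Your proof follows essentially the same route as the paper's, which splits the argument into Lemma~\ref{lem:22factorization} (two factorizations of the braid $R_{(2,2)}^g$, including the product of half twists $b_1 \cdots b_{2g+1}$ along arcs) and a subsequent short lemma lifting this half-twist factorization through the double branched cover to obtain $\rho_{(2,2)} = D_{d_{2g+1}} \circ \cdots \circ D_{d_1}$. One small correction worth noting: the preimage of $\partial D^2$ is disconnected because the number of branch points, $4g+2$, is even (so the total boundary monodromy $(12)^{4g+2}$ is trivial), not because of the conjugation — the conjugation by the half twist on the lower $2g+1$ strands is an isotopy of the branch locus and so leaves the branched cover unchanged, and is chosen purely to produce the symmetric braid presentation whose arcs lift to the chain of curves $d_i$ in Figure~\ref{fig:22dehntwist}.
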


The proof of the proposition is contained in the following two lemmas.

\begin{figure}[htp]
  	\labellist 
 	\small \hair 2pt
		\pinlabel $d_1$ at 165, 59
		\pinlabel $d_2$ at 203, 61
		\pinlabel $d_{2g}$ at 265, 53
		\pinlabel $d_{2g+1}$ at 286, 53
	
	\endlabellist
	\includegraphics[width = 4in]{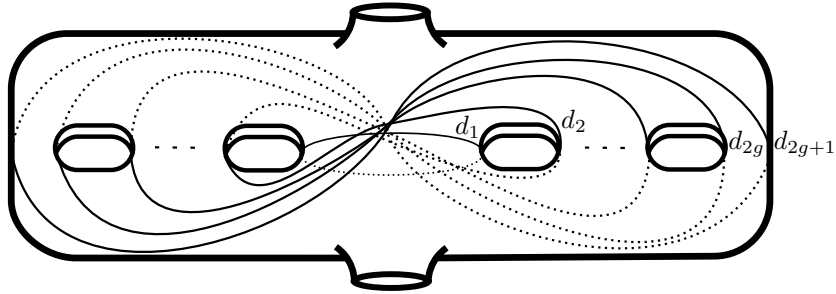}
	\caption{Dehn twists used in the factorization of the rotation map $\rho_{(2,2)}(\Sigma)$ of the $(2,2)$-cable of a genus $g$ open book.} \label{fig:22dehntwist}
\end{figure}

\begin{lem}  \label{lem:22factorization} The braid $R_{(2,2)}^g$ shown in Figure \ref{fig:22braid} has factorizations
$$R_{(2,2)}^g = \Delta \Delta_1^{-2} \Delta_2^{-2}$$
and
$$R_{(2,2)}^g = b_1 \cdots b_{2g+1},$$
where $b_i$ is a braid half twist about the arc $a_i$ shown in Figure~\ref{fig:22arc}, $\Delta$ is the Garside half twist on all $4g+2$ strands, $\Delta_1$ is the  half twist on the first $2g+1$ strands, and $\Delta_2$ is the  half twist on the last $2g+1$ strands.
\end{lem}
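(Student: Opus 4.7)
The plan is to prove the two factorizations of $R^g_{(2,2)}$ separately by direct analysis of the braid in Figure~\ref{fig:22braid} as an element of the $(4g+2)$-strand braid group.

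For the first factorization $R^g_{(2,2)} = \Delta\Delta_1^{-2}\Delta_2^{-2}$, my approach is to read off the cabling construction. The $(2,2)$-cable of the braid axis $U=\bdry D^2$ doubles each of the $2g+1$ original branch points, and once the new braid axis is unwound, the resulting $(4g+2)$-strand braid arranges itself as in Figure~\ref{fig:22braid} with the first $2g+1$ strands forming the bottom group and the last $2g+1$ the top group. Viewed in this layout, $R^g_{(2,2)}$ interchanges the two groups by the global half twist $\Delta$, but the natural way in which parallel cabled strands pass through one another in this exchange introduces one extra full twist inside each group that must be undone; undoing them contributes the factors $\Delta_1^{-2}$ and $\Delta_2^{-2}$. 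Since $\Delta_1^{-2}$ and $\Delta_2^{-2}$ are supported on disjoint collections of strands they commute, and the resulting reading of the braid gives $R^g_{(2,2)} = \Delta\Delta_1^{-2}\Delta_2^{-2}$ directly.

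For the second factorization $R^g_{(2,2)} = b_1\cdots b_{2g+1}$, my plan is to use the geometry of the arcs $a_i$ of Figure~\ref{fig:22arc}. The arcs should be a collection of $2g+1$ mutually disjoint arcs pairing up the $4g+2$ branch points, so that the half twists $b_i$ have pairwise disjoint support. The product $b_1\cdots b_{2g+1}$ then realizes the same reversing permutation $i\mapsto 4g+3-i$ as $\Delta\Delta_1^{-2}\Delta_2^{-2}$, and the two products have equal writhes (both $2g+1$, since the writhe of $\Delta\Delta_1^{-2}\Delta_2^{-2}$ is $(2g+1)(4g+1)-2\cdot 2g(2g+1) = 2g+1$, matching the $2g+1$ half twists $b_i$). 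To upgrade this to an honest braid equality I would argue by induction on $g$: the base case $g=0$ reduces to the single generator $\sigma_1$ on $2$ strands, and the inductive step adds two further strands together with one more half twist $b_{g+1}$ supported near a new arc chosen nested disjointly from the earlier $a_i$'s, parallel to the way Figure~\ref{fig:22braid} adds two strands (one top, one bottom) as $g$ increases by one.

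The main obstacle is the second factorization: matching the underlying permutation and writhe is not in general enough to identify two elements of the braid group. Settling the equality of $R^g_{(2,2)}$ and $b_1\cdots b_{2g+1}$ rigorously will require either a careful explicit isotopy of the braid diagram, or an appeal to the hyperelliptic $\Z/2$-symmetry intrinsic to the branched cover $\Sigma_{(2,2)}\to D^2$, which sharply constrains which braids can arise as the branch locus of a $(2,2)$-cable. Once both factorizations are established, the lemma is immediate.
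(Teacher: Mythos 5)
There is a genuine gap, and you have largely identified it yourself: for the second factorization your argument establishes only that $b_1\cdots b_{2g+1}$ and $\Delta\Delta_1^{-2}\Delta_2^{-2}$ share the same underlying permutation and writhe, which does not pin down an element of the braid group, and the proposed induction and hyperelliptic-symmetry routes are left as sketches. But the gap actually infects the first factorization too. Unwinding the cabled axis does \emph{not} hand you the braid of Figure~\ref{fig:22braid} directly; it hands you the braid $B_2 = \sigma_{1,d+1}\sigma_{2,d+2}\cdots\sigma_{d,2d}$ (with $d=2g+1$) from Lemma~\ref{realizedisconnected} and Figure~\ref{fig:unwindB}. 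That braid has the pairing permutation $i\leftrightarrow d+i$, not the reversal $i\mapsto 2d+1-i$ realized by $\Delta\Delta_1^{-2}\Delta_2^{-2}$, so they cannot be equal on the nose. The step you are silently skipping --- and the step the paper makes explicit --- is that $R_{(2,2)}^g$ is chosen to be the conjugate $\Delta_1 B_2 \Delta_1^{-1}$ (conjugation by a half twist on the lower $2g+1$ strands), as stated just before Proposition~\ref{prop:22cable}.

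Once that conjugation is in play both factorizations are short. Conjugating each generator $\sigma_{i,d+i}$ by $\Delta_1$ carries its defining arc to the disjoint arc $a_i$ of Figure~\ref{fig:22arc}, so $R_{(2,2)}^g = \Delta_1 B_2 \Delta_1^{-1} = \prod_i \Delta_1\sigma_{i,d+i}\Delta_1^{-1} = b_1\cdots b_{2g+1}$; this is why the paper calls the second factorization ``obvious,'' and it replaces your permutation-plus-writhe heuristic with an actual identity. The first factorization then follows by rewriting $\Delta_1 B_2 \Delta_1^{-1}$ using the standard half-twist relation $\Delta\Delta_1 = \Delta_2\Delta$, which lets you push $\Delta_1^{\pm1}$ past $\Delta$, rather than by the informal ``extra full twist'' bookkeeping you describe (that description also glosses over whether the extra internal twist relative to the clean block exchange is $\Delta_i^{-1}$ or $\Delta_i^{-2}$). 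In short: you need to invoke, not rederive, the conjugation relating Figure~\ref{fig:22braid} to $B_2$; once you do, the lemma drops out without any induction or symmetry argument.
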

\begin{figure}[htp]
  	\labellist 
 	\small \hair 2pt
		\pinlabel $a_1$ at 57, 18
		\pinlabel $a_{2g}$ at 73, 26
		\pinlabel $a_{2g+1}$ at 66, 35
		
	\endlabellist
	\includegraphics[width = 2in]{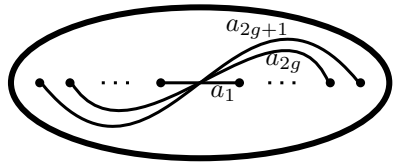}
	\caption{Arcs presenting a braid factorization of the braid $R_{(2,2)}^g$.} \label{fig:22arc}
\end{figure}

\begin{proof} ({\em cf.\ }Lemma~\ref{realizedisconnected})  That the braid half twists give a factorization of $R_{(2,2)}^g$ is obvious.  To see the other factorization recall that Figure~\ref{fig:22braid} is obtained from Figure~\ref{fig:unwindB} by conjugating by $\Delta_1$. From this one may easily see the new factorization. 
\end{proof}

\begin{lem} The rotation map $\rho_{(2,2)}$ has a factorization
\[
\rho_{(2,2)} = D_{d_{2g+1}}\circ \cdots \circ D_{d_{1}},
\]
where the $D_{d_i}$ are Dehn twists about the curves $d_i$ shown in Figure \ref{fig:22dehntwist}.
\end{lem}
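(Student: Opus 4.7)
The plan is to lift the braid factorization from Lemma~\ref{lem:22factorization} to the branched double cover. Recall that the $(2,2)$--cable of an open book with connected binding is constructed as a simple double branched cover of $(D^2, \Id)$ over the closure of the braid $R_{(2,2)}^g$, and in this identification the rotation map $\rho_{(2,2)}$ is precisely the lift of $R_{(2,2)}^g$ acting on the branched double cover of the disk branched over the $4g+2$ endpoints of the strands. So the problem reduces to understanding what each braid generator $b_i$ lifts to under this cover.

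First, I would invoke the general principle recalled in Subsection~\ref{bc}: if $\gamma$ is an embedded arc in the base disk connecting two branch points and missing all the others, then its preimage in a simple double cover is either an arc, a simple closed curve, or a combination, and the braid half twist $h_\gamma$ lifts to the composition of the corresponding half twists and, when the preimage contains a circle, a right handed Dehn twist about that circle. Since we are working with a \emph{double} cover in which every branch point has order-two ramification on both sheets above the arc, each arc $a_i$ of Figure~\ref{fig:22arc} has preimage a single embedded simple closed curve. Thus each $b_i$ lifts to a single right handed Dehn twist about this preimage circle.

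The next step is to identify the preimage circles with the curves $d_i$ of Figure~\ref{fig:22dehntwist}. This is essentially a picture-matching argument: describe the cover explicitly by cutting the base disk along disjoint arcs joining prescribed pairs of branch points (giving two copies of a $2g$-times-punctured disk glued along their boundary arcs to form $\Sigma_{(2,2)}$), and then track where each $a_i$ sits. Each $a_i$ either connects two branch points on the same ``side'' of the cover or passes through the identification, and drawing the resulting lifts directly matches them to $d_1, \ldots, d_{2g+1}$. The order of composition then lifts in the same order, yielding
\[
\rho_{(2,2)} = D_{d_{2g+1}} \circ \cdots \circ D_{d_1}.
\]

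The main obstacle, such as it is, will not be conceptual but bookkeeping: one must be careful with the conventions for how the braid generators $b_i$ compose (written left to right as braids versus right to left as mapping classes, as the authors warn in their initial remark), and one must match orientations so that a right handed braid half twist lifts to a right handed Dehn twist under this particular simple double cover. Once the explicit picture of the cover is drawn, both the identification of the lifts with the $d_i$ and the correct order of composition follow from direct inspection.
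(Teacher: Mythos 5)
Your proof is correct and follows essentially the same approach as the paper's: both identify $\rho_{(2,2)}$ with the lift of $R_{(2,2)}^g$ to the double branched cover and then lift the braid factorization from Lemma~\ref{lem:22factorization} term by term, observing that each half twist $b_i$ lifts to a right-handed Dehn twist about the simple closed curve covering the arc $a_i$, which is the curve $d_i$ of Figure~\ref{fig:22dehntwist}.
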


\begin{proof} ({\em cf.\ }Theorem \ref{thm:monpdiscon})    
As discussed in the previous subsection, in the two-fold branched cover the braid axis for $R_{(2,2)}^g$ lifts to the $(2,2)$-cable of the braid axis for the trivial  $2g+1$-stranded braid.  This is then the cable of the trivial open book and hence the monodromy is exactly $\rho_{(2,2)}$.  Any factorization of $R_{(2,2)}^g$ then lifts to a factorization of $\rho_{(2,2)}$. The factorization of $R_{(2,2)}^g$ from Lemma~\ref{lem:22factorization} in particular gives the desired factorization of $\rho_{(2,2)}$ since  the braid arcs for $R_{(2,2)}^g$ lift to the curves shown in Figure \ref{fig:22dehntwist}.
\end{proof}

\subsubsection{The $(p,1)$--cables of open books with connected bindings}
The goal of this subsection is to present the monodromy of the $(p,1)$--cable of an open book with connected binding.  In Theorem~\ref{thm:stabilization}, we used a branched cover construction of $\left(\Sigma, \Id\right)$ over $\left(D^2, \Id\right)$  to also build $\left(\Sigma_{(p,{1})}, \phi_{(p,{1})}\right)$.  When $\Sigma$ has only one boundary component, however, the twofold branched cover used to construct $\Sigma$ is non-trivial along the boundary. So while we used the same ideas to construct the monodromy of the $(p,2)$--cables, there is no cable of the unknot which lifts to the $(p,1)$--cable of $\partial \Sigma$. To construct this cable we need a different approach, in particular, a different branched cover.   
\begin{lem}\label{11cable}
Let $M$ be the manifold obtained from the trivial open book $\left(\Sigma, \Id\right)$ and denote the binding by $C$.  The $p$-fold cyclic cover of $M$ branched over $C_{(1,1)}$ is again $M$. Moreover, we can assume $C_{(1,1)}$ is transverse to the pages of the open book and then $C$ lifts to $C_{(p,1)}$ and the pages lift to pages of the cabled open book. 

In other words, the $(p,1)$--cable of $\left(\Sigma, \Id\right)$ can be seen as the $p$-fold cyclic cover of $\left(\Sigma, \Id\right)$ branched over the $(1,1)$--cable of the binding ${C}.$
\end{lem}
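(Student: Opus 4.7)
The plan is to construct the branched cover explicitly as a self-map $\widetilde F \colon M \to M$ of degree $p$, defined piecewise on the mapping torus and on a tubular neighborhood of the binding, and then verify its properties one at a time.

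First I would set up coordinates. Writing $M = (\Sigma \times S^1_\theta) \cup_\partial N_C$ with $N_C = S^1_\phi \times D^2_z$ and core $C = S^1_\phi \times \{0\}$, the gluing identifies the $\partial \Sigma$-direction with the longitudinal $\phi$-direction of $\partial N_C$ and the $S^1_\theta$-direction with the meridional ($\arg z$-)direction, so that page boundaries become longitudes of $N_C$. I would realize $C_{(1,1)}$ as the curve $\{(\phi, r_0 e^{i\phi}) : \phi \in S^1\}$ for some fixed $0 < r_0 < 1$, which is a $(1,1)$-curve on $\{|z| = r_0\}$ meeting each meridional slice $\{\phi=\phi_0\}$ in one point, hence transverse to the pages. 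Next, I would define $\widetilde F(x, e^{i\theta}) = (x, e^{ip\theta})$ on the mapping torus and $\widetilde F(\phi, z) = (\phi, z^p + r_0 \rho(|z|) e^{i\phi})$ on $N_C$, where $\rho \colon [0,1] \to [0,1]$ is a smooth cutoff equal to $1$ near $|z| = 0$ and $0$ near $|z| = 1$. The two formulas agree on $\partial N_C$ (where $\rho = 0$), so $\widetilde F$ is well-defined and smooth.

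Away from $C = \{z = 0\}$, $\widetilde F$ is a local diffeomorphism, and near $C$ it factors as the standard $z \mapsto z^p$ cover followed by a $\phi$-dependent translation, so it is a $p$-fold branched cover with branch locus $C$ upstairs whose downstairs image $\widetilde F(C) = \{(\phi, r_0 e^{i\phi})\}$ is exactly $C_{(1,1)}$. To see that $C$ lifts to $C_{(p,1)}$, I would solve $\widetilde F(\phi, z) = (\phi, 0)$: near $C$ this reduces to $z^p = -r_0 e^{i\phi}$ with local solutions $z = r_0^{1/p} e^{i(\phi+\pi)/p} \zeta_k$ for $p$-th roots of unity $\zeta_k$. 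Following a single branch as $\phi$ traverses $S^1$, the argument of $z$ advances by $2\pi/p$, so the preimage closes up only after $\phi$ wraps $p$ times, producing a single connected curve that wraps $p$ times longitudinally and once meridionally in $N_C$, namely $C_{(p,1)}$.

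To check that the pages lift to pages of the cabled open book, I would note that the preimage of $\Sigma \times \{e^{i\theta_0}\}$ in the mapping torus consists of $p$ parallel copies $\Sigma \times \{e^{i(\theta_0+2\pi k)/p}\}$, while inside $N_C$ the preimage of the meridional extension of this page is a surface that accumulates onto $C_{(p,1)}$ and glues the $p$ copies together along their $\partial \Sigma$ boundaries. This is precisely the construction of the cabled page $\Sigma_{(p,1)}$ from Lemma~\ref{lem:cable}, and the cyclic permutation of sheets induced by $\theta \mapsto \theta + 2\pi/p$ realizes the rotation monodromy $\rho_{(p,1)}$. The hard part I expect is the careful verification that $\widetilde F$ remains a smooth $p$-fold branched cover across the interpolation region defined by $\rho$, together with the precise matching of the glued-up preimage with the page $\Sigma_{(p,1)}$ of Lemma~\ref{lem:cable}.
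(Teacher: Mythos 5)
Your proposal aims at the same geometric object as the paper's proof but tries to build it by an explicit piecewise formula, whereas the paper constructs it softly and defers the hard coordinates to an ambient isotopy. The paper's two steps are: (1) because the monodromy is trivial, the $p$-fold cyclic cover of $(\Sigma,\Id)$ branched over the \emph{binding} $C$ is literally the map which is $(x,e^{i\theta})\mapsto(x,e^{ip\theta})$ on the mapping torus and $(\phi,z)\mapsto(\phi,z^p)$ on $N_C$ --- no cutoff function needed --- and one reads off that $C_{(1,1)}$ lifts to $C_{(p,1)}$ and pages lift to cabled pages; (2) since $C$ and $C_{(1,1)}$ are isotopic cores of $N_C$, an ambient diffeomorphism exchanging them turns this into a cover branched over $C_{(1,1)}$ in which $C$ lifts to $C_{(p,1)}$. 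Your construction attempts to write the composition of these two steps as a single formula $(\phi,z)\mapsto(\phi,z^p+r_0\rho(|z|)e^{i\phi})$.

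The gap is exactly the step you flag as ``the hard part,'' and it is a real one: for a generic cutoff $\rho$ and radius $r_0$ the formula is \emph{not} a branched cover. Two things fail. First, the image need not lie in $N_C$: one needs $r^p+r_0\rho(r)\le 1$ for all $r$, which constrains how slowly $\rho$ may decay near $|z|=1$. Second, and more seriously, the restriction to a $\phi$-slice need not be a local diffeomorphism away from $0$: the image of the circle $|z|=r$ is a $p$-fold covered circle of radius $r^p$ about the $r$-dependent center $r_0\rho(r)e^{i\phi}$, and for these circles to foliate the disk one needs roughly $r_0\,|\rho'(r)|\le p\,r^{p-1}$ on the interpolation region. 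For fixed $r_0$ this fails for large $p$; it can be repaired by shrinking $r_0$ with $p$, or by replacing $z\mapsto z^p$ with the meridional rotation $re^{i\theta}\mapsto re^{ip\theta}$ (which preserves $|z|$ and relaxes the constraint to $r_0|\rho'|\le 1$), but some such argument must be supplied. The cleanest fix, and the one the paper effectively uses, is to split your map back into its two factors: take the honest cover $z\mapsto z^p$ over $C$, verify the lifting statements there (where they are immediate), and then postcompose with a diffeomorphism of $M$, supported in $N_C$ and furnished by isotopy extension, that exchanges $C$ and $C_{(1,1)}$. That avoids having to certify an interpolated formula as a covering map.
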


\begin{proof}
We begin by commenting that it is essential here that the monodromy is the identity. In this case notice that the $p$-fold cyclic branched cover over the binding of $\left(\Sigma, \Id\right)$ yields the same manifold. Moreover, the branched cover takes the $(1,1)$--cable of the binding to the $(p,1)$--cable of the binding. Now reversing the roles of the binding and its $(1,1)$--cable (which we can do as they are isotopic) yields the desired result.  
\end{proof}

Let us establish some notation.  As discussed at the beginning of this section, the page of the cabled open book decomposition $\Sigma_{(p,{1})}$ is made up of $p$-copies of $\Sigma,$ denoted $\Sigma_i,$ for $i=1,\ldots, p,$ called nodules, and a base component $C$, which is a disk with $p$ subdisks removed. We explicitly realize $\Sigma_{(p,{1})}$ in $\R^3$ so that the nodules have $z$-coordinate non-negative, the base component $C$ is in the $xy$-plane and consists of the unit disk minus $p$ open disks arranged cyclically around the origin, and the entire surface is invariant under a $\frac{2\pi}{p}$ rotation about the $z$-axis.  See Figure~\ref{fig:base disk}.  We are given a reference arc, $d_j$, in $C$ that connects the $j$th and $j+1$st nodules.  Denote a neighborhood of $d_j$ and the nodules $\Sigma_j$ and $\Sigma_{j+1}$ by $\Sigma_{j,j+1}.$  Notice that we can fix an identification of $\Sigma$ with $\Sigma_1$ and then identify $\Sigma$ with the remaining $\Sigma_i$ by rotating about the $z$-axis. Under this realization of $\Sigma_{(p,1)}$ there is an natural identification of $\Sigma_{(2,1)}$ with $\Sigma_{j,j+1}.$
\begin{figure}[ht]
  \relabelbox \small 
 {\epsfysize=2truein\centerline {\epsfbox{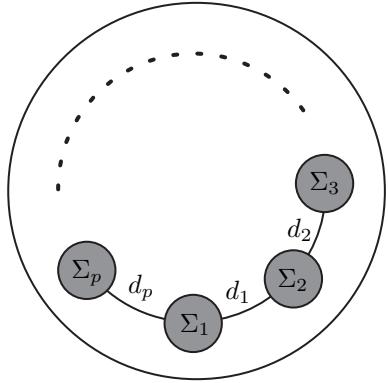}}} 
  \relabel{1}{$\Sigma_1$} 
  \relabel {2}{$\Sigma_2$} 
  \relabel{3}{$\Sigma_3$} 
  \relabel {p}{$\Sigma_p$}  
    \relabel{a}{$d_1$} 
  \relabel {b}{$d_2$}
     \relabel {c}{$d_p$}
  \endrelabelbox
        \caption{The base disk for $\Sigma_{(p,{1})}$.  The arcs $d_j$ are used to determine the subsurfaces $\Sigma_{j, j+1}$. }
\label{fig:base disk}
\end{figure}

\begin{thm} \label{thm:connected binding} 
Let $\left(\Sigma, \phi\right)$ be an open book with connected binding.  Then the monodromy $\phi_{(p,1)}$ of the $(p,1)$--cable of $\left(\Sigma, \phi\right)$ can be written as 
\[
\phi_{(p,1)} = \displaystyle\prod_{j=2}^{p} \partial_j^{-1} \circ \displaystyle\prod_{j=1}^{p-1} T_{p-j} \circ \tilde{\phi}.
\]
Here $\partial_j$ is the Dehn twist about the boundary of the $j$th nodule $\Sigma_j$ and $\tilde{\phi}$ is the lift of $\phi$, acting on the first nodule.  The map $T_{j}$ is the diffeomorphism of $\Sigma_{j,j+1}$ that, when $\Sigma_{j,j+1}$ is identified with the surface $\Sigma_{(2,1)}$ in Figure~\ref{fig:twonodules} as discussed above, is a lift of the Garside braid half-twist and can be written 
\[
T_j = \partial_j^{-1} \circ (D_{2d-1}) \circ (D_{2d-2} \circ D_{2d-1}) \circ \cdots \circ (D_{2} \circ \cdots \circ D_{2d-1})\circ (D_{1} \circ \cdots \circ D_{2d-1})
\]
where $D_i$ is a right handed Dehn twist along $c_i.$
\end{thm}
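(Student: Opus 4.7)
By Proposition~\ref{prop:monodromy split}, it suffices to find the factorization of $\rho_{(p,1)}$, since the contribution of $\tilde\phi$ is inserted at the end and is independent of the rest of the computation. My plan is to exploit the branched covering description supplied by Lemma~\ref{11cable}: the cabled open book $(\Sigma_{(p,1)}, \rho_{(p,1)})$ is the $p$--fold cyclic cover of $(\Sigma, \Id)$ branched over the $(1,1)$--cable $C'$ of the binding $C = \partial\Sigma$.

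First I would identify $\rho_{(p,1)}$ as a lift of a specific mapping class on $\Sigma$ with one marked point. Because $C'$ is transverse to each page $\Sigma_\theta$ in a single point, as $\theta$ varies this intersection point traces out a simple loop $\alpha$ in $\Sigma$ isotopic to $\partial\Sigma$. Since the original monodromy is the identity, traversing the $S^1$--direction of the open book once moves the branch point along $\alpha$, and so $\rho_{(p,1)}$ is the lift of the point-push $P_\alpha$ to the $p$--fold cyclic branched cover.

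Next I would decompose $P_\alpha$ into geometric pieces whose lifts can be computed explicitly. Using the symmetric realization of $\Sigma_{(p,1)}$ from Figure~\ref{fig:base disk}, the $\Z/p$ rotational symmetry of the cover exhibits a disk neighborhood of the branch point as divided into $p$ angular sectors, one adjacent to each nodule. I would express $P_\alpha$ as the product of $p-1$ moves $\mathcal{H}_{p-1}, \mathcal{H}_{p-2}, \ldots, \mathcal{H}_{1}$ applied in that order, where $\mathcal{H}_j$ swaps the $j$th and $(j+1)$st sectors by a clockwise half-twist; together these realize the net rotation of the branch point around $\alpha$. In the double-branched-cover presentation of $\Sigma_{j,j+1}$ over a disk (as in Case~2 of Theorem~\ref{thm:stabilization}, now applied to the subsurface spanning two adjacent nodules), $\mathcal{H}_j$ is precisely the Garside half-twist on the $2d = 2(2g+1)$ branch points, and so its lift $T_j$ inherits the stated Dehn twist factorization: the classical Garside factorization $\Delta = (\sigma_1\cdots\sigma_{2d-1})(\sigma_1\cdots\sigma_{2d-2})\cdots(\sigma_1)$ combined with the identification of each $\sigma_i$ with the Dehn twist $D_{c_i}$ along the simple closed curve component of the lift of the $i$th braiding arc, exactly as in the proof of Theorem~\ref{thm:monpdiscon}. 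The leading $\partial_j^{-1}$ in the formula for $T_j$ arises because the Garside full twist $\Delta^2$ lifts to a positive Dehn twist along $\partial \Sigma_j$, so the half-twist lift differs from the bare product of the $D_{c_i}$ by precisely $\partial_j^{-1}$.

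The final step is to account for the outer factor $\prod_{j=2}^p \partial_j^{-1}$, which comes from the closing-up of the cyclic cover. After applying $T_{p-1}\circ\cdots\circ T_1$ the nodules are cyclically permuted as required, but the gluing of the $p$ nodules to the base component through the deck action contributes $p-1$ spurious positive Dehn twists about $\partial\Sigma_2, \ldots, \partial\Sigma_p$; these must be undone by left-handed twists to recover the actual monodromy $\rho_{(p,1)}$. The main obstacle will be carefully tracking these boundary contributions, since lifts of braid elements to cyclic branched covers can interact subtly with the deck-group action near the branch locus. Ultimately this amounts to a careful bookkeeping of how the Garside element and its square lift through the cover along the boundary of a neighborhood of the marked point, and once this is settled the asserted formula for $\phi_{(p,1)}$ follows by reinserting $\tilde\phi$ via Proposition~\ref{prop:monodromy split}.
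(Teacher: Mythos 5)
Your proposal identifies the correct branched--covering framework (Lemma~\ref{11cable}) and the correct structural idea that each $T_j$ is a lift of a Garside half--twist in a two--fold branched--cover picture of $\Sigma_{j,j+1}$, and it correctly flags the bookkeeping of boundary twists as the crux. However, that bookkeeping is not actually carried out, and the two partial explanations you offer for the boundary corrections are both wrong.

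First, the overall strategy of ``decompose $P_\alpha$ on the base into $p-1$ half--twist moves $\mathcal{H}_j$, then lift each one'' does not work as stated. The $p$ sectors you describe live in the \emph{cover}, not in $\Sigma$: near the branch point the covering is $z\mapsto z^p$, so a $1/p$ rotation upstairs corresponds to a full loop of the marked point downstairs. There is no family of diffeomorphisms of $(\Sigma,x)$ whose lifts are the individual $T_j$. The paper instead decomposes $\rho_{(p,1)}$ by writing $P_\alpha$ as the commuting product $D_c\circ D_{c'}^{-1}$ (with $c,c'$ the two boundary circles of an annular neighborhood of $\alpha$), lifting each factor separately: $D_{c'}^{-1}$ lifts to $\partial_1^{-1}\circ\cdots\circ\partial_p^{-1}$ because $c'$ lifts to the $p$ curves $c'_i$ parallel to $\bdry\Sigma_i$, while $D_c$ lifts to an explicit $1/p$ rotation $r_p$ of the model of $\Sigma_{(p,1)}$ in $\R^3$. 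Only then is $r_p$ compared \emph{directly on the cover} with $T_{p-1}\circ\cdots\circ T_1$, by tracking actions on nodules and on the framings of the base component (Figures~\ref{fig:local models} and~\ref{fig:base disk rotations}); the $T_j$ are never realized as lifts of pieces of $P_\alpha$.

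Second, the claim that ``$\Delta^2$ lifts to a positive Dehn twist along $\partial\Sigma_j$, so the half--twist lift differs from the bare product of the $D_{c_i}$ by precisely $\partial_j^{-1}$'' is incorrect. In the two--fold cover $\Sigma_{(2,2)}\to D^2$ branched over $2d$ points, the full twist $\Delta^2 = D_{\partial D^2}$ lifts to the product of Dehn twists about \emph{both} boundary circles of $\Sigma_{(2,2)}$, which after capping off is a twist about the \emph{outer} boundary $\partial\Sigma_{(2,1)}$, not about $\partial\Sigma_j$. The factor $\partial_j^{-1}$ in $T_j=\partial_j^{-1}\circ s_j$ is instead a framing correction converting the raw lift $s_j$ of $\Delta$ (the ``spin'') into the ``dosado'' $T_j$, and it is this framing correction --- verified pictorially, not via $\Delta^2$ --- together with the lift $\partial_1^{-1}\circ\cdots\circ\partial_p^{-1}$ of $D_{c'}^{-1}$ (and a cancellation of $\partial_1$ from $r_p=\partial_1\circ\prod T_{p-j}$) that produces the outer factor $\prod_{j=2}^p\partial_j^{-1}$ in the theorem, rather than any ``spurious twists from the deck action.''
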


We point out that, unlike the previous cases, there is in general no positive Dehn twist presentation of $\rho_{(p,1)}(\Sigma)$ (see Theorem~\ref{thm:counterexample}), and so we content ourselves with the presentation given.

\begin{proof}
We again appeal to Proposition~\ref{prop:monodromy split} and focus on determining the Dehn twist presentation for $\rho_{(p,{1})}(\Sigma)$. From Lemma~\ref{11cable} we see that the monodromy of $\left(\Sigma_{(p,{1})}, \rho_{(p,{1})}(\Sigma)\right)$ can be computed from lifting the braid representation of the $(1,1)$--cable $K$ of the binding of $\left(\Sigma, \phi\right)$ to the $p$-fold cyclic cover branched over $K.$ The braid representing $K$ thought of as an element of the mapping class group is $B=D_c\circ D_{c'}^{-1}$ where $c$ is a simple closed curve parallel to $\partial \Sigma,$ $c'$ is a copy of $c$ pushed a little further into $\Sigma,$ and $D_c$ and $D_{c'}$ are Dehn twists about the given curves. If we choose a point $x$ between $c$ and $c'$ then it will trace out the $(1,1)$--cable of the binding. See Figure~\ref{fig:11cable}.
\begin{figure}[htp]
  \relabelbox \small 
 {\epsfysize=1.5truein\centerline {\epsfbox{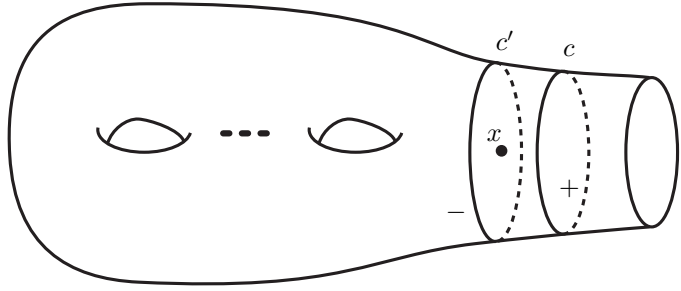}}} 
  \relabel{c}{$-$} 
  \relabel {x}{$x$} 
  \relabel {l}{$+$} 
  \relabel {a}{$c'$} 
  \relabel {b}{$c$} 
  \endrelabelbox
        \caption{Braid picture of the (1,1)-cable of the binding of the page $\Sigma$.}
\label{fig:11cable}
\end{figure}

Lifting $B$ to the $p$-fold cyclic cover, we first note that $c'$ will lift to $p$ simple closed curves $c'_i$, $i =1, \dots p$, with each $c'_i$ parallel to the boundary of the nodule $\Sigma_i$. Thus $D_{c'}^{-1}$ will lift to the diffeomorphism $\partial_1^{-1}\circ\ldots \circ \partial_p^{-1}$ (where we use the notation $\partial_i$ for $D_{c'_i}$ as in the statement of the theorem).  The curve $c$ does not lift to the cover, but we can still lift $D_c.$  Referring back to our description of $\Sigma_{(p,{1})}$ in $\R^3$ above, assume $\epsilon>0$ is chosen so that all the nodules of  $\Sigma_{(p,{1})}$ are contained in the cylinder about the $z$-axis of radius $1-\epsilon.$ Now let $r_p$ the restriction to $\Sigma_{(p,{1})}$ of the map that is rotation by $\frac{2\pi}{p}$ about the $z$-axis for all points within the cylinder of radius $1-\epsilon,$ the identity outside the cylinder of radius $1-\frac \epsilon 2$ and interpolates between the two (keeping $z$-coordinate constant) in between. By noting that the generating deck transform for the $p$-fold cover of $\Sigma_{(p,{1})}$ over $\Sigma$  branched over $x$ is just the restriction to $\Sigma_{(p,{1})}$ of rotation about the $z$-axis by $\frac {2\pi}{p}$ one may easily check that $r_p$ is the lift of $D_c$ to $\Sigma_{(p,{1})}$, {\em cf}.\ \cite[Figure 3.1]{Montesinos-AmilibiaMorton91}. 
Thus we see that 
\[
\rho_{(p,{1})}(\Sigma)= \partial_1^{-1}\circ\ldots \circ \partial_p^{-1}\circ r_p.
\]
So to complete the proof we need a Dehn twist presentation of $r_p.$

As before, the idea will be to find a suitable presentation when $p=2$ and show that the composition of different lifts, acting on each $\Sigma_{j,j+1}$, $j=1, \dots, p-1$, gives the general case. When $p=2$ the rotation $r_2$ is particularly easy to describe.  It occurs as the lift of the {\em Garside half twist braid} under a 2-fold branched cover.  More specifically, Figure~\ref{fig:garside twist} shows the 2-fold cover which describes $\Sigma_{(2,2)},$ the page of a $(2,2)$--cable of the original open book.  Here $\Sigma_{(2,2)}$ is the 2-fold cover of $D^2$ branched over $2d=2(2g+1)$ points. Let $\psi$ be the diffeomorphism of $\Sigma_{(2,2)}$ that fixes the boundary, rotates the the figure (outside a small neighborhood of the boundary)  through an angle $\pi$, and interpolates between the two maps in between.  The surface $\Sigma_{(2,1)}$ is obtained from $\Sigma_{(2,2)}$ by capping off one of its boundary components.  Moreover, $r_2$ is the extension of $\psi$ to $\Sigma_{(2,1)}$. So we are left to give a Dehn twist presentation of $\psi.$

\begin{figure}[htp]
\centering
\labellist\small\hair 1.pt
\pinlabel {$c_1$} at 524 273 
\pinlabel {$c_2$} at 486 288
\pinlabel {$c_{d}$} at 319 270
\pinlabel {$c_{d-1}$} at 385 286
\pinlabel {$c_{d+1}$} at 188 286
\pinlabel {$c_{2d-2}$} at 87 285
\pinlabel {$c_{2d-1}$} [r] at 31 259
\pinlabel {$l$} at 260 245
\pinlabel {$g(l)$} at 317 212
\pinlabel {$F$} at 137 285
\pinlabel {$B$} at 137 235
\pinlabel {$F$} at 435 285
\pinlabel {$B$} at 435 235
\endlabellist
\includegraphics[width=5in]{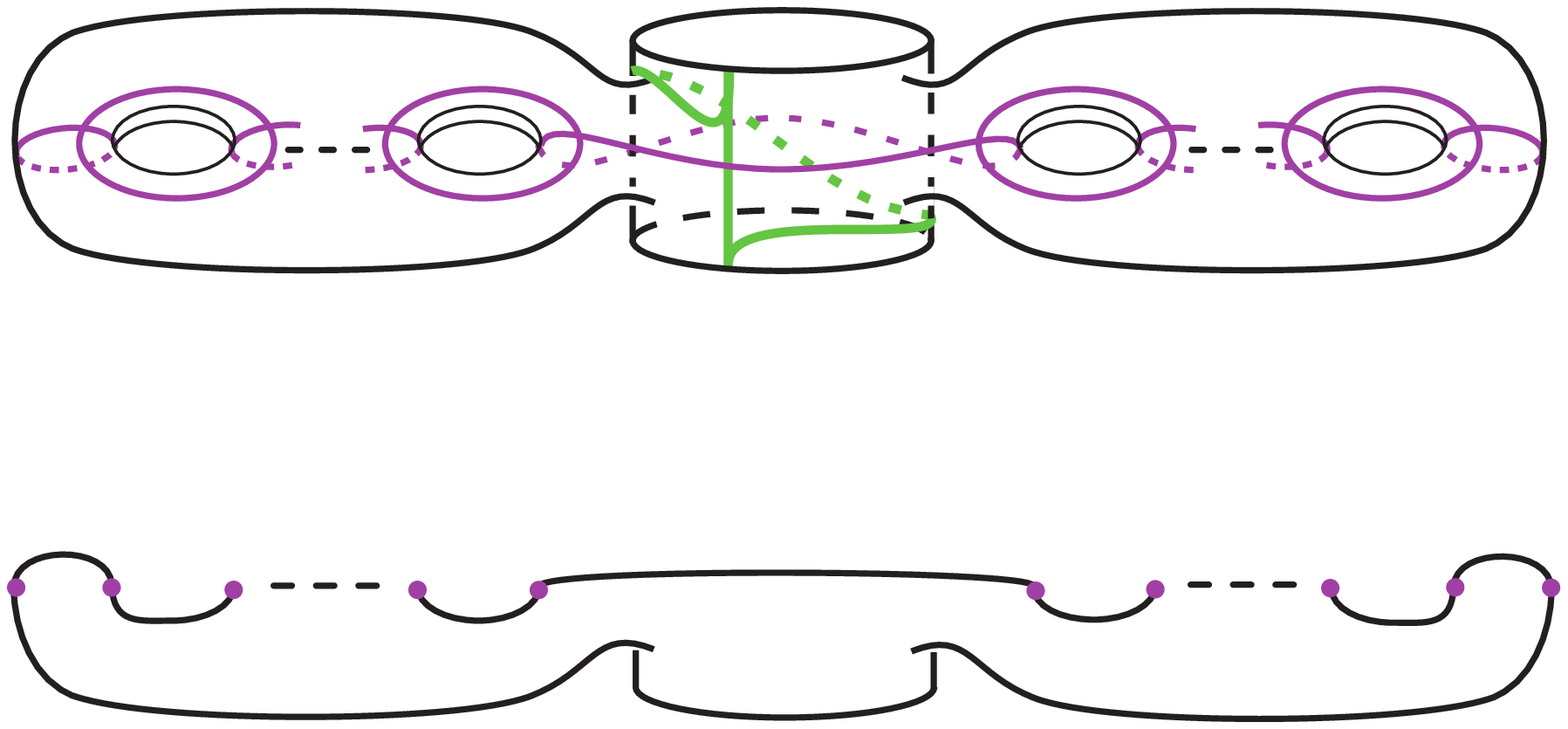}
\caption{Branched cover picture of the (2,2)-cabled surface $\Sigma_{(2,2)}$.  To see the lift $g$ of the Garside half twist rotate both the base and cover $180^\circ$ and then slide each boundary back to where it started.  The arc $l$ and it's image $g(l)$ under the rotation are shown, as are the lifts $c_i$ of the standard arcs $a_i$ connecting adjacent marked points in $D^2$.  The two sheets are labeled F and B and the spin map takes each sheet in nodule $\Sigma_1$ to its counterpart in nodule $\Sigma_2$.}
\label{fig:garside twist}
\end{figure}

Identify the base of the branched covering with the unit disk in $\R^2$ and place the branched point $x_1,\ldots, x_{2d}$ on the $x$-axis, ordered left to right so that they are symmetric about the origin.   Then (up to isotopy) $\psi$ covers the diffeomorphism $\psi'$ of $D^2$ that fixes the boundary, rotates the complement of a small symmetric neighborhood of the boundary (that contains no branched points) counterclockwise by $\pi$, and interpolates between the two maps elsewhere --- if we forget about the boundary, this is just a rotation of the entire disk through an angle $\pi$ --- this is just the Garside half twist braid $\Delta$. If we let $\sigma_i$ be the standard generators of the braid group (that is they are diffeomorphisms of $D^2$ that exchange $x_i$ and $x_{i+1}$ via a right handed twist supported in a neighborhood of an arc $a_i$ on the $x$-axis connecting them) then $\psi' = \Delta$ has factorization 
\[
\Delta=(\sigma_{2d-1} \cdots  \sigma_{1}) (\sigma_{2d-1} \cdots \sigma_{2}) \cdots (\sigma_{2d-1} \sigma_{2d-2} ) \cdot \sigma_{2d-1}.
\]
Each arc $a_i$ lifts to a simple closed curve $c_i$ in $\Sigma_{(2,2)},$ see Figure~\ref{fig:garside twist}, and the lift of the diffeomorphism $\Delta$ is given by $(D_{2d-1}) \circ (D_{2d-2} \circ D_{2d-1}) \circ \cdots \circ(D_{2} \circ \cdots \circ D_{2d-1})\circ (D_{1} \circ \cdots \circ D_{2d-1}),$ where $D_i$ is a right handed Dehn twist about $c_i.$  This gives the factorization 
\[
\phi_{(2,1)}(\Sigma) = \partial_2^{-1} \circ \partial_1^{-1} \circ (D_{2d-1}) \circ (D_{2d-2} \circ D_{2d-1}) \circ \cdots \circ (D_{2} \circ \cdots \circ D_{2d-1})\circ (D_{1} \circ \cdots \circ D_{2d-1}) \circ  \tilde{\phi}.
\]
 
To normalize the presentation of $r_2$ in preparation for the $p\not=2$ case, we pick the chain of curves $c_1, \dots, c_{d-1}$ and a proper arc $a$ in $\Sigma$ shown in Figure~\ref{fig:twonodules}, here $a$ is $c_d$ intersected with the nodule. The surface $\Sigma_{(p,1)}$ as described before the theorem consists of the base surface $C$ and the nodules $\Sigma_1,\ldots, \Sigma_p$ sitting symmetrically around the $z$-axis in $\R^3.$ We identify $\Sigma$ with $\Sigma_1$ and then with the other $\Sigma_i$ by rigid rotation about the $z$-axis. Under this identification we denote by $c_{i,j}$ the curve $c_j$ on $\Sigma_i$  for $1\leq j\leq d-1,$ and the curve $c_{2d-j}$ on $\Sigma_{i+1}$ for $d+1\leq j\leq 2d-1.$ Notice that there is some repetition among the $c_{i,j}$.  In particular, $c_{i,j}=c_{i+1, 2d-j}.$ Finally denote by $c_{i,d}$ the curve obtained by taking the union of $a\subset \Sigma_i,$ $a\subset \Sigma_{i+1}$ and two parallel copies of $d_j$ that connect the end points of the $a$'s.  Notice that for a fixed $i$ the curves $c_{i,j}$ correspond to the curves in Figure~\ref{fig:twonodules} under the identification of $\Sigma_{(2,1)}$ with $\Sigma_{i,i+1}.$
 
\begin{figure}[ht]
\centering
	\labellist 
	\small\hair 2pt
		\pinlabel {$c_{1}$} at 239 239
		\pinlabel {$c_{2}$} at 228 200
		\pinlabel {$c_{d}$} at 173 13
		\pinlabel {$c_{d+1}$} at 120 116
		\pinlabel {$c_{d-1}$} at 226 114
		\pinlabel {$c_{2d-2}$} at 120 177
		\pinlabel {$c_{2d-1}$} at 113 236
		\pinlabel {$F$} at 112 152
		\pinlabel {$B$} at 70  152
		\pinlabel {$F$} at 232 152
		\pinlabel {$B$} at 277 152
		\pinlabel {$S_2$} at 87 48
		\pinlabel {$S_1$} at 260 46
		\pinlabel {$d_1$} at 174 40
	\endlabellist
\includegraphics[width = 4.in]{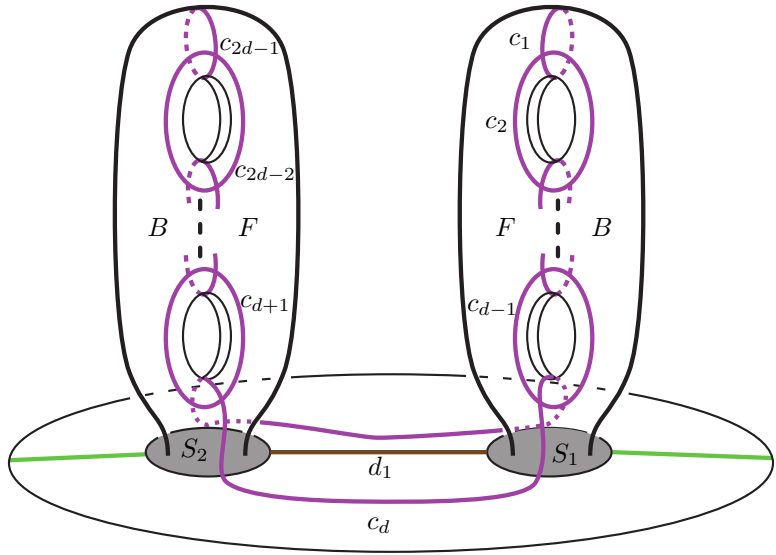}
\caption{The surface $\Sigma_{(2,1)}$. The spin map $r_2$ can be seen by rotating the picture $180^\circ$ and then sliding the boundary clockwise back to where it began.}
\label{fig:twonodules}
\end{figure}

We set
\[
s_i=(D_{2d-1}) \circ (D_{2d-2} \circ D_{2d-1}) \circ \cdots \circ (D_{2} \circ \cdots \circ D_{2d-1})\circ (D_{1} \circ \cdots \circ D_{2d-1})
\]
and notice that $s_i$ is simply $r_2$ acting on $\Sigma_{i,i+1}$ under our above identification. That is $s_i$ acts on $\Sigma_{i,i+1}$ by exchanging the nodules $\Sigma_i$ and $\Sigma_{i+1}$ and on the base component of $\Sigma_{i,i+1}$ it acts as shown in the middle part of Figure~\ref{fig:local models}. 

\begin{figure}[ht]
\labellist \small \hair 2pt
	\pinlabel $S_{j}$ at 108 90
	\pinlabel $S_{j+1}$ at 108 200
\endlabellist
\includegraphics[width = 1.25 in]{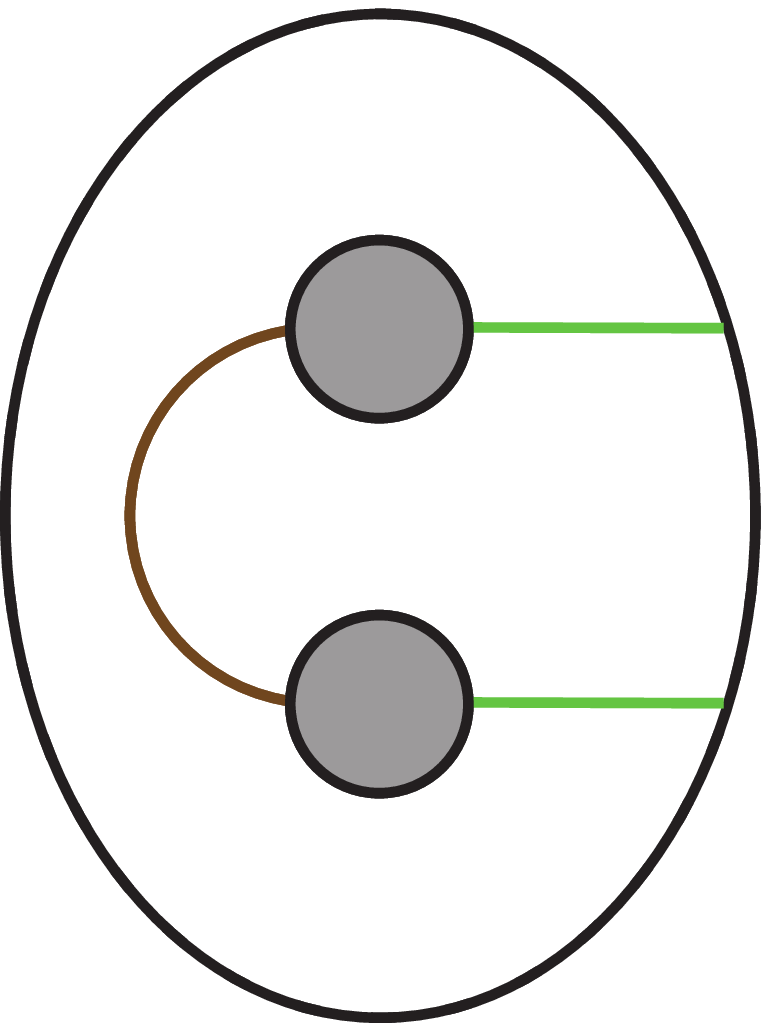}
\hspace{.25 in}
\includegraphics[width = 1.25 in]{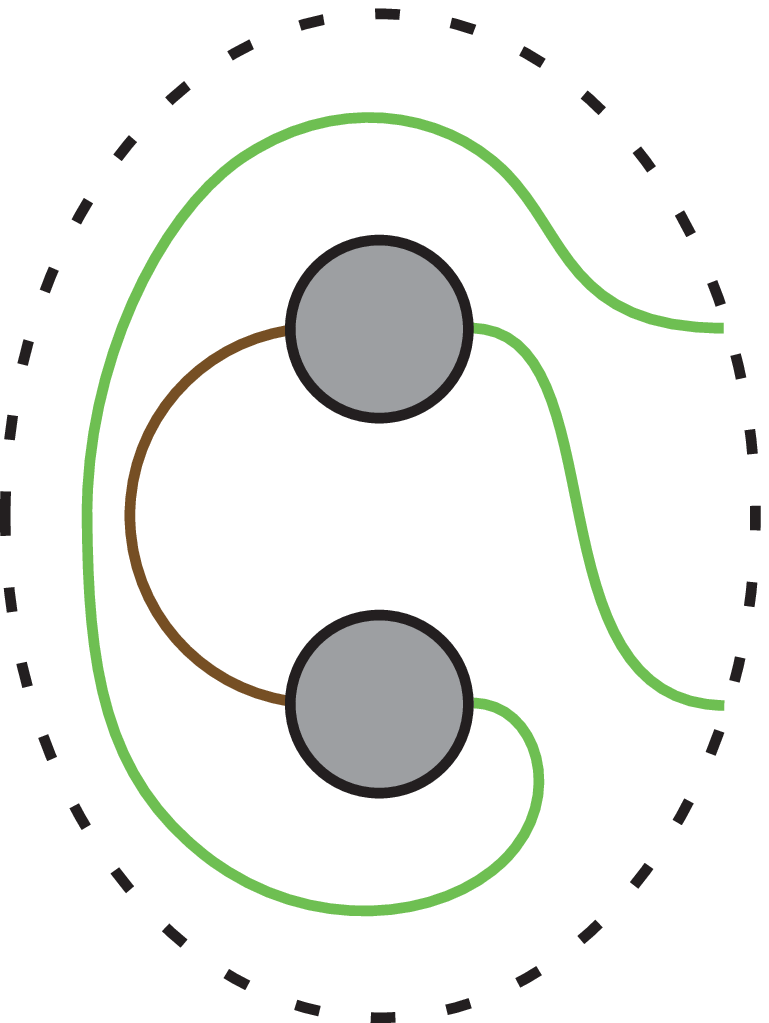}
\hspace{.25 in}
\includegraphics[width = 1.25 in]{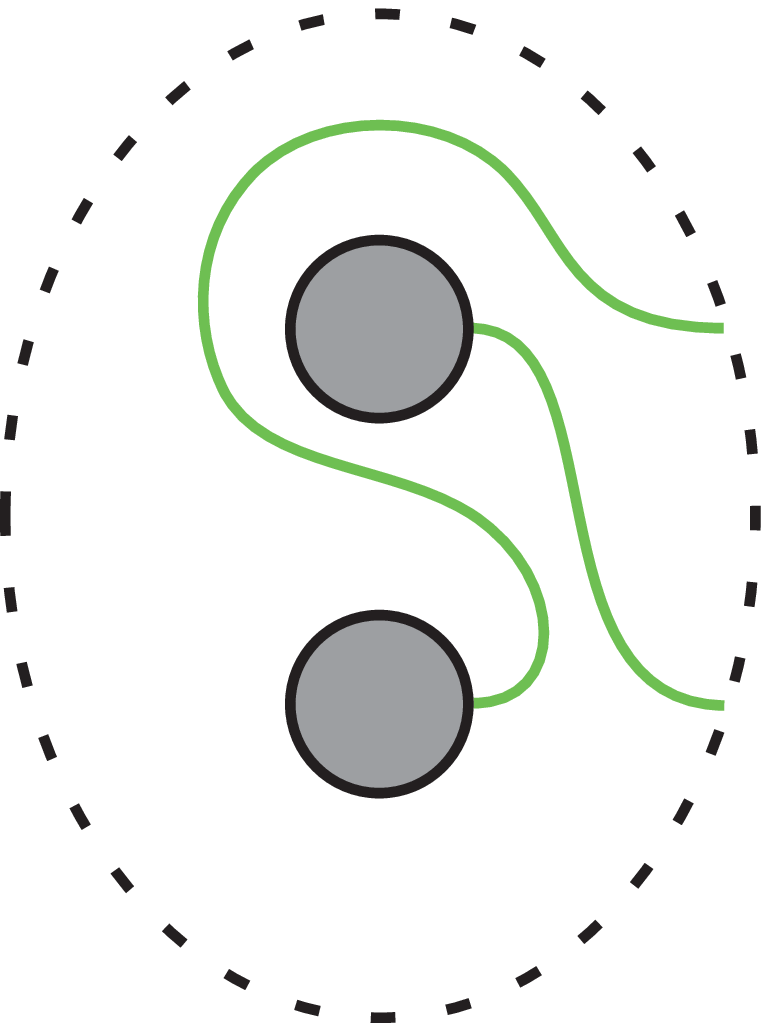}
\caption{Local picture of the spin (middle) diffeomorphism $s_i$ and dosado (right) diffeomorphism $T_i$ showing their framing difference.}
\label{fig:local models}
\end{figure}
If we set 
\[
T_i=\partial_{i}^{-1} \circ s_i
\]
then this is a diffeomorphism of $\Sigma_{i,i+1}$ that acts on the nodules in the same way $s_i$ does and acts on the base component as shown on the right of Figure~\ref{fig:local models}. Consider the composition $T_{p-1}\circ \cdots \circ T_{1}.$ This is a diffeomorphism of $\Sigma_{(p,1)}$ that acts on the nodules just as $r_p$ does, and on the base acts as shown in Figure~\ref{fig:base disk rotations}.

\begin{figure}[ht]
\relabelbox \small
{\centerline {\epsfbox{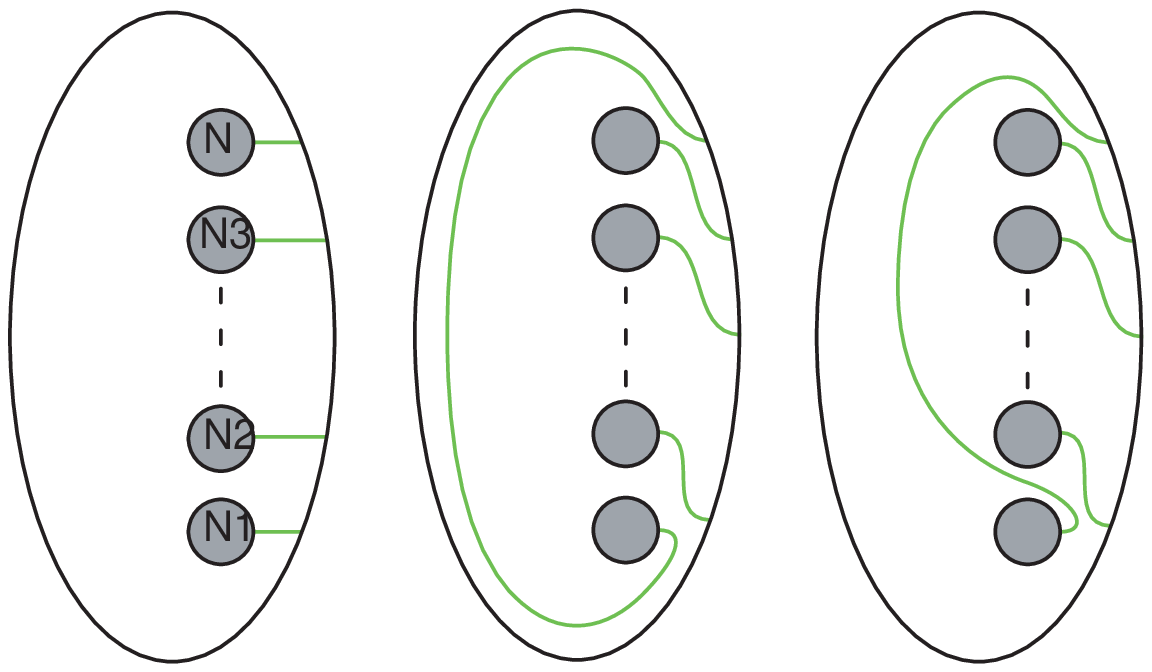}}}
\relabel{N1}{$S_1$}
\relabel{N2}{$S_2$}
\relabel{N3}{$S_{p-1}$}
\relabel{N}{$S_p$}
\endrelabelbox
\caption{On the left, the base disk with nodules labeled and framings given.  The middle shows how the rotation $r_p$ acts on the framed nodules.  The right shows the framing after applying the $p-1$ dosado maps between the $j$th and $j+1$st nodules.}
\label{fig:base disk rotations}
\end{figure}

Thus we can write 
\[
r_p=\partial_1 \circ \displaystyle\prod_{i=1}^{p-1} (T_{p-i})
\]
\end{proof}

\section{Applications}\label{applications}
In this section we give two applications of our monodromy computations from the previous section. In particular we study the monodromy of Stein fillable contact structures and prove the existence of many interesting monoids in the mapping class group of a surface. We also prove Proposition~\ref{prop:otexceptional} by exhibiting many negative exceptional cables that produce tight contact structures. 

\subsection{Stein fillable open books without a positive monodromy.}
We apply Theorem~\ref{thm:main-cable} and the factorization given in Theorem~\ref{thm:connected binding} to show that there exists open books supporting Stein fillable contact structures whose monodromy cannot be written as a product of positive Dehn twists.  The particular examples we find are $(2,1)$--cables of genus one open books compatible with the unique tight contact structure $\xi_{std}$ on the lens spaces $L(p,p-1)$ for $p\geq1$, where we include $S^3$ as the lens space $L(1,0)$.  

\subsubsection{The examples}
In the following lemma, we think about a length of a homeomorphism of a genus two surface as the algebraic length of a presentation as a product of Dehn twists about non-separating curves, which, for our purposes, counts right-handed Dehn twists positively and left-handed Dehn twists negatively.

To do this, we define an invariant of an element $m \in Map^+(\Sigma)$ which is related to the algebraic word length of such a factorization.  It is easy to see (from Wajnryb's presentation \cite{Wajnryb99}, for example) that  the abelianization $A$ of $Map^+(\Sigma)$ is $\Z / 10 \Z$.  There is a particular generator in $A$ we want to consider, that of a Dehn twist about a non-separating curve $[D]$.  Since any two such Dehn twists are conjugate they all represent the same element in $A$ and so we can determine this class without reference to a particular curve.  Moreover, since $Map^+(\Sigma)$ is generated by Dehn twists about non-separating curves, this class generates the abelianization as well.  We define the algebraic length, denoted $|m|,$ of a mapping class $m$ to be the integer $l$ such that the class of $[m]$ in the abelianization is $[m]=l [D]$.  The following lemma is obvious from the discussion here.

\begin{lem} \label{lm:modten}  Let $\phi$ be a homeomorphism of a surface $\Sigma$ of genus two and with one boundary component.  The algebraic length of $\phi$, denoted as above by $|\phi|,$ is equal, modulo ${10}$, to the algebraic word length of any factorization of $\phi$ into a product of Dehn twists about non-separating curves.  In particular, the length of any such factorization is well-defined modulo ${10}$.
\end{lem}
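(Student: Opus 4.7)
The plan is to unpack the definitions and leverage the computation of the abelianization already alluded to. First I would recall Wajnryb's presentation of $\Map^+(\Sigma)$ when $\Sigma$ has genus two and one boundary component, and use it to verify that the abelianization $A$ is $\Z/10\Z$. The essential features for our purposes are that $A$ is cyclic of order ten and that it is generated by the image $[D]$ of a Dehn twist about a non-separating simple closed curve.

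Next I would observe that, because the mapping class group acts transitively on the set of isotopy classes of non-separating simple closed curves in $\Sigma$, any two Dehn twists about non-separating curves are conjugate in $\Map^+(\Sigma)$. Conjugate elements have the same image under the abelianization map, so every non-separating Dehn twist maps to the common class $[D] \in A$, independent of the chosen curve. In particular, the inverse of such a twist maps to $-[D]$.

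With these pieces in hand, the lemma reduces to a direct calculation. Suppose
\[
\phi = D_{c_1}^{\epsilon_1} \circ D_{c_2}^{\epsilon_2} \circ \cdots \circ D_{c_n}^{\epsilon_n}
\]
is any factorization of $\phi$ into Dehn twists along non-separating curves, where each $\epsilon_i \in \{\pm 1\}$ records whether the twist is right- or left-handed. Applying the abelianization map gives
\[
[\phi] = \left( \sum_{i=1}^{n} \epsilon_i \right)[D] \in A = \Z/10\Z.
\]
By the definition of $|\phi|$ preceding the lemma, the integer $|\phi|$ is exactly the coefficient of $[D]$ in $[\phi]$, taken modulo ten. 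Thus $\sum \epsilon_i \equiv |\phi| \pmod{10}$, which is precisely the claim: the algebraic word length of any factorization of $\phi$ into non-separating Dehn twists equals $|\phi|$ modulo ten, and is therefore well-defined modulo ten.

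The only potentially non-routine step is verifying that the abelianization is indeed $\Z/10\Z$ with $[D]$ as a generator; this is a standard consequence of Wajnryb's presentation (abelianizing the relations collapses everything to a cyclic group in which the class of the chain relation forces the order to be ten), so I would cite it rather than rederive it. Everything else is formal once that fact is in place.
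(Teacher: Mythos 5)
Your proof is correct and follows exactly the route the paper intends: the paper states the lemma is "obvious from the discussion here," and that discussion consists precisely of the facts you invoke (the abelianization of $\Map^+(\Sigma)$ is $\Z/10\Z$ via Wajnryb, any two non-separating Dehn twists are conjugate and hence represent the same class $[D]$, and $[D]$ generates). You have simply written out the formal computation in the abelianization that the authors leave implicit.
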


This length combined with the following classification of symplectic fillings of $L(p,p-1)$ by Lisca give us the obstruction to a positive monodromy for our examples.

\begin{figure}[htp]
	\labellist 
	\small\hair 2pt
		
	\pinlabel $-2$ at 4 -10
	\pinlabel $-2$ at 74 -10
	\pinlabel $-2$ at 220 -10
	\pinlabel $-2$ at 290 -10
	\pinlabel \rotatebox{90}{$ \left\{ \phantom{\begin{matrix} B \\B\\B\\B\\B\\B\\B\\B\\B\\B\\B\\B \end{matrix} }\right.$} at 157 -20
	\pinlabel $p-1$ at 157 -45
	\endlabellist
\includegraphics[width=2in]{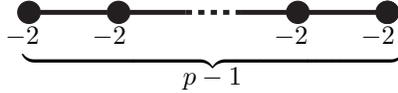}
\vspace{20pt}
\caption{Linear plumbing describing the unique minimal symplectic filling of $L(p,p-1)$. The vertices of the graph correspond to disk bundles over spheres with Euler number given by the label. An edge between two vertices denotes a plumbing between the spheres corresponding to the vertices. }
\label{fig:plumbing}
\end{figure}

\begin{thm}[Lisca 2004, \cite{Lisca04}] \label{thm:filling} Any minimal symplectic filling of the contact manifold \mbox{$(L(p,p-1), \xi_{std})$} is diffeomorphic to the plumbing described in Figure \ref{fig:plumbing}.
\end{thm}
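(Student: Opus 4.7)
The plan is to follow the general strategy for classifying symplectic fillings of lens spaces by closing up a filling to a known rational surface and then enumerating the possible embeddings of the canonical plumbing.

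First, note that $L(p,p-1)$ is the oriented boundary of the linear plumbing $P$ consisting of $p-1$ spheres of self-intersection $-2$, and this plumbing carries a natural symplectic (in fact holomorphic) structure with strongly convex boundary matching $\xi_{std}$. Given any minimal symplectic filling $W$ of $(L(p,p-1),\xi_{std})$, I would glue $W$ to $-P$ along the boundary (after a suitable symplectic collar adjustment) to produce a closed symplectic $4$-manifold $X$ containing a chain $C_1 \cup \dots \cup C_{p-1}$ of symplectically embedded $-2$-spheres realizing the original plumbing.

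Next, I would invoke McDuff's theorem on closed symplectic $4$-manifolds containing a symplectically embedded sphere of non-negative self-intersection. The chain $C_1 \cup \dots \cup C_{p-1}$ can be used, together with a suitable symplectic $+1$-sphere obtained after symplectic blow-ups disjoint from the chain, to recognize $X$ as a rational surface, i.e.\ a symplectic blowup of $\mathbb{CP}^2$. Minimality of the filling $W$ translates into the statement that no exceptional sphere in $X$ is disjoint from $C_1 \cup \dots \cup C_{p-1}$, so in particular each blowup touches the chain. The number of blowups is then pinned down by computing $b_2(X) = b_2(W) + (p-1)$ against $b_2$ of a blowup of $\mathbb{CP}^2$, using Euler characteristic and signature of $W$, which are fixed by the Heegaard–Floer / Seiberg–Witten constraints coming from $\xi_{std}$ on $L(p,p-1)$.

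The remaining task is the combinatorial classification: enumerate all ways to realize a chain of $p-1$ symplectic $-2$-spheres inside a fixed blowup of $\mathbb{CP}^2$, up to diffeomorphism preserving the complement. I would set up the homology classes $[C_i] = a_i H - \sum b_{i,j} E_j$ and constrain them by the intersection numbers $C_i \cdot C_i = -2$, $C_i \cdot C_{i+1} = 1$, and $C_i \cdot C_j = 0$ for $|i-j| \geq 2$, together with the adjunction inequality $C_i \cdot C_i + |K \cdot C_i| \leq -2$ forcing each $C_i$ to have genus zero data. I expect the main obstacle to be this last step: showing that the constraints together with minimality of $W$ force, up to diffeomorphism, a unique complementary configuration, namely the one whose closure reproduces $P$ itself. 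For general $L(p,q)$ this enumeration is governed by Lisca's continued-fraction count of ``zero-continued-fraction'' factorizations, but in the special case $q = p-1$ the continued fraction $[2,2,\dots,2]$ admits only the trivial such factorization, so the list collapses to a single filling, completing the proof.
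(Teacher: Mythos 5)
The paper does not prove this statement; it is quoted as a theorem of Lisca \cite{Lisca04}, with a pointer to ``the Third Example of [Lisca04, p.18]'' for the $L(p,p-1)$ case. So there is no in-paper proof to compare your proposal against. Nevertheless, your sketch targets Lisca's own argument, and it has a genuine gap at its first step.

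You propose to close up a minimal filling $W$ of $(L(p,p-1),\xi_{std})$ by gluing on the reversed plumbing $-P$ of $p-1$ spheres, and then to find the chain $C_1\cup\cdots\cup C_{p-1}$ of $(-2)$--spheres inside the resulting closed symplectic manifold $X$. This cannot be made to work. The plumbing $P$ has a negative-definite intersection form (the $A_{p-1}$ form), and it already carries a symplectic structure with \emph{convex} boundary inducing $\xi_{std}$ --- indeed $P$ is precisely the filling the theorem says is unique --- so you cannot symplectically glue it to another convex filling $W$. If instead you take $-P$ with its opposite orientation so that the boundary becomes concave, its spheres now have self-intersection $+2$ (not $-2$, as you assert), and its intersection form is positive definite of rank $p-1$. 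Any closed symplectic $4$-manifold $X=W\cup(-P)$ would then have $b_2^+(X)\geq p-1$; but a symplectic sphere of nonnegative square plus McDuff's theorem would force $X$, after blowdowns, to be rational or ruled and hence have $b_2^+(X)=1$. For $p\geq 3$ this is a contradiction, so no such symplectic cap structure on $-P$ can exist. The same issue undercuts your application of McDuff's theorem: blowups produce $(-1)$--spheres, not $(+1)$--spheres, and the adjunction relation for symplectic spheres is an equality $C\cdot C + K\cdot C = -2$, not the inequality you write.

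Lisca's actual cap is built differently. For $L(p,q)$ he uses the \emph{dual} continued fraction $p/(p-q)=[b_1,\ldots,b_h]$ (so $[p]$ when $q=p-1$) and embeds the associated chain of spheres, together with a sphere of nonnegative square (a line or fiber), inside a blowup of $\mathbb{CP}^2$ or a Hirzebruch surface; the complement of a neighborhood of that configuration is a concave cap with $b_2^+\leq 1$. It is that configuration --- not the $(-2)$--chain of the resolution --- whose homology classes are enumerated combinatorially, and your final paragraph about the trivial factorization of $[2,\ldots,2]$ is the correct end of this argument, but it is reached through the dual chain, not through $P$ itself. To fix your sketch you would need to replace $-P$ with Lisca's genuine concave cap and rerun the intersection-form and adjunction analysis on that configuration.
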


When $p=1$ the manifold $L(1,0)$ is $S^3$, which Eliashberg proves has a unique minimal symplectic filling, namely $B^4,$ \cite{Eliash91}.  The particular case of $L(p,p-1)$ is discussed in detail as the Third Example of \cite[p.18]{Lisca04}.

\begin{proof}  [Proof of Theorem~\ref{thm:counterexample}]
As stated before, the manifold $L(p,p-1)$ has a unique tight contact structure $\xi_{std}$, which additionally admits a unique Stein filling (easily constructed from a Legendrian presentation of the plumbing diagram Figure \ref{fig:plumbing}).  The contact structure $\xi_{std}$ is supported by an annular open book whose monodromy consists of $p$ right-handed Dehn twists about the core of the annulus.  We can stabilize this open book to get the genus one open book which has monodromy $\phi = D_1^p \circ D_2$.  We will see that the $(2,1)$--cable $\phi_{(2,1)}$ of this open book does not have a positive factorization.  Theorem~\ref{thm:connected binding} gives us a factorization of the monodromy $\phi_{(2,1)}$ of the cable as 
\[
\phi_{(2,1)} =(D_4 \circ D_5)^{-6} \circ (D_1 \circ D_2)^{-6} \circ (D_5) \circ (D_4 \circ D_5)\circ \cdots \circ(D_1 \circ \cdots \circ D_5) \circ  D_1^p \circ D_2 ,
\] 
where $D_i$ is the right-handed Dehn twist about the curve $c_i$ in Figure \ref{fig:twonodules}.  We have written $\phi_{(2,1)}$ as a product of Dehn twists about non-separating curves, and our particular factorization has algebraic length $15 - 24 + p + 1 = p-8$ and so $|\phi_{(2,1)}| \equiv p-8$.  To see that $\phi_{(2,1)}$ has no positive factorization, we compare this to the necessary length of a  minimal symplectic filling of $\xi_{std}$.  

Any positive factorization of $\phi_{(2,1)}$ can be modified by chain relations to give a positive factorization whose terms are each Dehn twists about a non-separating curve.  Any factorization into positive Dehn twists about non-separating curves actually constructs a Stein filling (see \cite{Giroux02, Gompf98}).  Denote by $\mathcal{F}$ the positive factorization. The construction of the Stein filling starts with a 4-dimensional thickening of the page, and adds 4-dimensional (symplectic) 2-handles for each non-separating Dehn twist in the positive factorization of the monodromy.  For our cabled open book on $L(p,p-1)$, the page is a genus 2 surface, this means we are constructing a minimal symplectic manifold by a 0-handle, four 1-handles and $|\mathcal{F}|$, 2-handles which hence has Euler characteristic $1 - 4 + |\mathcal{F}|$, where $|\mathcal{F}|$ both is the algebraic length of the monodromy ($|\mathcal{F}| = |\phi_{(2,1)}|$) and the number of Dehn twists in the chosen positive factorization.  However, we know all the minimal symplectic fillings of $\xi_{std}$ by Theorem~\ref{thm:filling}, namely the plumbing of spheres.  This filling has Euler characteristic $p$ and so $|\mathcal{F}| = p+3 = |\phi_{(2,1)}|$.  Comparing this with the previously calculated length gives us the desired contradiction, as $p-8 \neq p+3 \mod 10$. Thus $\phi_{(2,1)}$ has no positive Dehn twist factorization.
\end{proof}

\subsubsection{Relating the monodromy of the (2,1)-- and (2,2)--cables of a genus one open book decomposition.} \label{sec:stabilizing to 2,2}

The proof of Theorem~\ref{thm:counterexample} above relies on all the work done in Section~\ref{sec:monodromy} to factor the monodromy map of a $(2,1)$-cable of an open book. As those details are quite non-trivial, we will show out to make the proof independent of that work. The proof of Theorem~\ref{thm:counterexample} starts with a factorization of the monodromy of an open book decomposition that we claim supports the standard contact structure on a lens space $L(p,p-1)$. One may easily verify that the given open book does indeed describe the said lens space. Thus to make the proof independent of the rest of the paper we merely need to verify that the supported contact structure is the Stein fillable one as claimed. To this end we will show that by positively stabilizing the open book one time we can write the monodromy as a composition of positive Dehn twists. The precise stabilziation that we do is the one that takes the $(2,1)$--cable of the original genus one open book to the $(2,2)$--cable, which has a positive factorization. 

Specifically, we present a Hopf stabilization and sequence of mapping class relations changing the monodromy of a (2,1)--cable to a (2,2)--cable of any genus one open book.  The method here generalizes directly to any higher genus open book with one boundary component.  

First, Theorem \ref{thm:connected binding} gives a factorization of the monodromy of the (2,1)--cable of a genus one open book as 
\[\phi_{(2,1)} =  \rho_{(2,1)} \circ \tilde{\phi}=  (D_4 \circ D_5)^{-6} \circ (D_1 \circ D_2)^{-6} \circ (D_5) \circ (D_4 \circ D_5) \circ \cdots \circ (D_1 \circ \cdots \circ D_5) \circ \tilde{\phi},\]
 where we think of $\tilde{\phi}$ as $\phi$ acting on the right side nodule and hence having some factorization into Dehn twists $D_1$ and $D_2$.  
 \begin{figure}
	\vspace{10 pt}
	\labellist 
	\small\hair 2pt
		
	\pinlabel $d_1$ at 77 72
	\pinlabel $d_2$ at 35 69
	\pinlabel $d_3$ at 9 65
	\pinlabel $\bdry_{\Sigma}$ at 195 109
	\endlabellist
\includegraphics{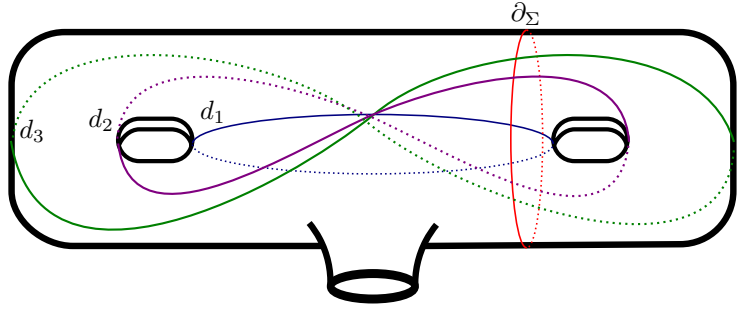}
\caption{The four Dehn twists presenting the cabling rotation for a (2,1)--cable.  We can write $\rho_{(2,1)} =   D_{d_3} \circ D_{d_2} \circ D_{d_1} \circ D^{-1}_{\bdry_{\Sigma}}$.} \label{fig:monodromy_base1}
\end{figure}
\begin{lem}
The diffeomorphism $\rho_{(2,1)}$ can be factored as 
\[
 \rho_{(2,1)} =D_{d_3} \circ  D_{d_2} \circ D_{d_1} \circ D^{-1}_{\bdry_{\Sigma}},
\]
where the curves $d_1,d_2,d_3$ and $\partial \Sigma$ are shown in Figure~\ref{fig:monodromy_base1}.
\end{lem}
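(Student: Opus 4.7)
The plan is to verify the identity by manipulating the factorization of $\rho_{(2,1)}$ provided by Theorem~\ref{thm:connected binding}, using standard chain and braid relations in the mapping class group of $\Sigma_{(2,1)}$, which in the genus-one case is a genus $2$ surface with one boundary component. Specializing Theorem~\ref{thm:connected binding} to $p=2$ and $g=1$, there is a chain $c_1,\dots,c_5$ of simple closed curves on $\Sigma_{(2,1)}$ (see Figure~\ref{fig:twonodules}) such that
\[
\rho_{(2,1)} \;=\; \partial_2^{-1}\circ\partial_1^{-1}\circ \Delta,
\]
where $\Delta := D_5\circ(D_4 D_5)\circ(D_3D_4D_5)\circ(D_2D_3D_4D_5)\circ(D_1D_2D_3D_4D_5)$ is the lift of the Garside half-twist, and where $\partial_1=(D_{c_1}D_{c_2})^6$ and $\partial_2=(D_{c_4}D_{c_5})^6$ are the boundary twists of the two nodules, which are given by the $2$-chain relation applied on each nodule.

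The first step is to invoke the $5$-chain relation. Because the chain $c_1,\dots,c_5$ fills $\Sigma_{(2,1)}$ up to a disk and an annular collar of $\partial\Sigma$, one of the two boundary curves of a regular neighbourhood of the chain bounds a disk and the other is isotopic to $\partial\Sigma$; hence
\[
(D_{c_1}D_{c_2}D_{c_3}D_{c_4}D_{c_5})^6 \;=\; D_{\partial\Sigma}, \qquad\text{equivalently}\qquad \Delta^2 = D_{\partial\Sigma}.
\]
Since $D_{\partial\Sigma}$ is central in $\mathit{Map}^+(\Sigma_{(2,1)})$, this lets one trade any power of $\Delta$ for a boundary twist times a smaller power of $\Delta$.

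The second and technical step is to identify the three curves $d_1,d_2,d_3$ of Figure~\ref{fig:monodromy_base1} with the images of specific $c_i$'s (e.g.\ $c_1$, $c_3$, $c_5$) under part of $\Delta$. This identification is essentially a sequence of Hurwitz moves: the curves $d_i$ are naturally produced by the rotation of the symmetric cabling picture used in Proposition~\ref{prop:22cable}, so one recognizes them after half of $\Delta$ has acted. With this identification, one applies the braid-type relation $D_a D_b = D_{D_a(b)} D_a$ repeatedly to simplify $\Delta$ together with $\partial_1^{-1}\partial_2^{-1}$, absorbing one factor of $\Delta^2=D_{\partial\Sigma}$ as the announced $D_{\partial\Sigma}^{-1}$.

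The main obstacle is the bookkeeping in step two: tracking the images of the $c_i$ under the successive twists in $\Delta$ and matching them to the pictured $d_i$. An alternative, and arguably cleaner, route is to appeal to Proposition~\ref{prop:22cable}, which gives $\rho_{(2,2)}=D_{d_3}D_{d_2}D_{d_1}$ on $\Sigma_{(2,2)}$, and to Lemma~\ref{lem:gencable}, which realizes the $(2,2)$-cable as a single positive Hopf stabilization of the $(2,1)$-cable. Undoing the stabilization replaces $\Sigma_{(2,2)}$ by $\Sigma_{(2,1)}$ and introduces a compensating $D_{\partial\Sigma}^{-1}$ from the Dehn twist about the stabilization curve once its dual co-core arc is capped off; this is the step one must justify carefully, and it is again tantamount to verifying the identity $\Delta^2=D_{\partial\Sigma}$.
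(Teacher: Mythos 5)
Your proposal does not match the paper's argument, and both of your suggested routes contain a genuine gap rooted in the same misidentification.  The curve labeled $\bdry_\Sigma$ in Figure~\ref{fig:monodromy_base1} is \emph{not} the boundary of the cabled page $\Sigma_{(2,1)}$; it is (a curve isotopic to) the boundary of one of the nodules $\Sigma_i \cong \Sigma \subset \Sigma_{(2,1)}$, hence a \emph{separating} curve bounding a once-punctured torus.  You can see this already from the abelianization invariant the paper introduces for Theorem~\ref{thm:counterexample}: $\rho_{(2,1)} = (D_4D_5)^{-6}(D_1D_2)^{-6}\Delta$ has algebraic length $-12-12+15=-9\equiv 1\pmod{10}$, whereas your proposed reading $\rho_{(2,1)}=D_{d_3}D_{d_2}D_{d_1}D_{\bdry\Sigma_{(2,1)}}^{-1}$ would have length $3-30\equiv 3\pmod{10}$ (using the $5$-chain relation $\Delta^2=(D_1\cdots D_5)^6=D_{\bdry\Sigma_{(2,1)}}$).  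With the nodule-boundary interpretation the $2$-chain relation gives $|D_{\bdry_\Sigma}|=12$, so the length is $3-12=-9$, as it must be.  Consequently, the identity $\Delta^2 = D_{\bdry\Sigma_{(2,1)}}$ which you put at the centre of Route~1 is true but irrelevant: the $D_{\bdry_\Sigma}^{-1}$ you need to produce is not $\Delta^{-2}$, and your Route~2 claim that the step ``is again tantamount to verifying $\Delta^2=D_{\bdry\Sigma}$'' is off for the same reason.

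The paper's actual argument is a short computation \emph{downstairs} in the braid group $B_6$, using the double branched cover $\Sigma'\to D^2$ over six points (where $\Sigma'$ is genus $2$ with $2$ boundary components, capped off at the end).  Under this cover, $\rho_{(2,1)}$ is the lift of the braid $\Delta\Delta_1^{-4}\Delta_2^{-4}$, where $\Delta_i^4$ is the full twist on the $i$th triple of strands and lifts to the nodule boundary twist $\bdry_i$.  Conjugating (by $\Delta_1^2$) gives $\Delta\Delta_1^{-2}\Delta_2^{-6}$, and then the previously established braid identity $\Delta\Delta_1^{-2}\Delta_2^{-2}=\sigma_{a_1}\sigma_{a_2}\sigma_{a_3}$ of Lemma~\ref{lem:22factorization} yields $\sigma_{a_1}\sigma_{a_2}\sigma_{a_3}\Delta_2^{-4}$; lifting, the $\sigma_{a_i}$ become the three non-separating twists $D_{d_i}$ and $\Delta_2^{-4}$ becomes $D_{\bdry_\Sigma}^{-1}$, the inverse twist about a nodule boundary.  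So the crux is the factorization of $R_{(2,2)}$ from Lemma~\ref{lem:22factorization} together with a single braid-group conjugation, not a chain relation or a Hopf destabilization.  If you want to pursue something closer to your Route~2, the relevant careful version is the lantern-relation argument the paper carries out immediately afterwards in Section~\ref{sec:stabilizing to 2,2}, but it goes in the opposite direction (stabilizing from the $(2,1)$--cable to the $(2,2)$--cable, not destabilizing), and it still rests on the identification of $\bdry_\Sigma$ as the nodule boundary.
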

\begin{proof}
We will derive the relation in the surface $\Sigma'$ of genus 2 with two boundary components and then cap off one boundary component. We can represent $\Sigma'$ as a 2--fold cover of the disk branched over 6 points. With the notation as in Lemma~\ref{lem:22factorization} we see that the original expression for $\rho_{(2,1)}$ comes from branch covering the braid $\Delta \Delta_1^{-4}\Delta_2^{-4}$ (recall that $\Delta$ is a half twists on all strands and $\Delta_1$ is a half twist on the first 3 strands and $\Delta_2$ is a half twist on the last 3 strands). Conjugating, we get the braid $\Delta\Delta_1^{-2}\Delta_2^{-6}$. From Lemma~\ref{lem:22factorization} we see that $\Delta\Delta_1^{-2}\Delta_2^{-2}= \sigma_{a_1} \sigma_{a_2}\sigma_{a_3}$. So $\Delta\Delta_1^{-2}\Delta_2^{-6}= \sigma_{a_1} \sigma_{a_2}\sigma_{a_3} \Delta_2^{-4}$. Lifting this relation to $\Sigma'$ (and capping off one boundary component) gives the desired factorization of $\rho_{(2,1)}$.
\end{proof}
 
We now stabilize as in Figure~\ref{fig:monodromy_base-stabilized} to get a monodromy factorization 
\[
D_{d_3} \circ D_{d_2} \circ D_{d_1} \circ D_\gamma \circ D^{-1}_{\bdry_{\Sigma}} \circ \tilde{\phi}
\]
 after commuting $D_\gamma$ past $\tilde{\phi}$ and $D^{-1}_{\bdry_\Sigma}$.  
 \begin{figure}
	\vspace{10 pt}
	\labellist
	\small\hair 2pt 
	\pinlabel $\gamma$ at 136 99
	\endlabellist
\includegraphics{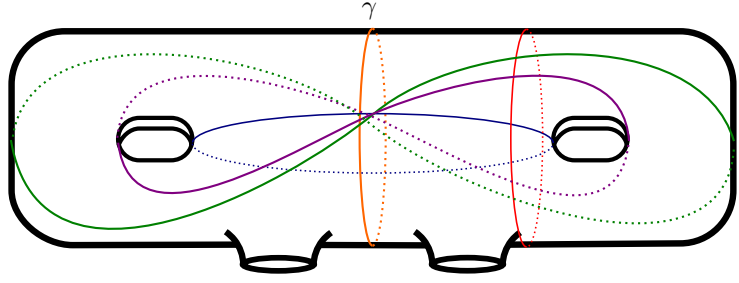}
\caption{The Dehn twists after adding another, $\gamma$, by stabilization.} \label{fig:monodromy_base-stabilized}
\end{figure}
 Now $\gamma$ and $d_1$ are two curves in the lantern relation shown in Figure~\ref{fig:monodromy_lantern1} 
  \begin{figure}
	\labellist
	\small\hair 2pt 
	\pinlabel $c_1$ at 54 99
	\pinlabel $c_2$ at 216	99
	\pinlabel $c_3$ at 94 33
	\pinlabel $c_4$ at 181 33
	\pinlabel $\beta$ at 157 50
	
	\endlabellist
\includegraphics{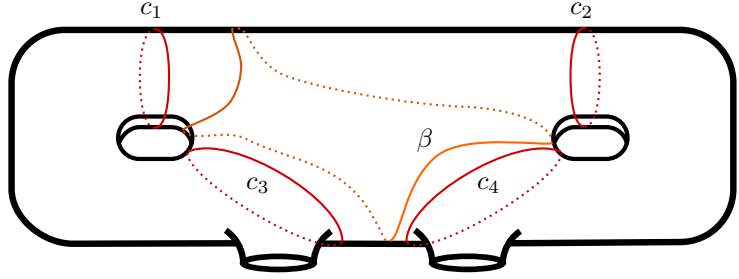}
\caption{The Dehn twists involved in the lantern with $\gamma$ and $d_1$.} \label{fig:monodromy_lantern1}
\end{figure}
 and we substitute $D_{c_4} \circ D_{c_3} \circ D_{c_2} \circ D_{c_1} \circ D^{-1}_{\beta}= D_{d_1} \circ D_\gamma$ to get 
 \[
 D_{d_3} \circ D_{d_2} \circ D_{c_4} \circ D_{c_3} \circ D_{c_2} \circ D_{c_1 \circ }D^{-1}_\beta  \circ D^{-1}_{\bdry\Sigma} \circ  \tilde{\phi}.
\]
 Now $\beta$ and $\bdry_\Sigma$ also form part of a lantern relation with the other five curves shown in Figure~\ref{fig:monodromy_lantern2}.  
 \begin{figure}
	\labellist
	\small\hair 2pt 
	\pinlabel $c'_{1}$ at 60 73
	\pinlabel $c'_{2}$ at 216	99
	\pinlabel $c'_{3}$ at 94 33
	\pinlabel $c'_{4}$ at 219 3
	\pinlabel $\delta_1$ at 188 59
	\endlabellist
\includegraphics{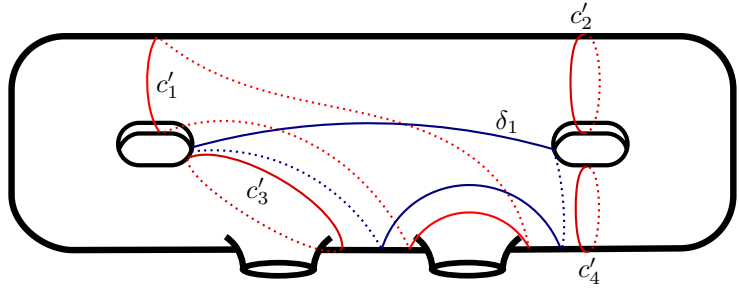}
\caption{The Dehn twists involved in the lantern with $\beta^{-1}$ and $\bdry_{\Sigma}^{-1}$.} \label{fig:monodromy_lantern2}
\end{figure}
 This lantern relation gives $D^{-1}_\beta \circ  D^{-1}_{\bdry \Sigma}=  D_{c'_{4}}^{-1} \circ D_{c'_{3}}^{-1} \circ D_{c'_{2}}^{-1} \circ D_{c'_{1}}^{-1} \circ D_{\delta_1}$ where $c_3 = c'_{3}$ and $c_2 = c'_{2}$.  Substituting gives the factorization 
 \[
D_{d_3} \circ  D_{d_2} \circ D_{c_4} \circ D_{c_3} \circ D_{c_2} \circ D_{c_1} \circ D_{c'_{4}}^{-1} \circ D_{c'_{3}}^{-1} \circ D_{c'_{2}}^{-1} \circ D_{c'_{1}}^{-1} \circ D_{\delta_1}  \circ \tilde{\phi} .
 \]
We cancel $D_{c_3}$ with  $D_{c'_{3}}^{-1}$ and $D_{c_2}$ with $D_{c'_{2}}^{-1}$ to get 
\[
   D_{d_3} \circ D_{d_2} \circ D_{c_4} \circ D_{c_1} \circ D_{c'_{4}}^{-1} \circ D_{c'_{1}}^{-1} \circ D_{\delta_1} \circ \tilde{\phi}.
\]

We will need to ``conjugate'' Dehn twists past one another. To this end, recall that for any diffeomorphism $f$ of a surface we have the relation $f \circ D_c \circ f^{-1}= D_{f(c)}$, so $f \circ D_c = D_{f(c)} \circ f$ and $D_c \circ f= f \circ D_{f^{-1}(c)}$. 
We would like the curves $d_2$ and $d_3$ to look more like $\delta_1$, that is to loop around the right most boundary component.  To this end we will conjugate $D_{d_2}$ and $D_{d_3}$ past $D_{c_4}$ and $D_{c'_{4}}^{-1}$ (after commuting $D_{c'_{4}}^{-1}$  and $D_{c_1}$ past each other) and get the factorization
\[
 D_{c_4} \circ  D_{c_{4'}}^{-1} \circ D_{\delta_3} \circ  D_{\delta_2} \circ  D_{c_1} \circ  D_{c_{1'}}^{-1} \circ  D_{\delta_1} \circ  \tilde{\phi} ,
\]
where $\delta_{2}$ and $\delta_{3}$ are, respectively, the curves $D_{c'_{4}} (D_{c_4}^{-1} (d_2))$ and $D_{c'_{4}} (D_{c_4}^{-1} (d_3))$ and are shown in Figure~\ref{fig:monodromy_twist1}.  Now for ease we want to slide the right hand boundary component up the back of the surface to the top.  This gives us the arrangements in Figure~\ref{fig:monodromy_untwist}.  Lastly, we conjugate $D_{c_1}$ and $D_{c'_{1}}^{-1}$ to the left across $D_{\delta_{2}}$ and $D_{\delta_{3}}$.  Conjugating $D_{c_1}$ across $D_{\delta_{3}} \circ D_{\delta_{2}}$ gives $D_{D_{\delta_3} (D_{\delta_2} (c_1))} = D_{c'_{4}}$ and similarly $D_{c'_{1}}$ transforms into $D_{D_{\delta_3} (D_{\delta_2} (c'_{1}))} = D_{c_{4}}$. The extra Dehn twists then cancel, leaving our final factorization 
\[D_{\delta_3} \circ   D_{\delta_2} \circ   D_{\delta_1} \circ \tilde{\phi}  ,\]
as shown at the top of Figure~\ref{fig:monodromy_untwist}.  This is the factorization handed to us from the branched cover construction in in Proposition~\ref{prop:22cable}, but in particular the factorization is in terms of positive Dehn twists. Thus verifying that the supported contact structure is Stein fillable.

\begin{figure}
	\labellist
	\small\hair 2pt 
	\pinlabel $\delta_2$ at 241 52
	\pinlabel $\delta_3$ at 265 52
	\endlabellist
\includegraphics{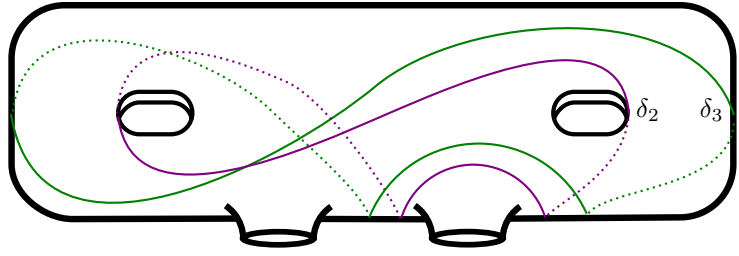}
\caption{The Dehn twists $\delta_2$ and $\delta_3$ are the images of $d_2$ and $d_3$ under $D_{c_{4}}^{-1} \circ D_{c_4'}$.} \label{fig:monodromy_twist1}
\end{figure}

\begin{figure}
	\labellist
	\small\hair 2pt 
	\pinlabel $\delta_1$ at 196 63
	\pinlabel $\delta_2$ at 241 52
	\pinlabel $\delta_3$ at 265 52
	\endlabellist
\includegraphics{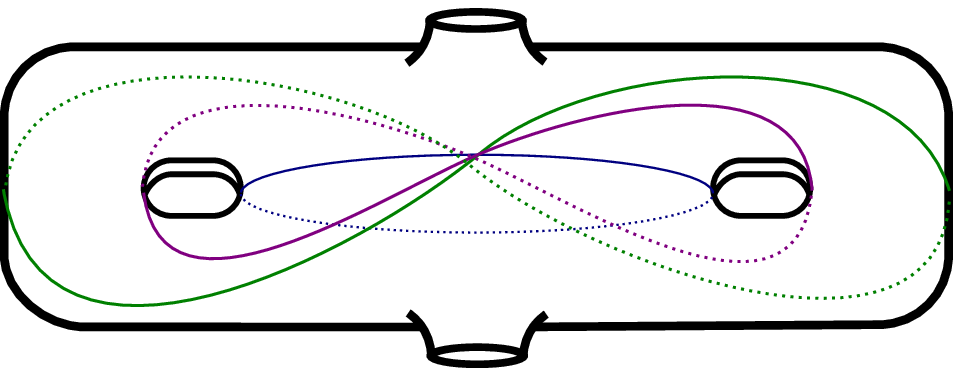}

\vspace{10pt}
	\labellist
	\small\hair 2pt 
	\pinlabel $c_{1}$ at 56 98
	\pinlabel $c_{1'}$ at 80 38
	\pinlabel $c_{4}$ at 190 39
	\pinlabel $c_{4'}$ at 219 5
	\endlabellist
\includegraphics{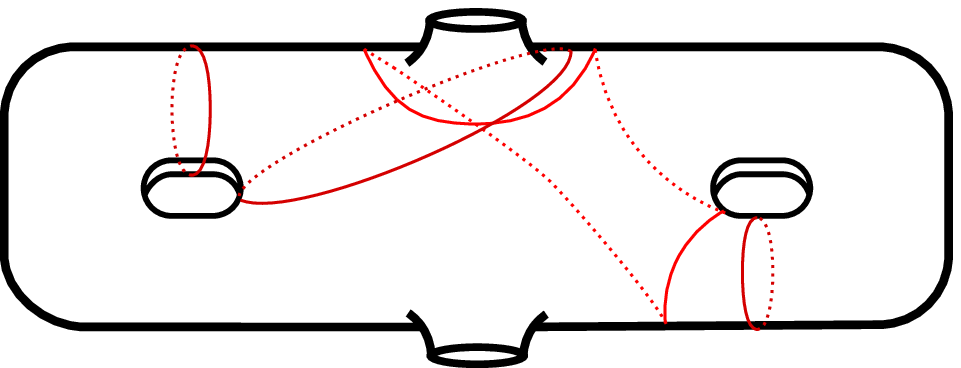}

\caption{The Dehn twists in the factorization of the (2,2)--cable after sliding the boundary component on the right up the back of the surface to the top.  The top diagram shows the curves $\delta_1$, $\delta_2$ and $\delta_3$, which give the final monodromy.} \label{fig:monodromy_untwist}
\end{figure}

\subsection{Monoids in the mapping class group}
In this subsection we establish the Stein cobordism from $(M_{(\Sigma,\phi_1)},\xi_{(\Sigma,\phi_1)}) \sqcup (M_{(\Sigma,\phi_2)},\xi_{(\Sigma,\phi_2)}) $ to $(M_{(\Sigma,\phi_2\circ \phi_1)},\xi_{(\Sigma,\phi_2\circ\phi_1)})$ and hence all the corollaries of this fact discussed in the introduction.

\begin{proof}[Proof of Theorem~\ref{constructbordism}]
The theorem follows by finding a very particular diffeomorphism  $\rho$ on $\Sigma'$, a surface related to $\Sigma$, as follows:
\begin{enumerate}
\item{$\Sigma'$ is built from two disjoint special submanifolds $\Sigma_1$ and $\Sigma_2$, each diffeomorphic to $\Sigma$, by adding 2-dimensional 1-handles,}
\item{ $\rho$ maps $\Sigma_1$ to $\Sigma_2$ and $\Sigma_2$ to $\Sigma_1$ by the identity map under their identifications with $\Sigma$,}
\item{ $\rho$ has a factorization into positive Dehn twists, and}
\item{ the pair $\left(\Sigma', \rho\right)$ is obtained from $(\Sigma_1, \Id)$ by a sequence of positive Hopf stabilizations (and no destabilizations).} 
\end{enumerate}

\noindent We begin by constructing the desired Stein bordism given the pair $(\Sigma', \rho)$.  The pair's existence is established in the last paragraph.    

We begin with an outline of the procedure in the language of open book decompositions before translating this procedure into a Stein cobordism.  We have two open books $(\Sigma, \phi_1)$ and $(\Sigma, \phi_2)$.  By adding 1-handles, we can stick them side-by-side as the subsurfaces $\Sigma_1$ and $\Sigma_2$ of $\Sigma'$, with the monodromy of the resulting open book being $\phi_i$ acting on $\Sigma_i$ and (extended as the identity everywhere else).  We will use this terminology later, so for now let's call these extensions $\widetilde{\phi_i}$.  Now by adding right-handed Dehn twists we can insert the map $\rho$ to the monodromy.  The entire result now follows from the observation that this new open book (the monodromy has been reordered for convenience $(\Sigma', \widetilde{\phi_2} \circ \rho \circ \widetilde{\phi_1})$ is a positive Hopf stabilization of $(\Sigma, \phi_2 \circ \phi_1)$.  To see this, notice that by conjugating $\widetilde{\phi_2}$ past $\rho$, we make $\phi_2$ act on the surface $\Sigma_1$ rather than $\Sigma_2$ and rewrite the monodromy as $\rho \circ \widetilde{\phi_2 \circ \phi_1}$, where both $\phi_1 $ and $\phi_2$ are now acting (as a composition) on $\Sigma_1$.  Since all the destabilizing arcs for the Dehn twists in $\rho$ sit on $\Sigma_2$, this is a positive Hopf stabilization of ($\Sigma_1, \phi_2 \circ \phi_1$).

The procedure for turning the above steps (adding 1-handles and adding positive Dehn twists) into a Stein cobordism is standard, but we include the details for those who might be unfamiliar with the setup.

Let $W_0=(M_{(\Sigma,\phi_1)}\sqcup M_{(\Sigma,\phi_2)})\times [0,1]$ with the symplectic structure on the symplectization of $\xi_{(\Sigma,\phi_1)}\sqcup \xi_{(\Sigma,\phi_2)}$ restricted to it. (It is well known that projection to $[0,1]$ is pluri-subharmonic and thus $W_0$ is Stein.) One may attach 1-handles to $(M_{(\Sigma,\phi_1)}\sqcup M_{(\Sigma,\phi_2)})\times 1$ so that the Stein structure extends over the 1-handles \cite{Eliashberg90a} and so that the new boundary component is the contact (self) connected sum of $(M_{(\Sigma,\phi_1)}\sqcup M_{(\Sigma,\phi_2)}, \xi_{(\Sigma,\phi_1)}\sqcup \xi_{(\Sigma,\phi_2)})$ (i.e., we may end up with the contact connected sum of additional copies of $S^1\times S^2$).  Thus the contact structure on this new boundary component is supported by the one obtained by attaching 1-handles to the pages of the open book decomposition for the lower boundary component. More importantly, any 2-dimensional 1-handle attachment to the pages of an open book is reflected in such a cobordism.  Thus we may construct a Stein cobordism $W_1$ with concave boundary  $(M_{(\Sigma,\phi_1)},\xi_{(\Sigma,\phi_1)}) \sqcup (M_{(\Sigma,\phi_2)},\xi_{(\Sigma,\phi_2)})$ and convex boundary supported by the open book decomposition with page $\Sigma'$ and monodromy $\widetilde{\phi_2}\circ\widetilde{\phi_1}$ where $\widetilde{\phi_i}$ is the extension of $\phi_i$ to $\Sigma'$ and $\phi_i$ acts on the subsurface $\Sigma_i$.

Again following  \cite{Eliashberg90a, Gompf98} one can attach 2-handles to the convex boundary of a Stein cobordism along a curve in the page of an open book decomposition (with framing one less than the framing the page induces on the curve) and extend the Stein structure over the 2-handle.  The monodromy of the open book decomposition on the convex boundary changes by composing with a positive Dehn twist along the curve \cite{EtnyreHonda02a}. Thus we can attach Stein 2-handles to the convex boundary of $W_1$ along curves in the pages of  $(\Sigma',\widetilde{\phi_2}\circ\widetilde{\phi_1})$ with framing one less than the page framing to get the Stein cobordism $W_2$ and the open book decomposition on the convex boundary is now $(\Sigma', \widetilde{\phi_2}\circ\rho\circ\widetilde{\phi_1}).$ 

Conjugating $\widetilde{\phi_2}$ past $\rho$ we get an open book $(\Sigma', \rho \circ (\rho^{-1}\circ\widetilde{\phi_2}\circ\rho)\circ \widetilde{\phi_1}).$ The map $\rho^{-1}\circ \widetilde{\phi_2}\circ \rho$ is the diffeomorphism where $\phi_2$ acts on $\Sigma_1$ and is the identity everywhere else. 
Thus if we denote by $\widetilde{\phi_2\circ\phi_1}$ the diffeomorphism of $\Sigma'$ that is $\phi_2\circ \phi_1$ on $\Sigma_1$ and the identity elsewhere, then the convex boundary component of the Stein cobordism $W_2$ is supported by the open book $(\Sigma',\rho \circ \widetilde{\phi_2\circ\phi_1}).$ Since $\left(\Sigma', \rho\right)$ is obtained from $(\Sigma_1, \Id)$ by a sequence of stabilizations we see that $(\Sigma',\rho \circ \widetilde{\phi_2\circ\phi_1})$ is also obtained from $(\Sigma_1,\phi_2\circ\phi_1)$ by positive stabilizations. Thus the convex boundary of $W_2$ is supported by $(\Sigma,\phi_2\circ\phi_1)$ and $W_2$ is the desired Stein cobordism. 

If $\Sigma$ has more than one boundary component then $\Sigma'$ is the cable surface $\Sigma_{(2,{\bf 1})}$ from Theorem~\ref{thm:stabilization} and similarly $\rho$ is $\rho_{(2,{\bf 1})}(\Sigma)$ from the same theorem. If $\Sigma$ has connected binding then $\Sigma'=\Sigma_{(2,2)}$ and $\rho=\rho_{(2,2)}$ from Theorem~\ref{thm:stabilization}. The properties listed above are clear from Theorem~\ref{thm:stabilization}  and Proposition~\ref{prop:monodromy split}.
\end{proof}

\subsection{Negative cables which remain tight.}\label{exceptionalsurg}

As we saw in Section \ref{sec:resolution}, $(r,-1)$--rational open books have particularly nice resolutions and abstract presentations.  In this section we take $(r-1, -1)$--cables of $(r,-1)$--open books with connected binding and see that they are again $(r,-1)$--open books.  We show how to determine the abstract presentation of the cable given that of the pattern.  As a sample application we show that there are many cases where the $(2,-1)$--cable of a $(3,-1)$--open book is a negative cable which is still tight. 

\begin{prop} \label{prop:negative cable} Let $\mathfrak{B}$ be a $(r,-1)$--open book with connected binding, written abstractly as $(\Sigma, \delta_{\frac{1}{r}} \circ \phi)$, $\phi \in Map^+(\Sigma,\bdry \Sigma)$.  The $(r-1, -1)$--cable of $\mathfrak{B}$ can be written abstractly as $(\Sigma_{(r-1, 1)}, \delta_\frac{1}{r} \circ \rho_{(r-1,1)}^{-1} \circ \bdry_1^{(2-r)}\circ \tilde{\phi})$. Here the surface $\Sigma_{(r-1,1)}$ and diffeomorphism $ \rho_{(r-1,1)}$ are as described for integral cables in Section \ref{sec:connected binding}.  The diffeomorphism  $\bdry_1$ is a Dehn twist about a the boundary of the first nodule of $\Sigma_{({r-1},-1)}.$ (Note that $\delta_\frac{1}{r}$ always refers to a $\frac 1r$ fractional Dehn twist along the boundary of the surface under consideration.  The two occurrences of this notation above refer to diffeomorphisms on different surfaces.)
\end{prop}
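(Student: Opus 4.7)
The plan is to adapt the branched cover argument of Theorem~\ref{thm:connected binding} to the rational setting, reducing first to the case $\phi = \Id$ and then identifying the rotation map. The first step is to establish the topology of the cable. The new binding $K' = K_{(r-1,-1)}$ lies on the boundary torus of a standard neighborhood $V$ of $K$, and a direct homology computation in $V$ gives $[K'] = (r-1)[K]$, so $K'$ has the same order $r$ in $H_1$ of the ambient manifold as $K$. An explicit intersection calculation on $\partial V$ shows that the cable has multiplicity $1$ at $K'$ and that its page is diffeomorphic to $\Sigma_{(r-1,1)}$, the same cable surface appearing in the integral case, since the page topology of a cable depends only on the pattern $(r-1, \pm 1)$ and not on the framing used to name it.

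Next I would invoke a rational analog of Proposition~\ref{prop:monodromy split}: the cable monodromy decomposes as $\Phi \circ \tilde\phi$, where $\Phi$ is a universal diffeomorphism of $\Sigma_{(r-1,1)}$ depending only on $\Sigma$ and $r$, and $\tilde\phi$ is a lift of $\phi$ acting on the first nodule. The argument of Section~\ref{sec:monsetup} adapts directly because cabling is still a local operation inside $V$ and only the boundary data near $K$ change. This reduces the proof to computing $\Phi$ in the case that the original monodromy is $\delta_{1/r}$.

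To compute $\Phi$, I would use a branched cover description analogous to Lemma~\ref{11cable}, realizing the cable as an $(r-1)$-fold cyclic cover of $\mathfrak{B}$ branched along a suitable transverse curve (the rational-framing analog of the $(1,1)$-cable of $K$, adjusted so that its homology class pairs correctly with $K$ in the rational sense). Lifting the fractional twist $\delta_{1/r}$ through this cover produces the outer $\delta_{1/r}$ factor on the boundary of $\Sigma_{(r-1,1)}$ together with correction twists on the nodule boundaries that combine to $\partial_1^{(2-r)}$, since the $1/r$ partial rotation on the original boundary lifts to a cyclic permutation of the nodules plus residual boundary-parallel twists. The deck transformation lifts to a spin-type diffeomorphism of $\Sigma_{(r-1,1)}$; this is $\rho_{(r-1,1)}^{-1}$ rather than $\rho_{(r-1,1)}$, the inverse appearing because $-1/(r-1)$ lies on the negative side of the Seifert slope $-1/r$, reversing the rotation direction relative to the positive integral $(r-1,1)$-cable.

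The main obstacle will be the precise calculation of the exponent $2-r$ on $\partial_1$. I expect to verify it directly in the base case $r=3$ (the case needed in Proposition~\ref{prop:tightneg}) by writing out the relevant branched cover explicitly in the Farey picture, and then extend to arbitrary $r$ by induction, using that an $(r,-1)$-open book is obtained from an $(r-1,-1)$-open book by a single Farey jump toward $-1/r$, which contributes one additional boundary-parallel correction. Alternatively, one may tally the Dehn twist contributions coming from the $(r-1,0)$-resolution of the cabled rational open book and compare them to the resolution of $\mathfrak{B}$ using Proposition~\ref{prop:r-1 resolution}, confirming that the exponent must be $2-r$ by counting lifted boundary twists.
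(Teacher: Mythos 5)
Your approach is genuinely different from the paper's, and it has a gap that I do not see how to repair without essentially switching to a different method.

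The paper's proof does not use branched covers at all. It expresses $\mathfrak{B}$ as $-r$ surgery on the binding of an integral open book $(\Sigma,\phi)$, and then decomposes that surgery as a surgery on the two-component link $K_0 \cup K_{(r-1,-1)}$ inside a solid torus neighborhood, with coefficients $\frac{1}{r-2}$ on the $0$-cable $K_0$ and $-r$ on $K_{(r-1,-1)}$. An explicit gluing-matrix computation verifies the surgery equivalence, and one then reads off the monodromy contributions directly: positive $(r-1,1)$-cabling of the integral open book contributes $\rho_{(r-1,1)}$, the $\frac{1}{r-2}$ surgery on $K_0 = \partial\Sigma_1$ contributes $\partial_1^{2-r}$, and the $-r$ surgery on $K_{(r-1,-1)}$ contributes the fractional twist $\delta_{1/r}$. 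In particular, the exponent $2-r$ falls out as the Rolfsen-twist count, not as something that must be verified case by case.

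Your proposal instead tries to extend the branched-cover argument of Lemma~\ref{11cable} from $(\Sigma,\Id)$ to $(\Sigma,\delta_{1/r})$. But Lemma~\ref{11cable} explicitly requires the monodromy to be the identity --- the key fact there is that the $p$-fold cyclic branched cover of $M_{(\Sigma,\Id)}$ over the $(1,1)$-cable of the binding is again $M_{(\Sigma,\Id)}$, because the complement of the binding is literally $\Sigma\times S^1$. Once you replace $\Id$ by $\delta_{1/r}$, the complement is the mapping torus of an order-$r$ fractional twist, and the $(r-1)$-fold cyclic cover branched over a transverse curve is in general a \emph{different} 3-manifold from $M_\mathfrak{B}$, while the $(r-1,-1)$-cable of $\mathfrak{B}$ is an open book on $M_\mathfrak{B}$ itself. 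So the claimed identification "cable $=$ branched cover" fails to hold, and it is not clear what the "rational-framing analog of the $(1,1)$-cable" achieving this could even be. This is the central step of your computation, so the gap is genuine. Reducing to $\phi=\Id$ via a rational analog of Proposition~\ref{prop:monodromy split} does not fix this, because $\delta_{1/r}$ is not supported away from the boundary and so is not absorbed into the $\tilde\phi$ factor; the universal map you still need to compute has $\delta_{1/r}$ built into it. Finally, you leave the exponent $2-r$ to a base case plus an induction or a resolution-counting argument, neither of which is actually carried out, and the induction in $r$ is not straightforward since both the framing and the cabling pattern change with $r$.
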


Before we prove the proposition, we discuss our primary interest in these cables.  Since the $(r-1,-1)$--cable is still, as a rational open book, an $(r, -1) $--open book, it has a particularly nice resolution.  We use this description to find examples where this negative cable is still tight.  These examples generalize to any $r$ with only a small modification of the proof.  

\begin{cor} The $(2,-1)$--cable of the rational open book $(\Sigma, \delta_{\frac{1}{3}}\circ \bdry^2)$ obtained by $-3$ surgery on the binding of $(\Sigma, \bdry^2)$ (framed to be a $(3,-1)$--open book) has an integral resolution with positive monodromy and hence is tight.  The monodromy $\bdry^2$ is a composition of two right-handed Dehn twists about a curve parallel to the boundary of $\Sigma$.
\end{cor}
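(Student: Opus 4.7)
My plan is to compute the monodromy of the $(2,-1)$--cable, resolve it to an integral open book, and then exhibit the resulting integral monodromy as a positive Hopf stabilization of the $(2,2)$--cable of $(\Sigma,\bdry^{2})$, which has manifestly positive monodromy. Applying Proposition~\ref{prop:negative cable} with $r=3$ and $\phi=\bdry^{2}$ gives the $(2,-1)$--cable of $(\Sigma,\delta_{\frac{1}{3}}\circ\bdry^{2})$ as the rational open book $(\Sigma_{(2,1)},\,\delta_{\frac{1}{3}}\circ\rho_{(2,1)}^{-1}\circ\bdry_1^{-1}\circ\widetilde{\bdry^{2}})$, and because $\widetilde{\bdry^{2}}=\bdry_1^{2}$ acts as two right-handed Dehn twists parallel to the boundary of the first nodule, this collapses to $(\Sigma_{(2,1)},\,\delta_{\frac{1}{3}}\circ\rho_{(2,1)}^{-1}\circ\bdry_1)$. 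The $(3,0)$--resolution of Proposition~\ref{prop:r-1 resolution} (equivalently Example~\ref{exs-1}) then yields an integral open book whose page $\Sigma''$ is obtained from $\Sigma_{(2,1)}$ by gluing a disk with three holes along the outer boundary --- producing three new boundary components $\gamma_1,\gamma_2,\gamma_3$ --- and whose monodromy is $\rho_{(2,1)}^{-1}\circ\bdry_1\circ D_{\gamma_1}D_{\gamma_2}D_{\gamma_3}$. By Theorem~\ref{thm:resolution} this integral open book supports the same contact structure as the rational cable, so it suffices to rewrite its monodromy as a product of right-handed Dehn twists.

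Next I would apply the factorization of Theorem~\ref{thm:connected binding}, which for $p=2$ yields $\rho_{(2,1)}=\bdry_2^{-1}\circ\bdry_1^{-1}\circ s_1$ with $s_1$ an explicit product of right-handed Dehn twists (the lift of the Garside half-twist). The integral monodromy then reads
\[
s_1^{-1}\circ\bdry_1\circ\bdry_2\circ\bdry_1\circ D_{\gamma_1}D_{\gamma_2}D_{\gamma_3},
\]
and the only obstruction to positivity is the single factor $s_1^{-1}$. An Euler characteristic count shows that $\Sigma''$ and the $(2,2)$--cable surface $\Sigma_{(2,2)}$ both have genus $2g(\Sigma)$, while $\Sigma''$ has exactly one more boundary component, consistent with a single positive Hopf stabilization of $\Sigma_{(2,2)}$ along an arc with both endpoints on the same boundary component.

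The identification is then carried out in the spirit of Section~\ref{sec:stabilizing to 2,2}: apply the lantern relation in the four-holed sphere spanned by the capping curve $\beta$ (the former outer boundary of $\Sigma_{(2,1)}$, now interior in $\Sigma''$) together with $\gamma_1,\gamma_2,\gamma_3$ to trade $D_{\gamma_1}D_{\gamma_2}D_{\gamma_3}$ for $D_\beta^{-1}$ times three positive Dehn twists about curves in the attached three-holed disk, then use the two extra copies of $\bdry_1$ supplied by $\widetilde{\bdry^{2}}$ and a sequence of conjugations past $s_1$, $\bdry_1$ and $\bdry_2$ to absorb $s_1^{-1}$ into positive twists, exposing $\rho_{(2,2)}\circ\bdry_1^{2}$ composed with a single right-handed Dehn twist about the stabilization arc in $\Sigma_{(2,2)}\subset\Sigma''$. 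Stein fillability (hence tightness) then follows from the Giroux-Akbulut-Ozbagci-Loi-Piergallini correspondence \cite{AkbulutOzbagci01, Giroux02, LoiPiergallini01}. The main obstacle is the mapping class group bookkeeping of this final step; the hypothesis $\phi=\bdry^{2}$ enters precisely to supply the two positive boundary-parallel twists needed to absorb $s_1^{-1}$, paralleling (but in the opposite direction to) the obstruction that produces Theorem~\ref{thm:counterexample}.
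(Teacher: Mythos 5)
Your computation through the resolution step is correct and matches the paper: Proposition~\ref{prop:negative cable} plus the resolution gives the integral monodromy $M_{\delta} \circ \rho_{(2,1)}^{-1} \circ \bdry_1$ on a genus $2g(\Sigma)$ page with three boundary components, where $M_\delta = D_{\gamma_1} D_{\gamma_2} D_{\gamma_3}$, and you correctly identify $s_1^{-1} = \Delta^{-1}$ as the obstruction to positivity. However, the finishing steps do not work.

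First, the claim that $\Sigma''$ is a positive Hopf stabilization of the $(2,2)$--cable of $(\Sigma,\bdry^2)$ cannot be right, for a global reason: the $(2,2)$--cable of $(\Sigma,\bdry^2)$ is an open book for $M_{(\Sigma,\bdry^2)}$, while your resolved open book describes the manifold obtained from $-3$--surgery on its binding. These manifolds differ in general, and Hopf stabilization preserves the 3--manifold, so the two open books cannot be stabilizations of each other, and the monodromy cannot take the form $\rho_{(2,2)}\circ\bdry_1^2\circ D_\alpha$. The Euler characteristic count only shows the surfaces are abstractly diffeomorphic, not that the open books are related.

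Second, and more concretely, the four-holed lantern you invoke makes the negativity worse, not better. Applying it gives $M_\delta = D_1' D_2' D_3' \circ D_\beta^{-1}$, and since $D_\beta = \Delta^2$ (the full boundary twist on $\Sigma_{(2,1)}$), substituting into $M_\delta \circ \bdry_1\bdry_2\Delta^{-1}\bdry_1$ and conjugating the boundary twists past $\Delta^{-1}$ yields a residue of $\Delta^{-3}$, not $\Delta^{-1}$, and there is nothing left to cancel it: the remaining $\bdry_1\bdry_2^2$ are boundary-parallel in $\Sigma_{(2,1)}$ and commute with $\Delta$, so they cannot absorb it. What you actually need is the generalized lantern relation on the \emph{five}-holed sphere spanned by $\bdry_1,\bdry_2,\gamma_1,\gamma_2,\gamma_3$ (Lemma~\ref{lem:lantern}, proved by applying the ordinary lantern twice in overlapping four-holed spheres). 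That relation consumes the positive twists $\bdry_1^2\bdry_2$ together with $M_\delta$ and produces $D_\bdry = \Delta^2$ with a $+$ sign, which exactly cancels the single $\Delta^{-1}$ to leave $\Delta\circ D_3\circ D_2\circ D_1$, a product of positive twists. The two extra copies of $\bdry_1$ coming from $\phi=\bdry^2$ enter precisely as the $\bdry_1^2$ consumed by the generalized lantern --- they are not available to ``absorb $s_1^{-1}$ by conjugation'' afterwards, because they must be fed into the relation itself.
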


Notice that this corollary is a more precise formulation of Proposition~\ref{prop:otexceptional}. To prove the corollary, we need a relation in the planar mapping class group proved in the lemma below.

\begin{figure}[htp]
\centering
\mbox{
	\labellist 
	\small\hair 2pt
	\pinlabel $\bdry_2$ at 53 173 
	\pinlabel $\bdry_1$ at 77 130
	\pinlabel $\delta_1$ at 117 176 
	\pinlabel $\delta_2$ at 142 121
	\pinlabel $\delta_3$ at 114 56 
	\endlabellist
\includegraphics[width=2.25in]{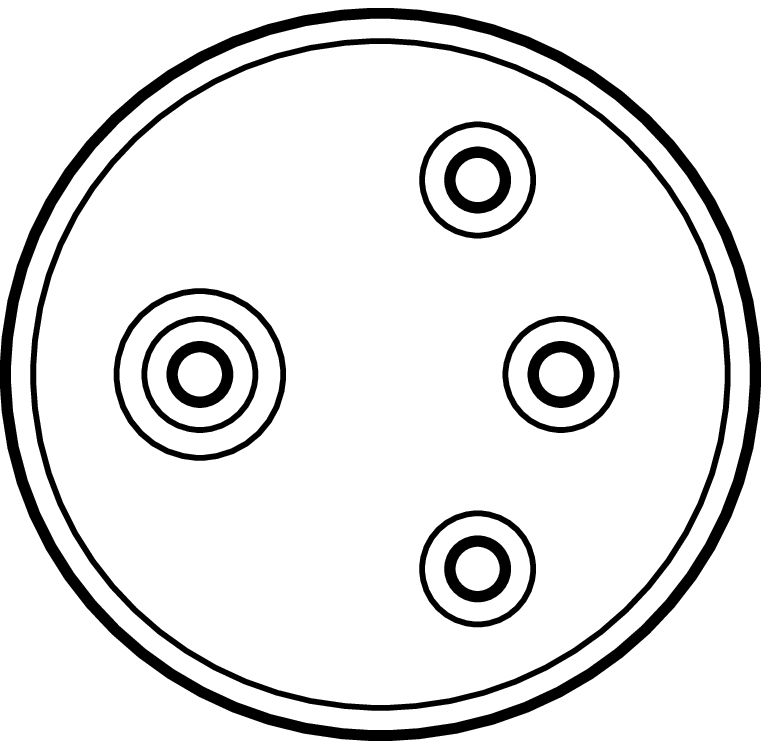}
}
\mbox{
	\labellist
	\small\hair 2pt
	\pinlabel $D_1$ at 72 158
	\pinlabel $D_2$ at 161 136
	\pinlabel $D_3$ at 71 51
	\pinlabel $D_\partial$ at 200 111
	\endlabellist
\includegraphics[width=2.25in]{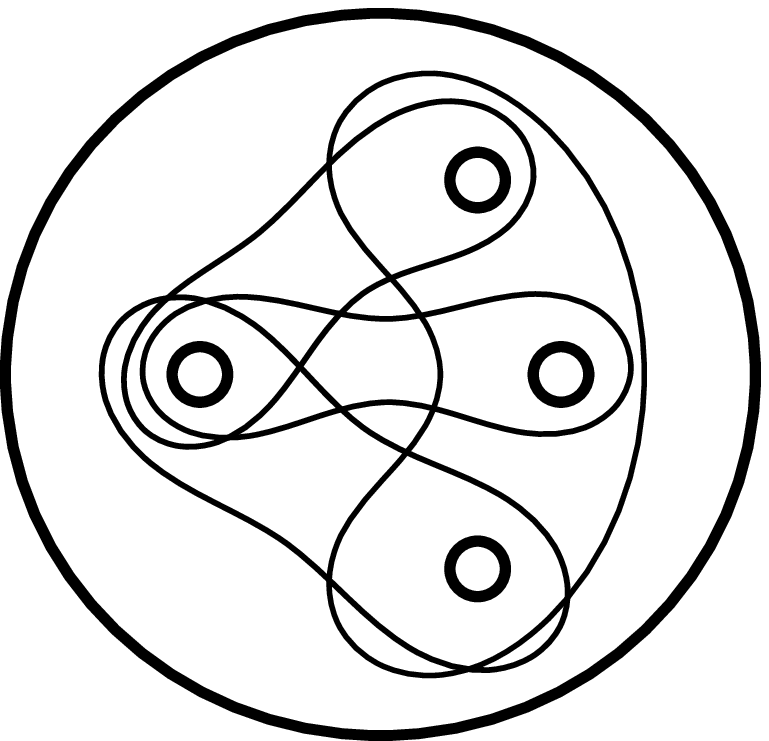}
}
\caption{In the mapping class group of a five-holed sphere the two collections of Dehn twists (left and right diagrams) compose to give the same diffeomorphism.} \label{fig:genlantern}

\end{figure}

\begin{proof}[Proof of Corollary] By Proposition \ref{prop:negative cable}, the negative cable of $(\Sigma, \delta_{\frac{1}{3}} \circ \bdry^2)$ has an abstract description $(\Sigma_{(2,1)}, \delta_\frac{1}{3} \circ \rho_{(2,1)}^{-1} \circ \bdry_1^{-1} \circ \bdry_1^2)$.  We can resolve this open book as shown in Section~\ref{sec:resolution} by adding two more boundary components to $\Sigma_{(2,1)}$ to get a surface $\widetilde{\Sigma}_{(2,1)}$ and replacing $\delta_\frac{1}{3}$ with $M_\bdry$, the Dehn multitwist about the three boundary components, to get the new monodromy $\tilde{\phi} = M_{\delta} \circ \rho_{(2,-1)}^{-1} \circ \bdry_1^{-1}\circ \bdry_1^2$.  Subsection~\ref{sec:connected binding} gives a factorization of the cable rotation $\rho_{(2,1)}$ as $\rho_{(2,1)} = \Delta \circ \bdry_2^{-1} \circ \bdry_1^{-1}$ with  $\Delta$ is the ``Garside twist.''  We need two properties of $\Delta$: 1) that $\Delta^2 = D_\bdry$, where $D_{\bdry}$ is a Dehn twist along $\bdry \Sigma_{(2,1)}$ thought of as a subset of , and 2) that $\Delta$ can be written as a product of positive Dehn twists.  Thus we can rewrite the monodromy as $\tilde{\phi} = M_\delta \circ \Delta^{-1} \circ \bdry_2\circ \bdry_1^2$.  To see that $\tilde{\phi}$ has positive Dehn twist factorization, we apply a generalization of the lantern relation to the five-holed sphere $S$, the complement of the nodules in $\widetilde{\Sigma}_{(2,1)}$. ($S$ is the base component of the cable surface in the terminology of Section~\ref{sec:monodromy}.  The boundary components of $S$ split into $\delta_1, \delta_2, \delta_3$, the boundary components of $\widetilde{\Sigma}_{(2,1)}$, and $\bdry_1, \bdry_2$, the boundaries of each of the two nodules, and these determine the Dehn twists $M_\delta$, $\bdry_1$ and $\bdry_2$.  We denote by $\delta_1,\delta_2,$ and $ \delta_3$ the Dehn twists about curves parallel to the boundary components of $\widetilde{\Sigma}_{(2,1)}.$ Thus $M_\delta=\delta_1\circ\delta_2\circ \delta_3.$  Applying the lantern relation of Lemma \ref{lem:lantern} we can rewrite part of the monodromy; $M_\delta \circ \bdry_1^2 \circ \bdry_2 = D_{\bdry} \circ D_3 \circ D_2 \circ D_1 $.   Since we can write $D_{\bdry} \circ \Delta^{-1}=\Delta$ as a product of positive Dehn twists, we can also write $\tilde{\phi}$ as a product of positive Dehn twists; $\tilde{\phi} = \Delta \circ D_3 \circ D_2 \circ D_1$, showing that the $(2,-1)$--cable of $(\Sigma, \delta_{\frac{1}{3}} \bdry^2)$ supports a Stein fillable contact structure.
\end{proof}

\begin{lemma}[Endo, Mark and Van Horn-Morris 2010, \cite{MarkVanhorn}]\label{lem:lantern} In the mapping class group of a five-holed sphere, the two factorizations shown in Figure \ref{fig:genlantern} give the same diffeomorphism.  Specifically $\bdry_{1}^2 \circ \bdry_{2} \circ \delta_1 \circ \delta_2 \circ \delta_3 = D_\partial \circ D_3 \circ D_2  \circ D_1 $.
\end{lemma}

\begin{proof}  Apply the original lantern relation twice, the first time about the sphere with four punctures containing $\bdry_1, \delta_1$ and $\delta_2$ (and its fourth boundary component a curve separating these curves from $\delta_3$ and $\bdry_2$). The second time application of the lantern relation will be about the sphere with four punctures containing $\bdry_1,\bdry_2$ and $\delta_3.$
\end{proof}

\begin{proof}[Proof of Proposition \ref{prop:negative cable}]  The proof of the proposition rests on a few simple facts about Dehn surgery.  The proof holds for disconnected binding, although the description of the monodromy is more complicated as we need to cable a single binding component and so we will not discuss this here.  First, any $(r,-1)$--open book comes from an integral open book by $-r$ surgery on the binding.  We translate $-r$ surgery on a knot $K$ into surgery on a link in a neighborhood of $K$ composed of a $0$--cable $K_0$ and the $(r-1, -1)$--cable $K_{(r-1,-1)}$ which sits on a torus nested inside the torus used for $K_0$.  The surgery coefficient is $\frac{1}{r-2}$ for $K_0$ and $-r$ for $K_{(r-1, -1)}$.  We will show that the diffeomorphism from the $-r$ surgery on $K$ to the sequence of surgeries on $K_0$ and $K_{(r-1,-1)}$ is supported on the framed solid torus and that it takes the the $(r-1,-1)$--cable of the image of $K$ to the image of $K_{(r-1,-1)}$.  First though, we show why this is enough to prove the proposition.

Once we have the surgery description, it is quite easy now to prove the proposition. Let us fix notation and denote by $K'$ the image of $K$ under $-r$--surgery along $K$ ($r>0$).  Let $K_{r-1,-1}$ be the $(r-1,-1)$--cable of $K$ and $K'_{r-1,-1}$ be that of $K'$, framed to be an $(r,-1)$ open book.  Let $(\Sigma, \phi)$ be the open book supported by $K$, which gives the rational open book with binding $K'$ the abstract presentation $(\Sigma, \delta_\frac{-1}{r} \circ \phi)$. (This is our original open book $\mathfrak{B}$.)  We construct the fibration on $K'_{r-1,-1}$ by first cabling $K$ to get $K_{r-1,-1}$, then applying the $-r$--surgery on $K_{r-1,-1}$ and the $\frac{1}{r-2}$ surgery along $K_0$.  As in Section \ref{sec:monodromy}, cabling changes the open book $(\Sigma, \phi)$ to $(\Sigma_{({r-1},1)}, \rho_{(r,-1)}\circ\phi)$. The component $K_0$ sits naturally on a page of the cabled fibration as the boundary of a nodule with page framing $0$ so $\frac{1}{r-2}$--surgery along $K_0$ adds $r-2$ left handed Dehn twists along $K_0$.  These Dehn twists we denoted as $\bdry_1^{2-r}$ in the statement of the proposition.  Last, we do the $-r$--surgery along $K_{(r-1,-1)}$ which also doesn't change the surface but adds the fractional boundary twist $\delta_\frac{1}{r}$ to the factorization of the monodromy, completing our monodromy to $\delta_{\frac{1}{r}} \circ \rho_{(r-1,-1)} \circ \bdry_1^{2-r} \circ \phi$.

Now we prove the surgery statement.  Recall $K'$ is the binding of a rational $(r,-1) $--open book, built as the image of a knot $K$ in an integral open book under $-r$--surgery.  We want to construct the $(r-1,-1)$--cable of $K'$ which we denote as $K'_{(r-1,-1)}$.  We describe the surgery along $K$ by removing a neighborhood $\nu(K)$ from our ambient manifold $M$ and regluing by a map $f:\partial \nu (K) \rightarrow -\bdry(M\backslash \nu(K))$.  Choose a meridian-longitude basis $\mu, \lambda$ for the boundary torus $T$ so that $\lambda$ is the page framing of $K$, oriented parallel to $K$, and $\mu$ is the meridian, oriented so that it links $K$ positively.  (Alternatively the oriented intersection $\mu \cdot \lambda$ on $T$ is positive.)  With respect to this basis, we choose the regluing map to be given by a matrix $A = \left(\begin{array}{cc} r & 1 \\ -1 & 0\end{array} \right)$, which picks out the desired framing curve, making the rational longitude of $K'$, $A^{-1}(0,1)^t = (-1, r)^t$.  In other words, with this framing, the resulting rational open book is exhibited as a $(r, -1) $--open book. 

We can decompose $A$ another way, however, which gives the second surgery description.  Since the framing of the cable $K_{(r-1,-1)}$ coming from the cabling torus $T$ is $-r+1$, $-r$--surgery on $K_{(r-1,-1)}$ changes the gluing along $T$ by a left-handed Dehn twist along $K_{(r-1,-1)}$.  This send $\mu$ to $\left<K_{(r-1,-1)}, \mu\right>  K_{(r-1,-1)} + \mu$ and $\lambda$ to $\left<K_{(r-1,-1)}, \lambda\right>  K_{(r-1,-1)} + \lambda$.  In coordinates, this gives a gluing matrix $\left(\begin{array}{cc} r & 1 \\ -(r-1)^2 & -r+2 \end{array} \right)$.  We need to compose this with the surgery along $K_0$, which we again think of as $r-2$ left-handed Dehn twists along $\lambda$.  This has the gluing matrix $\left(\begin{array}{cc} 1 & 0 \\ r-2 & 1 \end{array} \right)$.  Composing these two matrices gives $A$, proving the surgery equivalence.  This also shows that this describes the fibration on $K'_{(r-1,-1)}$.  Both descriptions give the same gluing matrix and in each description the cable knot is the $(r-1,-1)$--cable of the core. 
\end{proof}

\def\cprime{$'$} \def\cprime{$'$}

\end{document}